\documentclass[twoside,11pt]{article}

\usepackage{blindtext}
\usepackage{verbatim}
\usepackage[abbrvbib, preprint]{jmlr2e}

\usepackage[nothm]{YK}
\usepackage{tikz,forest}
\usetikzlibrary{arrows.meta}
\usepackage{nicefrac}
\usepackage{wrapfig}
\usepackage{subcaption}
\usepackage{algorithm}
\usepackage{algpseudocode}
\usepackage{graphicx}
\usepackage{booktabs}
\usepackage{array, makecell}
\usepackage{amssymb}
\usepackage{pifont}

\forestset{
    .style={
        for tree={
            base=bottom,
            child anchor=north,
            align=center,
            s sep+=1cm,
    straight edge/.style={
        edge path={\noexpand\path[\forestoption{edge},thick,-{Latex}] 
        (!u.parent anchor) -- (.child anchor);}
    },
    if n children={0}
        {tier=word, draw, thick, rectangle}
        {draw, diamond, thick, aspect=2},
    if n=1{%
        edge path={\noexpand\path[\forestoption{edge},thick,-{Latex}] 
        (!u.parent anchor) -| (.child anchor) node[pos=.2, above] {Y};}
        }{
        edge path={\noexpand\path[\forestoption{edge},thick,-{Latex}] 
        (!u.parent anchor) -| (.child anchor) node[pos=.2, above] {N};}
        }
        }
    }
}

\usepackage[most]{tcolorbox}

\tcbset{
  mybox/.style={
    colback=blue!5!white,   
    colframe=blue!75!black, 
    fonttitle=\bfseries,   
    coltitle=black,         
    boxrule=0.8pt,          
    arc=2mm,                
    left=4mm,right=4mm,top=2mm,bottom=2mm,
    enhanced,               
  }
}

\newcommand{\op}{\mathrm{op}}

 \usepackage{lastpage}

\ShortHeadings{Cost of Adaptation in Transfer Learning}{Chakraborty and Maity}
\firstpageno{1}

\begin{document}

\title{The Statistical Cost of Adaptation in Multi-Source Transfer Learning
}

\author{\name Abhinav Chakraborty \email ac4662@columbia.edu \\
       \addr Department of Statistics\\
       Columbia University \\
       New York, NY 10027, USA
       \AND
       \name Subha Maity \email subha.maity@uwaterloo.ca \\
       \addr Department of Statistics and Actuarial Science\\
       University of Waterloo\\
       Waterloo, ON N2L-3G1, Canada}

\editor{}

\maketitle
\begin{abstract}

Multi-source transfer learning can improve target-domain estimation by leveraging related source data, but its benefits depend on unknown source-to-target biases. This raises a fundamental question: can a bias-agnostic estimator perform as well as an oracle that knows the true bias configuration? To study this, we introduce the \emph{intrinsic cost of adaptation}, defined as the smallest worst-case ratio between the risk of any bias-agnostic estimator and the oracle risk. An intrinsic cost of one means oracle performance is achievable without knowing the biases, whereas a larger cost quantifies the unavoidable price of adaptation.

Focusing on parametric estimation, we show that multi-source transfer behaves fundamentally differently from the single-source setting: adaptation is not always possible, 
even with only two sources. For a fixed number of sources, we characterize the intrinsic cost of adaptation and identify a phase transition separating regimes where oracle performance is achievable from those where it is not. As the number of sources grows, we further show that the adaptation cost increases.

When adaptation over the full bias configuration space is impossible, additional structure can substantially reduce the cost. We study settings with ordered biases, clustered source parameters, and sufficiently separated non-informative sources, and propose estimators tailored to each regime, with supporting theoretical and empirical results. Overall, our results delineate the statistical limits of multi-source transfer, clarifying when oracle performance is attainable, when structural assumptions help, and when adaptation is fundamentally impossible.

\end{abstract}

\begin{keywords}
  Minimax theory, multi-task learning, transfer learning, cost of adaptation
\end{keywords}

\section{Introduction}


The paradigm of \emph{multi-source transfer learning} aims to aggregate data from multiple domains to enhance the performance of a statistical task in a specific domain of interest, known as the \emph{target domain}. This paradigm is particularly valuable in scientific and engineering applications where data scarcity is a significant challenge. For instance, healthcare analytics frequently rely on combining data from loosely related subpopulations \citep{cai2019survey, zhang2020multi}, while identifying patterns in regional satellite imagery often leverages data aggregated from other geographical areas \citep{zhang2010multi}.

In this work, we focus on parameter estimation. Given data from $(m+1)$ many domains (where $m$ is a positive integer) with underlying distributions $\{P_k\}_{k = 0}^m$ and a distribution functional $\mathcal{T}$, our goal is to estimate the target parameter $\theta_0^\star := \mathcal{T}(P_0) \in \reals^d$. Here, $k = 0$ indexes the target domain, and $P_0$ represents the target distribution. The remaining $m$ domains are referred to as the source domains, with corresponding source parameters denoted by $\{\theta_k^\star := \mathcal{T}(P_k)\}_{k = 1}^m \subset \reals^d$. Collectively, we refer to $\{\theta_k^\star\}_{k = 0}^m$ as the local parameters.


To evaluate the efficiency of multi-source transfer learning, it is standard practice to impose structural assumptions on the bias between the target and source domains. In the context of parameter estimation, \citet{li2022transfer,tian2023transfer,blanchard2024estimation} quantify this target-to-source bias directly through the difference $\theta_k^\star - \theta_0^\star$. Specifically, for a given subset of informative source indices $\cA \subseteq \{1, \dots, m\} := [m]$ and a predefined bias threshold $h > 0$, the local parameters are assumed to reside within the following class:
\begin{equation}
    \label{eq:bias-structure-1}
    \Theta_0(h, \cA, \|\cdot\|) :=  \big \{(\theta_0^\star, \dots,  \theta_m^\star): \|\theta_k^\star - \theta_0^\star \| \le h \text{ for all } k \in \cA \big\},
\end{equation}
where $\|\cdot\|$ represents a suitable norm. For instance, in the settings of linear regression and generalized linear models, \citet{li2022transfer} and \citet{tian2023transfer}, respectively, examine the estimation of a high-dimensional target parameter $\theta_0^\star$ subject to a sparsity constraint, defining the parameter class via the $\|\cdot\|_1$ norm. Conversely, for high-dimensional mean estimation without sparsity restrictions, \citet{blanchard2024estimation} analyzes the estimation of $\theta_0^\star$ using the parameter class defined by the $\|\cdot\|_2$ norm. Assuming $h$ and $\cA$ are given, these studies establish minimax optimal convergence rates for estimating $\theta_0^\star$ within their respective parameter classes. In practical scenarios, however, $h$ and $\cA$ are rarely known in advance, underscoring the need to adapt to these unknown quantities.

While \citet{li2022transfer} and \citet{tian2023transfer} provide high-probability guarantees for detecting the set $\cA$ (assuming additional conditions on $\{\theta_k^\star\}_{k = 0}^m$), they do not explicitly quantify how detection errors propagate to the final estimation error of $\theta_0^\star$. Consequently, the {cost of adapting} to unknown $h$ and $\cA$, if there is any, remains uncharacterized in their work. In contrast, for high-dimensional mean estimation, \citet{blanchard2024estimation} shows that even when all parameters are identical (i.e., $h = 0$ and $\cA = [m]$), a learner unaware of this ideal structure incurs a penalty in the mean squared estimation error (MSE). Relative to the oracle minimax rate, this penalty is bounded by $\cO(\max\{1, m \sqrt{\log(m)/d}\})$, demonstrating that adaptation is achievable when $m \sqrt{\log(m)/d} = \cO(1)$. Ultimately, while \citeauthor{blanchard2024estimation} highlights a potential cost of adaptation via this relative penalty, it remains unclear whether this penalty is an unavoidable fundamental limit or merely an artifact of the specific algorithm used.

Understanding the cost of adaptation is crucial in multi-source transfer learning. As we demonstrate in Section~\ref{sec:bias-adaptation}, achieving general adaptation is often fundamentally impossible; \ie, no algorithm can adapt to unknown parameters without incurring a non-trivial penalty. By investigating this cost, we can guide practitioners in determining whether pursuing an adaptive algorithm is a viable endeavor or precluded by fundamental limits.

Motivated by this, in this paper, we attempt to characterize the cost of adaptation for the following class of local parameters: 
\begin{equation}
    \label{eq:bias-structure-gen}
    \Theta(\bh, \|\cdot \|) :=  \big \{(\theta_0^\star, \dots,  \theta_m^\star): \|\theta_k^\star - \theta_0^\star \| \le h_k; ~ k = 1, \dots, m \big\},  
\end{equation} where $\bh = (h_1, \dots , h_m) \in [ 0, 1]^m$ is the vector consisting of the upper bounds for the target-to-source bias, henceforth referred to as the \emph{bias configuration}. For convenience, we assume $h_k \le 1$, but given any fixed $K > 0$, one may work with the upper bound $h_k \le K$ without altering the fundamental behaviors of adaptation to $\bh$. Compared to the previously defined classes $\Theta_0(h, \cA, \|\cdot \|)$, the $\Theta(\bh, \|\cdot \|)$ classes are strictly more general. By setting $h_k = h$ for $k \in S$ and $h_k = 1$ for $k \in S^\complement$, we recover the former as a special case. Thus, analyzing the cost of adaptation over $\{\Theta(\bh, \|\cdot \|): \bh\in [0, 1]^m\}$ provides a more granular understanding of the problem.

We now provide an informal definition of the intrinsic cost of adaptation (formalized in Section~\ref{subsec:intrinsic-cost}). Consider the general family of parameter configurations $\{\Theta(\bh, \|\cdot \|): \bh\in [0, 1]^m\}$. For an estimator of $\theta_0^\star$ that does not depend on $\bh$, we define its \emph{cost of adaptation} as the worst-case ratio of its MSE to the oracle minimax MSE for estimating $\theta_0^\star$ within $\Theta(\bh, \|\cdot \|)$ (where the oracle knows $\bh$). The \emph{intrinsic cost} is then defined as the minimum cost of adaptation over all such $\bh$-independent estimators. Defining this intrinsic cost is a non-trivial task in itself, as it requires first establishing the oracle minimax rate for $\Theta(\bh, \|\cdot \|)$. In this paper, we focus entirely on the $\|\cdot\|_2$ norm, which is naturally connected to the bias-variance decomposition in MSE. Notably, our oracle minimax rate analysis applies directly to $\Theta_0(h, \cA, \|\cdot\|_2)$ as a special case.

Having defined the intrinsic cost of adaptation, the broad goal of our paper is as follows:
\begin{tcolorbox}[mybox]
{\bf Goal:} Analyze the intrinsic cost of adaptation in multi-source transfer learning settings and then identify when general adaptation can or cannot be achieved.
\end{tcolorbox}
\noindent To the best of our knowledge, this is one of the first works to explicitly define and study the intrinsic cost of adaptation in multi-source transfer learning problems. In Section~\ref{sec:bias-adaptation}, we analyze this cost, assuming identical source sample sizes (\ie~ $n_1 = \dots = n_m = n$). While a comparable assumption regarding sample sizes appears in \citet{hanneke2022no,hanneke2026more}, our approach diverges by allowing the target sample size ($n_0$) to differ from the source sample size ($n$). We subsequently establish theoretical bounds for the adaptation cost based on four key parameters: the parameter dimension ($d$), the number of source domains ($m$), and the respective source and target sample sizes ($n$ and $n_0$). 
Notably, the cost depends on the ratio $n/n_0$, in addition to $m$ and $d$; this dependence would be obscured if we let $n_0 = n$.

\begin{table}[t]
\centering
\renewcommand{\arraystretch}{1.3}
\begin{tabular}{lccc}
\toprule 
& {Intrinsic cost}
 & \makecell{Adaptation\\ is possible} & \makecell{Adaptation\\ is impossible} \\
\midrule

\cite{hanneke2026more} 
& \ding{56} & $m = \cO\!\left(\mathrm{polylog}(n)\right)$ 
& $m = \cO\!\left(e^{c n}\right),\; c > 0$ \\[0.5em]

\cite{blanchard2024estimation} & \ding{56} & $m = \cO\!\left(\frac{\sqrt{d} n_0}{n}\right)$ & \ding{56}\\[0.5em]

Ours ($m = 1$) & $\tilde\cO\left(1\right)$ & always & never  \\[0.5em]

Ours (fixed $m \ge 2$) & $\cO\left(\frac{n}{dn_0} \vee 1\right)$ & $n = \cO(dn_0)$ & $n \gg dn_0$  \\[0.5em]

Ours (growing $m$) & Section \ref{subsec:cost-general-m} 
& $m = \cO\Big(d \wedge \sqrt{\frac{d n_0}{n}}\Big)$ 
& $m \gg  \!d \wedge \frac{dn_0}{n}$ \\

\bottomrule
\end{tabular}
\caption{A concise view of our results and comparison with previous works. The $\tilde \cO$ represents an order up-to a poly-logarithmic term.}
\label{tab:adaptation-concise}
\end{table}

Table \ref{tab:adaptation-concise} provides a brief preview of our findings. For a single source domain ($m = 1$), the intrinsic cost is $\tilde{\mathcal{O}}(1)$, suggesting that adaptation is generally possible in this setting. For a fixed finite number of source domains ($m \ge 2$), we characterize the intrinsic cost up to constants as $\mathcal{O}(\max{n/(dn_0), 1})$. This points to a phase transition: general adaptation is achievable when $n = \mathcal{O}(dn_0)$, whereas adaptation becomes impossible when $n \gg dn_0$. When the number of source domains $m$ grows, we obtain lower and upper bounds on the intrinsic cost. Although these bounds are not as tight as in the fixed-$m$ case, they still delineate the feasible and infeasible adaptation regimes up to a polynomial gap. Relative to the exponential separation appearing in prior work, this gap is substantially smaller in our setting; see Table \ref{tab:adaptation-concise}.


For finite $m$, our analysis of the intrinsic cost challenges a prevailing phenomenon in the multi-source transfer learning literature. Adaptation is always possible when $m = 1$; thus, we focus on $m \ge 2$. Previous works, such as \citet{hanneke2022no} and \citet{hanneke2026more}, show that adaptation is achievable when $m = \mathcal{O}(\mathrm{polylog}(n))$. Similarly, for non-parametric classification with finite $m$, \citet[Section 7]{Cai2021TransferClassifier} proposes a data-driven classifier that adapts to the oracle rate up to a polylogarithmic factor, demonstrating the feasibility of general adaptation. \citet{cai2024transfer} observes a comparable phenomenon in non-parametric regression. Together, these results might suggest that general adaptation is always possible whenever $m$ is finite. However, our analysis indicates that this is not the case in our setting. In our parameter estimation task, if $m \ge 2$ but is fixed, general adaptation is impossible when $n \gg dn_0$. Therefore, the feasibility of general adaptation for finite $m$ observed in prior work is an artifact of those specific problem settings and does not universally generalize.

 \citet{hanneke2022no} and \citet{hanneke2026more} are among the few prior works that investigate the existence of regimes where general adaptation fails in multi-source transfer learning problems. Both studies examine a non-parametric binary classification task with equal sample sizes ($n$) across all domains, characterizing the regimes in terms of $m$ and $n$. \citet{hanneke2022no} establishes impossibility by fixing $n$ while allowing $m$ to grow. \citet{hanneke2026more} improves upon this by allowing $n$ to grow but requires $m$ to grow exponentially with $n$ to demonstrate impossibility. Furthermore, \citet{hanneke2022no,hanneke2026more} shows that general adaptation (up to a logarithmic factor) is achievable when $m = \mathcal{O}(\mathrm{polylog}(n))$. Consequently, in their framework, the gap between the feasible and impossible adaptation regimes grows exponentially with $n$. In contrast, our setting with growing $m$ exhibits only a polynomial gap ($\sqrt{dn_0/n}$ versus $dn_0/n$, as shown in Table \ref{tab:adaptation-concise}). It is important to note, however, that our parameter estimation task fundamentally differs from their non-parametric binary classification. Moreover, the primary objective of both \citeauthor{hanneke2022no} and \citeauthor{hanneke2026more} is to prove the existence of impossibility regimes, rather than to precisely characterize their boundaries.

To address regimes where general adaptation is impossible, in Section \ref{sec:adaptation-general} we introduce additional structural assumptions on the feasible set of $\bh$ and the local parameters to restore the feasibility of adaptation (at least partially). Specifically, in Section \ref{subsec:ordered-bias}, we assume the source domains can be ordered by increasing values of $h_k$. In Section \ref{sec:clustering-method}, we assume the source parameters $\{\theta_k^\star\}_{k=1}^m$ possess a clustering structure. Finally, in Section \ref{sec:test-then-combine}, we assume that the parameters $\theta_k^\star$ of non-integrable source domains are sufficiently distinct from $\theta_0^\star$ to be identified via a simple hypothesis test. For each of these restricted configurations, we develop tailored methodologies that meaningfully reduce the cost of adaptation. 

The remainder of the paper is organized as follows. In Section \ref{subsec:related-work}, we review related literature. In Section \ref{sec:problem-formulation}, we formulate the multi-source transfer learning problem, establish minimax rates for estimating $\theta_k^\star$ within $\Theta(\boldsymbol{h}, \|\cdot\|_2)$, and formally define the intrinsic cost of adaptation. In Section \ref{sec:bias-adaptation}, we present a theoretical analysis of this intrinsic cost, characterizing the conditions under which general adaptation is possible or impossible. For regimes where general adaptation fails, in Section \ref{sec:adaptation-general}, we suggest further restrictions on the problem setting under which adaptation is possible and propose accompanying adaptive estimators. In Section \ref{sec:simuations}, we evaluate the performance of our proposed methods against relevant baselines using simulated data. Finally, in Section \ref{sec:discussion}, we conclude the paper with a discussion.

\subsection{Related Work}
\label{subsec:related-work}

\paragraph{Classical multiple-source adaptation theory.}
The broader machine-learning literature on multi-source domain adaptation is older and much
wider. A seminal theoretical contribution is \citet{mansour2008domain}, which
derives target-risk guarantees for combining predictors from multiple sources and shows that naive
convex aggregation can fail under distribution shift. Subsequent works, such as those by \citet{hoffman2018algorithms} and \citet{duan2023adaptive}, develop algorithms and theoretical guarantees for weighted or distribution-aware multi-source adaptation. However, their objectives and guarantees are not directly comparable to ours. Specifically, \citet{hoffman2018algorithms} studies prediction risk in classification and general supervised learning under distributional divergence assumptions, while \citet{duan2023adaptive} analyzes estimation error under several relatedness assumptions on the parameter space. In contrast, our work focuses on minimax parameter estimation over $\Theta(\bh, \|\cdot \|)$ and investigates the fundamental statistical cost that one must pay for an algorithm that is adaptive to $\bh$.

\paragraph{Structured-source models: ordered, clustered, and personalized transfer.}
When unrestricted adaptation is too ambitious, a natural strategy is to impose additional structure
on the source family. One important example is \emph{ordered} relatedness, where sources can be
ranked from most to least compatible with the target. This type of side information already appears
in \cite{hanneke2022no}, where it is established that when such a ranking is available, a rank-based aggregation rule can
achieve near-oracle performance. Our ordered-bias estimator follows the same general idea, but in a
parameter-estimation model with explicit source-specific bias levels ($h_k$'s).

Because the candidate aggregations are nested prefixes of the ordered list, our construction is also
close in spirit to adaptive estimation methods for ordered families of estimators, such as Lepski's
method and its Goldenshluger--Lepski refinements \cite{lepski1997optimal,goldenshluger2011bandwidth}.
The point is not that these methods directly solve our problem, but that they suggest a useful design
principle: when candidates are naturally ordered, one can search for the largest stable aggregation
rather than search over all subsets.

Another important structural idea is \emph{clustering}. In personalized and federated learning, it is
common to assume that clients or sources partition into a small number of latent groups with
similar parameters or distributions. \citet{sattler2020clustered},
\citet{mansour2020three}, and \citet{long2023multi}, among many
others, study clustered or personalized federated learning from algorithmic and statistical
viewpoints. These works motivate our clustered-source section:
once the sources can be compressed into a small number of effective groups, the adaptation problem
becomes easier. Our analysis differs from that literature in emphasis. Rather than
proposing a new federated optimization algorithm, we study the statistical price of using clustering
as a structural restriction that can restore partial adaptivity.



\section{Problem Formulation}
\label{sec:problem-formulation}
In a multi-source transfer learning setting, the learner observes a dataset from each available domain. For each index $k \in \{0, \dots, m\}$, let $\mathcal{D}_k = \{Z_{k, i}: i = 1, \dots, n_k \}$ denote the dataset associated with the $k$-th domain. This dataset comprises $n_k \ge 1$ independent samples drawn randomly from a domain-specific distribution $P_k$. For parameter estimation, the primary objective of multi-source transfer learning is to estimate a target parameter $\theta_0^\star$ by leveraging the combined data from all domains, $\{\mathcal{D}_k\}_{k=0}^m$. 

While the problem is formulated abstractly at this stage, for clarity of exposition, we assume that one can construct a ``well-behaved'' estimator $\widetilde{\theta}_k$ for each domain-specific parameter $\theta_k^\star$ using only the corresponding local data $\mathcal{D}_k$. Once these local estimators $\{\widetilde{\theta}_k\}_{k=0}^m$ are obtained, we shift our focus entirely to them. That is, rather than working directly with the raw datasets $\{\mathcal{D}_k\}_{k=0}^m$, our subsequent analysis and estimator construction rely solely on the joint distribution of these domain-specific estimators.

The remainder of this section is organized as follows. In Section \ref{subsec:domain-specific}, we formalize the notion of a ``well-behaved'' estimator sequence $\{\widetilde{\theta}_k\}_{k = 0}^m$. Denoting the final estimator of $\theta_0^\star$ as $\widehat{\theta}_0$, we measure the estimation error using the mean squared error (MSE): $\Ex [ \|\widehat{\theta}_0 - \theta_0^\star\|_2^2]$, where the expectation is taken over the joint distribution of $\{\widetilde{\theta}_k\}_{k = 0}^m$. 
In Section \ref{subsec:oracle-minimax-rate}, we characterize the minimax optimal rate of convergence for the MSE within $\Theta(\bh, \|\cdot\|_2)$. Building on this optimal rate, in Section \ref{subsec:intrinsic-cost} we introduce the \emph{intrinsic cost of adaptation}, defined as the sub-optimality that any estimator $\widehat{\theta}_0$ must incur relative to the oracle rate when $\bh$ is unknown. This intrinsic cost is the central quantity of interest for understanding the feasibility of bias adaptation.

\subsection{The Domain-Specific Estimators}
\label{subsec:domain-specific}

We formalize the notion of a ``well-behaved'' estimator by assuming it satisfies a sub-Gaussian concentration inequality. Specifically, we assume there exist constants $\tau, C_\tau > 0$, independent of the sample sizes $n_k$ and the dimension $d$, such that for any $k \in \{0, \dots, m\}$:
\begin{equation}\label{eq:sub-gaussian}
    \max_{\|a\|_2 = 1} P \left ( a^\top (\widetilde{\theta}_k - \theta_k^\star) > t \right) \le C_\tau \exp \left(- \frac{n_k t^2}{2\tau^2}\right) \quad \text{for all } t > 0\,.
\end{equation} 
Such well-behaved estimators naturally emerge in a broad spectrum of parameter estimation problems, as illustrated by the following examples.

\begin{example}[Normal mean estimation] \label{example:normal-mean}
    Let $ \cD_k := \{Z_{k, i}: i = 1, \dots , n_k\} $ be i.i.d.\ $\bN(\theta_k^\star, \Sigma_k)$, and assume the largest eigenvalue of $\Sigma_k$ is bounded above by $\tau^2$. Let the domain-specific estimator be  $\widetilde{\theta}_k := n_k^{-1}\sum_{i = 1}^{n_k} Z_{k, i}$. Then, $\widetilde{\theta}_k \sim \bN(\theta_k^\star, \Sigma_k / n_k)$, and the condition in \eqref{eq:sub-gaussian} holds with $C_\tau = 1$. 
\end{example}

\begin{example}[Linear regression]\label{example:linear-regression}
    Suppose $\cD_k:= \{Z_{k, i}:= (X_{k, i}, Y_{k, i}):  i = 1 \dots, n_k\} \in \reals^d \times \reals$ are independent pairs distributed according to the linear regression model:
    \[
    Y_{k, i} = X_{k, i}^\top \theta_k^\star +\eps_{k, i}, \quad \eps_{k, i} \overset{\text{i.i.d.}}{\sim} \bN(0, \sigma_k^2)\,. 
    \]
    Define the scaled Gram matrix as $\widehat{\Sigma}_k := n_k^{-1} \sum_{i = 1}^{n_k} X_{k, i} X_{k, i}^\top$, and let the domain-specific estimator be the ordinary least squares coefficient $\widetilde{\theta}_k := \widehat{\Sigma}_k^{-1} \{n_k^{-1} \sum_{i = 1}^{n_k} X_{k, i} Y_{k, i}\}$. Consequently,
    \[
    \widetilde{\theta}_k \sim \bN\left(\theta_k^\star, \frac{\sigma_k^2}{n_k} \widehat{\Sigma}_k^{-1} \right)\,.
    \]
    Assuming the largest eigenvalues of $\sigma_k^2 \widehat{\Sigma}_k^{-1}$s are bounded above by $\tau^2$, $\widetilde{\theta}_k$ satisfies the condition in \eqref{eq:sub-gaussian} with $C_\tau = 1$. 
\end{example}


\begin{example}[M estimation]
\label{example:m-estimation}
    For $k$-th domain, let $\cD_k := \{Z_{k, i}\}_{i = 1}^{n_k} \subset \cZ $ be a random sample from $P_k$, where $\cZ$ is the sample space for the distribution.  Let $\ell: \cZ \times \reals^d \to \reals $ be a loss function such that for every $z \in \cZ$, the $\ell(z , \cdot)$ is twice continuously differentiable and convex. Define the local estimator as an M estimator
    \[
    \widetilde{\theta}_k := \arg\min_{\theta \in \reals^d} \frac{1}{n_k} \sum_{i = 1}^{n_k} \ell(Z_{k, i}, \theta)\,,
    \] and the local parameter value $\theta_k^\star$ as the minimizer of $\Ex[\ell(Z_{k, i}, \cdot)]$. Under suitable conditions, in Appendix \ref{supp:m-estimation}, we establish the concentration inequality in \eqref{eq:sub-gaussian}.

\end{example}

By directly imposing the sub-Gaussian concentration assumption \eqref{eq:sub-gaussian} on the estimators $\{\widetilde{\theta}_k\}_{k = 0}^m$, we can unify the study of various parameter estimation problems within multi-source transfer learning frameworks, as illustrated in the preceding examples. Because our subsequent analysis focuses on establishing minimax rates based on these summary statistics rather than on the raw datasets, a brief clarification is needed. In the Gaussian mean model (Example~\ref{example:normal-mean}), the estimators $\widetilde{\theta}_k$ are sufficient statistics, so passing from the raw data to $\{\widetilde{\theta}_k\}_{k=0}^m$ entails no loss of information. For this reason, the normal model provides a natural benchmark for the oracle minimax rate. Our upper-bound analysis, however, is not specific to the Gaussian setting; it applies more generally to any problem for which the local estimators satisfy \eqref{eq:sub-gaussian}.


Before proceeding with our performance study for estimating the target parameter, we note a final few details. While the bias terms $h_k$ may be conceptualized as shrinking toward zero as the sample size tends to infinity, we treat the variance proxy $\tau$ as a fixed constant independent of the sample size. Moreover, we assume that $d\tau^2 / n_0 \le 1$ is a minimal assumption regarding the quality of $\widetilde \theta_0$, which lends some technical convenience to the proofs.

\subsection{An oracle minimax rate} \label{subsec:oracle-minimax-rate}

In this subsection, we formally characterize the minimax optimal rate of convergence for estimating $\theta_0^\star$ when the bias configuration is pre-specified.
To formalize this, we fix a bias configuration $\bh \in [0, 1]^m$ and define a suitable probability class for the joint distribution of $\{\widetilde{\theta}_k\}_{k=0}^m$ such that $(\theta_0^\star, \dots, \theta_m^\star) \in \Theta(\bh, \|\cdot \|_2)$. This class is defined as follows:
\begin{definition}[Probability class with general bias structure] \label{def:setup}
    Fix a bias configuration vector $\bh \in [0, 1]^m$, a vector of domain-specific sample sizes $\bn = (n_0, \dots, n_m) \in \bbN^{m+1}$, and constants $\tau, C_\tau > 0$. Let $\cP(\bh, \bn, \tau, C_\tau)$ denote the class of joint probability distributions of $\{\widetilde{\theta}_k\}_{k=0}^m$ such that the estimators are independent, and there exists a parameter set $(\theta_0^\star, \dots, \theta_m^\star) \in \Theta(\bh)$ satisfying the concentration bound in \eqref{eq:sub-gaussian}. 
\end{definition}

For convenience, we often omit the constant $C_\tau$ and simply write $\cP(\bh, \bn, \tau)$. We conduct a minimax study within this class to determine the optimal rate of convergence for the MSE, $\Ex [ \|\widehat \theta_0 - \theta_0^\star\|_2^2]$. 
Before presenting the results, we clarify that this minimax study is performed strictly within the class $\cP(\bh, \bn, \tau)$, which inherently depends on the bias $\bh$, sample sizes $\bn$, and scale $\tau$. Consequently, the estimator itself will depend on these parameters. To emphasize this dependence, we denote the estimator as $\widehat \theta_{0, \bh}$. The following theorem establishes the oracle minimax rate.

\begin{theorem}[Oracle minimax rate] \label{thm:oracle-minimax-rate}
Let $h_0 := 0$. Given any $\{0\} \subset S \subset \{0, \dots, m\}$, let $h_S := \max_{k \in S} h_k$ denote the maximum achievable bias magnitude, and $N_S = \sum_{k \in S} n_k$ denote the total sample size for the estimators $\{\widetilde \theta_k : k \in S\}$. Define the rate function:
\begin{equation} \label{eq:loca-loptimal-rate}
    \fR(\bh, \bn, \tau) := \min_{\{0\}\subset S \subset \{0,\dots,m\}}
\left\{
 \frac{d\tau^2}{N_S}
+  h_S^2
\right\}\,.
\end{equation}
Then, there exist constants $0 < c \le C < \infty$, depending on $C_\tau$ but independent of $\bh$, $\bn$, $\tau$, or $d$, such that:
\[
c \cdot \fR(\bh, \bn, \tau) ~\le~  \inf_{\widehat \theta_{0, \bh}} \; \sup_{P \in \cP(\bh, \bn, \tau)} \; \Ex _P \left [ \big \|\widehat\theta_{0, \bh}-\theta_0^\star\big\|_2^2 \right] ~\le~ C \cdot  \fR(\bh, \bn, \tau)\,.
\] 
In other words, $\fR(\bh, \bn, \tau)$ represents the \emph{minimax optimal rate} of convergence for the MSE within the class $\cP(\bh, \bn, \tau)$.
\end{theorem}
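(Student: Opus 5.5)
For the upper bound, I would fix the oracle-optimal subset $S^\ast$ attaining the minimum in \eqref{eq:loca-loptimal-rate}. The estimator is the sample-size–weighted average
\[
\widehat\theta_{0,\bh} := \sum_{k \in S^\ast} \frac{n_k}{N_{S^\ast}}\,\widetilde\theta_k ,
\]
which is legitimate because $\bh$ is known. Its error splits as
\[
\widehat\theta_{0,\bh}-\theta_0^\star=\sum_{k\in S^\ast}\frac{n_k}{N_{S^\ast}}(\theta_k^\star-\theta_0^\star)+\sum_{k\in S^\ast}\frac{n_k}{N_{S^\ast}}(\widetilde\theta_k-\theta_k^\star).
\]
The first (bias) term has norm at most $h_{S^\ast}$ by convexity of the norm. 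For the second (noise) term, independence and \eqref{eq:sub-gaussian} show that every one-dimensional projection $a^\top(\cdot)$ is a weighted sum of independent sub-Gaussians with variance proxy $\sum_k (n_k/N)^2\tau^2/n_k=\tau^2/N_{S^\ast}$. One caveat is that \eqref{eq:sub-gaussian} is one-sided and not centered, so I would first bound the mean of each projection by $O(\tau/\sqrt{n_k})$ via tail integration; the resulting centering term is again $O(\tau^2/N)$ after weighting. Summing the second moments over the $d$ coordinate directions $a=e_j$ then gives $\Ex\|\cdot\|_2^2\lesssim d\tau^2/N_{S^\ast}$. Combining via $(a+b)^2\le 2a^2+2b^2$ yields $C\,\fR$, with $C$ depending only on $C_\tau$.

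For the lower bound, I would work in the Gaussian submodel $\widetilde\theta_k\sim\bN(\theta_k^\star,\tau^2 I_d/n_k)$, which lies in $\cP$ (Example \ref{example:normal-mean}). Sort the sources so that $h_{(1)}\le\dots\le h_{(m)}$. The minimum in \eqref{eq:loca-loptimal-rate} is attained among the nested sets $S_j=\{0,(1),\dots,(j)\}$, since for a fixed value of $h_S$ it is best to include all sources with smaller bias. Set $N_j=N_{S_j}$ and define $g(j)=d\tau^2/N_j+h_{(j)}^2$. Let $j^\ast$ be the smallest $j$ with $h_{(j+1)}^2\ge d\tau^2/N_j$, using the convention $h_{(m+1)}=\infty$. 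Then $\fR\asymp d\tau^2/N_{j^\ast}+h_{(j^\ast)}^2$, up to a constant-factor check showing that both terms are controlled by the crossing point. I would prove two separate lower bounds.

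\emph{(i) Variance term $d\tau^2/N_{j^\ast}$.} Set all $\theta_k^\star=\theta$ for $k\le j^\ast$, a common value that is allowed since the biases are zero. For $k>j^\ast$, set $\theta_k^\star=\theta+\delta_k$, where $\delta_k$ is chosen to cancel the information, or more simply make those sources uninformative. Cleanest is a Bayesian argument: put a prior $\theta\sim\bN(0,\sigma^2 I)$ with $\sigma^2\asymp d\tau^2/N_{j^\ast}\cdot\frac1d$ per coordinate, and put independent priors $\theta_k^\star-\theta\sim$ uniform on a sphere or a truncated Gaussian of radius $\le h_{(k)}$ for $k>j^\ast$. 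Because $h_{(k)}^2\ge d\tau^2/N_{j^\ast}$ for $k>j^\ast$, the bias perturbation has per-coordinate variance comparable to $\sigma^2$, so each such source contributes at most a constant factor of information. This is the delicate part: source $k$'s effective precision about $\theta$ becomes $(\tau^2/n_k+h_{(k)}^2/d)^{-1}$, and one must show that $\sum_{k>j^\ast}$ of these does not exceed $O(N_{j^\ast}/\tau^2)$. This can fail when many sources have bias just above the threshold. I would handle it by redefining $j^\ast$ as the argmin of $g$ and comparing $g(j^\ast)\le g(j)$ for $j>j^\ast$ to bound the added precision. Alternatively, I would use a Assouad/Varshamov–Gilbert hypercube on $\theta_0$ with spacing $\asymp\tau/\sqrt{N}$, letting sources $k>j^\ast$ shift by the same hypercube vector where allowed. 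I expect this step to be the main obstacle.

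\emph{(ii) Bias term $h_{(j^\ast)}^2$,} needed only when it dominates. I would use a two-point construction: hypotheses $\theta_0^\star=0$ versus $\theta_0^\star=\epsilon v$ with $\epsilon\asymp\min(h_{(j^\ast)},\tau\sqrt{d/N_{j^\ast-1}})$, keeping all sources fixed at $0$, which is permissible whenever $h_{(k)}\ge\epsilon$. The sources then carry no information, and Le Cam's method with the target alone, or with a $d$-dimensional Fano packing, gives the rate $\min(h^2, d\tau^2/n_0)$. Combining with the minimality of $j^\ast$ and the standing assumption $d\tau^2/n_0\le1$ gives $c\,\fR$.
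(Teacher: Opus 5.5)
\paragraph{Verdict.} Your upper bound matches the paper's argument (oracle sample-size-weighted average plus a bias--variance split), and your reduction of $\fR$ to the crossing index $j^\ast$ is correct. The genuine gap is in the lower bound.

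\paragraph{Part (i).} Independent random biases cannot make the high-bias sources uninformative, because independent perturbations average out. Take $m$ sources with very large $n_k$ and common bias $h$, $h^2=2d\tau^2/n_0$. Then $j^\ast=0$ and $\fR\asymp d\tau^2/n_0$. Under your prior, however, the average of the sources estimates $\theta$ with per-coordinate variance about $h^2/(dm)$, so the Bayes risk is $O(d\tau^2/(mn_0))$, a factor $m$ too small. Redefining $j^\ast$ as $\arg\min g$ does not help, since $g(0)\le g(j)$ says nothing about how many sources sit at bias $h$. As written, your hypercube alternative has the mechanism backwards: a source whose mean moves with the vertex adds $\eta^2 n_k/(2\tau^2)$ to the KL of every flip.

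The missing idea, which is the content of the paper's Lemma~\ref{lem:multivar-lower}, is to \emph{freeze} the high-bias sources:
\begin{itemize}
\item Use the hypercube $\theta_0^\star\in\{0,\eta\}^d$. A source with $h_k>2\sqrt d\,\eta$ is placed at the vertex-independent point $(\eta+h_k/(2\sqrt d))\mathbf 1$. This point is within $h_k$ of every vertex, so the source's law is the same under all hypotheses and it contributes zero KL.
\item Only sources with $h_k\le 2\sqrt d\,\eta$ move with the target, at offset $h_k/(2\sqrt d)$ per coordinate.
\item A single flip then has KL $\asymp\eta^2\sum_{k:\,h_k\le 2\sqrt d\,\eta}n_k/\tau^2$ (with $k=0$ included), and Assouad gives $\gtrsim d\eta^2$ whenever this is at most $1$.
\end{itemize}

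\paragraph{Part (ii).} This has a related problem. Holding \emph{all} sources fixed requires every $h_{(k)}\ge\epsilon$. That fails as soon as some of $(1),\dots,(j^\ast-1)$ have smaller bias (e.g.\ $h_{(1)}=0$). Those sources must move with the target, and then a two-point bound only allows $\epsilon^2\lesssim\tau^2/N_{j^\ast-1}$. This loses a factor $d$ relative to the needed $h_{(j^\ast)}^2$, which can be as large as $d\tau^2/N_{j^\ast-1}$.

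What you need is again the frozen/moving hypercube, and then no separate bias argument is required. Let $\eta^\star$ be the largest admissible $\eta$, and set $S_\delta=\{k:h_k\le 2\sqrt d(\eta^\star+\delta)\}$. By maximality of $\eta^\star$,
\[
\frac{d\tau^2}{N_{S_\delta}}\le d(\eta^\star+\delta)^2,
\qquad
h_{S_\delta}^2\le 4d(\eta^\star+\delta)^2,
\]
so $\fR\le 5d(\eta^\star)^2$ after letting $\delta\downarrow 0$. This single construction covers both the variance-dominated and the bias-dominated regimes; in the latter, $\eta^\star\approx h_{(j^\ast)}/(2\sqrt d)$.

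\paragraph{Centering in the upper bound.} Separately, your centering remark is wrong. The offsets $b_k=\Ex\widetilde\theta_k-\theta_k^\star$ each have norm $O(\tau/\sqrt{n_k})$, but they need not cancel. With equal $n_k$, $\|\sum_k (n_k/N)b_k\|_2^2$ can be of order $\tau^2/n\asymp|S^\ast|\tau^2/N_{S^\ast}$, which exceeds $d\tau^2/N_{S^\ast}$ when $|S^\ast|\gg d$.

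The paper's proof silently takes $\Ex\widetilde\theta_k=\theta_k^\star$ (true in the Gaussian examples), and you need the same assumption; without it the bound can fail. For example, take $\bh=0$, $n_0\le n$ and $m\gg d$, and let every source be $N(0,\frac{\tau^2}{4n}I_d)$. This law satisfies \eqref{eq:sub-gaussian} (with $C_\tau=2$) around both $\pm\frac{\tau}{10\sqrt n}e_1$, so only the target distinguishes these two values of $\theta_0^\star$. The minimax risk is then $\gtrsim\tau^2/n\gg d\tau^2/(n_0+mn)$.
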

A detailed proof of this theorem is deferred to Appendix \ref{proof:thm:oracle-minimax-rate}.


To the best of our knowledge, the oracle minimax rate for estimating the target parameter in multi-source transfer learning under a general bias configuration $\bh$ has not been explicitly characterized in this generality. We therefore view the resulting rate characterization as a contribution of independent interest, in addition to its use as the benchmark for our adaptation analysis. We now provide a few remarks discussing the intuition behind this optimal rate, the construction of a rate-optimal estimator, and how our results compare with existing literature.

\begin{remark}[Oracle rate as a bias-variance trade-off] \label{remark:bias-variance-tradeoff}
The optimal rate in \eqref{eq:loca-loptimal-rate} captures the classic bias-variance trade-off. Given a subset of domains $S$, $h_S^2$ represents the squared maximum bias, while $d \tau^2 / N_S$ represents the variance associated with the pooled sample size. Expanding the subset $S$ decreases the variance term $d \tau^2 / N_S$ but inevitably increases the bias term $h_S^2$. The optimal rate $\fR(\bh, \bn, \tau)$ is achieved by finding the subset $S$ that minimizes the sum $(h_S^2 + d \tau^2 / N_S)$, striking a balance between squared bias and variance.
    
We can construct an estimator $\widehat \theta_{0, \bh}$ that achieves this optimal trade-off. Let $S_{\bh}$ be a subset that achieves the minimum in \eqref{eq:loca-loptimal-rate}, and define:
\[
\widehat \theta_{0, \bh} := \sum_{k \in S_{\bh}} \omega_{k, \bh} \widetilde \theta_k, \qquad \omega_{k, \bh} := \frac{n_k}{N_{S_{\bh}}}\,. 
\] 
In the proof of Theorem \ref{thm:oracle-minimax-rate}, we show that this estimator achieves the upper bound with $C = 4 C_\tau \vee 1$. Note that $\widehat \theta_{0, \bh}$ is an \emph{idealized} (oracle) estimator because its construction requires explicit knowledge of $\bh$. Nevertheless, it serves two crucial theoretical purposes:
\begin{enumerate}
    \item Analyzing the MSE of $\widehat \theta_{0, \bh}$ establishes the upper bound in Theorem \ref{thm:oracle-minimax-rate}, confirming $\fR(\bh, \bn, \tau)$ as the optimal rate. 
    \item It reveals that, once the optimal subset $S_{\bh}$ is identified, the ideal estimator is simply a sample-size-weighted average of the local estimators $\{\widetilde \theta_k : k \in S_{\bh}\}$. Therefore, the primary challenge in achieving the optimal rate without knowing $\bh$ lies entirely in identifying the subset $S_{\bh}$ that yields the best bias-variance trade-off.
\end{enumerate}
\end{remark}

\begin{remark} \label{remark:oracle-rate-comparison}
Below, we contextualize our oracle minimax rate by comparing it with established results in the literature.

\begin{enumerate}
    \item {\bf Comparison with \citet{chen2025minimax}:} 
    With a single source domain ($m = 1$), the optimal subset $S$ in the minimization problem in \eqref{eq:loca-loptimal-rate} is restricted to either $S = \{0\}$ or $S = \{0, 1\}$. Consequently, the optimal rate simplifies to:
    \[
  \textstyle   \fR (\bh, \bn, \tau)  = \left (\frac{d \tau^2}{n_0 +n_1} + h_1^2\right)  \wedge \frac{d\tau^2}{n_0} \asymp 
    \frac{d\tau^2}{n_0 +n_1} + \left( h_1^2 \wedge \frac{d\tau^2}{n_0} \right) \,.
    \] 
    This serves as a direct generalization of the optimal rate established in \citet[Section 3]{chen2025minimax} to the multi-dimensional parameter $\theta_0^\star$. 
    
    \item {\bf Comparison with \citet{li2022transfer} and \citet{tian2023transfer}:} 
    Both of these works investigate high-dimensional transfer learning under a sparsity constraint for the target parameter—the former in linear regression and the latter in generalized linear models. Given a bias level $h > 0$, an index set $\cA \subset [m]$, and a sparsity level $s$, they analyze the parameter space:
    \[
    \Big \{(\theta_0^\star, \dots,  \theta_m^\star): \|\theta_0^\star\|_0 \le s, \text{ and } \|\theta_k^\star - \theta_0^\star \|_1 \le h \text{ for } k \in \cA \Big\}\,.
    \] 
    As seen in \citet[Section 2.2]{li2022transfer} and \citet[Section 3.1]{tian2023transfer}, this is fundamentally our space $\Theta_0(h, \cA, \|\cdot\|_1 )$ defined in \eqref{eq:bias-structure-1}, but with an added sparsity constraint on $\theta_0^\star$. Over this space, they establish the following minimax lower bound for the $\ell_2$-error rate (for some $c > 0$):
    \begin{equation} \label{eq:sparsity-l2-rate}
   \textstyle  P \left(\inf\limits_{\widehat \theta_0}~ \sup\limits_{\theta_0^\star}~ \big\|\widehat \theta_0 - \theta_0^\star \big\|_2^2 \ge c \left \{ \frac{s\log d}{N_{\{0\} \cup \cA} } +  \frac{s\log d }{n_0} \wedge h \sqrt{\frac{\log d }{n_0}} \wedge h^2  \right\}  \right) \ge \frac12 \,.
    \end{equation}
 
    In contrast, our rate is derived for the $\|\cdot\|_2$ norm without any sparsity constraints, corresponding to the space $\Theta_0(h, \cA, \|\cdot\|_2 )$. This formulation is a special case of $\Theta(\bh, \|\cdot \|_2)$ where $h_k = h \cdot \mathbf{1}\{k \in \cA\} + 1\cdot \mathbf{1}\{k \in \cA^\complement\}$. To achieve the optimal bias-variance trade-off in \eqref{eq:loca-loptimal-rate}, we only need to evaluate two candidate sets: $S = \{0\}$ and $S = \{0\} \cup \cA$. The resulting optimal MSE rate under $\Theta_0(h, \cA, \|\cdot\|_2 )$ is:
    \begin{equation}\label{eq:oracle-rate-same-bias}
   \textstyle  \fR (\bh, \bn, \tau)  = \frac{d\tau^2}{n_0} \wedge \left \{  \frac{d\tau^2}{N_{\{0\} \cup \cA}} + h^2\right\} \asymp 
    \frac{d\tau^2}{N_{\{0\} \cup \cA}} + \left( \frac{d\tau^2}{n_0} \wedge h^2 \right) \,.
    \end{equation}
    To bridge their setting with ours, we can remove the sparsity constraint in \eqref{eq:sparsity-l2-rate} by setting $s = d$. Ignoring the logarithmic factors ($\log d$), our rate \eqref{eq:oracle-rate-same-bias} matches theirs, with the sole exception of the $h (\log d / n_0)^{1/2}$ term, which is a consequence of their specific problem setting.

    \item {\bf Comparison with \citet{blanchard2024estimation}:} 
 In their section 5, \citet{blanchard2024estimation} investigates high-dimensional mean estimation over the space $\Theta_0(h, \cA, \|\cdot\|_2 )$, assuming the data follows multivariate normal distributions (analogous to our Example \ref{example:normal-mean}). To facilitate an easy comparison with their minimax results \citep[Section 5]{blanchard2024estimation}, we set their covariance matrices to $\Sigma_k = \tau^2 \bI_d$. Under this simplification, the rate established in \citet[Theorem 4]{blanchard2024estimation} reduces to:
    \[
   \textstyle \frac{d\tau^2}{N_{\{0\} \cup \cA}} + \left( \frac{d\tau^2}{n_0} \wedge h^2 \right) \,.
    \] 
    This exactly recovers our optimal rate derived in  \eqref{eq:oracle-rate-same-bias}.
\end{enumerate}
    
\end{remark}

\subsection{The intrinsic cost of adaptation} \label{subsec:intrinsic-cost}

In Theorem \ref{thm:oracle-minimax-rate}, we established the minimax optimal rate of convergence for the MSE when estimating $\theta_0^\star$ over the class $\cP(\bh, \bn, \tau)$, assuming the bias vector $\bh$ is known. Building on this oracle rate, we now formalize the \emph{intrinsic cost of adaptation}. This cost quantifies the performance penalty incurred when the exact bias $\bh$ is unknown but is assumed to belong to a specified set of possible configurations $\cH \subset [0, 1]^m$.

\begin{definition}[Intrinsic cost of adaptation] \label{def:intrinsic-cost}
    Let $\cH \subset [0, 1]^{m}$ be a set of possible bias configurations. Consider any estimator $\widehat \theta_0$ that depends on the domain-specific estimators $\{\widetilde \theta_k\}_{k = 0}^m$, the sample sizes $\bn$, and the sub-Gaussian parameter $\tau$, but \emph{not} on the true bias $\bh \in \cH$. We define the adaptation cost of $\widehat \theta_0$ over $\cH$ as:
    \[
    \fC \big (\widehat \theta_0, \cH\big) :=  \sup_{\bh\in \cH}  \;\left\{\frac{ \sup\limits_{P \in \cP(\bh, \bn, \tau)} ~\Ex_P \left [  \|\widehat\theta_0-\theta_0^\star\|_2^2 \right]}{\fR(\bh, \bn, \tau)} \right\},.
    \] 
    The intrinsic cost of adaptation is then defined as the minimum possible adaptation cost achievable by any such estimator:
    \[
    \fC^\star(\cH) :=  \inf_{\widehat \theta_0}\; \fC \big (\widehat \theta_0, \cH\big)\,.
    \]
\end{definition}


The cost above provides a mathematical formalization of the unavoidable penalty one must pay for not knowing $\bh$ in advance. If an estimator achieves $\fC(\widehat \theta_0, \cH) = \cO(1)$, it attains the oracle rate $\fR(\bh, \bn, \tau)$ up to a constant factor without requiring knowledge of $\bh$. Such an estimator is said to be \emph{adaptive to the unknown bias} over $\cH$. In practice, however, strictly achieving an $\cO(1)$ adaptation cost is often challenging; an estimator is typically considered acceptable if this cost grows by no more than a poly-logarithmic factor. 

Moving forward, we refer to the cost of adapting to the unconstrained configuration set $\cH = [0, 1]^m$ as the cost of general adaptation. If the intrinsic cost of general adaptation, \ie, $\fC^\star([0, 1]^m)$, grows faster than a poly-logarithmic factor, then no single algorithm can uniformly adapt to all possible bias configurations $\bh \in [0, 1]^m$. In such cases, we conclude that general adaptation is theoretically impossible.

\section{Adaptation to unknown bias levels} \label{sec:bias-adaptation}

Before turning to the multi-source setting, it is useful to recall the case $m=1$, where adaptation to the unknown bias is fundamentally simpler. With only one source domain, there is no ambiguity about which source should be used for transfer; the problem reduces to deciding, in a data-driven way, whether the source is sufficiently close to the target to be useful for transfer. As shown in Remark \ref{remark:adaptation-m-1}, this can be done without incurring any nontrivial adaptation cost, so adaptation is possible over the full bias range in the one-source setting. This is in sharp contrast to the main message of this section: once $m\ge 2$, the difficulty changes qualitatively because the learner must not only decide whether transfer is beneficial but also identify which of the candidate source domains is closest to the target.

\subsection{General adaptation with a fixed number of source domains}

We begin with the regime in which the number of source domains $m$ is fixed and does not grow with the problem parameters. This regime already captures the main statistical obstruction to adaptation under unknown biases: the learner must determine which source domains are sufficiently close to the target to be useful for transfer, even though the bias configuration is not known in advance. Throughout this subsection, we assume equal source sample sizes, \ie,
\(
n_1=\cdots=n_m=n.
\) 

Our first result shows that for fixed $m$, uniform adaptation over the full configuration space $[0,1]^m$ is possible only when the source sample size is not too large relative to the target sample size and the dimension. In particular, near-oracle adaptation can hold only in the regime $n\lesssim d n_0$. By contrast, when $n\gg d n_0$, the intrinsic cost of adaptation necessarily grows, showing that even for a fixed number of source domains, unrestricted adaptation over the full bias configuration space becomes statistically impossible without incurring a substantial cost.

\begin{theorem}[Intrinsic cost of adaptation for fixed $m$]
	\label{thm:cost-fixed-m}
	Assume that $m\ge 2$ is fixed and that $n_1=\cdots=n_m=n$. Then there exist positive constants $c_L(m) >0$ and $c_U(m)>0$, depending only on $m$, such that
	\[
	c_L(m)\left\{\left(\frac{n}{d n_0}\right)\vee 1\right\}
	\;\le\;
	\mathfrak C^\star([0,1]^m)
	\;\le\;
	c_U(m)\left\{\left(\frac{n}{d n_0}\right)\vee 1\right\}.
	\]
\end{theorem}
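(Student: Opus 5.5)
The upper bound will use a data-driven selection among the $2^m$ candidate pooled estimators; for fixed $m$ this adds only an extra error of order $\tau^2/n_0$ over the oracle. The lower bound will use a two-point argument in which the two sources are swapped. Throughout, write $h_{(1)}\le\cdots\le h_{(m)}$ for the ordered biases. Since the sample sizes are equal, the minimizing $S$ in Theorem~\ref{thm:oracle-minimax-rate} may be taken to be $\{0\}$ together with the $j$ least-biased sources. Hence
\[
\fR(\bh,\bn,\tau)\asymp \min_{0\le j\le m}\Big\{\frac{d\tau^2}{n_0+jn}+h_{(j)}^2\Big\},\qquad h_{(0)}:=0,
\]
and in particular $\fR\ge d\tau^2/(n_0+mn)$. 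Because this lower bound is never below a constant (depending on $m$) times $\tau^2/n_0\cdot\{(n/(dn_0))\vee 1\}^{-1}$, it suffices to build a $\bh$-free estimator with risk at most $C(m)\{\fR(\bh,\bn,\tau)+\tau^2/n_0\}$.

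\textbf{Upper bound.} The key observation is that source-to-source differences $\widetilde\theta_k-\widetilde\theta_l$ have noise of size only $d\tau^2/n$. The target estimator $\widetilde\theta_0$ is needed only to decide \emph{which} of the source clusters $\theta_0^\star$ is closest to, and this is a question about $\theta_0^\star$ along at most $m-1$ directions. Projecting $\widetilde\theta_0$ onto such a fixed low-dimensional subspace costs only about $m\tau^2/n_0$, not $d\tau^2/n_0$.

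I would proceed in four steps.
\begin{enumerate}
\item For every nonempty $S\subseteq[m]$, form the candidate $\bar\theta_S=\sum_{k\in\{0\}\cup S}n_k\widetilde\theta_k/N_{\{0\}\cup S}$, together with the candidate $\widetilde\theta_0$ (the case $S=\emptyset$).
\item Estimate the squared bias of each candidate. Split $\bar\theta_S-\widetilde\theta_0$ into its component in $V=\mathrm{span}\{\widetilde\theta_k-\widetilde\theta_l: k,l\in[m]\}$ and its orthogonal complement. Sample splitting, or a conditioning argument using independence of $\widetilde\theta_0$ from the sources, makes $V$ independent of $\widetilde\theta_0$.
\item For the component in $V$, the statistic $\|P_V(\bar\theta_S-\widetilde\theta_0)\|^2-\mathrm{tr}$-correction concentrates within $O(m\tau^2/n_0+d\tau^2/n)$ plus cross terms. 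For the orthogonal part, all the sources share essentially one common offset, so only a single comparison with $\widetilde\theta_0$ is needed. This is the step of deciding whether to transfer at all, exactly as in the $m=1$ case of Remark~\ref{remark:adaptation-m-1}. There the chi-square fluctuation is of order $\sqrt d\,\tau^2/n_0\le d\tau^2/n_0$, and it matters only when $\fR\gtrsim$ that level.
\item Select $\widehat S$ minimizing the estimated bias plus the variance $d\tau^2/N_{\{0\}\cup S}$, using a Lepski/Goldenshluger--Lepski style threshold. Then prove the oracle inequality
\[
\Ex\|\bar\theta_{\widehat S}-\theta_0^\star\|^2\le C(m)\big\{\min_S(h_S^2+d\tau^2/N_S)+\tau^2/n_0\big\}
\]
via a union bound over the $2^m$ candidates and the sub-Gaussian bound \eqref{eq:sub-gaussian}.
\end{enumerate}
The main obstacle is controlling the cross terms $\langle b_S,\xi_0\rangle$ and $\langle P_V\xi_0,\cdot\rangle$ uniformly over $S$, where $b_S$ is the bias of $\bar\theta_S$ and $\xi_0=\widetilde\theta_0-\theta_0^\star$ is the target noise. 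These cross terms must not reintroduce a $d\tau^2/n_0$ term in the regime $h_{(1)}^2\ll d\tau^2/n_0$. I would handle this by bounding them through $|\langle b_S,\xi_0\rangle|\le \tfrac12 h_S^2+O(\tau^2/n_0)$ after conditioning on the sources.

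\textbf{Lower bound.} The bound $\fC^\star\ge 1$ is immediate. For the $n/(dn_0)$ term, fix a unit vector $e$ and set $a=\varepsilon e$ with $\varepsilon^2=c\tau^2/n_0\le 1$. Take Gaussian local estimators as in Example~\ref{example:normal-mean} with covariance $\tau^2 I/n_k$, and consider two hypotheses.
\begin{itemize}
\item Under $H_+$: $\theta_1^\star=a$, $\theta_2^\star=-a$, $\theta_0^\star=a$, and $\bh=(0,1,1,\dots)$.
\item Under $H_-$: the same sources, $\theta_0^\star=-a$, and $\bh=(1,0,1,\dots)$.
\end{itemize}
Place any remaining sources at a common point $w$, identical under both hypotheses, with $\|w\mp a\|^2\in[d\tau^2/n_0,1]$. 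This is possible since $d\tau^2/n_0\le1$, and this distance keeps those sources useless to the oracle.

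The source data have the same law under both hypotheses, and $\widetilde\theta_0$ has Kullback--Leibler divergence $2\varepsilon^2n_0/\tau^2=O(c)$ between them. Le Cam's two-point bound then gives
\[
\max_\pm\Ex\|\widehat\theta_0-\theta_0^\star\|^2\gtrsim\varepsilon^2\asymp\tau^2/n_0 .
\]
Meanwhile, under either hypothesis the oracle rate is $\fR\lesssim d\tau^2/(n_0+n)$, obtained by using the exactly aligned source. The ratio is therefore $\gtrsim (n+n_0)/(dn_0)\ge n/(dn_0)$, with constants depending only on $m$. Combining the two parts gives the stated lower bound, and together with the upper bound this proves Theorem~\ref{thm:cost-fixed-m}.
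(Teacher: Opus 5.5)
Your lower bound is the paper's argument. The paper's family $\theta_1^\star=g_1e_1$, $\theta_2^\star=g_2e_1$, $\theta_0^\star\in\{0,(g_1+g_2)e_1\}$, at the optimizing choice $g_1=0$, $g_2\asymp\tau/\sqrt{n_0}$, is your swap construction up to translation. For $m>2$ the paper puts half the sources at each point, while you park the rest at $w$ with $h_k=1$; both work. Your distance condition on $w$ is unnecessary, since $\fR$ depends only on $\bh$.

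The upper bound is where you differ, and your route is far heavier. The paper never estimates biases. It applies the hold-out inequality of Theorem~\ref{thm:model-selection} to all $2^m$ pooled candidates $\xi_j$, built from one half $\widetilde\theta_0^{(2)}$ of the target sample and scored by $\|\xi_j-\widetilde\theta_0^{(1)}\|_2^2$ on the other half. The noise term $\|\widetilde\theta_0^{(1)}-\theta_0^\star\|_2^2$ is common to all scores and cancels, so only cross terms remain. A union bound over $M=2^m$ costs $m\tau^2/n_0$, and dividing by $\fR\ge d\tau^2/(n_0+mn)$ gives $\lesssim_m (n/(dn_0))\vee 1$. This delivers your ``only about $m$ directions of $\theta_0^\star$ matter'' insight with no projection, no variance formula and no separate transfer decision, using nothing beyond \eqref{eq:sub-gaussian}. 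It also extends verbatim to growing $m$ (Corollary~\ref{cor:adaptation-cost-ub-general-m}).

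Your route can be made to work, but one step must be stated differently. The ``$\mathrm{tr}$-correction'' is not computable in $\cP(\bh,\bn,\tau)$, which fixes only a sub-Gaussian proxy and not $\mathrm{Cov}(\widetilde\theta_0)$. Without it, a rule that minimizes estimated bias plus $d\tau^2/N_{\{0\}\cup S}$ cannot see the variance gain. The reason is that $\bar\theta_S$ reuses the very $\widetilde\theta_0$ it is compared with. For the isotropic Gaussian member of the class one has exactly $\Ex\|\bar\theta_S-\widetilde\theta_0\|_2^2+d\tau^2/N_{\{0\}\cup S}=\|b_S\|_2^2+d\tau^2/n_0$ for every $S$.

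So take all $h_k=0$, $n\gg n_0$ and $d$ large. Every candidate, including $\widetilde\theta_0$, then has the same expected score. Fluctuations of order $\sqrt d\,\tau^2/n_0$ select $\widetilde\theta_0$ with probability bounded away from zero, and the ratio is $\gtrsim mn/n_0$. That is a factor $md$ too large once $n\ge dn_0$.

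The fix is the one you allude to: decide transfer-or-not by a one-sided threshold (intersection) test as in Theorem~\ref{thm:intersection-oracle-max}. There an error of size $d\tau^2/n_0$ is harmless, because it matters only when $\fR\asymp d\tau^2/n_0$. Among nonempty $S$, write $c_S=\sum_{k\in S}n_k/N_{\{0\}\cup S}$. The noise in the uncorrected $V^\perp$ term then shifts comparisons only by about $|c_S^2-c_{S'}^2|\,d\tau^2/n_0\lesssim d\tau^2/n\lesssim_m\fR$ when $n\ge n_0$. The case $n<n_0$ is trivial via $\widetilde\theta_0$.

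With this made explicit you get the same bound with a more geometric explanation. The paper's target split is precisely what removes the difficulty, by making the target noise enter every score identically.
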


The proof of Theorem~\ref{thm:cost-fixed-m} is deferred to 
Appendix~\ref{supp:proof:thm:cost-fixed-m}; here, we provide a sketch 
for the lower bound proof in the simplest $m = 2$ setting. This sketch 
will offer some insight into the difficulty of general adaptation for 
a fixed $m$ regime. The upper bound, on the other hand, is obtained by 
letting $m$ be fixed within Corollary~\ref{cor:adaptation-cost-ub-general-m}; 
hence, the tight upper bound is achieved by the model-selection-based 
estimator described in Section~\ref{subsubsec:model-selection}.

\paragraph{Proof sketch:}
The key idea is to exhibit two configurations that share the same unordered bias magnitudes but differ in which source is closer to the target. Any estimator that adapts uniformly to both configurations must identify the better source, and this identification step leads to the fundamental adaptation cost.

\medskip
\noindent\textit{Step 1: A two-point construction.}
Fix $0\le g_1\le g_2\le \tau/(2\sqrt{n_0})$, let $e_1=(1,0,\dots,0)^\top\in\mathbb{R}^d$, and define two distributions:
\begin{equation}\label{eq:two-source-lower-bound-construction}
    \begin{aligned}
        (P_0):\quad &\{\widetilde\theta_0,\widetilde\theta_1,\widetilde\theta_2\}
        \sim N\!\left(0_d,\tfrac{\tau^2}{n_0}I_d\right)
        \otimes N\!\left(g_1 e_1,\tfrac{\tau^2}{n}I_d\right)
        \otimes N\!\left(g_2 e_1,\tfrac{\tau^2}{n}I_d\right),\\
        (P_1):\quad &\{\widetilde\theta_0,\widetilde\theta_1,\widetilde\theta_2\}
        \sim N\!\left((g_1+g_2)e_1,\tfrac{\tau^2}{n_0}I_d\right)
        \otimes N\!\left(g_1 e_1,\tfrac{\tau^2}{n}I_d\right)
        \otimes N\!\left(g_2 e_1,\tfrac{\tau^2}{n}I_d\right).
    \end{aligned}
\end{equation}
The two distributions induce bias configurations $(h_1,h_2)=(g_1,g_2)$ and $(h_1,h_2)=(g_2,g_1)$, respectively—same in magnitude but reversed in roles. Moreover, $P_0$ and $P_1$ differ only in the first coordinate of $\widetilde\theta_0$; all other coordinates are uninformative for distinguishing them.

\medskip
\noindent\textit{Step 2: Lower bound on minimax risk.}
Because $P_0$ and $P_1$ differ only in one coordinate, the problem reduces to estimating a one-dimensional Gaussian mean, giving
\[
    \inf_{\widehat\theta_0}\sup_{P\in\mathcal{P}_0(g_1,g_2)}
    \mathbb{E}\!\left[\|\widehat\theta_0-\theta_0^\star\|_2^2\right]
    \gtrsim g_2^2.
\]

\medskip
\noindent\textit{Step 3: Upper bound on oracle risk.}
Since both configurations $(g_1,g_2)$ and $(g_2,g_1)$ belong to $G:=\{(g_1,g_2),(g_2,g_1)\}$, the oracle risk satisfies
\[
    \sup_{\mathbf{h}\in G}\mathfrak{R}(\mathbf{h},\mathbf{n},\tau)
    \lesssim
    \frac{d\tau^2}{n_0}\wedge\!\left(\frac{d\tau^2}{n}\vee g_1^2\right).
\]

\medskip
\noindent\textit{Step 4: Adaptation cost for a single pair.}
Combining Steps 2 and 3 yields
\begin{equation}\label{eq:lb-m2-eq-1}
    \sup_{\mathbf{h}\in G}
    \frac{
        \sup_{P\in\mathcal{P}(\mathbf{h},\mathbf{n},\tau)}
        \mathbb{E}\!\left[\|\widehat\theta_0-\theta_0^\star\|_2^2\right]
    }{
        \mathfrak{R}(\mathbf{h},\mathbf{n},\tau)
    }
    \gtrsim
    \frac{g_2^2}{\dfrac{d\tau^2}{n_0}\wedge\!\left(\dfrac{d\tau^2}{n}\vee g_1^2\right)}.
\end{equation}
Any adaptive estimator must incur an error of order $g_2^2$ to distinguish between the two configurations, whereas the oracle risk is governed by the (potentially much smaller) denominator.

\medskip
\noindent\textit{Step 5: Extension to a class of bias vectors.}
Since \eqref{eq:lb-m2-eq-1} holds for every pair $(g_1,g_2)$, it applies simultaneously over any set
\[
    \mathcal{G}\subset\!\left\{(g_1,g_2):0\le g_1\le g_2\le \frac{\tau}{2\sqrt{n_0}}\right\}.
\]
Taking the supremum over $\mathcal{G}$ gives
\begin{equation}\label{eq:adaptation-cost-lb-m2}
    \mathfrak{C}^\star(\mathcal{G}^{\mathrm{symm}})
    \gtrsim
    \max_{(g_1,g_2)\in\mathcal{G}}
    \frac{g_2^2}{\dfrac{d\tau^2}{n_0}\wedge\!\left(\dfrac{d\tau^2}{n}\vee g_1^2\right)},
\end{equation}
where $\mathcal{G}^{\mathrm{symm}}:=\mathcal{G}\cup\{(g_2,g_1):(g_1,g_2)\in\mathcal{G}\}$.
Finally, taking $\mathcal{G}=\{(g_1,g_2):0\le g_1\le g_2\le \tau/(2\sqrt{n_0})\}$ and optimizing the right-hand side of \eqref{eq:adaptation-cost-lb-m2} yields
\[
    \mathfrak{C}^\star([0,1]^2)
    \ge
    \mathfrak{C}^\star(\mathcal{G}^{\mathrm{symm}})
    \gtrsim
    \left(\frac{n}{dn_0}\right)\vee 1.
\]

In short, even in the two-source setting, uniform adaptation over a 
general bias configuration is obstructed by the need to distinguish 
between configurations that differ only in which source is closer to 
the target. This source-ordering difficulty is the fundamental mechanism 
underlying the adaptation cost, and it will also guide the construction 
of structured adaptable classes below.

\subsubsection{A structured configuration set for two sources}

The lower bound above shows that unrestricted adaptation over $[0,1]^2$ becomes impossible once
$n\gg d n_0$. On the other hand, by Theorem \ref{thm:cost-fixed-m} specialized to $m=2$,
uniform adaptation over the full two-source configuration space is achievable, up to logarithmic
factors, when $n\lesssim d n_0$. This naturally raises the following question: when
$n\gg d n_0$, is there a smaller and more structured subset of $[0,1]^2$ on which near-oracle
adaptation becomes possible?

We now identify a two-source configuration set $\cH_0\subset[0,1]^2$ for which the intrinsic cost of adaptation remains bounded up to logarithmic factors. The construction is guided directly by the lower-bound argument above. That argument shows that the main obstruction comes from having to adapt simultaneously to a configuration $(g_1,g_2)$ and to its reversal $(g_2,g_1)$, since these two points have the same unordered bias magnitudes but different best sources. Thus the issue is fundamentally an ordering obstruction.

\paragraph{Step 1: Necessity.}
For any pair $\{(g_1,g_2), (g_2,g_1)\}$
with $0\le g_1\le g_2\le \tau/(2\sqrt{n_0})$, the lower-bound argument gives
\[
\fC^\star\big(\{(g_1,g_2),(g_2,g_1)\}\big)
\gtrsim
\frac{g_2^2}{
	\left(\frac{d\tau^2}{n_0}\right)
	\wedge
	\left\{
	\left(\frac{d\tau^2}{n}\right)\vee g_1^2
	\right\}
}.
\]
\begin{figure}
    \centering
    \centering
  \begin{tikzpicture}
		\node[inner sep=0pt] (fig) at (0,0)
		{\includegraphics[width=.4\textwidth]{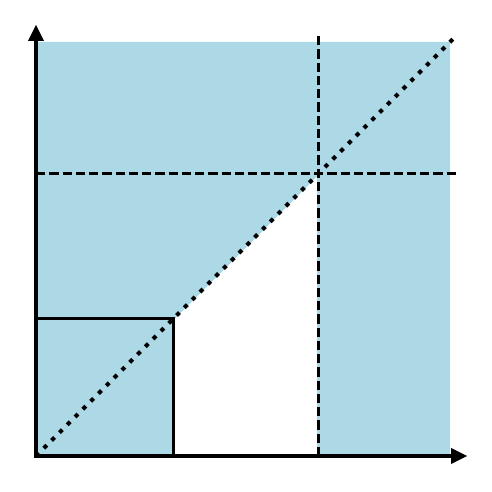}};
		\node  at (-1., -3.2) {$\tau\sqrt{\frac{d}{n}}$};
		\node  at (1, -3.2) {$\frac{\tau}{\sqrt{n_{0}}}$};
        \node  at (-3.2,-1) {$\tau\sqrt{\frac{d}{n}}$};
		\node  at (-3.2, 1) {$\frac{\tau}{\sqrt{n_{0}}}$};
		\node at (3, -2.7) {$h_1$};
		\node at (-2.7, 3.) {$h_2$};
	\end{tikzpicture}
	\caption{An adaptable set of bias configurations ($\cH_0$), presented as the blue region.}
	\label{fig:adaptable-bias-1}
\end{figure}

This expression is small only when reversing the order of the two sources does not substantially change the statistical difficulty. Accordingly, we isolate the largest part of the ordered region
$\{(g_1,g_2):0\le g_1\le g_2\le 1\}$ on which this lower bound remains of constant order.

Let $\cG_{\max}$ be a maximal subset of
$\{(g_1,g_2):0\le g_1\le g_2\le 1\}$ such that
\[
\max_{\substack{(g_1,g_2)\in\cG_{\max}:\\ 0\le g_1\le g_2\le \tau/\sqrt{n_0}}}
\frac{g_2^2}{
	\left(\frac{d\tau^2}{n_0}\right)
	\wedge
	\left\{
	\left(\frac{d\tau^2}{n}\right)\vee g_1^2
	\right\}
}
\le 1.
\]
A direct calculation shows that 
\[
\begin{aligned}
	\cG_{\max}
	&=
	\left\{
	(g_1,g_2):
	0\le g_1\le g_2\le \tau\sqrt{\frac dn}
	\right\}
    \cup
	\left\{
	(g_1,g_2):
	\tau\sqrt{\frac dn}<g_1=g_2\le \frac{\tau}{\sqrt{n_0}}
	\right\}
	\\
	&\quad\cup
	\left\{
	(g_1,g_2):
	\frac{\tau}{\sqrt{n_0}}<g_2\le 1,\ 0\le g_1\le g_2
	\right\}.
\end{aligned}
\]
These three pieces have a simple interpretation. In the first region, both biases are small enough that reversing their order is not yet costly. In the second, only the diagonal survives inside the critical square $[0,\tau/\sqrt{n_0}]^2$, since there the two sources are equally good. In the third, the larger bias already exceeds the target-noise scale, so the particular lower-bound mechanism above is no longer the dominant bottleneck.

An adaptable class need not be symmetric. The natural candidate is therefore obtained by keeping the entire ordered half-plane
\[
\{(g_1,g_2):0\le g_1\le g_2\le 1\},
\]
and then adding only the harmless part of the reverse-ordered region. We define
\[
\begin{aligned}
    \cH_0
:=
\cG_{\max}^{\mathrm{symm}}
\cup
\{(g_1,g_2):0\le g_1\le g_2\le 1\}, \\
\text{where} ~~\cG_{\max}^{\mathrm{symm}}
:=
\cG_{\max}
\cup
\{(g_2,g_1):(g_1,g_2)\in\cG_{\max}\}.
\end{aligned}
\]
Equivalently, $\cH_0$ contains every configuration with the canonical ordering $h_1\le h_2$, while in the reverse-ordered half-plane $h_2<h_1$ it contains only those configurations for which the lower bound does not already force a large adaptation cost; see Figure \ref{fig:adaptable-bias-1}.



This construction also makes it clear why some ordering restrictions are unavoidable. If
$g_2\gg \tau\sqrt{d/n}$ and $g_1\ll g_2$, then applying the same lower bound to
$\{(g_1,g_2),(g_2,g_1)\}$ yields
\[
\fC^\star\big(\cH_0\cup\{(g_2,g_1)\}\big)
\gtrsim
\frac{g_2^2}{
	\left(\frac{d\tau^2}{n}\right)\vee g_1^2
}
\gg 1.
\]
Thus, once a reverse-ordered point enters the critical region, the estimator is forced to distinguish between two configurations with identical unordered bias magnitudes but different best sources, and near-oracle adaptation becomes impossible. In this sense, the ordered structure built into $\cH_0$ is not an arbitrary modeling choice; it is dictated by the lower-bound.

\paragraph{Step 2: Sufficiency.}
We now show that the same restriction is sufficient, up to logarithmic factors. The estimator is
constructed by first forming one candidate transfer estimator from each source separately, and then
selecting between them according to which source appears closer to the target.

Fix a $\delta\in(0,1)$ and define the target confidence ball
\[
\cB_0
:=
\left\{
\theta:
\|\theta-\widetilde\theta_0\|_2
\le
\frac{\tau}{\sqrt{n_0}}
\left(\sqrt d+\sqrt{\log(1/\delta)}\right)
\right\}.
\]
For each $k\in\{1,2\}$, define the pooled estimator and the corresponding confidence ball
\[
\widetilde\theta_{0,k}
:=
\frac{n_0\widetilde\theta_0+n\widetilde\theta_k}{n_0+n}, \qquad \cB_{0,k}
:=
\left\{
\theta:
\|\theta-\widetilde\theta_{0,k}\|_2
\le
\frac{\tau}{\sqrt{n_0+n}}
\left(\sqrt d+\sqrt{\log(1/\delta)}\right)
\right\}.
\]
We then define
\[
\check\theta_k
:=
\widetilde\theta_{0,k}\,\bbI\{\cB_0\cap\cB_{0,k}\neq\varnothing\}
+
\widetilde\theta_0\,\bbI\{\cB_0\cap\cB_{0,k}=\varnothing\}.
\]
Thus, $\check\theta_k$ pools the target and the $k$th source only when the corresponding
confidence regions intersect; otherwise, it falls back to the target-only estimator. If the $k$th
source were known in advance to be the more useful one, then $\check\theta_k$ would already
achieve the optimal target--source tradeoff
\[
\fR((h_k),(n_0,n),\tau)
=
\frac{d\tau^2}{n_0}
\wedge
\left(
\frac{d\tau^2}{n_0+n}+h_k^2
\right)
\asymp
\frac{d\tau^2}{n_0}
\wedge
\left(
\frac{d\tau^2}{n}+h_k^2
\right),
\]
up to logarithmic factors; see Theorem \ref{thm:intersection-oracle-max}. The remaining task is, therefore, to identify which of the two sources has the smaller bias.
To this end, define
\begin{equation}\label{eq:estimator-m2}
	T
:=
\|\widetilde\theta_1-\widetilde\theta_0\|_2^2
-
\|\widetilde\theta_2-\widetilde\theta_0\|_2^2 \qquad \text{and} \qquad  \widehat\theta_0
	:=
	\begin{cases}
		\check\theta_1
		&
		\text{if }T\le \dfrac{\tau^2\log(1/\delta)}{n_0},
		\\[0.8em]
		\check\theta_2
		&
		\text{if }T> \dfrac{\tau^2\log(1/\delta)}{n_0}.
	\end{cases}
\end{equation}
The estimator combines the target with the first source when $T\le \tau^2\log(1/\delta)/n_0$; otherwise, it combines with the second source. This rule is
aligned with the geometry of $\cH_0$, which favors the first source throughout the canonical
ordered region $h_1\le h_2$ and admits reverse-order points only when they are statistically
benign. The shifted threshold is also chosen to match precisely the geometry of the adaptable set.

\begin{lemma} \label{lemma:upper-bound-cost-m2-restricted}
For a universal constant $C >0$ defined in Lemma \ref{lem:diff-sq-concentration}, assume that the $n$ and $n_0$ satisfy $n \ge 8C^2 n_0 \left({d}/{\log n} + 2\right)$.
Then, for a universal constant $c_U >0$ such that the estimator in \eqref{eq:estimator-m2} with  $\delta = n^{-2}$ satisfies
	\[
	\fC^\star(\widehat\theta_0,\cH_0)\le c_U(\log n)^2.
	\]
\end{lemma}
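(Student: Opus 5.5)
We need to bound the worst-case MSE of the estimator in \eqref{eq:estimator-m2} by $(\log n)^2\,\fR(\bh,\bn,\tau)$ uniformly over $\bh\in\cH_0$. Throughout we work on a good event $\cE$ of probability at least $1-O(n^{-2})$. On $\cE$ the concentration bound \eqref{eq:sub-gaussian} (via a net argument) places $\theta_0^\star\in\cB_0$ and $\bar\theta_{0,k}:=(n_0\theta_0^\star+n\theta_k^\star)/(n_0+n)$ in the analogous ball around $\widetilde\theta_{0,k}$. In addition, Lemma \ref{lem:diff-sq-concentration} controls $T$ around its mean $\|\theta_1^\star-\theta_0^\star\|^2-\|\theta_2^\star-\theta_0^\star\|^2$. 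The fluctuation of $T$ is of order
\[
\frac{\tau}{\sqrt{n}}\,\|\theta_1^\star-\theta_2^\star\|\sqrt{\log n}\;+\;\frac{\tau^2}{n}\bigl(\sqrt{d\log n}+\log n\bigr)\;+\;\text{terms from }\widetilde\theta_0.
\]
The key feature is that $\widetilde\theta_0$ enters $T$ only through $2(\widetilde\theta_0-\theta_0^\star)^\top(\theta_2^\star-\theta_1^\star)$. Its noise is therefore of size $\frac{\tau}{\sqrt{n_0}}\|\theta_1^\star-\theta_2^\star\|\sqrt{\log n}$ and carries no $d$ factor; the quadratic $\widetilde\theta_0$ terms cancel in the difference. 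Off $\cE$ we use the crude bound $\Ex\|\widehat\theta_0-\theta_0^\star\|^2\mathbb{I}_{\cE^c}\lesssim (1+d\tau^2/n_0)\,n^{-1}$, via Cauchy--Schwarz, $h_k\le1$ and $d\tau^2/n_0\le1$. This is negligible against $\fR\ge d\tau^2/(n_0+2n)$.

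The first ingredient is a single-source oracle inequality of the kind in Theorem \ref{thm:intersection-oracle-max}: on $\cE$,
\[
\|\check\theta_k-\theta_0^\star\|_2^2\lesssim \log n\Bigl\{\frac{d\tau^2}{n_0}\wedge\Bigl(\frac{d\tau^2}{n}+h_k^2\Bigr)\Bigr\}.
\]
The argument is short. If the balls intersect, the triangle inequality gives $\|\widetilde\theta_{0,k}-\theta_0^\star\|\lesssim \tau\sqrt{(d+\log n)/n_0}$, and also $\|\widetilde\theta_{0,k}-\theta_0^\star\|\le \|\widetilde\theta_{0,k}-\bar\theta_{0,k}\|+h_k$. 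If they do not intersect, then $h_k\gtrsim\tau\sqrt{d/n_0}$, and $\widetilde\theta_0$ is near-optimal. Writing $r_k$ for the bracketed quantity, note that $\fR(\bh,\bn,\tau)\asymp \frac{d\tau^2}{n_0}\wedge(\frac{d\tau^2}{n}+h_1^2\wedge h_2^2)$ for $m=2$ with equal source sizes, since pooling both sources costs $h_1\vee h_2$ and never beats the better single source by more than a constant. It therefore suffices to show that the selected index $\hat k$ satisfies $r_{\hat k}\lesssim (\log n)\, r_{k^\star}$, where $k^\star=\arg\min_k h_k$.

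\textbf{Case analysis on $\cH_0$.} (i) If $\hat k=1$ and $h_1\le h_2$, or $\hat k=2$ and $h_2\le h_1$, there is nothing to prove. (ii) Suppose $\hat k=2$ while $h_1\le h_2$. Then $T>\tau^2\log n^2/n_0$, and on $\cE$ this forces $h_2^2\le h_1^2+$fluctuation, with $\|\theta_1^\star-\theta_2^\star\|\le h_1+h_2\le 2h_2$. Solving the quadratic inequality in $h_2$ gives
\[
h_2^2\lesssim h_1^2+\frac{\tau^2\log n}{n_0}+\frac{\tau^2\sqrt{d\log n}}{n},
\]
so $r_2\lesssim \log n\cdot r_1$. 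Here the cross term $\frac{\tau}{\sqrt{n_0}}h_2\sqrt{\log n}$ is absorbed by AM--GM, and the term $\frac{\tau^2\log n}{n_0}\le \frac{\tau^2 d}{n_0}$ is dominated by the cap. (iii) Suppose $\hat k=1$ while $h_2<h_1$. Then $(h_1,h_2)\in\cG_{\max}^{\mathrm{symm}}$ lies in one of three regions. If both biases are at most $\tau\sqrt{d/n}$, then $r_1\asymp r_2\asymp d\tau^2/n$. If $h_1=h_2$, the two choices are equivalent. If $h_1>\tau/\sqrt{n_0}$, we argue from $T\le\tau^2\log n^2/n_0$, which gives $h_1^2\le h_2^2+O(\tau^2\log n/n_0+\tau^2\sqrt{d\log n}/n)+$cross terms. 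Hence $h_1^2\lesssim h_2^2+(\log n)\tau^2/n_0$. Because $h_1>\tau/\sqrt{n_0}$, the inequality yields $h_1^2\lesssim \log n\,(h_2^2\vee \tau^2/n_0)$. Combined with the caps at $d\tau^2/n_0$, this gives $r_1\lesssim \log n\cdot r_2$ whenever $h_2\gtrsim\tau/\sqrt{n_0}$.

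\textbf{Main obstacle.} The delicate subcase is (iii) with $h_2\ll\tau/\sqrt{n_0}<h_1$. The only available control there is $r_1\le d\tau^2/n_0$, whereas $r_2$ could be as small as $d\tau^2/n$. We must show this subcase is excluded on $\cE$. The plan is to use the sample-size condition $n\ge 8C^2n_0(d/\log n+2)$, which makes $\tau^2\sqrt{d\log n}/n$ and $\tau^2/n$ smaller than $\tau^2\log n/(8n_0)$. The threshold then dominates all source-driven fluctuations, so $T\le\tau^2\log n^2/n_0$ forces $h_1^2-h_2^2\lesssim\tau^2\log n/n_0$. That leaves at most a $\log n$ slack between $h_1^2$ and $\tau^2/n_0$, and checking that the cap comparison absorbs this slack is where the extra logarithm arises. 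Multiplying this selection loss by the $\log n$ from the single-source oracle inequality produces the stated $(\log n)^2$. I would carry out this subcase first, with the explicit constants from Lemma \ref{lem:diff-sq-concentration}, before assembling the remaining cases.
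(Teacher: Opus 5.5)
Your outline follows the paper's route: a single-source bound for each $\check\theta_k$, the reduction $\mathfrak{R}(\mathbf{h},\mathbf{n},\tau)\asymp\frac{d\tau^2}{n_0}\wedge(\frac{d\tau^2}{n}+h_1^2\wedge h_2^2)$, and a case split driven by the concentration of $T$. However, the subcase you flag as the main obstacle cannot be closed, because the claimed bound is false there. The root cause is that $T$ concentrates around $\nu_1^2-\nu_2^2$, with $\nu_k:=\|\theta_k^\star-\theta_0^\star\|_2\le h_k$, not around $h_1^2-h_2^2$ as your cases treat it. A configuration with large $h_1$ still contains instances with small $\nu_1$.

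Concretely, take $\mathbf{h}=(h_1,0)$ with $h_1\in(\tau/\sqrt{n_0},1]$. It lies in $\mathcal{H}_0$ as the reflection of a point of the third piece of $\mathcal{G}_{\max}$, and $\mathfrak{R}(\mathbf{h},\mathbf{n},\tau)\le d\tau^2/(n_0+n)$ (pool the target with source 2). In $\mathcal{P}(\mathbf{h},\mathbf{n},\tau)$ take isotropic Gaussians with $\theta_1^\star=0$ and $\theta_0^\star=\theta_2^\star=g e_1$, where $g=\tau/\sqrt{n_0}$. Let $n\to\infty$ with $d,n_0$ fixed, so the hypothesis $n\ge 8C^2n_0(d/\log n+2)$ holds. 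Then $T=g^2+2g\,e_1^\top(\widetilde\theta_0-\theta_0^\star)+o_P(\tau^2/n_0)=O_P(\tau^2/n_0)$. This stays below the threshold $2\tau^2\log n/n_0$ with probability tending to one, so $\hat k=1$. The balls $\mathcal{B}_0$ and $\mathcal{B}_{0,1}$ also intersect with probability tending to one: their centers differ by about $\|g e_1+\widetilde\theta_0-\theta_0^\star\|_2$, while $\mathcal{B}_0$ alone has radius $\frac{\tau}{\sqrt{n_0}}(\sqrt d+\sqrt{2\log n})$. Hence $\widehat\theta_0=\widetilde\theta_{0,1}$, whose bias $\frac{n}{n_0+n}g$ tends to $g$. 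The risk is therefore at least of order $\tau^2/n_0$, and the ratio to $\mathfrak{R}$ is at least of order $(n_0+n)/(dn_0)$, which is linear in $n$.

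This is not specific to the estimator. Both $(0,g)$ and $(h_1,0)$ lie in $\mathcal{H}_0$. The instances $\theta_1^\star=0$, $\theta_2^\star=g e_1$, $\theta_0^\star\in\{0,g e_1\}$ belong to $\mathcal{P}((0,g),\mathbf{n},\tau)$ and $\mathcal{P}((h_1,0),\mathbf{n},\tau)$ respectively. They differ only through $\widetilde\theta_0$, with Kullback--Leibler divergence $n_0g^2/(2\tau^2)=1/2$, and both have oracle rate at most $d\tau^2/(n_0+n)$. Le Cam's two-point bound then gives $\mathfrak{C}^\star(\mathcal{H}_0)\gtrsim(n_0+n)/(dn_0)$; this is the paper's own construction with $(g_1,g_2)=(0,g)$, enlarged by monotonicity of $\mathcal{P}(\mathbf{h},\mathbf{n},\tau)$ in $\mathbf{h}$. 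Your ``$\log n$ slack between $h_1^2$ and $\tau^2/n_0$'' is measured against the wrong scale. The benchmark in this subcase is $r_2\asymp d\tau^2/n$, which is smaller than $\tau^2/n_0$ by exactly this unbounded factor.

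The paper's proof does not close this subcase either. Its Case 4 bounds $\{\frac{2d\tau^2\log n}{n_0}\wedge(\frac{d\tau^2}{n}+h_2^2+K^2)\}\log n$ by a multiple of $\{\frac{d\tau^2}{n_0}\wedge(\frac{d\tau^2}{n}+h_2^2)\}(\log n)^2$. That step fails at $h_2=0$, because $K^2\asymp\tau^2\log n/n_0$ under the sample-size condition. Its Case 3 ends with the factor $\frac{n}{dn_0}(\log n)^2$. Running the necessity argument against the ordered partner $(h_2,\min\{h_1,\tau/(2\sqrt{n_0})\})$ shows what can be kept when $n\gg dn_0$. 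Besides the small square and the diagonal, the admissible reverse-ordered points are essentially those with $h_2\gtrsim\tau/\sqrt{n_0}$.

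If you restrict the reflected third piece accordingly (say to $h_2\ge\tau/\sqrt{n_0}$), your case (iii) goes through once it is written in terms of $\nu_k$. On $\mathcal{E}$, $\hat k=1$ together with $\nu_1>\nu_2$ forces $\nu_1\le\nu_2+O(K)$, and $K^2\lesssim(\log n)\tau^2/n_0\le(\log n)r_2$.

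Your case (ii) has a milder form of the same defect. The leftover $\tau^2\log n/n_0$ is not absorbed by the cap when $h_1^2+d\tau^2/n\ll\tau^2/n_0$. Moreover, $\{\hat k=2\}$ cannot be excluded on an event of probability $1-O(n^{-2})$: for $\nu_1=0$ and $\nu_2\asymp\tau\sqrt{\log n/n_0}$, its probability is of order $n^{-1}$ up to a logarithmic factor. There you have to bound the contribution of this event in expectation, using that its probability is small, rather than argue deterministically on $\mathcal{E}$.
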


Therefore, the restriction from $[0,1]^2$ to $\cH_0$ is both necessary and sufficient, up to logarithmic factors. More importantly, the two-source analysis does more than show that unrestricted adaptation fails: it also reveals how an adaptable class should be organized. The set $\cH_0$ preserves a global ordering and admits reverse-order configurations only when they are statistically harmless. This same perspective suggests that, in the many-source setting, adaptable classes should again be organized by an appropriate ordering or ranking structure among the sources (see Section \ref{subsec:ordered-bias}). 

\subsection{General adaptation with a growing number of source domains}

\label{subsec:cost-general-m}

We now move from the fixed-$m$ regime to let $m$ grow. The two-source analysis above shows that adaptation becomes difficult once the learner must identify which sources are closest to the target. When $m$ grows, the same source-selection difficulty persists, but it is compounded by a much larger combinatorial search space, and the resulting adaptation cost increases with $m$. In this subsection, we quantify that cost over the full configuration space $[0,1]^m$ and then study what can still be achieved by a general model-selection procedure.

\subsubsection{A lower bound on the cost of general adaptation}

As before, we again assume equal source sample sizes so that
$n_1=\cdots=n_m=n$, and study the intrinsic cost of adaptation over the full bias configuration
space $[0,1]^m$. The following theorem gives a general lower bound in terms of $m$, $n$, $n_0$,
and $d$.


\begin{theorem}\label{thm:cost-lower-bound-general-m}
	There exists a universal constant \(c_L>0\) such that
	\[
	\mathfrak C^\star([0,1]^m)
	\;\ge\;
	c_L\max \left[
	\left\{\frac{n_0+mn}{d}\,
	\left(
	\frac{ d}{n_0}\wedge 
	\frac{1}{n} \sqrt{\frac{d}{m}}
	\right)\right\}\,,
	\;
	\frac{n_0+mn}{d\,n_0}\right].
	\]
\end{theorem}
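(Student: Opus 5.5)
We will prove the two terms of the maximum separately, each by exhibiting a finite subset $\cH\subset[0,1]^m$ and using $\fC^\star([0,1]^m)\ge \fC^\star(\cH)$. For each $\cH$, we lower bound the adaptive minimax risk by a two-point or Bayesian testing argument and upper bound the oracle rate $\fR(\bh,\bn,\tau)$ by Theorem~\ref{thm:oracle-minimax-rate}, choosing a favourable $S$.

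\emph{Second term, $\frac{n_0+mn}{dn_0}$.} This term generalizes the $m=2$ sketch with $e_1$-directional shifts. Take the configuration $\bh^{(0)}=0$, where all sources equal the target. Its oracle rate is $\fR\asymp d\tau^2/(n_0+mn)$, obtained with $S=\{0,\dots,m\}$. Pair it with the configuration $\bh^{(1)}=(g,\dots,g)$ where $g\asymp\tau/\sqrt{n_0}$. Under $\bh^{(1)}$, the sources sit at $\theta_k^\star=0$ while $\theta_0^\star=g e_1$; its oracle rate is at most $d\tau^2/n_0$. The key point is that under both hypotheses the source laws are identical, so they carry no information, and the target estimator $\widetilde\theta_0\sim N(\theta_0^\star,\tau^2 I_d/n_0)$ has mean $0$ or $ge_1$. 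Le Cam's two-point bound then shows that any estimator has risk $\gtrsim g^2\asymp\tau^2/n_0$ under one of the two hypotheses. The ratio of this risk to the oracle rate is at least $\frac{\tau^2/n_0}{d\tau^2/(n_0+mn)}=\frac{n_0+mn}{dn_0}$ in the first case. In the second case the ratio is at least $\frac{\tau^2/n_0}{d\tau^2/n_0}=1/d$, which is harmless. More carefully, we must ensure the bound comes from the hypothesis whose oracle is small. Since the max over the two ratios is at least the risk sum weighted by the smaller denominator, the bound $\ge g^2/\max(\fR_0,\fR_1)$ would be too weak. Instead we use a risk-ratio version of Le Cam: if the estimator has risk $\le \epsilon$ under $P_0$, then by the bounded-TV or $\chi^2$ change of measure its risk under $P_1$ is $\gtrsim g^2$. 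Equivalently, we consider $\max_j R_j/\fR_j$ and take $g=c\tau/\sqrt{n_0}$ so that the TV distance is at most $1/2$.

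\emph{First term.} We plan a many-hypothesis, sparse-mixture construction in the spirit of \citet{hanneke2022no}. Choose a uniformly random source $J\in[m]$ to be unbiased ($h_J=0$), while the remaining sources are shifted away from $\theta_0^\star=0$ by independent random vectors $\pm\rho\,u$ of norm $\rho$. Here $u$ is a random sign vector scaled by $1/\sqrt d$, or alternatively a random direction. The oracle that knows $J$ pools the target with source $J$, giving $\fR\lesssim d\tau^2/(n_0+n)\asymp d\tau^2/n$ when $n\gtrsim n_0$. Alternatively we use a configuration where all sources are unbiased, with oracle $d\tau^2/(n_0+mn)$; this matches the $(n_0+mn)/d$ prefactor. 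We will therefore compare the null $P_0$ (all sources at $\theta_0^\star=0$) with the alternative $P_1$, in which the target is at $\theta_0^\star=v$ with $\|v\|=\rho$. Under $P_1$ a random source sits at $v$ and the others are spread in random directions at radius $\rho$, so the source population looks nearly identical to the null. The chi-square divergence of the resulting Gaussian mixture is controlled via the second-moment (Ingster) computation, of the form $\Ex\exp(n\langle U,U'\rangle/\tau^2)$ over $m$ sources. This stays bounded when $\rho^2\lesssim \frac{\tau^2}{n}\sqrt{d/m}$, and the target term needs $\rho^2\lesssim \tau^2 d/n_0$. Thus neither hypothesis can be distinguished, and the risk is $\gtrsim\rho^2\asymp\tau^2\big(\frac{d}{n_0}\wedge\frac1n\sqrt{d/m}\big)$. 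Dividing by the null oracle rate $d\tau^2/(n_0+mn)$ gives the first term.

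\emph{Main obstacle.} The main obstacle is the second-moment computation for the mixture over which source is unbiased and over the random directions of the biased sources. We must keep every alternative inside $\Theta(\bh)$ with the $\bh$ we use for the oracle: under $P_1$ the biased sources have bias $\lesssim\rho$, so the oracle rate of $P_1$ is $\lesssim \rho^2+d\tau^2/(n_0+mn)$. Note this does not give a large ratio under $P_1$; the ratio is driven by $P_0$. Hence the risk-ratio form of Le Cam described above is essential: if the risk under $P_0$ is small, the risk under $P_1$ must be $\gtrsim \rho^2$, while the oracle under $P_1$ is also $\asymp\rho^2$. We then balance the two ratios, $R_0/\fR_0$ and $R_1/\fR_1$, using the bounded $\chi^2$ to transfer error from one hypothesis to the other. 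We expect this balancing to lose only constants.
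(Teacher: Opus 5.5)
There is a genuine gap. In both of your constructions only the null configuration has a small oracle rate. As a result, the families you test against have intrinsic cost $O(1)$, and no risk-ratio balancing can produce the claimed bounds.

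\textbf{Second term.} Under $\mathbf h^{(1)}=(g,\dots,g)$ every source is biased by $g\asymp\tau/\sqrt{n_0}$. Hence $\mathfrak R(\mathbf h^{(1)},\mathbf n,\tau)=\min\{d\tau^2/n_0,\,d\tau^2/(n_0+mn)+g^2\}\gtrsim g^2+d\tau^2/(n_0+mn)$. The best Le Cam can then give is $\max_j R_j/\mathfrak R_j\ge(R_0+R_1)/(\mathfrak R_0+\mathfrak R_1)\gtrsim g^2/\bigl(g^2+d\tau^2/(n_0+mn)\bigr)$, which is at most a constant. This is not slack in the inequality. The non-adaptive pooled estimator $(n_0\widetilde\theta_0+n\sum_k\widetilde\theta_k)/(n_0+mn)$ has worst-case risk $\lesssim d\tau^2/(n_0+mn)+\max_k h_k^2$, which is within a constant of the oracle rate at both $\mathbf h^{(0)}$ and $\mathbf h^{(1)}$.

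\textbf{First term.} The same estimator defeats your first construction. As you note yourself, the oracle rate under $P_1$ is already of order $\rho^2+d\tau^2/(n_0+mn)$, and $\rho^2\lesssim d\tau^2/(n_0+n)$ throughout your admissible range. So the balancing you describe can only certify a cost of order one. Relatedly, your second construction does not generalize the $m=2$ sketch. There the two configurations are $(g_1,g_2)$ and $(g_2,g_1)$, so under each hypothesis some source remains close to the target.

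\textbf{Missing idea.} Every hypothesis must itself have oracle rate $\lesssim d\tau^2/(n_0+mn)$. That is, a constant fraction of sources must be exactly aligned with the target, and the hypotheses should differ only in \emph{which} sources are aligned.
\begin{itemize}
\item For the second term, the paper places half the sources at $\mu_1=0$ and half at $\mu_2=\delta u$, with $\theta_0^\star\in\{\mu_1,\mu_2\}$ and $\delta^2\asymp\tau^2/n_0$. The source law is identical under both hypotheses, Le Cam on $\widetilde\theta_0$ gives risk $\gtrsim\tau^2/n_0$, and both oracle rates are $\lesssim d\tau^2/(n_0+mn)$.
\item For the first term, it takes $\theta_0^\star=\alpha v_j$ and $\theta_i^\star=\xi_i\alpha v_j$ with i.i.d.\ Rademacher $\xi_i$. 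Every realization then has about $m/2$ aligned sources, so $\mathbb E_\xi\mathfrak R\lesssim d\tau^2/(n_0+mn)$. The inequality $\sup_i a_i/b_i\ge\sum_i a_i/\sum_i b_i$ reduces the cost to a Bayes risk over the mixtures $\overline P_j$.
\end{itemize}

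\textbf{Fano is needed on the target side.} That Bayes risk must be bounded by Fano over a packing $\{v_j\}$ with $\log M\asymp d$, not by a null-versus-mixture test. A null-versus-mixture bound tolerates only $\rho^2\lesssim\tau^2\sqrt d/n_0$ on the target side, or $\tau^2/n_0$ for a fixed direction, since $\|\widetilde\theta_0\|_2^2$ already detects larger shifts. Fano instead allows $D_{\mathrm{KL}}(\overline P_j\|P_\star)\lesssim n_0\alpha^2/\tau^2+mn^2\alpha^4/\tau^4$ to be as large as $\tfrac18\log M\asymp d$. This yields $\alpha^2\asymp\tau^2\bigl(\tfrac{d}{n_0}\wedge\tfrac1n\sqrt{d/m}\bigr)$.

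Your second-moment computation for the sources, with threshold $\rho^2\lesssim\frac{\tau^2}{n}\sqrt{d/m}$, is the right ingredient; it is the same idea as the paper's $\cosh(n\alpha^2/\tau^2)$ calculation. It has to be embedded in this always-aligned family.
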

While the detailed proof is deferred to Appendix~\ref{proof:thm:cost-lower-bound-general-m}, it is
helpful to note that the lower bound is already witnessed by a highly structured subclass in which
the source means form only two clusters. The proof sketch below explains how this clustered
structure yields the two terms in the theorem.




\smallskip

\paragraph{Proof sketch:} The lower bound is driven by two complementary clustered constructions, corresponding to the two terms in the maximum.

\medskip

\noindent
{\it First construction: random-sign two-cluster family.}
Consider a finite packing \(v_1,\dots,v_M\subset \bbS^{d-1}\) of the unit sphere with pairwise-separated directions and \(\log M \asymp d\). For each $k$, we place the target at \(\theta_0^\star=+\alpha v_k\) and assign each source independently, with an i.i.d.\ Rademacher sign, to one of the two cluster centers, $+\alpha v_k $ or $ -\alpha v_k$.
Conditional on the signs, the sources take only two distinct values, so the resulting model has the required two-cluster structure. At the same time, a positive fraction of the sources are aligned with the target, so the oracle risk is of order
\[
\fR(\bh,\bn,\tau)\asymp \frac{d\tau^2}{n_0+mn}.
\]
Contrarily, after marginalizing over the random signs, distinguishing the different directions \(v_k\) becomes difficult. The target still contributes a first-order Kullback–Leibler term of size \(n_0\alpha^2/\tau^2\), but the source contribution is only of second order. More precisely, one compares each resulting mixture law to a common Gaussian reference and bounds the corresponding divergence by \(\chi^2\): the one-source Rademacher mixture has a divergence of order \(n^2\alpha^4/\tau^4\), and hence the \(m\) independent source coordinates contribute \(mn^2\alpha^4/\tau^4\) overall. Therefore, the mixture laws indexed by different packed directions remain close provided
\[
\alpha^2
\asymp
\min\!\left\{
\frac{\tau^2 d}{n_0},
\,
\frac{\tau^2}{n}\sqrt{\frac{d}{m}}
\right\}
\]
and a Fano argument then yields the lower bound
\[
\inf_{\widehat\theta_0}
\sup_{\bh\in[0,1]^m}
\frac{
	\sup_{P\in\mathcal P(\bh,\bn,\tau)}
	\mathbb E_P\!\left[\|\widehat\theta_0-\theta_0^\star\|_2^2\right]
}{
	\fR(\bh,\bn,\tau)
}
\;\gtrsim\;
\frac{n_0+mn}{d\tau^2}\,
\min\!\left\{
\frac{\tau^2 d}{n_0},
\,
\frac{\tau^2}{n}\sqrt{\frac{d}{m}}
\right\}.
\]

\medskip

\noindent
{\it Second construction: deterministic balanced two-cluster family.}
The second construction (which is similar to the one in the two-source lower bound~\eqref{eq:two-source-lower-bound-construction}) is simpler and yields a complementary term. Choose two vectors \(\mu_1,\mu_2\in\mathbb R^d\) such that
\[
\|\mu_1-\mu_2\|_2^2\asymp \frac{\tau^2}{n_0},
\]
place half of the sources at \(\mu_1\) and the other half at \(\mu_2\), and let the target be either \(\mu_1\) or \(\mu_2\). Again, this model has the same two-cluster structure. In either case, an oracle can identify the correct cluster and pool the target with roughly \(m/2\) aligned sources, giving oracle risk:
\[
\cO\left(\frac{d\tau^2}{n_0+mn}\right).
\]
However, from the learner's point of view, deciding whether \(\theta_0=\mu_1\) or \(\theta_0=\mu_2\) is merely a two-point testing problem at the target noise scale \(\tau^2/n_0\). By Le Cam's method, this forces the squared-error risk to be at least of order \(\tau^2/n_0\), and hence
\[
\inf_{\widehat\theta_0}
\sup_{\bh\in[0,1]^m}
\frac{
	\sup_{P\in\mathcal P(\bh,\bn,\tau)}
	\mathbb E_P\!\left[\|\widehat\theta_0-\theta_0^\star\|_2^2\right]
}{
	\fR(\bh,\bn,\tau)
}
\;\gtrsim\;
\frac{n_0+mn}{d\,n_0}.
\]

Together, these two clustered constructions yield Theorem~\ref{thm:cost-lower-bound-general-m}.

\begin{remark}
The clustered lower-bound constructions above also fit the restricted bias structure
\(\Theta_0(h,\cA,\|\cdot\|)\) from \eqref{eq:bias-structure-1}, which has been studied in the earlier literature. Indeed, in both constructions, if we let \(\cA\) denote the set of sources that lie in the same cluster as the target, then for every \(k\in\cA\) we have \(\theta_k^\star=\theta_0^\star\). Hence each such instance belongs to \(\Theta_0(0,\cA,\|\cdot\|_2)\), and therefore also to \(\Theta_0(h,\cA,\|\cdot\|_2)\) for any \(h\ge 0\). In this sense, the hard family used in the proof is not only clustered, but is already contained in the more restrictive class considered in earlier work. Consequently, the same construction can also be used to quantify the cost of adaptation for that restricted model.
\end{remark}

Theorem \ref{thm:cost-lower-bound-general-m} reflects two distinct obstructions. The first construction captures the genuinely many-source difficulty: as $m$ grows, the source-selection problem becomes more complex, and the cost of adaptation increases with $m$. The second construction is closer to the two-source phenomenon discussed earlier and shows that even in a simple two-cluster model, adaptation already incurs a nontrivial cost at the target noise scale. In this sense, the growing-$m$ regime combines a new combinatorial difficulty with the basic source-selection obstruction already present for fixed $m$.

If $mn \lesssim n_0$, \ie, if the combined source sample size is no larger than the target sample size up to constants, then transfer learning is not essential: the target-only estimator $\widetilde \theta_0$ already attains the optimal rate $d\tau^2/n_0$. We therefore solely focus on the transfer-relevant regime $n_0 \ll mn$.
\begin{corollary}[Transfer-relevant regime \(n_0\ll mn\)]
	\label{cor:adaptation-cost-lb-general-m}
	If \(n_0\ll mn\), then
	\[
	\mathfrak C^\star([0,1]^m)
	\;\gtrsim\;
	\left(
	\frac{mn}{n_0}
	\wedge
	\sqrt{\frac{m}{d}}
	\right)
	\vee
	\frac{mn}{d\,n_0} \vee 1.
	\]
	in particular if \(n_0=n\), then
	\[
	\mathfrak C^\star([0,1]^m)
	\;\gtrsim\;
	\frac{m}{d}
	\;\vee\;
	1.
	\]
\end{corollary}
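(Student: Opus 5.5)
The corollary is a direct simplification of Theorem~\ref{thm:cost-lower-bound-general-m}. Under $n_0\ll mn$ we have $n_0+mn\asymp mn$, and the plan is to substitute this into each of the two terms of the maximum and simplify. The trailing $\vee 1$ is free: by definition $\fC^\star(\cH)\ge c$ for a constant $c>0$, because of the lower bound in Theorem~\ref{thm:oracle-minimax-rate}. For every fixed $\bh$, the ratio of any estimator's worst-case risk to $\fR(\bh,\bn,\tau)$ is already at least $c$.

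\emph{First term.} Distributing the prefactor $\frac{n_0+mn}{d}\asymp\frac{mn}{d}$ inside the minimum gives
\[
\frac{mn}{d}\left(\frac{d}{n_0}\wedge\frac1n\sqrt{\frac dm}\right)
=\frac{mn}{n_0}\wedge \frac{m}{d}\sqrt{\frac dm}
=\frac{mn}{n_0}\wedge\sqrt{\frac md}.
\]
The minimum is preserved because multiplying by a positive factor commutes with $\wedge$.

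\emph{Second term.} Directly, $\frac{n_0+mn}{dn_0}\asymp\frac{mn}{dn_0}$. Taking the maximum of the two terms together with the constant $1$ yields the first display, with the implicit constant depending only on $c_L$ and on the constant in $n_0+mn\ge mn$. For a lower bound we in fact only need $n_0+mn\ge mn$, so the hypothesis $n_0\ll mn$ matters only for interpretability and not for validity.

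\emph{Case $n_0=n$.} The first term becomes $m\wedge\sqrt{m/d}$ and the second becomes $m/d$. Since $d\ge1$, we have $\sqrt{m/d}\le m$, so the minimum is $\sqrt{m/d}$. Now compare $\sqrt{m/d}$ with $m/d\vee1$. If $m\ge d$, then $\sqrt{m/d}\le m/d$. If $m<d$, then $\sqrt{m/d}<1$. In either case $\sqrt{m/d}\le \frac md\vee1$, so the whole maximum equals $\frac md\vee 1$, which gives the second display.

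There is no real obstacle here. The only points needing care are to use the inequality $n_0+mn\ge mn$ in the correct direction, so that the bound stays a lower bound, and to justify the constant-order floor $\vee 1$ through the oracle lower bound of Theorem~\ref{thm:oracle-minimax-rate}.
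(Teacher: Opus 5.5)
Your proposal is correct and follows the paper's route: the corollary comes from substituting $n_0+mn\ge mn$ into the two terms of Theorem~\ref{thm:cost-lower-bound-general-m}, and your case analysis for $n_0=n$ is right. For the $\vee 1$ floor, the paper asserts $\mathfrak{C}^\star([0,1]^m)\ge 1$ at the end of Step~1 of that theorem's proof. Your version, a constant $c>0$ coming from the lower bound in Theorem~\ref{thm:oracle-minimax-rate}, is the more careful statement of the same fact.
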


\subsubsection{An upper bound on the cost of general adaptation}

\label{subsubsec:model-selection}

To obtain a general upper bound, we now analyze a model-selection-based estimator.  Given a family $\cM \subset 2^{[m]}$ of subsets of source domains, the idea is to build a collection of candidate transfer estimators, one for each subset, and then use held-out target data to select the best candidate. The idea builds on the model-selection-based aggregation in \citet{tsybakov2003optimal} to make it suitable for multi-source transfer learning, where users have access to only $\{\widetilde \theta_k\}_{k = 1}^m$, the target data, and $\bn$.
We begin by splitting the target data into two equal parts and then constructing two independent estimators $\widetilde\theta_0^{(1)}$ and $\widetilde \theta_0^{(2)}$, such that both satisfy the sub-Gaussian assumption \eqref{eq:sub-gaussian} with an effective sample size $n_0/2$. We use $\widetilde \theta_0^{(2)}$ to construct candidate integrated estimators and $\widetilde \theta_0^{(1)}$ for validation.

Let $M = |\cM|$. Given an enumeration $\cM = \{S_1, \dots, S_M\}$, define the candidate estimators
\[
\xi _j = \frac{\sum_{k \in S_j} n_k \widetilde \theta_k + \frac{n_0}{2} \widetilde \theta_0^{(2)}}{\sum_{k \in S_j} n_k + \frac{n_0}{2}}, \quad j = 1, \dots, M\,.
\]
We then define the model-selection estimator as the candidate estimator with the minimum validation error:
\[
\widehat \theta_0^{\mathrm{MS}} := \xi_{\hat j}, \qquad \hat j:= \arg\min_{j = 1, \dots, M} \textstyle \left\|\xi_j - \widetilde \theta_0^{(1)}\right\|_2^2.
\]
\begin{theorem}\label{thm:model-selection}
	For $j = 1, \dots, M$, define $N_j := \sum_{k \in S_j} n_k + n_0 / 2$.  There exists a universal $C_1 > 0$ such that
	\[
	\begin{aligned}
		\Ex\left[\left\|\widehat \theta_0^{\mathrm{MS}} - \theta_0^\star\right\|_2^2\right] & \le C_1 \cdot \left[\min_{j = 1, \dots, M} \left(\textstyle \frac{ 1}{N_j^2} \Big \|\sum_{k \in S_j} n_k  (\theta_k^\star - \theta_0^\star) \Big\|_2^2  + \frac{d\tau^2}{N_j} \right)
		+\tau^2 \frac{\log M \wedge d}{n_0}\right]\,. 
	\end{aligned}
	\]
\end{theorem}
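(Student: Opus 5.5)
The proof is a standard "validation on held-out data" oracle inequality, built on one fact: the candidates $\xi_1,\dots,\xi_M$ are functions of $\{\widetilde\theta_k\}_{k=1}^m$ and $\widetilde\theta_0^{(2)}$ only, so they are independent of the validation estimator $\widetilde\theta_0^{(1)}$.

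\emph{Basic inequality.} Write $\varepsilon := \widetilde\theta_0^{(1)} - \theta_0^\star$. By \eqref{eq:sub-gaussian} with effective sample size $n_0/2$, and since the maximum there ranges over all unit directions $\pm a$, $\varepsilon$ satisfies two-sided sub-Gaussian tail bounds with variance proxy $2\tau^2/n_0$. Fix a deterministic index $j^\star$ attaining the minimum on the right-hand side of the theorem. By definition of $\hat j$,
\[
\|\xi_{\hat j}-\widetilde\theta_0^{(1)}\|_2^2\le \|\xi_{j^\star}-\widetilde\theta_0^{(1)}\|_2^2 .
\]
Expanding around $\theta_0^\star$ gives
\[
\|\xi_{\hat j}-\theta_0^\star\|_2^2\le\|\xi_{j^\star}-\theta_0^\star\|_2^2+2\langle\varepsilon,u_{\hat j}\rangle,
\qquad u_j:=\xi_j-\xi_{j^\star}.
\]
Set $Z:=\max_j \langle \varepsilon, u_j/\|u_j\|_2\rangle_+$, with the convention $0/0=0$. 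Then, using $\|u_{\hat j}\|_2\le \|\xi_{\hat j}-\theta_0^\star\|_2+\|\xi_{j^\star}-\theta_0^\star\|_2$ and $2ab\le a^2/4+4b^2$,
\[
2\langle\varepsilon,u_{\hat j}\rangle\le \tfrac12\|\xi_{\hat j}-\theta_0^\star\|_2^2+\tfrac12\|\xi_{j^\star}-\theta_0^\star\|_2^2+8Z^2 .
\]
Rearranging yields
\[
\|\xi_{\hat j}-\theta_0^\star\|_2^2\le 3\|\xi_{j^\star}-\theta_0^\star\|_2^2+16Z^2 .
\]

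\emph{Controlling $Z^2$.} I would prove two bounds on $\Ex[Z^2]$ and take the smaller.
\begin{itemize}
\item Conditionally on the candidates, the unit vectors $u_j/\|u_j\|_2$ are fixed and independent of $\varepsilon$. So $Z$ is the positive part of a maximum of $M$ sub-Gaussian variables with proxy $2\tau^2/n_0$. Integrating the union-bound tail gives $\Ex[Z^2]\lesssim \tau^2(\log M+1)/n_0$, with constants depending on $C_\tau$.
\item Alternatively, $Z\le\|\varepsilon\|_2$. A $1/2$-net argument on $\bbS^{d-1}$ combined with \eqref{eq:sub-gaussian} gives $\Ex\|\varepsilon\|_2^2\lesssim d\tau^2/n_0$.
\end{itemize}
Together these give $\Ex[Z^2]\lesssim \tau^2(\log M\wedge d)/n_0$, up to the additive $1$, which is absorbed into the target term of order $\tau^2/n_0$ that every candidate variance already dominates.

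\emph{Oracle candidate risk.} Write
\[
\xi_{j^\star}-\theta_0^\star=\frac{1}{N_{j^\star}}\sum_{k\in S_{j^\star}} n_k(\theta_k^\star-\theta_0^\star)+\frac{1}{N_{j^\star}}\Big(\sum_{k\in S_{j^\star}} n_k(\widetilde\theta_k-\theta_k^\star)+\tfrac{n_0}{2}(\widetilde\theta_0^{(2)}-\theta_0^\star)\Big).
\]
The first term is exactly the bias appearing in the theorem. For the second term I need $\Ex\|\cdot\|_2^2\lesssim d\tau^2/N_{j^\star}$.

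\emph{Main obstacle.} This variance step is where I expect the difficulty: \eqref{eq:sub-gaussian} only gives one-directional tails and does not force $\Ex\widetilde\theta_k=\theta_k^\star$, so I cannot simply add variances. I would instead argue as follows.
\begin{itemize}
\item Each summand is independent, and in every direction $a$ the quantity $a^\top(\widetilde\theta_k-\theta_k^\star)$ has tails of sub-Gaussian proxy $\tau^2/n_k$. Its mean therefore has size $O(\tau/\sqrt{n_k})$ and its centered part is sub-Gaussian with comparable proxy.
\item Summing with weights $n_k/N_{j^\star}$: the centered part is sub-Gaussian with proxy $\lesssim\tau^2/N_{j^\star}$ in each direction. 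A net argument then gives $\lesssim d\tau^2/N_{j^\star}$.
\item The mean part has squared norm at most $d$ times the square of $\sum_k (n_k/N_{j^\star})\,\tau/\sqrt{n_k}$. This is the step I am least sure closes at the desired scale without further care, since the means could add coherently across sources.
\end{itemize}
If that fails, I would interpret \eqref{eq:sub-gaussian} as also controlling centered deviations, which is the case in the paper's examples, where the estimators are Gaussian and unbiased.

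Taking expectations in the basic inequality and combining the three pieces gives the claim with a universal $C_1$.
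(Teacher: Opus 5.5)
Your argument is essentially the paper's proof. Like the paper, you condition on the candidates and use the independence of $\widetilde\theta_0^{(1)}$. You control the cross term by a union bound over $M$ directions, or by $\|\varepsilon\|_2$ when $d<\log M$; your fixed comparator $j^\star$ with the self-normalized $Z$ is a slightly cleaner version of the paper's high-probability comparison with the conditional best index $j_0$. You then finish with the bias--variance bound $\Ex\|\xi_j-\theta_0^\star\|_2^2\le\bigl\|N_j^{-1}\sum_{k\in S_j}n_k(\theta_k^\star-\theta_0^\star)\bigr\|_2^2+d\tau^2/N_j$.

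That last step is where the paper silently makes exactly your fallback assumption: $\Ex\widetilde\theta_k=\theta_k^\star$ with covariance $\preceq(\tau^2/n_k)I_d$. Your worry is justified: a common shift of order $\tau/\sqrt{n}$ in every source is compatible with \eqref{eq:sub-gaussian} but breaks that step once $|S_j|\gg d$.

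One small fix concerns the additive $\tau^2/n_0$. Absorb it using $\log M\wedge d\ge\log 2$ for $M\ge 2$, and note $Z\equiv 0$ when $M=1$. Do not absorb it via the candidate variances, since $d\tau^2/N_j$ can be much smaller than $\tau^2/n_0$.
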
 
This oracle inequality shows that the selected estimator performs nearly as well as the best candidate in the model class, up to the additional validation penalty of $\cO(\tau^2(\log M\wedge d)/n_0)$.

\begin{corollary}[Upper bound for intrinsic cost]
	\label{cor:adaptation-cost-ub-general-m}
	Focus on the transfer-relevant regime: $n_0 \ll mn$. Let $\cM = 2^{\{1, \dots, m\}}$ be the set of all candidate estimators. Then, the above theorem implies the following upper bound:
	\[
	 \fC^\star([0, 1]^m) \le \fC(\widehat \theta_0^{\mathrm{MS}}, [0, 1]^m)  \lesssim \left\{\left(1 \wedge \frac{m}{d}\right) \frac{m n }{n_0} \right\} \vee 1\,. 
	\]
\end{corollary}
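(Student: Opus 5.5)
The plan is to apply the oracle inequality of Theorem~\ref{thm:model-selection} with the full family $\cM=2^{\{1,\dots,m\}}$ and compare each of its two terms to the oracle rate $\fR(\bh,\bn,\tau)$ of Theorem~\ref{thm:oracle-minimax-rate}. Here $M=2^m$, so $\log M\wedge d = (m\log 2)\wedge d\le m\wedge d$. Since the estimator $\widehat\theta_0^{\mathrm{MS}}$ does not use $\bh$, it is admissible in Definition~\ref{def:intrinsic-cost}. This gives the first inequality $\fC^\star([0,1]^m)\le \fC(\widehat\theta_0^{\mathrm{MS}},[0,1]^m)$ for free. It remains to bound, uniformly over $\bh\in[0,1]^m$ and $P\in\cP(\bh,\bn,\tau)$, the ratio of the right-hand side of Theorem~\ref{thm:model-selection} to $\fR(\bh,\bn,\tau)$.

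\emph{Step 1 (the oracle term).} Fix $\bh$ and let $S_{\bh}\ni 0$ attain the minimum in \eqref{eq:loca-loptimal-rate}. Take the candidate $S_j:=S_{\bh}\setminus\{0\}\in\cM$. For every $k\in S_j$ we have $\|\theta_k^\star-\theta_0^\star\|_2\le h_k\le h_{S_{\bh}}$. By the triangle inequality,
\[
\frac{1}{N_j}\Big\|\sum_{k\in S_j} n_k(\theta_k^\star-\theta_0^\star)\Big\|_2\le \frac{\sum_{k\in S_j}n_k}{N_j}\,h_{S_{\bh}}\le h_{S_{\bh}}.
\]
Moreover $N_j=N_{S_{\bh}}-n_0/2\ge N_{S_{\bh}}/2$, so $d\tau^2/N_j\le 2d\tau^2/N_{S_{\bh}}$. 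Hence the minimum over $j$ in Theorem~\ref{thm:model-selection} is at most $2\fR(\bh,\bn,\tau)$. This term therefore contributes only a constant to the cost.

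\emph{Step 2 (the validation penalty).} This step is the only one where the claimed rate arises, and I expect it to be the only mildly delicate point. Every $S\ni 0$ satisfies $h_S\ge 0$ and $N_S\le n_0+mn$, so $\fR(\bh,\bn,\tau)\ge d\tau^2/(n_0+mn)$ for every $\bh$. Combining this with $\log M\wedge d\le m\wedge d$ gives
\[
\frac{\tau^2(\log M\wedge d)/n_0}{\fR(\bh,\bn,\tau)}\le \frac{(m\wedge d)(n_0+mn)}{d\,n_0}
=\Big(1\wedge\frac md\Big)\Big(1+\frac{mn}{n_0}\Big)\le 1+\Big(1\wedge\frac md\Big)\frac{mn}{n_0}.
\]

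\emph{Step 3 (conclusion).} Adding Steps 1 and 2 and taking the supremum over $\bh$ and $P$ yields
\[
\fC(\widehat\theta_0^{\mathrm{MS}},[0,1]^m)\lesssim \Big\{\Big(1\wedge\frac md\Big)\frac{mn}{n_0}\Big\}\vee 1,
\]
with constants depending only on $C_1$ (and through it on $C_\tau$). One bookkeeping check remains: the split target estimators $\widetilde\theta_0^{(1)},\widetilde\theta_0^{(2)}$ satisfy \eqref{eq:sub-gaussian} with sample size $n_0/2$. This only changes constants, and it is already built into Theorem~\ref{thm:model-selection}. Finally, the assumption $n_0\ll mn$ is not needed for the inequality itself. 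It only ensures that the first term is the relevant one, since otherwise the bound reduces to $O(1)$.
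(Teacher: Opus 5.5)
Your proposal is correct and follows the paper's proof: apply Theorem~\ref{thm:model-selection} with $M=2^m$, bound the best-candidate term by a constant multiple of $\mathfrak{R}(\bh,\bn,\tau)$, and divide the validation penalty $\tau^2(m\wedge d)/n_0$ by the lower bound $\mathfrak{R}(\bh,\bn,\tau)\ge d\tau^2/(n_0+mn)$. Your version is slightly more careful than the paper's. You explicitly verify the oracle-term comparison using the candidate $S_{\bh}\setminus\{0\}$, which the paper only asserts. You also keep $n_0+mn$ rather than $mn$ in that lower bound, so the assumption $n_0\ll mn$ is not needed for the inequality.
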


\begin{corollary}\label{cor:adaptation-cost-lb-general-m-implications}
	Corollary \ref{cor:adaptation-cost-ub-general-m} shows that model selection achieves general adaptation when $m = \cO(d \wedge \sqrt{dn_0/ n})$. On the other hand, Corollary \ref{cor:adaptation-cost-lb-general-m} implies that general adaptation is impossible when $m \gg d \wedge (dn_0 / n)$. The lower and upper bounds, therefore, lead to the following conclusions.
	\begin{itemize}
		\item Let $d \lesssim  n_0/n $. In this regime, $\cO(d \wedge \sqrt{dn_0/ n})  = \cO(d \wedge (dn_0 / n)) = \cO(d)$. Consequently, the boundary $m = \cO(d)$ fully characterizes whether general adaptation is possible: general adaptation is achieved by model selection when $m= \cO(d)$, and general adaptation is impossible when $m \gg d$. 
		
		\item Alternatively, let $n_0 / n \ll d$. Then $\cO(d \wedge \sqrt{dn_0/ n}) = \cO( \sqrt{dn_0/ n}) \ll d \wedge dn_0/ n$. In this regime, Corollary \ref{cor:adaptation-cost-ub-general-m} implies that general adaptation is achieved by model selection when $m = \cO( \sqrt{dn_0/ n})$, while Corollary \ref{cor:adaptation-cost-lb-general-m} shows that general adaptation is impossible when $m \gg d \wedge dn_0/ n$. Since there is a gap between $\cO( \sqrt{dn_0/ n})$ and $\cO(d \wedge (dn_0 / n))$, the two corollaries do not fully characterize whether adaptation is possible in the intermediate regime $ \sqrt{dn_0/ n} \ll m \lesssim d \wedge (dn_0 / n)$.
	\end{itemize}

\end{corollary}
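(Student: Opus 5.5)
The plan is to read off this corollary directly from Corollary~\ref{cor:adaptation-cost-ub-general-m} (upper bound) and Corollary~\ref{cor:adaptation-cost-lb-general-m} (lower bound), doing elementary case analysis on the sizes of $m$, $d$ and $n_0/n$. Throughout, ``general adaptation is achieved'' means $\fC^\star([0,1]^m)=\cO(1)$ (up to constants), and ``impossible'' means $\fC^\star([0,1]^m)\to\infty$. First, the regime $mn\lesssim n_0$ is disposed of separately. There $N_S\le n_0+mn\lesssim n_0$ for every $S$, so Theorem~\ref{thm:oracle-minimax-rate} gives $\fR(\bh,\bn,\tau)\gtrsim d\tau^2/n_0$ uniformly in $\bh$. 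Hence the target-only estimator $\widetilde\theta_0$, whose MSE is at most of order $d\tau^2/n_0$ by \eqref{eq:sub-gaussian}, already has cost $\cO(1)$. We may therefore work in the transfer-relevant regime $n_0\ll mn$, where both corollaries apply.

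\emph{Achievability.} By Corollary~\ref{cor:adaptation-cost-ub-general-m}, $\fC(\widehat\theta_0^{\mathrm{MS}},[0,1]^m)\lesssim \{(1\wedge m/d)\,mn/n_0\}\vee 1$. If $m\lesssim d$, then $(1\wedge m/d)\asymp m/d$, and the bound becomes $\{m^2 n/(dn_0)\}\vee 1$. This is $\cO(1)$ exactly when $m\lesssim\sqrt{dn_0/n}$. So $m=\cO(d\wedge\sqrt{dn_0/n})$ yields bounded cost for model selection, and therefore for $\fC^\star$.

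\emph{Impossibility.} By Corollary~\ref{cor:adaptation-cost-lb-general-m}, $\fC^\star\gtrsim (mn/n_0\wedge\sqrt{m/d})\vee mn/(dn_0)$. If $m\gg dn_0/n$, the last term $mn/(dn_0)$ diverges. If instead $m\gg d$, then $\sqrt{m/d}\to\infty$, and $mn/n_0\to\infty$ because we are in the regime $n_0\ll mn$; hence their minimum diverges. Either way, $m\gg d\wedge (dn_0/n)$ forces an unbounded intrinsic cost.

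\emph{The two bullets.} These are arithmetic comparisons of the thresholds. If $d\lesssim n_0/n$, then $dn_0/n\gtrsim d^2\ge d$ and $\sqrt{dn_0/n}\gtrsim d$. Hence both $d\wedge\sqrt{dn_0/n}$ and $d\wedge dn_0/n$ are of order $d$, and the two results meet at $m\asymp d$. If $n_0/n\ll d$, then $\sqrt{dn_0/n}=d\sqrt{n_0/(nd)}\ll d$. Moreover, $\sqrt{x}\le x$ for $x=dn_0/n\ge 1$, which gives $\sqrt{dn_0/n}\lesssim d\wedge dn_0/n$ and leaves the stated intermediate window. The only step requiring care, and the one I expect to be the main (mild) obstacle, is this nondegeneracy in the second bullet. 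When $dn_0/n<1$, the condition $m\ge1$ already exceeds both thresholds, so I will note that the comparison is only meaningful when $dn_0/n\gtrsim1$. The same care is needed to make sure the lower bound's use of $mn/n_0\to\infty$ is justified by the standing assumption $n_0\ll mn$.
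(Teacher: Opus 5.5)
Your proposal is correct and is essentially the paper's own argument: the paper gives no separate proof, and simply reads off the thresholds from Corollaries~\ref{cor:adaptation-cost-ub-general-m} and~\ref{cor:adaptation-cost-lb-general-m} under the standing assumption $n_0\ll mn$, then compares $\sqrt{dn_0/n}$, $d$, and $dn_0/n$ in the two regimes. Your caveats are sound refinements that the paper leaves implicit, with one small addition: when $dn_0/n<1$, the lower bound should be read with $m\ge 2$, since its balanced two-cluster construction needs both groups nonempty and for $m=1$ Remark~\ref{remark:adaptation-m-1} gives only a logarithmic cost; so your observation that every $m\ge 1$ exceeds both thresholds does not translate into impossibility at $m=1$.
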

In summary, the analysis in Section~\ref{sec:bias-adaptation} suggests that the intrinsic cost of adaptation is governed by the difficulty of identifying useful sources. In the single-source case, the learner only faces the question of whether transfer should occur, and adaptation is correspondingly more tractable. In contrast, for multiple sources, uncertainty about which subset of domains yields the best bias-variance trade-off leads to qualitatively harder behavior, reflected in the fixed-$m$ result of Theorem~\ref{thm:cost-fixed-m}. In the growing-$m$ regime, Theorem~\ref{thm:cost-lower-bound-general-m} and Corollary~\ref{cor:adaptation-cost-ub-general-m} further show that this difficulty acquires a combinatorial dimension. Overall, these results help explain why unrestricted adaptation can fail in multi-source transfer learning, and they motivate the additional structural assumptions introduced in Section~\ref{sec:adaptation-general}.
\section{Adaptable configurations under general settings} \label{sec:adaptation-general}

In the previous section, we analyzed the cost of general adaptation and identified several scenarios where adapting to the bias is impossible. Given these limitations, it is natural to explore specific conditions under which the cost of adaptation is meaningfully reduced. In this section, we investigate three such restrictions, focusing on reduced configuration sets and potentially imposing additional constraints on the parameters $\{\theta_k^\star\}_{k= 0}^m$.


Specifically, we explore three structural restrictions motivated by the existing literature, each serving as a mechanism to incorporate available prior knowledge. In Section~\ref{subsec:ordered-bias}, we consider source domains arranged in increasing order of their bias levels \citep{hanneke2022no}. In Section~\ref{sec:clustering-method}, we assume the parameters $\{\theta_k^\star\}_{k= 0}^m$ exhibit a clustering structure \citep{duan2023adaptive}. Finally, in Section~\ref{sec:test-then-combine}, we posit that the parameters $\theta_k^\star$ for non-integrable source domains ($S_{\bh}^\complement$ in Remark~\ref{remark:bias-variance-tradeoff}) are sufficiently distant from $\theta_0^\star$ to be detected via a simple hypothesis test \citep{tian2023transfer}. 
By leveraging these restrictions, we propose tailored methodologies for each setting that substantially reduce the cost of adaptation.

Before proceeding to the individual subsections, a final remark is in order. The methodologies discussed in Sections \ref{subsec:ordered-bias} and \ref{sec:test-then-combine} require the scale parameter $\tau$ as input. Because our primary focus in this paper is on adaptation to $\bh$ rather than $\tau$, our analysis assumes that the true value of $\tau$ is known. Nevertheless, for practical applications, we propose a data-driven proxy for $\tau$ in Appendix \ref{supp:tau-proxy} when it is not known.

\subsection{Adaptation under ordered bias}
\label{subsec:ordered-bias}

Returning to the adaptable configuration set for two source domains (Figure~\ref{fig:adaptable-bias-1}), we observe that the region $ \{(h_1,h_2): 0 \le h_1 \le h_2 \le 1\}$ lies entirely within the configuration set. This suggests a natural structural assumption: the bias in the first source domain is no larger than that in the second. 
Motivated by this observation, for an arbitrary $m$, $\bn$, and $d$, we assume that the source domains are arranged according to their bias levels. The assumption is formalized as the following configuration set:


\begin{assumption}[Ordered biases]
\label{assmp:ordered-biases}
The source domains are ordered according to their bias levels $h_k$, \ie, $h_1 \le \cdots \le h_m$. Accordingly, we restrict the configuration set to
\[
\cH_{\mathrm{ordered}}
:=
\Bigl\{
\bh \in [0,1]^m : h_1 \le \cdots \le h_m
\Bigr\}.
\]
\end{assumption}

The above assumption substantially simplifies the optimization within the oracle rate~\eqref{eq:loca-loptimal-rate}. In the unrestricted setting, the optimization ranges over all possible subsets $\{0\} \subseteq S \subseteq \{0,\dots,m\}$, yielding $2^m$ candidates. Under the ordered bias assumption, however, we only need to consider prefix sets of the form
\[
S = \{0,\dots,k\}, \qquad k=0,\dots,m.
\]
This structural constraint substantially reduces the search space from $2^m$ arbitrary subsets to just $m+1$ prefixes.

Recall the model selection procedure discussed in Section~\ref{subsubsec:model-selection}: given a class $\cM $ of candidate sets, the resulting estimator incurs an adaptation penalty of order $\cO((\log |\cM| \wedge d)/n_0)$
(\cf\ Theorem~\ref{thm:model-selection}). Since the number of candidates is now reduced from $M = 2^m$ to $M = m+1$, one might intuitively hope to apply this procedure directly, thereby improving the penalty term from
\[
\textstyle \cO\!\left(\frac{m \wedge d}{n_0}\right)
\quad \text{to} \quad
\cO\!\left(\frac{\log m \wedge d}{n_0}\right).
\]
Unfortunately, even this reduced penalty is generally too large to achieve adaptation; specifically, the procedure fails because
\[
\textstyle \fR(\bh,\bn,\tau) \ll \frac{\log m \wedge d}{n_0}
\qquad \text{whenever} \qquad
h_m^2 \vee \frac{d\tau^2}{N_{\mathrm{total}}}
\ll \frac{\log m \wedge d}{n_0}.
\]

To successfully achieve adaptation, we now propose an alternative estimator that finds the optimal bias-variance trade-off by looking at a running intersection of  a sequence of high-probability confidence regions for $\theta_0^\star$. This construction is partly inspired by the rank-based procedure of \citet[Section~4.2]{hanneke2022no}, who utilizes an analogous intersection-based approach for a related transfer learning problem in binary classification.

\paragraph{The Intersection-Based Estimator.}
For $r = 0,\dots,m$, define the prefix estimators 
\begin{equation}
\label{eq:prefix-estimator-section}
\widehat\theta^{(r)}
:=
\sum_{k=0}^r w_{k,r}\,\widetilde\theta_k,
\qquad
w_{k,r}
:=
\frac{n_k}{\sum_{j=0}^r n_j}.
\end{equation}
Then, given a confidence level $\delta \in (0,1)$, define the confidence regions
\begin{equation}
\label{eq:rad-def-section}
\mathcal B_r
:=
\left\{
\theta \in \reals^d :
\|\theta - \widehat\theta^{(r)}\|_2 \le 2\rho_r
\right\}, \quad \rho_r
:=
\textstyle \frac{\tau}{\sqrt{\sum_{j=0}^r n_j}}
\left(
\sqrt d + \sqrt{\log\frac{m+1}{\delta}}
\right).
\end{equation}
Finally, the estimator is defined by selecting the largest $t$ such that all confidence regions up to $t$ intersect:
\begin{equation}
\label{eq:t-hat-section}
\textstyle \widehat\theta_0
:=
\widehat\theta^{(\hat t)},
\qquad
\hat t
:=
\max\left\{
t : \bigcap_{r=0}^t \mathcal B_r \neq \varnothing
\right\}.
\end{equation}
The following theorem provides an appropriate specification for $\delta$ and demonstrates that $\widehat\theta_0$ successfully adapts over $\cH_{\mathrm{ordered}}$.

\begin{theorem}[Adaptation with the intersection estimator]
\label{thm:intersection-oracle-max} 
There exists a universal constant $C>0$ such that, for any $\bh \in \cH_{\mathrm{ordered}}$,
\begin{equation}
\label{eq:intersection-oracle-final-max}
\sup_{P \in \cP(\bh, \bn, \tau)} P \left( \big\|\hat\theta_0-\theta_0^\star\big\|_2^2
> 
C
\cdot \fR(\bh, \bn, \tau)  \log\left(\frac{m+1}{\delta}
\right) \right) \le \delta\,.
\end{equation}
Moreover, by setting $\delta = d\tau^2 /N_{\mathrm{total}}$, there exists a constant $C_1 > 0$ such that 
\[
\fC(\widehat \theta_0, \cH_{\mathrm{ordered}}) \le C_1 \log \left(\frac{m N_{\mathrm{total}}}{d \tau^2}\right)\,.
\] 
Consequently, $\widehat \theta_0$ adapts to the unknown bias within $\cH_{\mathrm{ordered}}$ up to a logarithmic factor.
\end{theorem}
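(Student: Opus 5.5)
The argument is a Lepski-type analysis on a single good event. The first step is to define that event. By the sub-Gaussian bound \eqref{eq:sub-gaussian} and a standard covering argument (a $1/2$-net of the sphere, or Hsu--Kakade--Zhang for sub-Gaussian vectors), each prefix average satisfies
\[
\|\widehat\theta^{(r)}-\bar\theta^{(r)}\|_2\le\rho_r
\]
with probability at least $1-\delta/(m+1)$. Here $\bar\theta^{(r)}:=\sum_{k\le r}w_{k,r}\theta_k^\star$. The constants in $\rho_r$ absorb $C_\tau$, and the $\log 9$ factor from the net goes into $\sqrt d$. A union bound over $r=0,\dots,m$ gives an event $E$ with $P(E)\ge1-\delta$ on which all these bounds hold at once. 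The deterministic fact needed next is that $\|\bar\theta^{(r)}-\theta_0^\star\|_2\le\max_{k\le r}h_k=h_r$, where ordering is used to identify $h_{\{0,\dots,r\}}=h_r$.

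Next, under Assumption~\ref{assmp:ordered-biases} the oracle rate is attained by a prefix. For any $S$, replacing it with $\{0,\dots,\max S\}$ keeps $h_S$ unchanged and enlarges $N_S$. Hence $\fR\asymp\min_r\{d\tau^2/N_r+h_r^2\}$, with $N_r:=\sum_{j\le r}n_j$. Let $r^\star$ be a minimizer. Write $L:=\log((m+1)/\delta)$, so that $\rho_r^2\lesssim d\tau^2L/N_r$.

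The key claims hold on $E$. Let $r^\dagger$ be the largest $r\le r^\star$ with $h_{r^\star}\le\rho_r$; the index $r=0$ qualifies when $h_{r^\star}\le\rho_0$. If no such $r$ exists, the trivial case $h_{r^\star}>\rho_0$ applies: then $\fR\gtrsim d\tau^2/n_0$, and a separate argument below shows every $\widehat\theta^{(\hat t)}$ stays within $O(\rho_0)$ of $\theta_0^\star$.

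\emph{Claim (i), no early stopping.} For all $r\le r^\dagger$, $\theta_0^\star\in\mathcal B_r$, because $\|\widehat\theta^{(r)}-\theta_0^\star\|\le\rho_r+h_r\le\rho_r+h_{r^\star}\le2\rho_r$. Hence $\hat t\ge r^\dagger$.

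\emph{Claim (ii), stability after $r^\dagger$.} For $t=\hat t\ge r^\dagger$, the balls $\mathcal B_{r^\dagger}$ and $\mathcal B_{\hat t}$ intersect, since all balls up to $\hat t$ share a point. Therefore
\[
\|\widehat\theta^{(\hat t)}-\theta_0^\star\|\le\|\widehat\theta^{(\hat t)}-\widehat\theta^{(r^\dagger)}\|+\|\widehat\theta^{(r^\dagger)}-\theta_0^\star\|\le 2\rho_{\hat t}+2\rho_{r^\dagger}+2\rho_{r^\dagger}\lesssim\rho_{r^\dagger},
\]
because $\rho_r$ is decreasing in $r$. The same argument with $r=0$ handles the trivial case, since $\mathcal B_0$ always meets $\mathcal B_{\hat t}$ and $\|\widehat\theta^{(0)}-\theta_0^\star\|\le\rho_0$.

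It then remains to show $\rho_{r^\dagger}^2\lesssim L\,\fR$. If $r^\dagger=r^\star$, this is $\rho_{r^\star}^2\lesssim L\,d\tau^2/N_{r^\star}\le L\fR$. Otherwise $r^\dagger<r^\star$ and $\rho_{r^\dagger+1}<h_{r^\star}$. This would give $\rho_{r^\dagger+1}^2\le\fR$, but I need $\rho_{r^\dagger}$, not $\rho_{r^\dagger+1}$, and $N_{r^\dagger}$ can be much smaller than $N_{r^\dagger+1}$ when sample sizes are unequal. This is the main obstacle.

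I would avoid it by not using a fixed $r^\dagger$. Instead I would compare with $\mathcal B_{\hat t}$ differently, splitting into two cases. If $\hat t\ge r^\star$, intersect $\mathcal B_{\hat t}$ with $\mathcal B_{r^\star}$. On $E$,
\[
\|\widehat\theta^{(r^\star)}-\theta_0^\star\|\le\rho_{r^\star}+h_{r^\star},
\]
so the error is at most $4\rho_{r^\star}+\rho_{r^\star}+h_{r^\star}$, whose square is $\lesssim L\fR$. If $\hat t<r^\star$, then ball $\hat t+1$ fails to meet $\bigcap_{r\le\hat t}\mathcal B_r$. In particular $\theta_0^\star\notin\mathcal B_{\hat t+1}$ or $\theta_0^\star\notin\mathcal B_r$ for some $r\le \hat t$. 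Either way, some $r\le\hat t+1\le r^\star$ has $\rho_r<h_r\le h_{r^\star}$, because on $E$ the failure $\|\widehat\theta^{(r)}-\theta_0^\star\|>2\rho_r$ forces $h_r>\rho_r$. I expect some care is needed to extract a usable index here, possibly the first index where $\theta_0^\star$ exits.

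In that case I would use the first exit index $s$, which satisfies $s\le\hat t+1$ and $\theta_0^\star\in\mathcal B_r$ for all $r<s$. Then $\rho_s<h_{r^\star}$. Also $\hat t\ge s-1$, and the ball $\mathcal B_{s-1}$ containing $\theta_0^\star$ meets $\mathcal B_{\hat t}$, which bounds the error by $O(\rho_{s-1})$. This still leaves the gap between $\rho_{s-1}$ and $\rho_s$, so some unequal-sample-size step is genuinely delicate. I would fall back on bounding $\|\widehat\theta^{(\hat t)}-\theta_0^\star\|\lesssim\rho_{\hat t}+h_{\hat t}$, using the ball of $\hat t$ itself with $h_{\hat t}\le h_{r^\star}$. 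Combined with $\rho_{\hat t}$, this requires $N_{\hat t}$ comparable to $N_{r^\star}$, and the exit argument is what controls that.

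The final step integrates to get the cost bound: with $\delta=d\tau^2/N_{\mathrm{total}}$,
\[
\Ex\|\widehat\theta_0-\theta_0^\star\|_2^2\le C\fR L+\Ex\bigl[\|\widehat\theta_0-\theta_0^\star\|_2^2\mathbf 1_{E^c}\bigr].
\]
Every prefix estimator is a convex combination, so its error is at most $\max_r\|\widehat\theta^{(r)}-\theta_0^\star\|\le 1+\max_r\|\widehat\theta^{(r)}-\bar\theta^{(r)}\|$, using $h_k\le1$. Cauchy--Schwarz then bounds the tail term by $O(\sqrt\delta)$ times a constant. To get the stated $\delta$-order bound instead, I would use the tail form of the high-probability statement, integrating over $\delta$. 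The result is $\lesssim d\tau^2/N_{\mathrm{total}}\le\fR$, and dividing by $\fR$ gives $\fC\lesssim\log(mN_{\mathrm{total}}/(d\tau^2))$.
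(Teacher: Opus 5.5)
There is a genuine gap. The case $\hat t<r^\star$ is never closed. This is the same obstruction you met with $r^\dagger<r^\star$ when $N_{r^\dagger}\ll N_{r^\dagger+1}$, and your first-exit/fallback route cannot close it as set up.

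The problem is the witness you use for $\bigcap_r\mathcal B_r\neq\varnothing$, namely $\theta_0^\star$. Membership $\theta_0^\star\in\mathcal B_t$ requires $h_t\lesssim\rho_t$. So when the intersections break at ball $\hat t+1$, this witness only tells you about $\rho_s$ at an exit index $s\le\hat t+1$, possibly $s=\hat t+1$. It never yields $\rho_{\hat t}\lesssim h_{r^\star}$, which is what your fallback bound $\|\widehat\theta^{(\hat t)}-\theta_0^\star\|_2\le\rho_{\hat t}+h_{\hat t}$ actually needs. Comparability of $N_{\hat t}$ and $N_{r^\star}$ is not what is required, and it can fail while the estimator is still fine.

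The missing idea is to witness non-emptiness with the population mean of a prefix estimator, $\bar\theta^{(t)}=\Ex\widehat\theta^{(t)}$. On $E$ it lies in $\mathcal B_t$ whatever $h_t$ is. For $r<t$ it satisfies $\|\bar\theta^{(t)}-\widehat\theta^{(r)}\|_2\le h_t+h_r+\rho_r\le 2h_t+\rho_r$.

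The paper pairs this with a shifted oracle index. With $\sigma_r^2:=d\tau^2/N_r$ and $\sigma_{-1}:=\infty$, it takes $r^\star:=\max\{r:\sigma_{r-1}\ge h_r\}$; by Lemma~\ref{lem:oracle-index-max} this index minimizes $h_r^2\vee\sigma_r^2$. Then $h_{r^\star}\le\sigma_{r^\star-1}\le\sigma_r$ for every $r<r^\star$. With a large enough constant in $\rho_r$ (the paper's $C_0$), this gives $\bar\theta^{(r^\star)}\in\bigcap_{r\le r^\star}\mathcal B_r$ on $E$. Hence $\hat t\ge r^\star$, and only your case $\hat t\ge r^\star$ remains. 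You handle that case correctly, indeed more carefully than the paper's Step~2, which simply asserts $\widehat\theta^{(\hat t)}\in\mathcal C_{\hat t}$. Comparing $h_{r^\star}$ with $\sigma_{r^\star-1}$ rather than $\sigma_{r^\star}$ is exactly what absorbs the jump from $N_{r^\star-1}$ to $N_{r^\star}$ that blocked you.

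The same witness also rescues your own case split without changing $r^\star$. If $\hat t<r^\star$, then $\bar\theta^{(\hat t+1)}\in\mathcal B_{\hat t+1}$ while $\bigcap_{r\le\hat t+1}\mathcal B_r=\varnothing$. So some $r\le\hat t$ has $2\rho_r<h_{\hat t+1}+h_r+\rho_r$. This gives $\rho_{\hat t}\le\rho_r<2h_{r^\star}$, and an error below $3h_{r^\star}$.

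Separately, in your last step you cannot ``integrate the tail over $\delta$'', because $\delta$ fixes the radii and hence the estimator itself. Instead, either apply Cauchy--Schwarz with $\delta=(d\tau^2/N_{\mathrm{total}})^2$, which changes the logarithm only by a constant factor, or bound $\Ex[\|\widehat\theta_0-\theta_0^\star\|_2^2\mathbf 1_{E^c}]$ directly from the sub-Gaussian tails. The paper itself does not write out this expectation step.
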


\begin{remark}[Adaptation for $m = 1$] 
\label{remark:adaptation-m-1}
When there is only one source domain ($m=1$), the ordering of source domains trivially holds. Consequently, a direct application of the preceding theorem indicates that the intrinsic cost of adaptation is $\mathcal{O}(1)$ up to a logarithmic factor, implying that adaptation is generally feasible. 
    
\end{remark}

As an interesting digression, our adaptation result can be directly applied to non-parametric regression with a fixed design. This yields a $k$-nearest neighbor (kNN) estimator with a data-driven choice of $k$ that automatically adapts to the underlying smoothness of the regression function. We detail this application below.

\begin{remark}[Adaptive kNN regressor for non-parametric regression]
    Suppose we observe data $\{(X_k, Y_k)\}_{k = 1}^m$, where $X_k = k/m$ forms a uniform fixed design on $[0,1]$, and $Y_i = f(X_i) + \varepsilon_i$. Here, $\{\varepsilon_i\}_{i = 1}^m$ are i.i.d.\ zero-mean sub-Gaussian random variables with variance proxy $1$, and $f: [0, 1] \to \reals$ is the unknown regression function. Assume that $f$ is $\alpha$-H\"older smooth for some $0 < \alpha \le 1$; that is, there exists an $L > 0$ such that
    \[
    |f(x) - f(x')| \le L |x - x'|^\alpha \qquad \text{for all } x, x' \in [0, 1]\,.
    \] 
    Our goal is to estimate the function value at a specific point $x_0 \in [0, 1]$, leading to the target parameter $\theta_0^\star = f(x_0)$, and the estimation error $\Ex \left[\bigl\{\hat f(x_0) - f(x_0)\bigr\}^2\right]$.

    To apply our intersection-based estimator, we treat each data point $(X_k, Y_k)$ as an individual source domain. We set $\widetilde \theta_k = Y_k$, $\theta_k^\star = f(X_k)$, $n_k = 1$, and $\tau = 1$. For the target domain, we set $\widetilde \theta_0 = 0$ and $n_0 = 0$. The smoothness condition directly bounds the bias:
    \[
    |\theta_k^\star - \theta_0^\star| = |f(X_k) - f(x_0)| \le L |X_k - x_0|^\alpha := h_k\,. 
    \] 
    Although the exact values of $h_k$ are unknown when $\alpha$ is unknown, $h_k$ is strictly increasing with respect to the distance $|X_k - x_0|$. Therefore, we can naturally order the sample $\{(X_k, Y_k)\}_{k = 1}^m$ by sorting it according to $|X_k - x_0|$. Let the rearranged sample be $\{(X_{(k)}, Y_{(k)})\}_{k = 1}^m$, with corresponding ordered bias levels $\{h_{(k)}\}_{k = 1}^m$. 

    Applying the intersection-based estimator to this rearranged sample, the prefix estimators $\widehat \theta^{(r)}$ defined in \eqref{eq:prefix-estimator-section} correspond exactly to the kNN estimators for $f(x_0)$ using $r$ neighbors. The final estimator from \eqref{eq:t-hat-section} becomes $\hat f(x_0) := \widehat \theta^{(\hat t)}$, where $\hat t$ serves as a purely data-driven choice for the number of neighbors. Theorem \ref{thm:intersection-oracle-max} guaranties that this choice adapts to the oracle rate $\fR(\bh,\bn, \tau)$, modulo a logarithmic factor.

    Examining the oracle rate more closely, we find:
    \[
    \fR(\bh,\bn, \tau) = \min_{1 \le k \le m} \left\{h_{(k)}^2 + \frac{1}{k} \right\} \asymp   \min_{1 \le k \le m} \left\{\left(\frac{k}{m}\right)^{2\alpha} + \frac{1}{k} \right\}\,,
    \] 
    since $h_{(k)} \asymp (k/m)^\alpha$. This optimal bias-variance trade-off is achieved at $k^\star \asymp m^{2\alpha / (2\alpha + 1)}$, yielding the rate
    \[
    \fR(\bh,\bn, \tau) = \cO \left(m^{-\frac{2\alpha}{2\alpha+1}}\right)\,.
    \]
    This achieves the minimax optimal rate for nonparametric regression under an $\alpha$-H\"older smoothness assumption \citep{tsybakov2008nonparametric_book}. Consequently, our framework yields a $k$-NN regressor that automatically adapts to the unknown smoothness without requiring prior knowledge of $\alpha$. Furthermore, our method for selecting $\hat{t}$ via intersection closely parallels the adaptive bandwidth selection strategy for local polynomial estimators based on the intersection of confidence intervals, as proposed by \citet{katkovnik2002adaptive}.
\end{remark}

\subsection{Adaptation Under a Clustered Source Structure} \label{sec:clustering-method}

As established in Corollary~\ref{cor:adaptation-cost-lb-general-m}, adapting to a general bias configuration becomes increasingly challenging as  $m$ grows. A generic strategy to address this is the model selection estimator described in Section~\ref{subsubsec:model-selection}; however, as Corollaries~\ref{cor:adaptation-cost-ub-general-m} and \ref{cor:adaptation-cost-lb-general-m-implications} demonstrate, in the balanced regime ($n_0 = \dots = n_m = n$), the upper bound for the cost of general adaptation restricts the success of this strategy to $m = \cO(\sqrt{d})$. 

To overcome this $m = \cO(\sqrt{d})$ bottleneck, we assume in this subsection that the local parameters $\{\theta_k^\star\}_{k = 1}^m$ are clustered into a small number of groups. This aligns with standard assumptions in personalized federated learning/multitask learning, where clients can often be partitioned into a few latent groups that share similar underlying distributions or parameters \citep{sattler2020clustered,mansour2020three,long2023multi,duan2023adaptive}. By exploiting this clustering structure, we can aggregate the local estimators $\widetilde{\theta}_k$ from source domains within the same cluster. This aggregation effectively reduces the number of distinct sources, enabling us to satisfy the required condition $m_{\mathrm{effective}} = \cO(\sqrt{d})$ for successful adaptation. 

To provide provable guaranties for this intuition, the remainder of this section focuses on a two-cluster setting with $n_0 = \dots = n_m = n$. In this specific regime, under this structure, the restriction can be relaxed from $m = \cO(\sqrt{d})$ to $m = \cO(d)$. Notably, our lower bound on the cost of adaptation (Theorem~\ref{thm:cost-lower-bound-general-m}, and specifically Corollary~\ref{cor:adaptation-cost-lb-general-m}) dictates that any further relaxation is provably impossible (since the lower bound constructions there are essentially clustering-based). 

Exploiting this cluster structure, however, comes at a cost. If the separation between clusters is too small, the uncertainty inherent in recovering the true latent partition introduces an additional estimation error. This cost must be carefully accounted for in the final risk bound. As we establish below, analyzing a simple two-cluster source structure with equal sample sizes is sufficient to capture this core statistical trade-off.

\paragraph{Two-Cluster Model with Equal Sample Sizes.}
To isolate the statistical difficulty as clearly as possible, we restrict our attention to a scenario where all domains share an equal sample size. We assume the source parameters $\{\theta_k^\star\}_{k = 1}^m$ exhibit a two-cluster structure. Specifically, there exist two vectors \(\mu_1, \mu_2 \in \reals^d\) and a disjoint partition 
\[
[m] =G_1 \cup G_2, \qquad G_1 \cap G_2 = \varnothing,
\] 
such that for any $k \in [m]$ and $r \in \{1, 2\}$, we have $\theta_k^\star = \mu_r$ whenever $k \in G_r$. We define the following key quantities: the cluster sizes $m_r := |G_r|$, the minimum cluster size $m_{\min} := m_1 \wedge m_2$, the between-cluster separation $\Delta := \|\mu_1 - \mu_2 \|_2$, and the bias magnitude of the $r$-th cluster relative to the target domain $b_r := \|\theta_0^\star - \mu_r\|_2$. For our analysis, we further assume that the cluster sizes are balanced; that is, there exists a constant $0 < c < 1/2$ such that $m_{\min} \ge c m$. 

Under this two-cluster structure, we can explicitly characterize the oracle rate from \eqref{eq:loca-loptimal-rate}, which serves as the foundation for our subsequent analysis. This model can be embedded into the general bias structure in \eqref{eq:bias-structure-gen} using the specific bias configuration $\bh^{\mathrm{cl}}$, defined as:
\[
h_k^{\mathrm{cl}}
=
\begin{cases}
b_1, & k \in G_1,\\
b_2, & k \in G_2.
\end{cases}
\] 
Due to this highly structured configuration, we only need to evaluate the candidate sets $S \in \{\varnothing, G_1, G_2, [m]\}$ to determine the oracle rate in \eqref{eq:loca-loptimal-rate}. This evaluation yields the rate:
\[
\fR(\bh^{\mathrm{cl}}, \bn, \tau) \asymp \min \left\{ \frac{d\tau^2}{n}, \frac{d\tau^2}{m_1 n} + b_1^2, \frac{d\tau^2}{m_2 n} + b_2^2, \frac{d\tau^2}{m n} + b_1^2 \vee b_2^2 \right\}\,.
\] 
Finally, applying $m_{\min} \ge cm$, the oracle rate simplifies to:
\begin{equation}\label{eq:oracle-rate-cluster}
    \fR(\bh^{\mathrm{cl}}, \bn, \tau) \asymp \min \left\{ \frac{d\tau^2}{n},  \frac{d\tau^2}{m n} + b_1^2 \vee b_2^2 \right\} := M_{\mathrm{cl}}\,.
\end{equation}

\paragraph{Adaptive Estimator.} 
Our proposed estimator is formally detailed in Algorithm~\ref{alg:two-cluster-adaptive-short}. It consists of two primary components: a clustering step to recover the latent source partition (detailed in Algorithm~\ref{alg:sample-split-clustering}) and a target-side model selection step to choose among the resulting candidate estimators. Throughout these steps, we heavily utilize sample splitting. In the $k$-th domain, we split the sample $\cD_k$ and construct three independent estimators $\widetilde \theta_k^{(1)}, \widetilde \theta_k^{(2)}, \widetilde \theta_k^{(3)}$ of the local parameter $\theta_k^\star$. Relying on these independent estimators provides crucial technical advantages for the theoretical analysis of the algorithm's performance (as seen in the simulation studies, sample splitting is not necessary in practice). 

\begin{algorithm}[t]
\caption{Adaptive Transfer Under a Two-Cluster Source Structure}
\label{alg:two-cluster-adaptive-short}
\begin{algorithmic}[1]
\Require Domain-specific samples \(\{\cD_k\}_{k = 0}^m\)

\State Split the target sample $\cD_0$ into two independent halves to compute $\widetilde\theta_0^{(1)}$ and $\widetilde\theta_0^{(2)}$.

\State For each source \(k\in[m]\), split \(\mathcal D_k\) into three independent parts to compute $\widetilde\theta_k^{(1)}, \widetilde\theta_k^{(2)}$, and $\widetilde\theta_k^{(3)}$. 

\State Apply Algorithm~\ref{alg:sample-split-clustering} to \(\{\widetilde\theta_k^{(1)},\widetilde\theta_k^{(2)}\}_{k=1}^m\) to obtain the clustering partition $\{\hat G_1, \hat G_2\}$.

\State Form the three candidate estimators:
\[
\hat\theta^{(0)}:=\widetilde\theta_0^{(1)},
\qquad
\hat\theta^{(r)}
:=
\frac{\widetilde\theta_0^{(1)}+\sum_{i\in\hat G_r}\widetilde\theta_k^{(3)}}{1+|\hat G_r|},
\quad r \in \{1,2\}.
\]
\State Select the best candidate via:
\[
\hat r \in \arg\min_{j\in\{0,1,2\}}
\bigl\|
\widetilde\theta_0^{(2)}-\hat\theta^{(j)}
\bigr\|_2^2.
\]
\Return $\widehat \theta_0 := \hat \theta^{(\hat r)}$. 
\end{algorithmic}
\end{algorithm}

\begin{algorithm}[t]
\caption{Sample-Split Clustering of the Source Estimators}
\label{alg:sample-split-clustering}
\begin{algorithmic}[1]
\Require Domain-specific estimators \(\{\widetilde\theta_k^{(1)},\widetilde\theta_k^{(2)}\}_{k=1}^m\)

\State Using \(\{\widetilde \theta_k^{(1)}\}_{k=1}^m\), compute the sample mean and covariance:
\[
\bar \theta^{(1)} := \frac{1}{m}\sum_{k=1}^m \widetilde \theta_k^{(1)},
\qquad
\hat\Sigma^{(1)} := \frac{1}{m}\sum_{k=1}^m
\bigl(\widetilde \theta_k^{(1)} - \bar \theta^{(1)}\bigr)\bigl(\widetilde \theta_k^{(1)} - \bar \theta^{(1)}\bigr)^\top.
\]
\State Let \(\hat u\) be a leading eigenvector of \(\hat\Sigma^{(1)}\). Run one-dimensional \(2\)-means clustering on the projections:
\[
t_k^{(1)} := \hat u^\top \bigl(\widetilde \theta_k^{(1)} - \bar \theta^{(1)}\bigr),
\qquad k \in [m],
\]
to obtain cluster centers \(\hat c_1 < \hat c_2\).

\State For each \(k \in [m]\), classify the independent second-half estimator \(\widetilde \theta_k^{(2)}\) according to:
\[
\hat z_k \in \arg\min_{r \in \{1,2\}}
\left|
\hat u^\top \bigl(\widetilde \theta_k^{(2)} - \bar \theta^{(1)}\bigr) - \hat c_r
\right|.
\]
\Return the estimated clusters:
\[
\hat G_r := \{ k \in [m] : \hat z_k = r \},
\qquad r \in \{1,2\}.
\]
\end{algorithmic}
\end{algorithm}

\begin{theorem}[MSE Bound for the Adaptive Estimator]
\label{thm:two-cluster-mse}
Let \(\widehat \theta_0\) be the estimator defined in Algorithm~\ref{alg:two-cluster-adaptive-short}. There exists a universal constant $C > 0$ such that
\[
\begin{aligned}
    \Ex\left[\|\widehat \theta_0-\theta_0^\star\|_2^2\right] \;\le\; C\left(M_{\mathrm{cl}}+\frac{\tau^2}{n}\widetilde\Delta_{m,d}+\frac{\tau^2}{n} \right),\\
    \text{where} \quad \widetilde\Delta_{m,d}
:=
\frac{d+\log(md)}{m}
+
\frac{\sqrt{m\bigl(d+\log(md)\bigr)}}{m}
+
\log(md).
\end{aligned}
\]
\end{theorem}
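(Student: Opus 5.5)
The bound separates into a validation cost and a clustering cost. Since $\widetilde\theta_0^{(2)}$ is independent of the three candidates $\hat\theta^{(0)},\hat\theta^{(1)},\hat\theta^{(2)}$, I will condition on everything except $\widetilde\theta_0^{(2)}$ and run the model-selection argument of Theorem~\ref{thm:model-selection} with $M=3$. This gives
\[
\Ex\|\widehat\theta_0-\theta_0^\star\|_2^2\lesssim \Ex\Big[\min_{j\in\{0,1,2\}}\|\hat\theta^{(j)}-\theta_0^\star\|_2^2\Big]+\frac{\tau^2}{n}.
\]
The additive $\tau^2/n$ term in the statement is exactly this validation penalty. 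Note that $\log 3\wedge d$ is of constant order, and the effective sample size $n/2$ only changes constants. I will then bound $\Ex\min_j$ by the expected error of one specific candidate, chosen according to which term attains $M_{\mathrm{cl}}$.

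\emph{Case analysis.} If $M_{\mathrm{cl}}\asymp d\tau^2/n$, I will use $\hat\theta^{(0)}=\widetilde\theta_0^{(1)}$, whose risk is $O(d\tau^2/n)$ by \eqref{eq:sub-gaussian}. Otherwise $M_{\mathrm{cl}}\asymp d\tau^2/(mn)+b_1^2\vee b_2^2$, and I will use $\hat\theta^{(r)}$, where $r$ indexes the estimated cluster that best matches $G_1$. The columns $\widetilde\theta_k^{(3)}$ are independent of $\hat G_r$. Conditionally on the partition, the error of $\hat\theta^{(r)}$ therefore splits into:
\begin{enumerate}
\item a variance term of order $d\tau^2/(n(1+|\hat G_r|))$;
\item a bias term $\big\|\sum_{k\in\hat G_r}(\theta_k^\star-\theta_0^\star)\big\|_2^2/(1+|\hat G_r|)^2$.
\end{enumerate}
On the event that $\hat G_r$ has at most a constant fraction of misclassifications, $|\hat G_r|\gtrsim m$, so the variance term is $O(d\tau^2/(mn))$. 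Every summand in the bias term has norm at most $b_1\vee b_2$, so the bias term is at most $(b_1\vee b_2)^2$ regardless of how the misclassifications fall. Hence on this good event the risk is $O(M_{\mathrm{cl}})$.

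\emph{Controlling the clustering step.} The remaining work is to show that, outside a range of separations where the error is anyway paid by $\tfrac{\tau^2}{n}\widetilde\Delta_{m,d}$, Algorithm~\ref{alg:sample-split-clustering} produces balanced clusters with high probability. I split by $\Delta$.
\begin{enumerate}
\item \textbf{Small separation.} Suppose $\Delta^2\lesssim \tfrac{\tau^2}{n}\widetilde\Delta_{m,d}$. Pooling any candidate set, even a misclustered one or all of $[m]$, incurs extra bias at most $\Delta^2$ relative to $b_1\wedge b_2$. I will compare against the full pooled estimator or against $\hat\theta^{(r)}$ for the largest estimated cluster. This gives risk $\lesssim d\tau^2/(mn)+(b_1\wedge b_2+\Delta)^2\lesssim M_{\mathrm{cl}}+\tfrac{\tau^2}{n}\widetilde\Delta_{m,d}$. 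The case where a single estimated cluster is very small needs care; there I fall back on the other cluster, which is large.
\item \textbf{Large separation.} Suppose $\Delta^2\gg \tfrac{\tau^2}{n}\widetilde\Delta_{m,d}$. The population covariance of the first-split estimates is $\tfrac{m_1m_2}{m^2}(\mu_1-\mu_2)(\mu_1-\mu_2)^\top$ plus a noise part of order $\tau^2/n$. A sub-Gaussian covariance concentration bound, together with the cross term between signal and noise, gives $\|\hat\Sigma^{(1)}-\Sigma\|_{\op}\lesssim \tfrac{\tau^2}{n}\big(\tfrac{d+\log(md)}{m}+\sqrt{\tfrac{d+\log(md)}{m}}\big)+\Delta\tfrac{\tau}{\sqrt n}\sqrt{\tfrac{d}{m}}$. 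The eigengap is $\gtrsim c^2\Delta^2$ because $m_{\min}\ge cm$. Davis--Kahan then yields $|\hat u^\top(\mu_1-\mu_2)|\ge\Delta/2$, and the one-dimensional 2-means centers land within $\Delta/8$ of the projected means.
\end{enumerate}
In the large-separation case, classification of the independent second split uses one-dimensional projections with $\tau/\sqrt n$-sub-Gaussian noise. The misclassification probability per source is therefore $\exp(-cn\Delta^2/\tau^2)$. The $\log(md)$ term in $\widetilde\Delta_{m,d}$ makes a union bound over the $m$ sources give exact recovery with probability $1-(md)^{-1}$. On the complementary failure event I use the crude bound $\min_j\le\|\hat\theta^{(0)}-\theta_0^\star\|^2$ together with Cauchy--Schwarz.

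\emph{Main obstacle.} I expect the hardest step to be the large-separation spectral analysis: controlling the 2-means centers on the data-dependent direction $\hat u$, and matching the thresholds so that every regime of $\Delta$ is covered by exactly the three terms of $\widetilde\Delta_{m,d}$. My first attempt there will be a perturbation argument on the projected values, treating the 2-means solution as a threshold near the midpoint of the two projected means.
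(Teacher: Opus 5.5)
Your proposal is correct and follows the paper's proof closely. The three-candidate validation step reduces everything to $\Ex[\min_a\|\hat\theta^{(a)}-\theta_0^\star\|_2^2]+\tau^2/n$. After that, the split at $\Delta^2\asymp\frac{\tau^2}{n}\widetilde\Delta_{m,d}$ is exactly the paper's Case~1/Case~2 argument: exact spectral recovery, with the failure event paid for by $\hat\theta^{(0)}$ via Cauchy--Schwarz, versus pooling the largest estimated cluster at an extra bias cost of $\Delta^2$. The full pooled estimator you also mention is not one of the three candidates, but you do not need it.

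One remark on your own observation that every pooled subset has bias at most $b_1\vee b_2$. Combined with $|\hat G_{\max}|\ge m/2$ and the independence of the third split from the partition, it already gives the bound for $M_{\mathrm{cl}}$ as written with $b_1^2\vee b_2^2$, with no clustering analysis at all. The spectral step is therefore needed only for the sharper benchmark $\fR(\bh^{\mathrm{cl}},\bn,\tau)\asymp\min\{d\tau^2/n,\,d\tau^2/(mn)+b_1^2\wedge b_2^2\}$ that holds under $m_{\min}\ge cm$. That sharper bound is what the paper's proof actually delivers. For that version your ``constant fraction of misclassifications'' good event is not enough; you need the exact recovery you establish in the large-separation regime.
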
 

The MSE upper bound consists of three distinct components. The first term, $M_{\mathrm{cl}}$, is the oracle rate under the two-cluster model previously established in \eqref{eq:oracle-rate-cluster}. The third term, proportional to $\tau^2/n$, corresponds to the penalty incurred from the model selection step. Recall that in Theorem~\ref{thm:model-selection}, model selection incurs a penalty of $\cO(\tau^2(\log M \wedge d)/n_0)$. In our current setting, $n_0 = n$ and the selection step in Algorithm~\ref{alg:two-cluster-adaptive-short} choose among $M = 3$ estimators, which simplifies the penalty to $\cO(\tau^2/n)$. Finally, the middle term, $\tau^2 \widetilde \Delta_{m, d}/n$, captures the estimation error inherent to recovering the cluster partitions themselves.
The recovery analysis underlying the middle term follows the same general template as standard exact-recovery arguments for two-component sub-Gaussian mixtures, combining a spectral estimate of the cluster direction with a one-dimensional separation argument; see, for example, \citet{fei2018hidden,jiang2020recovery}. Our contribution is to integrate this recovery step into the transfer-learning risk analysis and to quantify its effect relative to the oracle benchmark $M_{\mathrm cl}$.

 We end this subsection with a corollary that discusses the adaptation when $m = \cO(d)$.
\begin{corollary}[Adaptation up to Logarithmic Cost when \(m= \cO( d)\)]
\label{cor:two-cluster-log-adaptation}
Assume the setting of Theorem~\ref{thm:two-cluster-mse}. If the number of sources scales as $m = \cO(d)$, then
\[
\Ex\left[\|\widehat \theta_0-\theta_0^\star\|_2^2\right]
\;\le\;
C\cdot \log(md) \cdot\fR(\bh^{\mathrm{cl}},\bn,\tau).
\]
Consequently, for \(m = \cO(d)\), adaptation is achievable up to a logarithmic factor under the two-cluster source structure.
\end{corollary}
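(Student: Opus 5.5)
The corollary follows directly from Theorem~\ref{thm:two-cluster-mse}: I will show that, when $m=\cO(d)$, each of the two extra terms $\tau^2\widetilde\Delta_{m,d}/n$ and $\tau^2/n$ is at most $C\log(md)\cdot M_{\mathrm{cl}}$, and then use $M_{\mathrm{cl}}\asymp\fR(\bh^{\mathrm{cl}},\bn,\tau)$ from \eqref{eq:oracle-rate-cluster}.

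The first step is a lower bound on the oracle rate that holds uniformly in $b_1,b_2$. Since both entries of the minimum in \eqref{eq:oracle-rate-cluster} are at least $d\tau^2/(mn)$ (the second because $b_1^2\vee b_2^2\ge 0$, the first because $m\ge1$), we get
\[
M_{\mathrm{cl}}\;\ge\;\frac{d\tau^2}{mn}.
\]
Under $m\le c_0 d$ this gives $M_{\mathrm{cl}}\ge \tau^2/(c_0 n)$. This inequality does all the work, because every extra term in Theorem~\ref{thm:two-cluster-mse} is a multiple of $\tau^2/n$.

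The second step bounds $\widetilde\Delta_{m,d}$ term by term under $m\le c_0 d$:
\begin{itemize}
\item The first term satisfies $(d+\log(md))/m \lesssim d/m$, since $\log(md)\le 2\log d+\log c_0 \lesssim d$ for $d\ge 1$ (adjusting constants for small $d$).
\item The second term satisfies $\sqrt{m(d+\log(md))}/m \lesssim \sqrt{d/m}\le d/m$, using $d/m\ge 1/c_0$; in the regime $m\le d$ we have $\sqrt{d/m}\le d/m$ directly.
\item The third term, $\log(md)$, is kept as it is.
\end{itemize}
Hence $\tfrac{\tau^2}{n}\widetilde\Delta_{m,d}\lesssim \tfrac{d\tau^2}{mn}+\tfrac{\tau^2}{n}\log(md)$.

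The final step combines the two. The term $d\tau^2/(mn)$ is at most $M_{\mathrm{cl}}$, while $\tfrac{\tau^2}{n}\log(md)$ and $\tfrac{\tau^2}{n}$ are each at most $c_0\log(md)\,M_{\mathrm{cl}}$ by the first step (assuming $md\ge e$, which holds trivially apart from degenerate cases that only adjust constants). Summing gives $\Ex\|\widehat\theta_0-\theta_0^\star\|_2^2\le C\log(md)M_{\mathrm{cl}}$. Replacing $M_{\mathrm{cl}}$ by $\fR(\bh^{\mathrm{cl}},\bn,\tau)$ through the two-sided equivalence in \eqref{eq:oracle-rate-cluster}, which depends only on the balance constant $c$, completes the proof.

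There is no real obstacle here. The only point needing care is that the comparison must hold uniformly over all $(b_1,b_2)$, including configurations where $M_{\mathrm{cl}}$ is as small as possible. That is exactly why I use the universal floor $d\tau^2/(mn)$ rather than comparing with any particular $b$-dependent branch of the minimum. This floor is also where the assumption $m=\cO(d)$ enters, since it is what turns the $\tau^2/n$ model-selection and clustering penalties into a logarithmic factor.
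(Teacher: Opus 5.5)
Your argument is correct and is the intended one; the paper states this corollary without a separate proof. Under $m=\cO(d)$, the universal floor $\fR(\bh^{\mathrm{cl}},\bn,\tau)\ge d\tau^2/\{(m+1)n\}\gtrsim\tau^2/n$ absorbs every term of $\tfrac{\tau^2}{n}\widetilde\Delta_{m,d}+\tfrac{\tau^2}{n}$ up to the factor $\log(md)$, which is bounded below because $m\ge 2$.

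One caveat concerns your last step. The equivalence $M_{\mathrm{cl}}\asymp\fR(\bh^{\mathrm{cl}},\bn,\tau)$ holds only with $b_1^2\wedge b_2^2$ in place of the printed $b_1^2\vee b_2^2$ in \eqref{eq:oracle-rate-cluster}; with $b_1=0$ and $b_2$ large, the printed version is $m$ times too big. That corrected quantity is, however, what the proof of Theorem~\ref{thm:two-cluster-mse} actually bounds, through the minimum over the candidates $\hat\theta_\varnothing,\hat\theta_{G_1},\hat\theta_{G_2}$, so your conclusion stands.
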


\subsection{Eliminating non-informative sources}
\label{sec:test-then-combine}

In the previous subsection, we explored a two-cluster model with equal sample sizes. We established that adaptation is possible when $m = \cO(d)$, a significantly more relaxed condition than the $m = \cO(\sqrt{d})$ requirement for the model selection estimator (\cf\ Section \ref{subsubsec:model-selection}). While this simple model effectively captures the core statistical challenges of clustering-based estimators, it may not reflect a generic practical scenario. As an alternative, in this subsection we introduce a separation-based structure. This approach accommodates arbitrary $m$ and varying sample sizes, albeit under stricter separation requirements. 

Our separation structure is motivated by \citet{tian2023transfer}. While they investigate a parameter set $\Theta_0(h, \cA, \|\cdot \|_1)$ based on a fixed set of integrable domains $\cA \subset \{1, \dots, m\}$, we instead seek a sufficient condition on $\bh$ that makes $S^\star := \cA \cup \{0\}$ the optimal set for the oracle rate in \eqref{eq:loca-loptimal-rate}. Furthermore, for any source $k \notin S^\star$, we assume the true distance $\|\theta_k^\star - \theta_0^\star\|_2$ is large enough to be reliably detected via the empirical distance $\|\widetilde \theta_k - \widetilde \theta_0\|_2$. Once these non-informative sources ($k \notin S^\star$) are detected and discarded with high confidence, we simply aggregate the remaining local estimators $\widetilde\theta_k$ to construct our final estimator for $\theta_0^\star$.

We begin by establishing a sufficient condition on the $\bh$. For convenience, let $h_0 = 0$. Given a subset $S$ such that $\{0\} \subset S \subset \{0, \dots, m\}$, we define:
\[
h_S := \max_{k \in S} h_k, \quad N_S := \sum_{k\in S} n_k\,. 
\] 
The oracle optimal rate in \eqref{eq:loca-loptimal-rate} is given by:
\[
\fR(\bh, \bn, \tau) = \min\limits_{\{0\} \subset S \subset \{0, \dots, m\}} \left(h_S^2 + \frac{d\tau^2}{N_S}\right) \asymp \min\limits_{\{0\} \subset S \subset \{0, \dots, m\}} \left(h_S^2 \vee \frac{d\tau^2}{N_S}\right)\,. 
\]
In the following lemma, we provide necessary and sufficient conditions on $\bh$ such that a prespecified set $S^\star$  minimizes the above rate $f(S) := h_S^2 \vee ({d\tau^2}/{N_S})$.

\begin{lemma} \label{lemma:minimizer-iff-bias-1}
 The set $S^\star$ is a minimizer of $\min\limits_{\{0\} \subset S \subset \{0, \dots, m\}} f(S)$ if and only if  $S^\star = \{k \in \{0, \dots, m\} \mid h_k \le h^\star\}$, where 
\[
(h^\star)^2 \le  {\frac{d\tau^2}{\sum_{k: h_k < h^\star} n_k}}\,,
\]
and $h_k > \max \{ h^\star , \tau \sqrt{d/N_{S^\star}}\}$ for any $k \notin S^\star$. 
\end{lemma}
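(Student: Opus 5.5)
We will prove the statement with $h^\star := h_{S^\star} = \max_{k\in S^\star} h_k$, and we read the ``only if'' direction as a statement about the maximal minimizer. The reason is that minimizers need not be unique: if the bias term $(h^\star)^2$ dominates $f(S^\star)$, adding an index with $h_k\le h^\star$ leaves $f$ unchanged. The whole argument uses one monotonicity fact. For any $S$ with $h_S = t$, the sublevel set $L(t):=\{k: h_k\le t\}$ has the same bias term $t^2$ and a larger pooled size $N_{L(t)}\ge N_S$. Hence $f(L(t))\le f(S)$. Comparisons therefore reduce to sublevel sets.

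\emph{Sufficiency.} Assume $S^\star=\{k:h_k\le h^\star\}$ with $(h^\star)^2\le d\tau^2/\sum_{h_k<h^\star}n_k$ and $h_k>\max\{h^\star,\tau\sqrt{d/N_{S^\star}}\}$ for every $k\notin S^\star$. Fix an arbitrary admissible $S$ and split into cases.

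\emph{Case (a): $S\not\subset S^\star$.} Then $S$ contains some $k\notin S^\star$, so
\[
h_S^2\ge h_k^2>\max\{(h^\star)^2,\,d\tau^2/N_{S^\star}\}=f(S^\star),
\]
which gives $f(S)>f(S^\star)$.

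\emph{Case (b1): $S\subset S^\star$ and $h_S=h^\star$.} Here $N_S\le N_{S^\star}$, so both terms of $f(S)$ dominate those of $f(S^\star)$.

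\emph{Case (b2): $S\subset S^\star$ and $h_S<h^\star$.} Then $S\subset\{k:h_k<h^\star\}$. Using the hypothesis on $h^\star$,
\[
\frac{d\tau^2}{N_S}\ge \frac{d\tau^2}{\sum_{h_k<h^\star}n_k}\ge (h^\star)^2 .
\]
Trivially we also have $d\tau^2/N_S\ge d\tau^2/N_{S^\star}$. Hence $f(S)\ge f(S^\star)$.

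Together the three cases show that $S^\star$ is a minimizer.

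\emph{Necessity.} Let $S^\star$ be the maximal minimizer, which is well defined because there are finitely many candidate sets. Set $h^\star=h_{S^\star}$.

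First, $S^\star$ is a sublevel set. If some $k\notin S^\star$ had $h_k\le h^\star$, then $S^\star\cup\{k\}$ has the same bias term and a larger $N$. It would therefore also be a minimizer, contradicting maximality.

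Second, take $k\notin S^\star$ and suppose $h_k\le\tau\sqrt{d/N_{S^\star}}$. Note $h_k>h^\star$ by the first step. Then $L(h_k)\supsetneq S^\star$ satisfies
\[
f(L(h_k))\le \max\{h_k^2,\,d\tau^2/N_{S^\star}\}=d\tau^2/N_{S^\star}\le f(S^\star).
\]
So $L(h_k)$ is a strictly larger minimizer, again a contradiction.

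Third, suppose $(h^\star)^2> d\tau^2/\sum_{h_k<h^\star}n_k$. This requires the set $\{k:h_k<h^\star\}$ to be nonempty; it always contains $0$ when $h^\star>0$. That set has maximal bias $t<h^\star$, where $t$ is a finite maximum. Its value is $\max\{t^2,\,d\tau^2/\sum_{h_k<h^\star}n_k\}<(h^\star)^2\le f(S^\star)$, contradicting minimality.

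The main obstacle is bookkeeping rather than analysis. Ties in $h_k$ matter: the condition uses the strict set $\{h_k<h^\star\}$, while $S^\star$ uses $h_k\le h^\star$. The non-uniqueness of minimizers also needs care, which is why necessity is argued for the maximal minimizer. Case (b2) is the only place where the somewhat unusual hypothesis $(h^\star)^2\le d\tau^2/\sum_{h_k<h^\star}n_k$ enters. It must be used exactly there, to rule out dropping the whole top level set $\{h_k=h^\star\}$.
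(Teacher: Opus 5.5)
Your proof follows the paper's route. Sufficiency uses the same case split: an $S$ containing some $k\notin S^\star$ loses on its bias term, and an $S\subseteq S^\star$ loses on its bias or variance term. Necessity compares $S^\star$ with $S^\star\cup\{k\}$ and with $\{k:h_k<h^\star\}$.

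Where you deviate, you are right and the paper is not.

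\begin{itemize}
\item The paper's necessity step claims $f(S^\star\cup\{k\})<f(S^\star)$ whenever $k\notin S^\star$ and $h_k\le h^\star$. This is false as soon as $(h^\star)^2\ge d\tau^2/N_{S^\star}$.
\item The paper then reads the strict inequalities $h_k>\max\{h^\star,\tau\sqrt{d/N_{S^\star}}\}$ off the comparison $f(S^\star\cup\{k\})\ge f(S^\star)$. That comparison only yields non-strict inequalities.
\end{itemize}

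The literal ``only if'' is in fact false. Take $d\tau^2=1$, $n_0=n_1=1$, $n_2=100$, $h_1^2=0.4$, $h_2^2=0.45$. Then both $\{0,2\}$ and $\{0,1,2\}$ attain the minimum value $0.45$, and $\{0,2\}$ is not a sublevel set. Restricting necessity to the maximal minimizer, as you do, is exactly the right repair.

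Your split of the subset case into $h_S=h^\star$ and $h_S<h^\star$ also fixes a gloss in the paper's sufficiency argument. The paper invokes the hypothesis on $\sum_{h_k<h^\star}n_k$ for every $S\subseteq S^\star$, but that hypothesis only controls sets with $h_S<h^\star$. The paper only uses sufficiency, to certify $S$ as the optimal set for $\mathbf h\in\mathcal H(S,T,h)$, so nothing downstream is affected.

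One small point: finiteness gives existence of an inclusion-maximal minimizer, not uniqueness. Uniqueness does hold, because the union of two minimizers is again a minimizer. Your contradictions only use inclusion-maximality, so this costs nothing.
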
 
Building on this lemma, we define our configuration sets. Given sets $S$ and $T$ such that $\{0\} \subset T \subsetneq S \subset \{0, \dots, m\}$, and a bias threshold $h \le \tau \sqrt{d/ N_T}$, we define $\cH(S, T, h)$ as the set of all configurations $\bh$ satisfying:
\[
h_k = \begin{cases}
    < h & \text{if } k \in T\\
    h & \text{if } k \in S \setminus T\\
    > \max \left \{h, \tau \sqrt{\frac{d}{N_S}}\right\} & \text{if } k \notin S \,. 
\end{cases}
\] 
For any bias configuration $\bh \in \cH(S, T, h)$, Lemma \ref{lemma:minimizer-iff-bias-1} implies that $S$ optimally balances the bias-variance trade-off for the oracle rate \eqref{eq:loca-loptimal-rate}. Collectively, we define:
\begin{equation}
    \cH(S) = \bigcup_{\{0\} \subset T \subsetneq S } \bigcup_{0 \le h \le \tau \sqrt{\frac{d}{N_T}}} \cH(S, T, h)\,,
\end{equation} 
where $S$ serves as the optimal set. For the base case of $S = \{0\}$, we define $\cH(\{0\}):= \{\bh \mid h_k > \tau \sqrt{d/n_0}\}$ such that for any $\bh \in \cH(\{0\})$, the target domain $\{0\}$ alone is the optimal set for the oracle rate in \eqref{eq:loca-loptimal-rate}.

Next, to eliminate a non-integrable source domain ($k \in S^\complement$) with high confidence, we require sufficient separation between $\theta_k^\star$ and $\theta_0^\star$. 
Given a $\kappa > 0$, we assume:
\begin{equation}\label{eq:separation-pairwise-testing}
    \|\theta_k^\star - \theta_0^\star \|_2 > \kappa \tau \max\left\{\sqrt{\frac{d}{n_0}}, \sqrt{\frac{d}{n_k}}\right\}\,.
\end{equation}
The precise value of $\kappa$ is specified in Theorem \ref{thm:adaptation-test-then-combine}. 
Given the above separation condition, we redefine our probability class as follows:

\begin{definition}
    Given a set $\{0\} \subset S \subset \{0, \dots, m\}$, parameters $\bn$, $\tau, \kappa > 0$, and a bias configuration $\bh \in \cH(S)$, we define $\cP'(\bh, \bn, \tau, \kappa, S)$ as the class of all probability distributions of $(\widetilde \theta_0, \dots, \widetilde \theta_m)$ such that:
    \begin{enumerate}
        \item The estimators $\widetilde \theta_0, \dots, \widetilde \theta_m$ are independent and satisfy \eqref{eq:sub-gaussian}.
        \item For $k \in A$, the bias satisfies $\|\theta_k^\star - \theta_0^\star\|_2 \le h_k$; for $k \notin A$, the separation condition in \eqref{eq:separation-pairwise-testing} holds with scale $\kappa \tau$. 
    \end{enumerate} 
\end{definition}

\begin{remark}[Separation conditions in the existing literature]
    A similar condition appears in \citet[Assumption 5]{tian2023transfer}, which suffices to establish a high-probability bound for correctly identifying their integrable set. However, compared to \citet[Proposition 1]{blanchard2024estimation}, which requires a separation of $\cO(d^{1/4})$, we assume a stronger separation of $\cO(d^{1/2})$. In high-dimensional mean estimation, \citeauthor{blanchard2024estimation} shows that $\cO(d^{1/4})$ is sufficient when utilizing an unbiased estimate of $\|\theta_k^\star - \theta_0^\star\|_2^2$. By contrast, we operate under a general parameter estimation setting that satisfies \eqref{eq:sub-gaussian} and estimate the parameter difference simply as $\|\widetilde \theta_k - \widetilde \theta_0\|_2^2$. This is a biased estimate. While characterizing the bias and constructing an (approximate) unbiased estimate is non-trivial in our general framework, specific applications might allow for an unbiased estimate of $\|\theta_k^\star - \theta_0^\star\|_2^2$, potentially relaxing the required separation to $\cO(d^{1/4})$.
\end{remark}

\paragraph{The Estimator:} 
Our estimation strategy is inspired by \citet[Algorithm 2]{tian2023transfer}. First, we identify a suitable set of integrable domains by eliminating those with excessive empirical distance from the target. Then, we compute a weighted average of the remaining local estimators to produce the final estimate:
\begin{equation}
    \label{eq:test-then-combine}
    \widehat S := \left \{ k \in [m]: \|\widetilde \theta_k - \widetilde \theta_0 \|_2 \le \tau\sqrt{\frac{d\alpha \log(n_0 \vee n_k )}{n_0 \wedge n_k }} \right\} \cup \{0\}\,, \qquad  \widehat \theta_0 := \frac{\sum_{k \in \widehat S} n_k \widetilde \theta_k}{\sum_{k \in \widehat S} n_k}\,. 
\end{equation} 

In the following theorem, we demonstrate that for an appropriately chosen $\alpha > 0$, this estimator successfully adapts to the unknown bias within the reduced probability class $\cP'(\bh, \bn, \tau, \kappa, S)$.

\begin{theorem}\label{thm:adaptation-test-then-combine} 
Let $\kappa = 3{\alpha^{1/2}}$. For any $\{0\} \subset S \subset \{0, \dots, m\}$, configuration $\bh \in \cH(S)$, and distribution $P \in \cP'(\bh, \bn, \tau, \kappa, S)$, the correct integrable domains are identified with high probability:
\[
P\left(\widehat S = S\right) \ge  1 - \frac{m}{n_0^{\alpha d}}\,.
\] 
Furthermore, for $\alpha \ge \max\left\{\frac{9^2 \log \left\{{m N_{\mathrm{total}}}/(d \tau^2)\right\}}{d \log (n_0)}, 1\right\}$, where $N_{\mathrm{total}} := \sum_{k = 0}^m n_k$, there exists a universal constant $C >0$ such that:
\[
\sup_{\{0\} \subset S \subset \{0, \dots, m\}}\;\sup_{\bh \in \cH(S)}\; \sup\limits_{P \in \cP'(\bh, \bn, \tau, \kappa, S)}\;\frac{ \Ex \big [ \|\widehat \theta_0 - \theta_0^\star \|_2^2\big] }{\bR(\bh, \bn, \tau)} \le C\,. 
\]
Thus, the elimination-based procedure adapts to the unknown $S$ and $\bh\in \cH(S)$.
\end{theorem}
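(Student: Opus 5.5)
The proof splits into two parts: a high-probability selection-consistency statement, then a risk bound obtained by conditioning on the event $\{\widehat S = S\}$.

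\textbf{Part 1: selection.} Write $\widetilde\theta_k - \widetilde\theta_0 = (\theta_k^\star - \theta_0^\star) + (\varepsilon_k - \varepsilon_0)$ with $\varepsilon_k := \widetilde\theta_k - \theta_k^\star$. From \eqref{eq:sub-gaussian} and a standard $1/2$-net argument on the sphere, $\|\varepsilon_k - \varepsilon_0\|_2$ concentrates at scale $\tau\sqrt{d/(n_0\wedge n_k)}$, with tail
\[
P\bigl(\|\varepsilon_k-\varepsilon_0\|_2 > c\,\tau\sqrt{(d+u)/(n_0\wedge n_k)}\bigr)\le C e^{-u}.
\]
Take $u \asymp \alpha d \log(n_0\vee n_k)$, so the threshold $\lambda_k := \tau\sqrt{d\alpha\log(n_0\vee n_k)/(n_0\wedge n_k)}$ dominates the noise except with probability about $n_0^{-\alpha d}$. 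There are two kinds of error.

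\emph{False exclusion} ($k\in S$). Here $\|\theta_k^\star-\theta_0^\star\|\le h_k\le h\le \tau\sqrt{d/N_T}$. This is at most $\tau\sqrt{d/n_0}$, and I also need it to be at most about $\tau\sqrt{d/n_k}$. That follows because $N_T \ge n_0$, and for $k\in S\setminus T$ the condition $h^2\le d\tau^2/N_T$ together with the structure of $\cH(S)$ controls it. If this fails, I will fall back on \eqref{eq:separation-pairwise-testing} applied only to excluded sources. So bias plus noise is below $\lambda_k$ once $\alpha\ge1$.

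\emph{False inclusion} ($k\notin S$). By separation, $\|\theta_k^\star-\theta_0^\star\|>3\sqrt\alpha\,\tau\sqrt{d/(n_0\wedge n_k)}$. The reverse triangle inequality then leaves a margin above $\lambda_k$, provided the $\log(n_0\vee n_k)$ factor is absorbed. I expect this to be the main obstacle: the $\kappa=3\sqrt\alpha$ calibration must beat $\sqrt{\alpha\log(\cdot)}$. I will first try to handle it by reading the threshold scaling carefully, possibly using a noise bound at deviation level $u=\alpha d\log n_0$ rather than $\log(n_0\vee n_k)$, and otherwise track constants.

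A union bound over the $m$ sources gives $P(\widehat S=S)\ge 1-m n_0^{-\alpha d}$.

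\textbf{Part 2: risk.} Decompose
\[
\Ex\|\widehat\theta_0-\theta_0^\star\|_2^2=\Ex[\cdot\,;\widehat S=S]+\Ex[\cdot\,;\widehat S\ne S].
\]
On $\{\widehat S=S\}$, $\widehat\theta_0$ is the weighted average over $S$. As in the proof of Theorem~\ref{thm:oracle-minimax-rate} (Remark~\ref{remark:bias-variance-tradeoff}), its error is at most $2h_S^2+2\|\sum_{S} n_k\varepsilon_k/N_S\|^2$, whose expectation is $\lesssim h_S^2+d\tau^2/N_S$. By Lemma~\ref{lemma:minimizer-iff-bias-1}, $S$ minimizes $f$, so this is $\asymp\fR(\bh,\bn,\tau)$.

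On the complement, every weighted average is a convex combination. Using Cauchy--Schwarz, its error is at most
\[
\max_k\|\theta_k^\star-\theta_0^\star\|^2+\max_k\|\varepsilon_k\|^2,
\]
and the probability of this event is at most $m n_0^{-\alpha d}$. Since the paper works with the bounded range $\bh\in[0,1]^m$, I will either bound $\|\theta_k^\star-\theta_0^\star\|$ by a constant or rely on the fact that only included sources enter the average. An included $k\notin S$ has $\|\theta_k^\star-\theta_0^\star\|\le\lambda_k+\|\varepsilon_k-\varepsilon_0\|$, so the contribution is $\lesssim \alpha d\tau^2\log N_{\mathrm{total}}\cdot\sqrt{m n_0^{-\alpha d}}$ after Cauchy--Schwarz on the fourth moments. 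The condition $\alpha d\log n_0\ge 81\log\{mN_{\mathrm{total}}/(d\tau^2)\}$ makes $m n_0^{-\alpha d}\le (d\tau^2/(mN_{\mathrm{total}}))^{81}$. This is far below $d\tau^2/N_{\mathrm{total}}\le\fR$ even after the polylog and $\sqrt{\cdot}$ losses. Taking suprema yields the constant $C$.
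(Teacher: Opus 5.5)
Your two-part architecture matches the paper's. In Part 1 you use a $1/2$-net bound on $\|\varepsilon_k-\varepsilon_0\|_2$, triangle inequalities in both directions and a union bound; in Part 2 you add the oracle error on $\{\widehat S=S\}$ to a misidentification penalty. However, the false-inclusion step you flag as the main obstacle is a genuine gap. It cannot be closed by changing the deviation level $u$ or by tracking constants.

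Write $\delta_k=\theta_k^\star-\theta_0^\star$ and $r_k=\tau\sqrt{d/(n_0\wedge n_k)}$. With $\kappa=3\sqrt\alpha$, \eqref{eq:separation-pairwise-testing} only guarantees $\|\delta_k\|_2>3\sqrt\alpha\,r_k$. The acceptance threshold in \eqref{eq:test-then-combine}, however, is $\lambda_k=\sqrt{\alpha\log(n_0\vee n_k)}\,r_k$. Once $\log(n_0\vee n_k)>9$ the threshold exceeds the guaranteed separation, and the obstruction is deterministic. A source with, say, $\|\delta_k\|_2=4\sqrt\alpha\,r_k$ satisfies $\|\widetilde\theta_k-\widetilde\theta_0\|_2\le\|\delta_k\|_2+\|\varepsilon_k-\varepsilon_0\|_2\le\lambda_k$ with probability tending to one; it would be retained even with zero noise. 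So under the stated hypotheses $P(\widehat S=S)$ can tend to $0$.

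Part 2 fails along with it. Take $n_0=\cdots=n_m=n$, $S=\{0,\dots,\lfloor m/2\rfloor\}$, $T=\{0\}$, $h=\tau\sqrt{d/N_S}$, and $\theta_k^\star=\theta_0^\star$ on $S$. Shift every excluded source by the same vector of length $4\sqrt\alpha\,\tau\sqrt{d/n}$. The procedure then pools everything, and its squared bias is of order $\alpha d\tau^2/n$. Meanwhile $\fR\lesssim d\tau^2/(mn)$, so the risk ratio grows like $\alpha m$. To fix this, the separation must carry the same $\sqrt{\log(n_0\vee n_k)}$ factor as the threshold, or the logarithm must be removed from the threshold.

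What makes the argument work is what the paper's proof actually uses in its exclusion step, which is not what $\kappa=3\sqrt\alpha$ and the threshold in \eqref{eq:test-then-combine} provide. It assumes separation $\|\delta_k\|_2>17\lambda_k$ and threshold $9\lambda_k$. These are compared against the noise radius $t_k=\sqrt{32}\,\lambda_k$ that the net lemma gives at level $(n_0\vee n_k)^{-\alpha d}$. Both margins, $9\lambda_k-r_k$ and $17\lambda_k-9\lambda_k$, are then at least $8\lambda_k>t_k$, using $r_k\le\lambda_k$.

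A similar constant issue affects your false-exclusion step. At $u=\alpha d\log(n_0\vee n_k)$ the noise radius is $c\lambda_k$ with $c>1$; for Gaussian estimators with $n_0=n_k$ one already needs $c\ge 2$. So ``bias plus noise below $\lambda_k$'' does not hold at the claimed probability unless the threshold constant is enlarged. Your worry there about needing $h_k\lesssim\tau\sqrt{d/n_k}$ is unnecessary: $h_k\le h\le\tau\sqrt{d/N_T}\le\tau\sqrt{d/n_0}\le r_k$ is all that is used.

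Once the separation is strengthened, your Part 2 goes through. Your treatment of $\{\widehat S\neq S\}$, via Cauchy--Schwarz or via the threshold bound on the biases of retained sources, is more careful than the paper's. The paper bounds that term by $P(\widehat S\neq S)\max_J\mathbb{E}\|\widehat\theta_J-\theta_0^\star\|_2^2$, even though $\widehat S$ is a function of the same noise.
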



We conclude this section by summarizing our main contributions. While general adaptation is often fundamentally impossible, we demonstrate that the cost of adaptation can be significantly improved under specific structural assumptions. Specifically, we consider three types of restrictions: ordered biases (Section \ref{subsec:ordered-bias}), clustering of source parameters (Section \ref{sec:clustering-method}), and sufficient separation between the target and the parameters of non-informative sources (Section \ref{sec:test-then-combine}). For each scenario, we propose a tailored estimator. Our proposed methods fully recover adaptation in the first and third cases. In the second case, our estimator recovers adaptation when $m = \mathcal{O}(d)$; otherwise, it achieves the theoretical lower bound (Corollary \ref{cor:adaptation-cost-lb-general-m}) for the cost of adaptation.
\section{Synthetic Data experiments}
\label{sec:simuations}

In this section, we evaluate and compare the \emph{clustering} (Section \ref{sec:clustering-method}) and the \emph{elimination-based} method (Section \ref{sec:test-then-combine}) under various data-generating configurations. Because the adaptation of the intersection-based estimator (Section~\ref{subsec:ordered-bias}) under the ordered-bias assumption has already been established for arbitrary $d$, $\bn$, and $\tau$, we exclude it from our simulations to maintain a streamlined comparison.
Assuming equal sample sizes ($n$) across all domains for a given $m$ and $d$, we generate the local parameter estimates as follows:
\[
\textstyle \widetilde{\theta}_k \sim N \left(\theta_k^\star, \frac{1}{n} \bbI_d\right), \quad k = 0, \dots, m,
\] 
where $\bbI_d$ is the $d \times d$ identity matrix, and $\mu_0 = \mathbf{0}_d$ is the zero vector in $\reals^d$. The remaining true parameters, $\theta_1^\star, \dots, \theta_m^\star$, are specified later according to the desired experimental configuration. 
Given $\{\widetilde{\theta}_k: k = 0, \dots, m\}$, we implement two methods\footnote{Codes for reproducing the results are provided in \url{https://github.com/smaityumich/MTL-intrinsic_cost}} as follows:
\begin{itemize}
    \item \textbf{Elimination-based estimator:} We compute the estimator defined in \eqref{eq:test-then-combine} by fixing the tuning parameters at $\tau = \alpha = 1$.
    \item \textbf{Clustering-based estimator:} We apply Algorithm \ref{alg:clustering-estimator}, which is a modified version of the sample-splitting approach in Algorithm \ref{alg:sample-split-clustering}. This algorithm requires a scaling constant $C > 0$, which we set to $C = 2 \log (mn)$, as well as the number of clusters ($K$), which we will specify later.
\end{itemize}

To benchmark the two estimators above, we compare them against two baseline estimators: a \emph{naive} estimator and an \emph{oracle} estimator. The naive estimator simply outputs the target domain estimate, $\widetilde{\theta}_0$, as the final estimator. Conversely, the oracle estimator leverages the $\bh$ to find the optimal subset $S$ that minimizes the mean squared error (MSE) of the pooled estimator $\widetilde{\theta}_S$:
\begin{equation}
    \label{eq:oracle-estimator}
    \min_{\{0\} \subset S \subset \{0, \dots, m\}}\left\{\mathrm{MSE} \left(\widetilde{\theta}_S\right) := \frac{1}{|S|^2} \bigg\|\sum_{k \in S} \theta_k^\star\bigg\|^2_2 + \frac{d}{n|S| }\right\}, \quad \text{where} \quad \widetilde{\theta}_S := \frac{1}{|S|} \sum_{k \in S}\widetilde{\theta}_k.
\end{equation}
The oracle estimator then outputs $\widetilde{\theta}_{S_0}$, where $S_0$ is the subset that achieves this minimum. Generally, this optimization is computationally prohibitive for large $m$ because it requires searching through $2^m$ possible subsets. However, in our first two experimental configurations, the problem structure simplifies significantly, rendering it computationally feasible.

\subsection{Three configurations}

\begin{figure}[]
    \centering
    \begin{subfigure}[b]{0.69\textwidth}
        \centering
         \includegraphics[width=\textwidth]{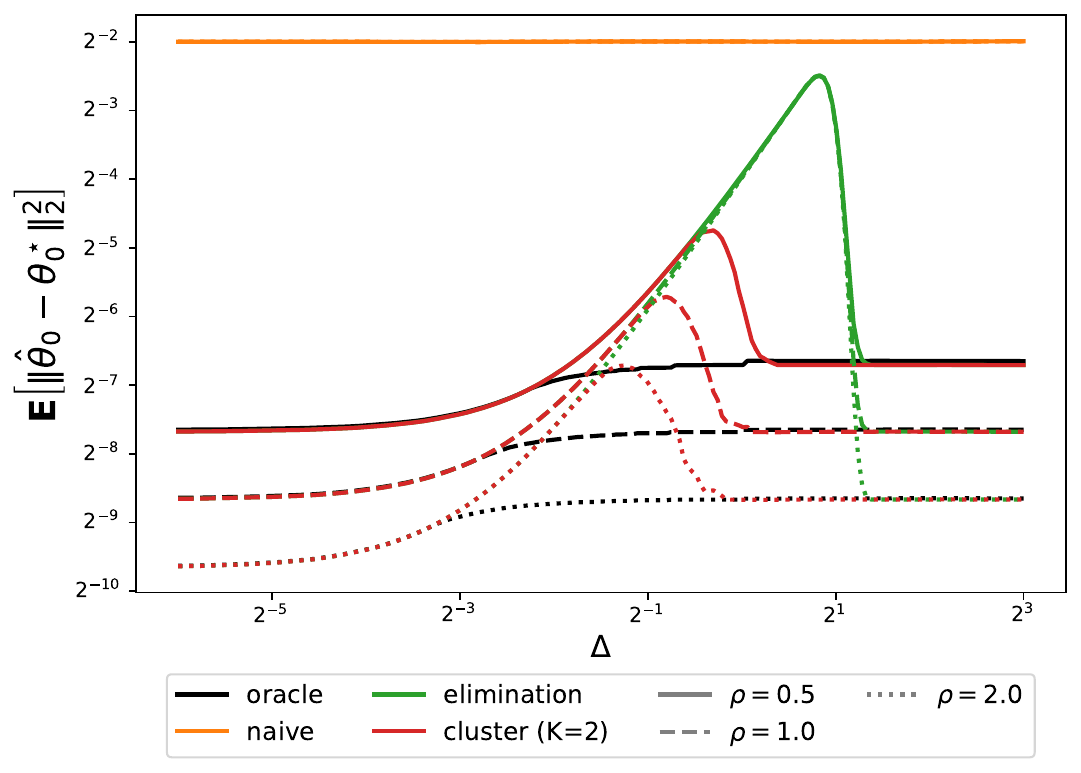}
    \end{subfigure}
    \hfill
    \begin{subfigure}[b]{0.3\textwidth}
        \centering
        \includegraphics[width=\textwidth]{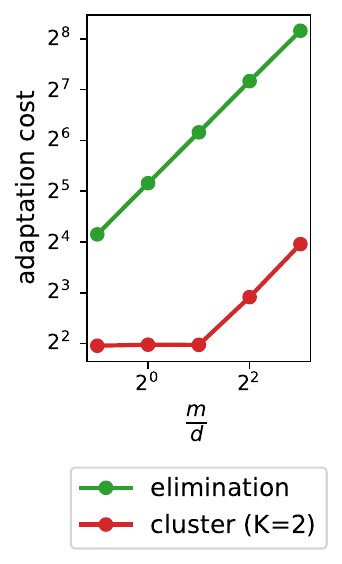}
    \end{subfigure}
    \caption{Comparison of estimators under the cluster configuration. \emph{Left:} MSE across different values of $\Delta$. \emph{Right:} Maximum adaptation cost of the elimination and clustering-based estimators as a function of $\rho = m/d$.}
    \label{fig:cluster-compare}
\end{figure}

\paragraph{Cluster configuration (Figure \ref{fig:cluster-compare}):} 
Given an $m$, we set $\theta_k^\star = \mathbf{0}_d$ for $k = 1, \dots, \lfloor m/2\rfloor$, and $\theta_k^\star = \Delta( \sqrt{d/n}) \mathbf{e}_1$ for $k = \lfloor m/2\rfloor + 1, \dots, m$, where $\mathbf{e}_1 = (1, 0, \dots, 0)^\top \in \reals^d$. Under this setup, the estimates $\widetilde{\theta}_k$ are drawn from a multivariate normal distribution, forming two distinct clusters centered at $\mathbf{0}_d$ and $(\Delta \sqrt{d/n}) \mathbf{e}_1$. 

For small values of $\Delta$, the oracle estimator optimally integrates all $\widetilde{\theta}_k$. Conversely, for large $\Delta$, it excludes the second cluster and only integrates the target domain and the first $\lfloor m/2 \rfloor$ sources. This reduces the candidate pool for the oracle estimator to just two options:
\[
\textstyle \frac{1}{m+1}\sum_{k = 0}^m \widetilde{\theta}_k \qquad \text{or} \qquad \frac{1}{\lfloor m/2\rfloor+1}\sum_{k = 0}^{\lfloor m/2\rfloor} \widetilde{\theta}_k. 
\] 
Consequently, the optimization in \eqref{eq:oracle-estimator} simplifies to choosing the option with the lower MSE:
\[
\widehat{\theta}_0^{(\mathrm{oracle})} = \begin{cases}
    \frac{1}{m+1}\sum_{k = 0}^m \widetilde{\theta}_k & \text{if} \quad \frac{d\Delta^2}{n}\left( \frac{ m - \lfloor m/2 \rfloor }{m+1}\right)^2 + \frac{d}{n(m+1)} \le \frac{d}{n(\lfloor m/2\rfloor + 1)}, \\
    \frac{1}{\lfloor m/2\rfloor+1}\sum_{k = 0}^{\lfloor m/2\rfloor} \widetilde{\theta}_k & \text{otherwise}.
\end{cases}
\] 

We evaluate three scenarios: setting $d = 100$ and $n = 400$, we vary $\rho \in \{ 0.5, 1, 2\}$ and let $m = \rho d$. For each setting, we test a range of $\Delta$ values (shown in the left panel of Figure \ref{fig:cluster-compare}). The MSE for estimating $\theta_0^\star$ is approximated by averaging $\|\widehat{\theta}_0 - \theta_0^\star \|_2^2$ over 500 independent iterations. For the clustering algorithm (Algorithm \ref{alg:clustering-estimator}), we set the number of clusters at $K = 2$. 

As seen in the left panel of Figure \ref{fig:cluster-compare}, the MSE of oracle estimator achieves a rate of $\cO(d/mn) = \cO\{(4\rho d)^{-1}\}$ for larger $\Delta$, meaning the MSE halves when $\rho$ doubles. The naive estimator maintains a relatively constant MSE near $1/4$, perfectly matching its theoretical MSE of $d/n = 1/4$. 
When $\Delta$ is either sufficiently small or sufficiently large, both the elimination-based and clustering-based estimators perform comparably to the oracle estimator. The performance gap at moderate values of $\Delta$ reflects the \emph{cost of adaptation}. We define this cost as the maximum ratio between the MSE of the proposed estimator and that of the oracle estimator. The right panel of Figure \ref{fig:cluster-compare} plots this adaptation cost against $\rho = m/d$. The cost increases linearly with $\rho$, particularly for larger values, corroborating the theoretical lower bound established in Corollary \ref{cor:adaptation-cost-lb-general-m}.


\begin{figure}[]
    \centering
    \includegraphics[width=\textwidth]{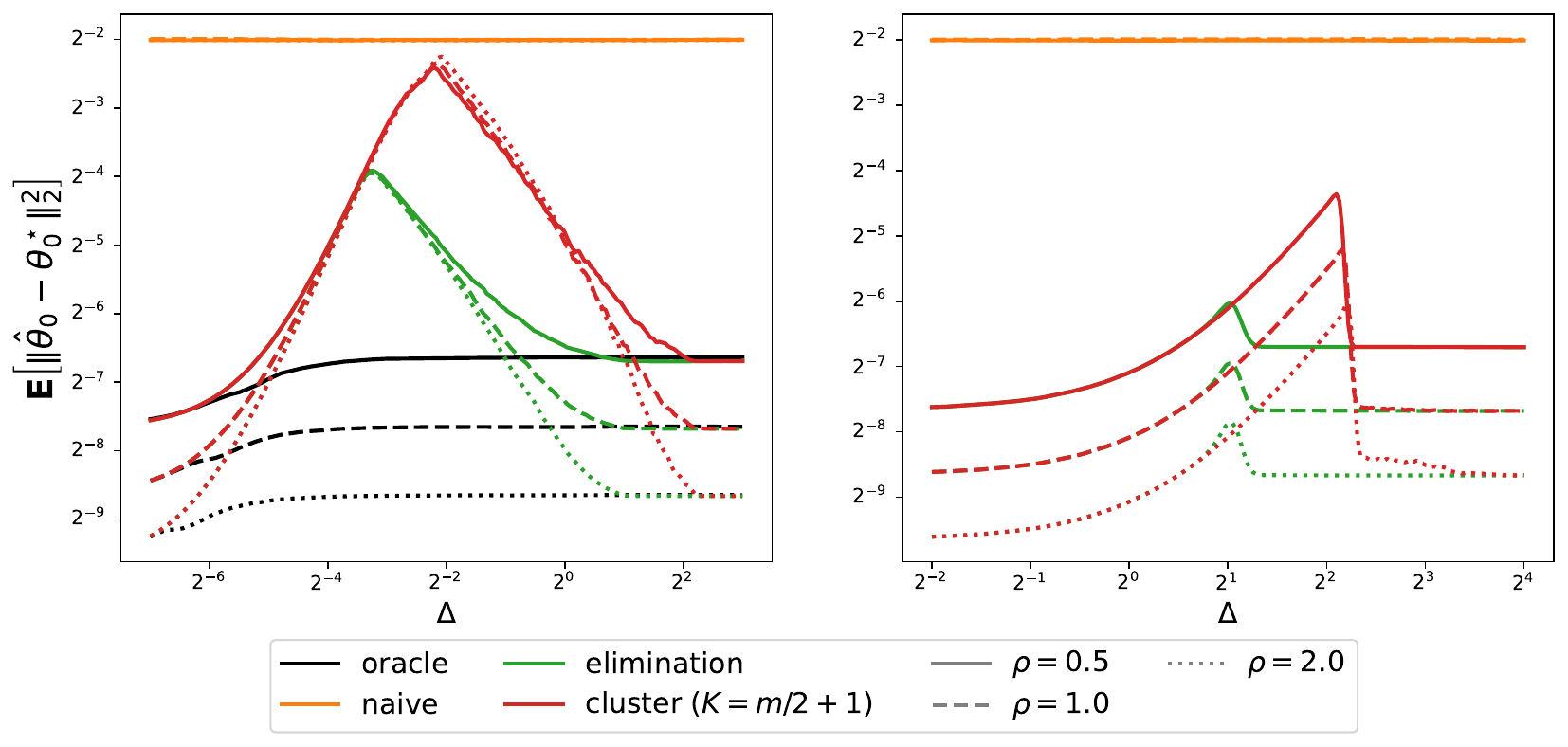}
   
    \caption{Comparison of estimators under Separation I (left) and Separation II (right) configurations.}
    \label{fig:sep-compare}
\end{figure}

\paragraph{Separation I configuration (Figure \ref{fig:sep-compare}, left panel):} 
Next, we explore a separation configuration. Given an $m$, we again set $\theta_k^\star = \mathbf{0}_d$ for $k = 1, \dots, \lfloor m/2\rfloor$. However, the remaining parameters are generated as:
\[
 \textstyle \theta_k^\star := h_k \mathbf{e}_1, \qquad  h_k \sim \text{Uniform}\left(\Delta \sqrt{\frac{d}{n}}, 20 \Delta \sqrt{\frac{d}{n}}\right), \quad k = \left\lfloor \frac{m}{2}\right\rfloor + 1, \dots, m,
\] 
varying $\Delta$ between $2^{-7}$ and $2^2$. Because the non-zero $\theta_k^\star$ values differ from $\theta_0^\star$ only along the first coordinate, we can simplify the oracle optimization \eqref{eq:oracle-estimator}. We sort the source domains by $h_k$ in ascending order such that $h_{(\lfloor m/2\rfloor + 1)} \le \dots \le h_{(m)}$. The oracle estimator is then computed as:
\[
\widehat{\theta}_0^{(\mathrm{oracle})}:= \frac{1}{k_0 +1}\sum_{k = 0}^{k_0}\widetilde{\theta}_{(k)}, \quad   k_0 := \arg\min_{ \left \lfloor \frac{m}{2} \right \rfloor \le k \le m  } \left \{ \left(\frac{\sum_{j = \lfloor m/2 \rfloor + 1}^k h_{(j)}}{k+1} \right)^2 + \frac{d}{(k+1) n} \right\},
\] 
where $\widetilde{\theta}_{(k)} = \widetilde{\theta}_k$ for $k \le \lfloor m/2 \rfloor$, and the remaining $\widetilde{\theta}_{(k)}$ are ordered according to $h_{(k)}$. 

Unlike the previous setup, this data lacks a distinct two-cluster structure, especially for large $\Delta$. But we still compute the clustering-based estimator with $K = \lfloor m/2 \rfloor + 1$ and $C$ as before. As shown in the left panel of Figure \ref{fig:sep-compare}, both elimination- and clustering-based estimators perform similarly to the oracle at the extremes of $\Delta$. However, for moderate $\Delta$, the elimination-based estimator outperforms the clustering-based method, as the underlying data structure is poorly suited for clustering.

\paragraph{Separation II configuration (Figure \ref{fig:sep-compare}, right panel):} 
In our final configuration, we set $\theta_k^\star = \mathbf{0}_d$ for $k = 1, \dots, \lfloor m/2\rfloor$ and generate the remaining parameters as follows:
\[
 \textstyle \theta_k^\star := \Big(\Delta \sqrt{\frac{d}{n}}\Big) u_k, \qquad   u_k \sim \text{Uniform}\left(\bbS^{d-1}\right), \quad k = \left\lfloor \frac{m}{2}\right\rfloor + 1, \dots, m,
\] 
where $\bbS^{d-1} := \{x \in \reals^d: \|x\|_2=1\}$ and $\Delta$ range from $2^{-2}$ to $2^4$. For $k \ge \left\lfloor \frac{m}{2}\right\rfloor + 1$, the $\theta_k^\star$ differ from $\theta_0^\star$ by the exact same magnitude but in uniformly random directions. Because this geometry prevents a straightforward simplification of \eqref{eq:oracle-estimator}, we omit the oracle estimator from this comparison. The naive, clustering-based, and elimination-based estimators are implemented exactly as in Separation I. 

The MSE trends (Figure \ref{fig:sep-compare}, right panel) mirror those of Separation I. The elimination and clustering-based methods perform comparably at extreme values of $\Delta$, while the elimination-based estimator demonstrates superior performance at moderate $\Delta$ values due to the absence of a clear cluster structure.

\paragraph{Comparison of elimination vs. clustering-based estimators:} 
The relative performance of the two proposed estimators is heavily dictated by the underlying data geometry. The clustering-based estimator excels in the two-cluster configuration, whereas the elimination-based estimator dominates in the Separation I and II configurations, where distinct clustering is absent. 

Crucially, however, the penalty for applying the ``wrong" method is asymmetric. In the Separation configurations (Figure \ref{fig:sep-compare}), the MSE of the clustering-based estimator never exceeds the elimination-based estimator's MSE by more than a factor of $8$ ($2^3$). Conversely, in the two-cluster configuration (Figure \ref{fig:cluster-compare}, left panel, $\rho = 2$, $\Delta = 2$), the elimination-based estimator's MSE can be up to $64$ ($2^6$) times worse than that of the clustering-based estimator. This suggests that the potential benefits of the clustering-based estimator when clusters are present significantly outweigh its drawbacks when they are not.

\section{Discussion}
\label{sec:discussion}

This paper studies the statistical cost of adapting to unknown target-to-source biases in multi-source transfer learning. We introduced the intrinsic cost of adaptation, which compares the risk of any bias-agnostic estimator with the oracle minimax rate achievable when the bias configuration is known. This perspective shows that adaptation is not merely an algorithmic difficulty, but can be a fundamental statistical limitation.

Our main finding is that multi-source adaptation becomes qualitatively harder once there is more than one source. With a single source, the learner only needs to decide whether transfer is useful. With multiple sources, the learner must also identify which sources are useful. This source-selection problem creates an unavoidable adaptation cost. For fixed \(m \ge 2\), unrestricted adaptation over the full bias space is possible only when \(n \lesssim dn_0\); when \(n \gg dn_0\), no estimator can uniformly attain the oracle rate. When \(m\) grows, the difficulty increases further because the selection problem becomes combinatorial.

Our results also show how adaptation can be restored. Ordered biases reduce the candidate sets to nested prefixes, clustered sources reduce many domains to a few effective groups, and separation conditions allow non-informative sources to be eliminated through testing. These examples suggest that successful adaptive transfer learning requires a structure that reduces ambiguity about which sources should be used.

Several questions remain open. Most notably, our lower and upper bounds for growing \(m\) do not fully match, leaving an intermediate regime where the exact feasibility of adaptation is unresolved. Another natural direction is to characterize the intrinsic cost of adaptation under unequal source sample sizes. In that setting, the learner must decide not only which sources are informative, but also how the heterogeneous sample sizes \(n_1,\dots,n_m\) interact with the unknown bias configuration. A sharp theory here would clarify whether the adaptation cost is governed by simple effective quantities, or whether it depends in an essential way on the full sample-size profile. It would also be useful to extend the framework beyond the sub-Gaussian local-estimator model, for example, to heavy-tailed settings, dependent sources, other losses, or non-Euclidean parameter spaces.


Overall, our work highlights that the value of auxiliary data depends not only on how close the sources are to the target but also on whether this closeness can be learned reliably. The intrinsic cost of adaptation formalizes this distinction and clarifies when transfer can be achieved without oracle knowledge, when additional structure is necessary, and when adaptation is fundamentally impossible.


\acks{
Abhinav Chakraborty was supported
by the Founder’s Postdoctoral Fellowship in Statistics at Columbia University.
Subha Maity was supported by the University of Waterloo Startup Grant and the NSERC Discovery Grant (RGPIN-2026-04520). 
}



\appendix

\section{Supplementary material for Section \ref{sec:problem-formulation}}


\subsection{M estimation}
\label{supp:m-estimation}

let $ \{Z_{i}\}_{i = 1}^{n} \subset \cZ $ be a random sample from the probability distribution $P$, where $\cZ$ is the sample space for the distribution. Given a convex set $K \subset \reals^d$,   let $\ell:  K \times \cZ \to \reals $ be a loss function such that for every $z \in \cZ$, the $\ell( \cdot, z)$ is twice continuously differentiable and convex. Define the local estimator as an M estimator
    \[
    \widetilde{\theta} := \arg\min_{\theta \in K} \frac{1}{n_k} \sum_{i = 1}^{n} \ell( \theta, Z_i)\,,
    \] and the local parameter value $\theta^\star$ as the minimizer of $L (\theta):= \Ex[\ell( \theta, Z_i)]$. 
    
\begin{assumption} \label{assmp:m-estimation}
    Consider the following:
    \begin{enumerate}
        \item {\bf Score sub-Gaussianity:} Define the scores $\psi(\theta; X) := \nabla_\theta \ell(\theta; X)$. or every fixed unit vector $v \in \mathbb{R}^d$ with
$\|v\|_2 = 1$, the projected score $Z_i(v) := \langle v, \psi(\theta^*;
X_i)\rangle$ is sub-Gaussian with parameter $\sigma_S$, i.e.\ for all $\lambda \in
\mathbb{R}$,
\[
    \mathbb{E}\!\Big[\exp\Big(\lambda \langle v, \psi(\theta^*;
X_i)\rangle\Big)\Big] \;\le\; e^{\sigma_S^2\lambda^2/2}.
\]
\item  {\bf Strong convexity:} There exists a  $\mu > 0$ such that for every unit vector $v \in \mathbb{R}^d$ and
every $\theta\in K$ and $X$ 
\[
    v^\top \nabla_\theta^2 \ell(\theta; X)\, v \;\ge\; \mu^{-1}.
\]
\item {\bf Lipschitz Hessian:}  The map $\theta \mapsto \nabla^2_\theta \ell(\theta; X_i)$ is
$L_H$-Lipschitz in operator norm almost surely:
\[
    \left\|\nabla^2_\theta \ell(\theta; X_i)
          - \nabla^2_\theta \ell(\theta'; X_i)\right\|_{\mathrm{op}}
    \;\le\; L_H \|\theta - \theta'\|_2
    \qquad \text{a.s.},
\]
for all $\theta, \theta' \in \mathcal{B}$. 
    \end{enumerate}
\end{assumption}


\begin{theorem}[Sub-Gaussian Concentration of Projected M-Estimators]
\label{thm:main}
Assume~\ref{assmp:m-estimation}, 
and that $\|\widetilde\theta-\theta^*\|_2 \le \tfrac{1}{2\mu L_H}$. Then, for any $v \in \mathbb{R}^d$ with $\|v\|_2 = 1$ and any
$t > 0$,
\begin{equation}
    \label{eq:main}
    \mathbb{P}\!\left(
        \langle v,\, \widetilde\theta - \theta^*\rangle \ge t
    \right)
    \;\le\;
    2\exp\!\left(-\frac{n\,t^2}{2\,\tau^2}\right),
\end{equation}
where $\tau^2 := 4\mu^2\sigma_S^2$.
\end{theorem}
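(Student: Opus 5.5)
The plan is the standard one-step linearization of the M-estimator around $\theta^*$. The remainder is controlled by the Lipschitz Hessian and the localization assumption $\|\widetilde\theta-\theta^*\|_2\le 1/(2\mu L_H)$. The leading term is then a sum of i.i.d.\ sub-Gaussian scores, and Hoeffding/Chernoff applies to it.

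\emph{Step 1: first-order condition and Taylor expansion.} Assume $\widetilde\theta$ is an interior minimizer, or more generally use the variational inequality. Its first-order condition reads $\nabla L_n(\widetilde\theta)=0$, where $L_n(\theta):=n^{-1}\sum_i\ell(\theta,Z_i)$. The integral form of Taylor's theorem gives
\[
0=\nabla L_n(\theta^*)+\bar H\,(\widetilde\theta-\theta^*),\qquad \bar H:=\int_0^1\nabla^2L_n\bigl(\theta^*+s(\widetilde\theta-\theta^*)\bigr)\,ds .
\]
Strong convexity gives $\bar H\succeq\mu^{-1}I$, so $\bar H$ is invertible with $\|\bar H^{-1}\|_{\op}\le\mu$. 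Let $H^*:=\nabla^2L_n(\theta^*)$. The Lipschitz Hessian gives $\|\bar H-H^*\|_{\op}\le L_H\|\widetilde\theta-\theta^*\|_2/2\le 1/(4\mu)$.

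\emph{Step 2: reduce to a projected score.} From Step 1, $\widetilde\theta-\theta^*=-\bar H^{-1}\nabla L_n(\theta^*)$. Hence
\[
\langle v,\widetilde\theta-\theta^*\rangle=-\langle \bar H^{-1}v,\nabla L_n(\theta^*)\rangle,\qquad \|\bar H^{-1}v\|_2\le\mu .
\]
The obstacle is that $w:=\bar H^{-1}v$ is random and depends on the same data as the score, so I cannot directly apply the sub-Gaussian score assumption in direction $w$. My first attempt is to bound the deviation of $w$ from something more tractable. The factor $4\mu^2=(2\mu)^2$ in $\tau^2$ suggests that a factor-$2$ loss in the direction norm is being absorbed: the perturbation bound $\|\bar H-H^*\|_{\op}\le 1/(4\mu)$ gives $\|\bar H^{-1}\|_{\op}\le 2\mu$ by a Neumann-series argument. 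However, $H^*$ is still random. The cleanest route I would try is to write
\[
\langle v,\widetilde\theta-\theta^*\rangle\le \|\bar H^{-1}\|_{\op}\,\sup_{u\in\mathcal U}\langle u,\nabla L_n(\theta^*)\rangle
\]
over a suitable one-dimensional set of directions. Alternatively, and more honestly, I would treat the deterministic direction obtained by replacing $\bar H$ with its expectation. Here I expect to need care, and possibly an extra constant such as the $2$ in front of $\exp$, to cover a union over the sign and over the linearization error.

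\emph{Step 3: concentration.} For a fixed unit vector $u$, $\langle u,\nabla L_n(\theta^*)\rangle=n^{-1}\sum_i Z_i(u)$ is an average of i.i.d.\ $\sigma_S$-sub-Gaussian variables, which are mean zero since $\nabla L(\theta^*)=0$. The Chernoff bound gives
\[
P\bigl(n^{-1}\textstyle\sum_i Z_i(u)\ge s\bigr)\le\exp\bigl(-ns^2/(2\sigma_S^2)\bigr).
\]
Setting $s=t/(2\mu)$ then yields $\exp(-nt^2/(8\mu^2\sigma_S^2))$. Matching the stated $\tau^2=4\mu^2\sigma_S^2$ requires the effective direction norm to be $\mu\sqrt2$ or less, or some bookkeeping of that kind. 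The two-sided factor $2$ comes from applying the bound to both $\pm u$. I expect Step 2, decoupling the random Hessian direction from the score, to be the main difficulty. If it cannot be done exactly, I would fall back on conditioning on the localization event and accept a constant-factor change in $\tau$, which is harmless for \eqref{eq:sub-gaussian}.
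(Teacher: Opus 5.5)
The gap is in Step 2. Your diagnosis there is right, but the proposal never gets past it, and none of your fallbacks costs only a constant. Write $g:=\nabla L_n(\theta^*)$ and $w:=\bar H^{-1}v$. The hypotheses tell you only that $\langle w,v\rangle=\langle w,\bar H w\rangle\ge\mu^{-1}\|w\|_2^2$, i.e.\ that $w$ lies in the ball $\{w:\|w-\tfrac{\mu}{2}v\|_2\le\tfrac{\mu}{2}\}$. That set of directions is not one-dimensional. The supremum of $|\langle w,g\rangle|$ over it is $\tfrac{\mu}{2}(|\langle v,g\rangle|+\|g\|_2)$, which lives at the scale $\mu\sigma_S\sqrt{d/n}$.

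Replacing $\bar H$ by a deterministic $H_0$ does not help either, because it leaves the cross term $\langle(\bar H^{-1}-H_0^{-1})v,g\rangle$. Nothing in the assumptions makes $\nabla^2L_n(\theta^*)$ concentrate; there is not even an upper bound on the Hessian. The localization condition only gives $\|\bar H-H^*\|_{\mathrm{op}}\le 1/(4\mu)$, a constant fraction of the curvature, so this term is again of order $\mu\|g\|_2$. The loss is a factor of order $\sqrt d$, not a constant change of $\tau$.

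The constant mismatch you worry about in Step 3 is not real. $\exp\{-nt^2/(8\mu^2\sigma_S^2)\}$ is exactly $\exp\{-nt^2/(2\tau^2)\}$ for $\tau^2=4\mu^2\sigma_S^2$. Also, per-sample strong convexity already gives $\|\bar H^{-1}\|_{\mathrm{op}}\le\mu$, so no Neumann series is needed.

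The paper's proof follows your Steps 1 and 3 and handles Step 2 by asserting, from $\|\widehat H_n(\bar\theta)^{-1}\|_{\mathrm{op}}\le 2\mu$, that $\langle v,\widetilde\theta-\theta^*\rangle\le 2\mu\,|\langle v,\nabla\hat L_n(\theta^*)\rangle|$. That is exactly the move you declined to make, and it does not follow, since $\langle v,H^{-1}g\rangle=\langle H^{-1}v,g\rangle$. Under the stated hypotheses the gap cannot be closed, because the statement as written is false. Take $K=\mathbb{R}^d$, $n=1$, $Z\sim N(0,I_d)$, $Z_\perp:=Z-\langle v,Z\rangle v$, $e:=(v+Z_\perp/\|Z_\perp\|_2)/\sqrt2$, and
\[
\ell(\theta;Z)=\tfrac12\,\theta^\top A(Z)\,\theta-\sigma_S Z^\top\theta,\qquad A(Z)=\mu^{-1}ee^\top+4\mu^{-1}(I_d-ee^\top).
\]
This example satisfies every hypothesis:
\begin{itemize}
\item $\nabla^2\ell=A(Z)\succeq\mu^{-1}I_d$ and does not depend on $\theta$, so $L_H=0$ and the localization condition is vacuous.
\item $\theta^*=0$.
\item $\langle u,\psi(\theta^*;Z)\rangle=-\sigma_S\langle u,Z\rangle$ is $\sigma_S$-sub-Gaussian for every fixed unit $u$.
\end{itemize}
Yet $\widetilde\theta=\sigma_S A(Z)^{-1}Z$ gives $\langle v,\widetilde\theta\rangle=\sigma_S\mu\bigl(\tfrac58\langle v,Z\rangle+\tfrac38\|Z_\perp\|_2\bigr)$. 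This is at least $t=\sigma_S\mu\sqrt d/4$ with probability at least $1/4$ once $d\ge 13$. The claimed bound at this $t$ is $2e^{-d/128}$, which is below $1/4$ for $d\ge 267$.

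What the hypotheses do deliver is the norm bound $\|\widetilde\theta-\theta^*\|_2\le\mu\|\nabla L_n(\theta^*)\|_2$, i.e.\ concentration at scale $\mu\sigma_S\sqrt{(d+\log(1/\delta))/n}$. A dimension-free directional bound needs an extra assumption that decouples the Hessian from the score. Examples are a quadratic loss with a fixed Hessian, or scores that are conditionally sub-Gaussian given the Hessians so that you can condition on $\bar H$, as in Gaussian linear regression given the design.
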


\begin{proof}
The proof proceeds in three steps.

\paragraph{Step 1:}
Since $K$ is compact and convex, and $\widetilde\theta$ minimizes
$\hat{L}_n(\theta) := \frac{1}{n}\sum_{i=1}^n \ell(\theta,Z_i)$ over
$K$, the first-order optimality  condition
gives $\nabla\hat{L}_n(\widetilde\theta) = 0$.
Because $\ell(\,\cdot\,,z)$ is twice continuously differentiable, a
mean-value expansion yields
\begin{equation}
    \label{eq:taylor}
    0 = \nabla\hat{L}_n(\widetilde\theta)
    \;=\;
    \nabla\hat{L}_n(\theta^*)
    \;+\;
    \widehat{H}_n(\bar\theta)\,(\widetilde\theta - \theta^*),
\end{equation}
where $\bar\theta$ lies on the segment $[\theta^*,\widetilde\theta]
\subset K$ and $\widehat{H}_n(\theta)
    \;:=\;
    \frac{1}{n}\sum_{i=1}^n \nabla^2_\theta\ell(\theta,Z_i)$.
Then
\begin{equation}
    \label{eq:quad}
    (\widetilde\theta-\theta^*)^\top
    \widehat{H}_n(\bar\theta)\,
    (\widetilde\theta-\theta^*)
    \;=\;
    \bigl\langle
        \nabla\hat{L}_n(\theta^*),\;
        \theta^* - \widetilde \theta
    \bigr\rangle.
\end{equation}

\paragraph{Step 2: }
Decompose the empirical Hessian at $\bar\theta$ as
\begin{equation}
    \label{eq:hess_decomp}
    \widehat{H}_n(\bar\theta)
    \;=\;
    \widehat{H}_n(\theta^*)
    \;+\;
    \underbrace{
        \bigl[\widehat{H}_n(\bar\theta) - \widehat{H}_n(\theta^*)\bigr]
    }_{=:\,A}.
\end{equation}


\noindent\textit{Control of $A$ via a self-bounding argument.}
Unlike $B$, the term $A = \widehat{H}_n(\bar\theta) -
\widehat{H}_n(\theta^*)$ depends on the random point $\bar\theta$,
which lies on the segment $[\theta^*,\widetilde\theta]$.  By the
Lipschitz Hessian condition (Assumption~\ref{assmp:m-estimation}(4)),
\begin{equation}
    \label{eq:A_pointwise}
    \|A\|_{\mathrm{op}}
    \;\le\;
    L_H\,\|\bar\theta - \theta^*\|_2
    \;\le\;
    L_H\,\|\widetilde\theta - \theta^*\|_2,
\end{equation}
since $\bar\theta$ lies between $\theta^*$ and $\widetilde\theta$.
We do \emph{not} bound $\|\widetilde\theta - \theta^*\|_2$ by the
deterministic radius $R$ at this stage; instead we will obtain a
data-dependent bound on $\|\widetilde\theta - \theta^*\|_2$ and feed
it back.

\noindent\textit{Hessian lower bound on $\mathcal{G}$.}
On the event $\mathcal{G}$, using \eqref{eq:hess_decomp},
\eqref{eq:A_pointwise}, the definition of $\mathcal{G}$, and the
strong-convexity lower bound
$\nabla^2 L(\theta^*) \succeq \mu^{-1}I$
(Assumption~\ref{assmp:m-estimation}(2)):
\begin{equation}
    \label{eq:hess_lb_rough}
   \textstyle  \widehat{H}_n(\bar\theta)
    \;\succeq\;
    \left(
        \frac{1}{\mu}
        - L_H\|\widetilde\theta - \theta^*\|_2
    \right)I
    \; \succeq\; \frac{1}{2\mu} I .
\end{equation}
where the last inequality holds because
$\|\widetilde\theta-\theta^*\|_2 < \tfrac{1}{2\mu L_H}$.

\paragraph{Step 3: }


\noindent\textit{Refined norm bound on $\mathcal{G}$.}
Combining \eqref{eq:quad} and \eqref{eq:hess_lb_rough} with
Cauchy--Schwarz:
\begin{align}
 \textstyle    \frac{1}{2\mu}
    \|\widetilde\theta-\theta^*\|_2^2
    \;\le\;
    (\widetilde\theta-\theta^*)^\top
    \widehat{H}_n(\bar\theta)
    (\widetilde\theta-\theta^*) \notag 
    \;\le\;
    \|\nabla\hat{L}_n(\theta^*)\|_2\,
    \|\widetilde\theta-\theta^*\|_2, \label{eq:quad_cs}
\end{align}
Dividing both sides 
by $\|\widetilde\theta-\theta^*\|_2 > 0$ gives
\begin{equation}
    \label{eq:norm_bound}
  \textstyle   \|\widetilde\theta - \theta^*\|_2
     \; \le \; 2\mu \|\nabla\hat{L}_n(\theta^*)\|_2 
    \qquad\text{on } \mathcal{G}.
\end{equation}


\noindent\textit{Projected score bound.}
From the inversion of \eqref{eq:taylor} and the optimality condition $\widetilde\theta - \theta^*
    =
    -\widehat{H}_n(\bar\theta)^{-1}\nabla\hat{L}_n(\theta^*)$
Projecting onto $v$
and applying $\|\widehat{H}_n(\bar\theta)^{-1}\|_{\mathrm{op}}
\le 2\mu$:
\begin{equation}
    \label{eq:proj_final}
    \langle v,\widetilde\theta-\theta^*\rangle
    \;\le\;
    2\mu\,|S_n(v)| 
    \qquad\text{on }\mathcal{G}.
\end{equation}

\noindent\textit{Sub-Gaussianity of $S_n(v)$.}
Since $\mathbb{E}[\psi(\theta^*;Z_i)] = \nabla L(\theta^*) = 0$ and
each $\langle v,\psi(\theta^*;Z_i)\rangle$ is sub-Gaussian with
parameter $\sigma_S$ (Assumption~\ref{assmp:m-estimation}(1)),
$S_n(v)$ is sub-Gaussian with parameter $\sigma_S/\sqrt{n}$:
\begin{equation}
    \label{eq:score_tail}
  \textstyle   \mathbb{P}\bigl(|S_n(v)| \ge s\bigr)
    \;\le\;
    2\exp\!\left(-\frac{ns^2}{2\sigma_S^2}\right)
    \qquad\forall\,s>0.
\end{equation}
Given a $t > 0$, we let $s = t/(2\mu)$ and obtain
\begin{equation}
    \label{eq:main_tail}
  \textstyle   \mathbb{P}\!\left(
        \langle v,\widetilde\theta-\theta^*\rangle \ge t
    \right)
    \;\le\;
    \mathbb{P}\!\left(|S_n(v)| \ge \frac{t}{2\mu}\right)
    \;\le\;
    2\exp\!\left(-\frac{nt^2}{2\tau^2}\right),
\end{equation}
where $\tau^2 = 4\mu^2\sigma_S^2$. This completes the proof.

\end{proof}

\subsection{Proof of Theorem \ref{thm:oracle-minimax-rate}}
\label{proof:thm:oracle-minimax-rate}

\subsubsection{Proof of the lower bound} 
The main technical step is the multivariate lower bound stated in Lemma~\ref{lem:multivar-lower}, whose proof appears later.  
We then refine that result to obtain the set-wise characterization of the minimax rate.

\bigskip

\begin{proof} The proof is broken into multiple steps.
\paragraph{Step 1: Reduction via Lemma~\ref{lem:multivar-lower}.}
From Lemma~\ref{lem:multivar-lower}, for any $\eta>0$ satisfying
\[
\eta^2\sum_{k:\,\eta\ge\tfrac{h_k}{2\sqrt d}}\frac{n_k}{\tau^2}\le1,
\]
the minimax risk satisfies
\[
\inf_{\widehat\theta_0}\sup_{P\in\cP(h_1,\dots,h_m,\tau)}
\Ex_P\big\|\widehat\theta_0-\theta_0^\star\big\|_2^2
\ \gtrsim\ d\,\eta^2.
\]
Let
\[
\eta^\star
=\sup\left\{
\eta>0:\ 
\eta^2\!\!\sum_{k:\,\eta\ge\tfrac{h_k}{2\sqrt d}}\tfrac{n_k}{\tau^2}\le1
\right\}.
\]
Optimizing over $\eta$ gives
\[
\inf_{\widehat\theta_0}\sup_P
\Ex_P\|\widehat\theta_0-\theta_0^\star\|_2^2
\ \gtrsim\
d(\eta^\star)^2.
\]

\paragraph{Step 2: Reformulation in setwise form.}
To express this bound in terms of the problem parameters, fix $\delta>0$ and define the active index set
\[
S_\delta=\bigl\{k:\ \eta^\star+\delta\ge\tfrac{h_k}{2\sqrt d}\bigr\}.
\]
By maximality of $\eta^\star$, any $\eta>\eta^\star$ violates the constraint, implying
\[
(\eta^\star+\delta)^2
\ \ge\
\Bigl(\sum_{k\in S_\delta}\tfrac{n_k}{\tau^2}\Bigr)^{-1},
\qquad
\max_{k\in S_\delta}h_k^2
\ \le\
4d(\eta^\star+\delta)^2.
\]
Evaluate at $S_\delta$ and then minimize over $S$:
\[
\min_{\{0\}\subset S\subset\{0,\dots,m\}}
\left[
d\left(\sum_{k\in S}\frac{n_k}{\tau^2}\right)^{-1}
+ \max_{k\in S}h_k^2
\right]
\ \le\ d
\left(\sum_{k\in S_\delta}\frac{n_k}{\tau^2}\right)^{-1}
+ \max_{k\in S_\delta}h_k^2
\ \le\ (d+4d)\,(\eta^\star+\delta)^2.
\]
Letting $\delta\downarrow 0$ gives
\[
\min_{\{0\}\subset S\subset\{0,\dots,m\}}
\left[
d\left(\sum_{k\in S}\frac{n_k}{\tau^2}\right)^{-1}
+ \max_{k\in S}h_k^2
\right]
\ \lesssim\ d(\eta^\star)^2.
\]
Combining with the lower bound from Step 1 gives
\[
\inf_{\widehat\theta_0}\sup_{P\in\cP(h_1,\dots,h_m,\tau)}
\Ex_P\big\|\widehat\theta_0-\theta_0^\star\big\|_2^2
\ \gtrsim\
\min_{\{0\}\subset S\subset\{0,\dots,m\}}
\Biggl[
d\Bigl(\sum_{k\in S}\tfrac{n_k}{\tau^2}\Bigr)^{-1}
+\max_{k\in S}h_k^2
\Biggr],
\]
which completes the proof lower bound.

\end{proof}

\begin{lemma}[Multivariate lower bound via Assouad]
\label{lem:multivar-lower}
Fix integers $d\ge 1$ and $m\ge 1$. 
Suppose $(\widetilde \theta_0,\dots,\widetilde \theta_m)$ are independent random vectors satisfying
\[
\widetilde \theta_k \;\sim\; \mathcal{N}\!\big(\theta_k^\star,\, \sigma_k^2 I_d\big),
\qquad 
\sigma_k^2 = \frac{\tau^2}{n_k}.
\]
Assume $\| \theta_k^\star - \theta_0^\star \|_2 \le h_k$ for $k=1,\dots,m$. 
Then the minimax mean-squared error for estimating $\theta_0^\star$ over $\cP(h_1,\dots,h_m,\tau)$ satisfies
\[
\inf_{f}\;\sup_{P\in\cP(h_1,\dots,h_m,\tau)}\; 
\Ex_P\!\big\|f(\widetilde \theta_0,\dots,\widetilde \theta_m)-\theta_0^\star\big\|_2^2
\;\gtrsim\;
d\,\eta^2,
\]
where $\eta>0$ is chosen to be as large as possible, subject to
\[
\eta \;\le\;
\Biggl(
  \sum_{k:\,\eta \ge \tfrac{h_k}{2\sqrt d}}
  \frac{n_k}{\tau^2}
\Biggr)^{-1/2}.
\]
\end{lemma}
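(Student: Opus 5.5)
We prove the lemma with Assouad's lemma over the hypercube $\{-1,+1\}^d$. The key device is that the informative sources move together with the target and the uninformative sources stay fixed. Fix $\eta>0$ satisfying the displayed constraint and set $A_\eta:=\{k\ge 1:\ \eta\ge h_k/(2\sqrt d)\}$. For $\omega\in\{-1,+1\}^d$ define
\[
\theta_0^\star(\omega)=\eta\,\omega .
\]
For the sources, put $\theta_k^\star(\omega)=\eta\,\omega$ when $k\notin A_\eta$, and $\theta_k^\star(\omega)=0$ when $k\in A_\eta$.

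We first check the bias constraints. For $k\notin A_\eta$ the source coincides with the target, so the bias is $0\le h_k$. For $k\in A_\eta$ we have $\|\theta_k^\star-\theta_0^\star\|_2=\eta\sqrt d$. This is $\le h_k$ precisely when $\eta\le h_k/\sqrt d$, which is the wrong direction: the sources whose $h_k$ is large enough to absorb the perturbation are those with $\eta\le h_k/\sqrt d$. So the roles must be swapped. The cheap (small-$h_k$) sources cannot move with the target and will carry information about $\omega$; the expensive ones can move with it. We therefore redefine, for $k\ge 1$,
\[
\theta_k^\star(\omega)=
\begin{cases}
\eta\,\omega, & k\in A_\eta \ (\text{small } h_k),\\
0, & k\notin A_\eta \ (\text{large } h_k).
\end{cases}
\]
Now the bias of every source in $A_\eta$ is $0$. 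For $k\notin A_\eta$ we have $h_k>2\sqrt d\,\eta\ge\|\eta\omega\|_2$, so all bias constraints hold. Using Gaussian laws $N(\theta_k^\star,\tfrac{\tau^2}{n_k}I_d)$ places each hypothesis in $\cP(\bh,\bn,\tau)$, by Example~\ref{example:normal-mean} with $C_\tau=1$ (and $C_\tau\ge 1$ in general).

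Next we bound the information between neighboring hypotheses. Let $\omega,\omega'$ differ in a single coordinate. The laws then differ only in the components $\widetilde\theta_0$ and $\widetilde\theta_k$ for $k\in A_\eta$, and in each of these only along one coordinate, by a mean shift of $2\eta$. Writing $S=\{0\}\cup A_\eta$, the KL divergence is
\[
\sum_{k\in S}\frac{n_k(2\eta)^2}{2\tau^2}=2\eta^2\sum_{k\in S}\frac{n_k}{\tau^2}\le 2,
\]
where the last step is the hypothesis on $\eta$ (reading the sum in the lemma as including $k=0$, since $h_0=0$). By Pinsker, the total variation distance between such neighbors is bounded by a constant strictly less than $1$. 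This is the step needing care: we want $\mathrm{TV}\le 1-c$ uniformly. If the Pinsker constant is not below $1$, we shrink $\eta$ by a universal factor, which changes the bound only by constants.

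Assouad's lemma then applies to the separable loss $\|\widehat\theta-\theta_0^\star(\omega)\|_2^2=\sum_{j=1}^d(\widehat\theta_j-\eta\omega_j)^2$, with per-coordinate separation $\eta^2$. It gives
\[
\max_\omega \Ex\|\widehat\theta-\theta_0^\star(\omega)\|_2^2\ \ge\ \frac d2\,\eta^2\,(1-\max\mathrm{TV})\ \gtrsim\ d\eta^2 ,
\]
and taking the largest admissible $\eta$ completes the proof. The main obstacle is the bookkeeping already resolved above: choosing which sources move with the target so that the bias constraint and the information constraint produce exactly the index set $\{k:\eta\ge h_k/(2\sqrt d)\}$. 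The factor $2$ in the threshold provides the slack needed for the strict inequality $h_k>2\sqrt d\,\eta\ge\eta\sqrt d$.
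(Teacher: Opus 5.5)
Your proof is correct and is essentially the paper's argument. Both use Assouad over the hypercube with the target perturbed coordinatewise at scale $\eta$. The small-bias sources $\{k:\eta\ge h_k/(2\sqrt d)\}$ track the target and so enter the neighbour KL, while the large-bias sources are held fixed. The paper differs only cosmetically, using $\{0,\eta\}$ rather than $\{\pm\eta\}$ per coordinate and adding a superfluous offset $h_k/(2\sqrt d)$ to the source means.

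Two small remarks. Your sentence saying the cheap sources ``cannot move with the target'' contradicts your correct displayed construction. The shrinking of $\eta$ you invoke is harmless, because $A_{\eta/c}\subseteq A_\eta$; it can also be avoided altogether with the Bretagnolle--Huber bound.
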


\begin{proof}
We proceed in a step-by-step fashion.

\paragraph{1. One-dimensional construction.}
Define two scalar parameter configurations:
\[
\alpha_1:\quad \theta_0^{(1)}=\eta,\quad \theta_k^{(1)}=\eta+\tfrac{h_k}{2},
\]
\[
\alpha_2:\quad \theta_0^{(2)}=0,\quad 
\theta_k^{(2)}=
\begin{cases}
\tfrac{h_k}{2}, & \text{if } \tfrac{h_k}{2}\le \eta,\\[3pt]
\eta+\tfrac{h_k}{2}, & \text{if } \tfrac{h_k}{2}>\eta.
\end{cases}
\]
Then $\|\Ex[\widetilde \theta_k]-\theta_0^\star\|_2\le h_k$ holds and
\[
\mathrm{KL}(\alpha_1,\alpha_2)
\;=\;
\eta^2 \sum_{k:\,\tfrac{h_k}{2}\le \eta}\sigma_k^{-2}
\;=\;
\eta^2\sum_{k:\,\tfrac{h_k}{2}\le \eta}\frac{n_k}{\tau^2}.
\]

\paragraph{2. Tensorized hypercube construction.}
Let $u\in\{1,2\}^d$ index hypotheses. 
For each coordinate $j$, place an independent copy of the 1-D pair above, but scale the bias by $1/\sqrt d$ so that the $\ell_2$ class constraints hold jointly.  
Define
\[
T_d \;=\; \bigl\{k:\, \tfrac{h_k}{2\sqrt d}\le \eta\bigr\},
\]
and set
\[
\theta_{0,j}(u) =
\begin{cases}
\eta, & u_j=1,\\
0, & u_j=2,
\end{cases}
\qquad
\theta_{k,j}(u) =
\begin{cases}
\eta+\tfrac{h_k}{2\sqrt d}, & u_j=1,\\[4pt]
\tfrac{h_k}{2\sqrt d}, & u_j=2,~k\in T_d,\\[4pt]
\eta+\tfrac{h_k}{2\sqrt d}, & u_j=2,~k\notin T_d.
\end{cases}
\]
Then $(\widetilde \theta_0,\dots,\widetilde \theta_m)$ satisfies the constraints of $\cP(h_1,\dots,h_m,\tau)$.

\paragraph{3. Class membership check.}
For each $k$ and $u$, we have
\[
\|\theta_k(u)-\theta_0(u)\|_2
\;\le\;
\sqrt d\big(\eta+\tfrac{h_k}{2\sqrt d}\big)
\;\le\;
h_k,
\]
so the bias constraint $\|\theta_k^\star-\theta_0^\star\|_2\le h_k$ holds.

\paragraph{4. Edge separation and divergence.}
Let $u^{(j)}$ flip only the $j$-th coordinate of $u$.  
Then
\[
\|\theta_0(u)-\theta_0(u^{(j)})\|_2^2=\eta^2,
\qquad
\mathrm{KL}(P_u,P_{u^{(j)}})
=\eta^2\!\!\sum_{k\in T_d\cup\{0\}}\!\!\frac{n_k}{\tau^2}.
\]

\paragraph{5. Application of Assouad’s lemma.}
By Assouad’s lemma for the $2^d$-hypercube, we obtain
\[
\inf_{f}\sup_{u}\Ex_u\|f-\theta_0(u)\|_2^2
\;\gtrsim\;
d\,\eta^2\!\left(1-\sqrt{C\,\eta^2\!\sum_{k\in T_d\cup\{0\}}\!\frac{n_k}{\tau^2}}\right).
\]
Choosing $\eta$ so that the KL term in parentheses remains bounded by a universal constant gives the claimed rate, i.e.
\[
\eta \;\le\; 
\Biggl(
  \sum_{k:\,\eta \ge \tfrac{h_k}{2\sqrt d}} \frac{n_k}{\tau^2}
\Biggr)^{-\!1/2}.
\]
\end{proof}

\subsubsection{Proof of the upper bound}

For an integration set $\{0\} \subset S \subset \{0, \dots , m\}$, define the estimator
\[
\begin{aligned}
    \widehat \theta_{0, S} = \sum_{k \in S}  \omega_k \widetilde \theta_k , ~~ \omega_k =  \frac{ n_k}{\sum_{j \in S}  n_j}\,. 
\end{aligned}
\]
Then an upper bound for mean squared error $\Ex [\|\widehat \theta _{0, S} - \theta_0^\star \|_2^2]$ is 
\[
\begin{aligned}
    \Ex \left[\left \|\widehat \theta_{0, S} - \theta_0^\star\right\|_2^2\right] & = \tr \left [\var\left (\widehat \theta_{0, S}\right)\right] +\left \| \Ex \left [\widehat \theta_{0, S}\right] - \theta_0^\star\right\|_2^2 \\
    &\le   \sum _{k \in S} \omega_k^2 \left (\frac{d\tau^2}{n_k}\right) + \left \|\sum_{k \in S} \omega_k \theta_k^\star - \theta_0^\star\right\|^2 \le   \frac{d\tau^2}{\sum_{k \in S} n_k} + \max_{k \in S} h_k^2   
\end{aligned}
\] 
Let $S^\star$ be a minimizer of the oracle rate in equation \eqref{eq:loca-loptimal-rate}. By the definition of $S^\star$, we have 
\[
\begin{aligned}
    \Ex \left[\left \|\widehat \theta_{0, \text{oracle}} - \theta_0^\star\right\|_2^2\right] & \le  \frac{d\tau^2}{\sum_{k \in S^\star} n_k} + \max_{k \in S^\star} h_k^2  = \min_{\{0\}\subset S \subset \{0,\dots,m\}}
\left[
 \frac{d\tau^2}{\sum_{k \in S} n_k}
+ \max_{k\in S} h_k^2
\right]\,. 
\end{aligned}
\]
This completes the proof. 

\section{Supplementary material for Section \ref{sec:bias-adaptation}}

\subsection{Proof of the Theorem \ref{thm:cost-fixed-m}}
\label{supp:proof:thm:cost-fixed-m}

\subsubsection{Proof of the lower bound}
We establish the bound for $m = 2$. Later, we shall discuss how to extend the proof to a finite $m \ge 3$. 

\paragraph{Proof for $m = 2$:} 
For any $0<g_1<g_2<1$, consider the following configuration:
\[
\widetilde \theta_k \sim N\Big(\theta_k, (\tau^2 / n_k) \bI_d \Big), ~~ k = 0, 1, \text{ and } 2,
\] where $n_1 = n_2 = n$. For $e_1 = (1, 0, \dots, 0) ^\top \in \reals^d$, we let $\theta_1 = g_1 e_1$, $\theta_2 = g_2 e_1$, and $\theta_0 \in \{0_d , (g_1 +g_2)e_1\}$. We denote the two joint distributions of $(\widetilde \theta_0, \widetilde \theta_1, \widetilde \theta_2)$ as $P_0$ and $P_1$. First, we shall establish a lower bound for
\[
\inf_{\widehat\theta_0}
\sup_{\{P_0,P_1\}}
\Ex\!\left[\|\widehat\theta_0-\theta_0\|_2^2\right]
\]
The Kulback-Leibler divergence between the two distributions is $  \mathrm{KL} (P_0, P_1)  = \frac{n_0(g_1 + g_2)^2}{2\tau^2}$.
Then, for any $g_2 \le \tau / (2\sqrt{n_0})$, from LeCam's two-point bound, we have 
\[
\begin{aligned}
   \textstyle  \inf_{\widehat\theta_0}
\sup_{\{P_0,P_1\}}
\Ex\!\left[(\widehat\theta_0-\theta_0)^2\right] & \ge c(g_1 + g_2)^2  \Big(1 - \TV(P_0, P_1)\Big)  \\
& \textstyle  \ge c(g_1 + g_2)^2  \left(1 - \sqrt{\frac{\KL(P_0, P_1)}{2}}\right)  \\
& \textstyle  \ge c(g_1 + g_2)^2  \left(1 - \sqrt{\frac{n_0(g_1 + g_2)^2}{4\tau^2}}\right)  \ge \frac {c g_2^2} 2   \\
\end{aligned}
\] where the final inequality holds because $g_1 + g_2 \le 2g_2 \le \tau /\sqrt{n_0}$. 
Now, for $\bn = (n_0, n, n)$, note that $P_0 \in \cP((g_1, g_2), \bn, \tau)$ and $P_1 \in \cP((g_2, g_1), \bn, \tau)$. Then, we have the upper bound 
\[
\textstyle \fR((g_1, g_2), \bn, \tau) \vee \fR((g_2, g_1), \bn, \tau) \asymp \frac{d\tau^2}{n_0} \wedge \left( \frac{d\tau^2}{n} \vee g_1^2\right)\,.
\]
\begin{equation} \label{eq:lb-m2-eq-1-supp}
    \begin{aligned}
\text{Consequently,}\qquad &\inf_{\widehat\theta_0}
\sup_{(h_1,h_2)\in\{(g_1,g_2),(g_2,g_1)\}}
\frac{
\sup_{P\in\cP((h_1,h_2), \bn , \tau)}
\Ex[\|\widehat\theta_0-\theta_0\|_2^2]
}{
R(h_1,h_2;\tau,N)
}\\[4pt]
&\qquad\ge\;
\inf_{\widehat\theta_0}
\sup_{(h_1,h_2)\in\{(g_1,g_2),(g_2,g_1)\}}
\textstyle \frac{
\sup_{P\in\cP((h_1,h_2), \bn , \tau)}
\Ex[\|\widehat\theta_0-\theta_0\|_2^2]
}{
\frac{d\tau^2}{n_0} \wedge \left( \frac{d\tau^2}{n} \vee g_1^2\right)
}
\\[6pt]
&\qquad\ge\; \textstyle 
c_L\,\frac{g_2^2 }{
\frac{d\tau^2}{n_0} \wedge \left( \frac{d\tau^2}{n} \vee g_1^2\right)
}.
\end{aligned}
\end{equation}

\medskip\noindent
\emph{Taking the supremum over all $(h_1,h_2)\in[0,1]^2$.}
By the definition of the intrinsic adaptation cost,
\[
\begin{aligned}
    & \inf_{\widehat\theta_0}
\sup_{(h_1,h_2)\in[0,1]^2}
\frac{
\sup_{P\in\cP((h_1,h_2), \bn , \tau)}
\Ex[\|\widehat\theta_0-\theta_0\|_2^2]
}{
R(h_1,h_2;\tau,N)
}\\
&
=
\inf_{\widehat\theta_0}
\sup_{0<g_1<g_2<1}
\sup_{(h_1,h_2)\in\{(g_1,g_2),(g_2,g_1)\}}
\frac{
\sup_{P\in\cP((h_1,h_2), \bn , \tau)}
\Ex[\|\widehat\theta_0-\theta_0\|_2^2]
}{
R(h_1,h_2;\tau,N)
}.
\end{aligned}
\]
Lower bounding the outer supremum by restricting to the small-signal
regime $g_2\le\tau/\sqrt{n_0}$, in which
Lemma~\ref{lem:two-point} applies, we obtain
\[
\begin{aligned}
\mathfrak C^\star([0, 1]^2)
&\ge
\sup_{0<g_1<g_2<\tau/\sqrt{n_0}}
\textstyle c_L\,\frac{g_2^2 }{
\frac{d\tau^2}{n_0} \wedge \left( \frac{d\tau^2}{n} \vee g_1^2\right)
}\\[4pt]
&=
\textstyle c_L'\,
\left(\frac{n}{dn_0} \vee 1 \right)\,.
\end{aligned}
\]
This completes the lower bound.

\begin{lemma}[Two--point lower bound for a fixed configuration]\label{lem:two-point}
Fix numbers $0 < g_1 \le g_2$ and consider the scalar Gaussian model
\[
\tilde\theta_k \sim N(\theta_k,\tau^2/n_k),\qquad k=0,1,2,
\]
independent.  Fix the configuration with $\theta_1 = g_1 $ and $ \theta_2 = g_2$, 
and consider 
\[
\theta_0 \in \{0,\, g_1+g_2\}.
\]
Assume, moreover, that $g_2 \lesssim \tau/\sqrt{n_0}$.
Then there exists a universal constant $c>0$ such that, for any estimator
$\widehat\theta_0=\widehat\theta_0(\tilde\theta_0,\tilde\theta_1,\tilde\theta_2)$,
\[
\inf_{\widehat\theta_0}
\;\sup_{\theta_0\in\{0,g_1+g_2\}}
\Ex\big[(\widehat\theta_0-\theta_0)^2\big]
\;\ge\;
c\,g_2^2.
\]
\end{lemma}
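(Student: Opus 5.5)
The argument is Le Cam's two-point method. The two hypotheses differ only in the target mean, and the source observations carry no information about which one holds. Write $P_0$ for the joint law when $\theta_0=0$ and $P_1$ for the joint law when $\theta_0=g_1+g_2$. In both cases $\tilde\theta_1\sim N(g_1,\tau^2/n_1)$ and $\tilde\theta_2\sim N(g_2,\tau^2/n_2)$, independent of $\tilde\theta_0$. By independence and the chain rule, the Kullback--Leibler divergence tensorizes, and the source factors contribute zero:
\[
\mathrm{KL}(P_0,P_1)=\mathrm{KL}\big(N(0,\tau^2/n_0),N(g_1+g_2,\tau^2/n_0)\big)=\frac{n_0(g_1+g_2)^2}{2\tau^2}.
\]

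Next, apply the standard two-point reduction. For any estimator $\widehat\theta_0$, define the test that chooses whichever of $0$ and $g_1+g_2$ is closer to $\widehat\theta_0$. If the test errs, then $|\widehat\theta_0-\theta_0|\ge (g_1+g_2)/2$. This gives
\[
\sup_{\theta_0\in\{0,g_1+g_2\}}\Ex\big[(\widehat\theta_0-\theta_0)^2\big]\ \ge\ \frac{(g_1+g_2)^2}{8}\big(1-\TV(P_0,P_1)\big).
\]
Pinsker's inequality then gives $\TV(P_0,P_1)\le\sqrt{\mathrm{KL}/2}=\sqrt{n_0}(g_1+g_2)/(2\tau)$.

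The only delicate point is fixing the constant in the assumption $g_2\lesssim\tau/\sqrt{n_0}$ so that the total variation stays bounded away from $1$. I would make this concrete as $g_2\le \tau/(2\sqrt{n_0})$, the same normalization used in the proof sketch of Theorem~\ref{thm:cost-fixed-m}. Then $g_1+g_2\le 2g_2\le\tau/\sqrt{n_0}$, so $\TV(P_0,P_1)\le 1/2$. If the implicit constant $c_0$ in $g_2\le c_0\tau/\sqrt{n_0}$ is larger, the cleanest remedy is to shrink the alternative: take $\theta_0\in\{0,\epsilon(g_1+g_2)\}$ with a fixed $\epsilon$. That choice would alter the statement slightly, so I would instead state the lemma with the explicit constant $1/2$.

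Finally, since $g_1\ge 0$, we have $(g_1+g_2)^2\ge g_2^2$. Combining the bounds yields
\[
\inf_{\widehat\theta_0}\sup_{\theta_0}\Ex\big[(\widehat\theta_0-\theta_0)^2\big]\ge \frac{g_2^2}{16},
\]
so the lemma holds with $c=1/16$.

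The same argument extends directly to the $d$-dimensional embedding used in the proof of Theorem~\ref{thm:cost-fixed-m}, where everything is placed along $e_1$. The squared $\ell_2$ loss is at least the squared error in the first coordinate. The remaining coordinates have identical laws under both hypotheses and add nothing to the divergence.
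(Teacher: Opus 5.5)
Your proof is correct and uses the same Le Cam two-point argument as the paper. Your Pinsker computation under $g_2\le\tau/(2\sqrt{n_0})$ is exactly the one the paper writes inline for the $m=2$ lower bound. The lemma's own proof differs in two small ways: it first reduces to estimators depending on $\tilde\theta_0$ alone (your KL tensorization on the joint law makes that step unnecessary), and it then uses the closed-form Gaussian total variation in place of Pinsker.

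That second choice removes the issue you flagged as delicate. For $g_2\le C_0\tau/\sqrt{n_0}$ we have
\[
1-\mathrm{TV}(P_0,P_1)=2\Phi\bigl(-\sqrt{n_0}(g_1+g_2)/(2\tau)\bigr)\ge 2\Phi(-C_0).
\]
Alternatively, Bretagnolle--Huber gives $1-\mathrm{TV}\ge\tfrac12 e^{-\mathrm{KL}}\ge\tfrac12 e^{-2C_0^2}$. Either way the lemma holds for any implicit constant $C_0$, with $c$ depending only on $C_0$. So you need neither fix the constant at $1/2$ nor shrink the alternative.

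This general form is also what the paper uses when it later invokes the lemma on the range $g_2\le\tau/\sqrt{n_0}$. There the Pinsker bound on $\mathrm{TV}$ is not bounded away from $1$ (it tends to $1$ as $g_1,g_2\to\tau/\sqrt{n_0}$). Restricting that application to $g_2\le\tau/(2\sqrt{n_0})$ would cost only a constant, so your version would still suffice downstream.

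One caution if you compare with the paper's text: its displayed Gaussian formula has $\mathrm{TV}$ and $1-\mathrm{TV}$ interchanged. The bound written above is the one the paper intends.
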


\begin{proof}
\emph{Step 1: Reduction to a scalar problem.}
Since $(\tilde\theta_1, \tilde\theta_2)$ has distribution
$N(g_1, \tau^2/n_1)\otimes N(g_2, \tau^2/n_2)$ regardless of $\theta_0$,
the joint law factors as $P_i = P_i^{(0)} \otimes Q$ for $i = 0,1$,
where $Q$ is the common law of $(\tilde\theta_1,\tilde\theta_2)$.
Hence the likelihood ratio depends only on $\tilde\theta_0$, and
\[
\inf_{\widehat\theta_0(\tilde\theta_0,\tilde\theta_1,\tilde\theta_2)}
\sup_{\theta_0\in\{0,g_1+g_2\}}
\Ex\bigl[(\widehat\theta_0-\theta_0)^2\bigr]
=
\inf_{\widehat\theta_0(\tilde\theta_0)}
\sup_{\theta_0\in\{0,g_1+g_2\}}
\Ex\bigl[(\widehat\theta_0-\theta_0)^2\bigr].
\]

\medskip
\emph{Step 2: Le Cam's two-point bound.}
Set $\Delta := g_1 + g_2$. The marginal laws of $\tilde\theta_0$ are
$P_0^{(0)} = N(0, \tau^2/n_0)$ and $P_1^{(0)} = N(\Delta, \tau^2/n_0)$.
By Le Cam's lemma,
\[
\inf_{\widehat\theta_0(\tilde\theta_0)}
\sup_{\theta_0\in\{0,\Delta\}}
\Ex\bigl[(\widehat\theta_0-\theta_0)^2\bigr]
\ge
\frac{\Delta^2}{4}\Bigl(1 - \TV(P_0^{(0)}, P_1^{(0)})\Bigr).
\]
Using the standard formula
$\TV(N(\mu_1,\sigma^2), N(\mu_2,\sigma^2))
= 2\Phi\!\left(-\frac{|\mu_1-\mu_2|}{2\sigma}\right)$,
we obtain
\[
1 - \TV(P_0^{(0)}, P_1^{(0)})
= 1 - 2\,\Phi\!\left(-\frac{\Delta\sqrt{n_0}}{2\tau}\right)
\asymp 1 - \Phi\!\left(\frac{\Delta\sqrt{n_0}}{\tau}\right),
\]
so the minimax risk is
$\displaystyle\gtrsim \Delta^2\!\left(1-\Phi\!\left(
\frac{\Delta\sqrt{n_0}}{\tau}\right)\right)$.

\medskip
\emph{Step 3: Applying the small-signal condition.}
Since $g_1 \le g_2 \lesssim \tau/\sqrt{n_0}$, we have
$\Delta = g_1+g_2 \le 2g_2 \lesssim \tau/\sqrt{n_0}$,
so $\Delta\sqrt{n_0}/\tau$ is bounded by a universal constant.
Therefore $1 - \Phi(\Delta\sqrt{n_0}/\tau) \ge c_0 > 0$ for a
universal constant $c_0$. Moreover, $g_2 \le \Delta \le 2g_2$
gives $\Delta^2 \asymp g_2^2$. Combining,
\[
\inf_{\widehat\theta_0}
\sup_{\theta_0\in\{0,g_1+g_2\}}
\Ex\bigl[(\widehat\theta_0-\theta_0)^2\bigr]
\gtrsim
g_2^2,
\]
which completes the proof.
\end{proof}

\paragraph{Proof of $m > 2$:} Repeat the proof for $m = 2$ for the following class: 
\[
\widetilde \theta_k \sim N\Big(\theta_k, (\tau^2 / n_k) \bI_d \Big), \quad \text{where} \quad \theta_k = \begin{cases}
    g_1 e_1 & k = 1 , \dots , \lfloor m/2 \rfloor \\
    g_2 e_1 & k = \lfloor m/2 \rfloor + 1 , \dots, m
\end{cases}
\] Then the first $\lfloor m/2 \rfloor$ source domains can be combined into one with an effective sample size of $\lfloor m/2 \rfloor n \asymp n$. Similarly, the final $m - \lfloor m/2 \rfloor$ source domains can be combined into one with an effective sample size of $(m - \lfloor m/2 \rfloor) n \asymp n$. Then the rate for the bound remains unchanged. 

Alternatively, the bound also holds by letting $m$ be fixed in Theorem \ref{thm:cost-lower-bound-general-m}.

\subsubsection{Proof of the upper bound} 
The upper bound is obtained by a direct application of Corollary \ref{cor:adaptation-cost-ub-general-m}, where, by keeping $m$ finite, we obtain 
\[
\textstyle \fC^\star ([0, 1]^m) \lesssim \left(\frac{n}{dn_0} \vee 1 \right)\,.
\] Thus, the upper bound is established for a constant $C >0$.


\subsection{Proof of Lemma \ref{lemma:upper-bound-cost-m2-restricted}}

\label{proof:lemma:upper-bound-cost-m2-restricted}

Recall the Lemma \ref{lemma:expectation-ub-1}. Define $\nu_1 := \|\theta_1 - \theta_0\|_2 $,  $\nu_2:= \|\theta_2 - \theta_0\|_2 $, and 
    \[
   \textstyle  K^2 := 2C \tau^2 \left(\frac{d + \log(1/\delta)}{n} \vee \frac{\log(1/\delta)}{n_0}\right)\,,
 \] where $C > 0$ is the constant within $\cE_T$, as defined in Lemma \ref{lem:diff-sq-concentration}. Let $\delta = n^{-2}$. According to Lemma \ref{lemma:expectation-ub-1},  there exists a constant $C_1 > 0$ such that
 \[
 \begin{aligned}
     & \textstyle \bR(\widehat \theta_0, \nu_1, \nu_2) := \textstyle \Ex \left[ \|\widehat \theta_0 - \theta_0\|_2 ^2\right]\\
     & \le \begin{cases}
     C_1 \frac{d\tau^2}{n}  & \nu_1, \nu_2 \le \tau\sqrt{\frac{d}{n}}\\
     C_1 \left\{ \frac{d\tau^2}{n_0} \wedge \left(\frac{d\tau^2}{n} + \nu_2^2 +K ^2\right)\right\} \log n & \nu_1 > \tau\sqrt{\frac{d}{n}}, ~\nu_2 \le \nu_1\\
     C_1 \left\{ \frac{d\tau^2}{n_0} \wedge \left(\frac{d\tau^2}{n} + \nu_1^2 \right)\right\} \log n & \nu_2 > \tau\sqrt{\frac{d}{n}}, ~\nu_1 \le \nu_2, ~ n_0 \le \frac{ n\log(1/\delta)}{16 C^2 \{d + \log(1/\delta)\}}\,.
 \end{cases}
 \end{aligned}
 \]
\paragraph{Case 1:} For $h_1, h_2 \le \tau \sqrt{d/n}$, we have $\fR ((h_1, h_2), \bn, \tau) = d\tau^2 / n$. Then, 
\[
\begin{aligned}
     \frac{\max\limits_{\nu_1 \le h_1, \nu_2 \le h_2}\bR(\widehat \theta_0, \nu_1, \nu_2)}{\fR ((h_1, h_2), \bn, \tau)} \le \frac{\max\limits_{\nu_1 , \nu_2 \le \tau \sqrt{\frac{d}{n}}}\bR(\widehat \theta_0)}{\fR ((h_1, h_2), \bn, \tau)} \le C_1 \frac{\frac{d\tau^2}{n}}{\frac{d\tau^2}{n}} \le C_1\,.
\end{aligned}
\]

\paragraph{Case 2:} Let  $h_2 > \tau \sqrt{d/n}$ and $h_1 \le h_2$. Then $\fR ((h_1, h_2), \bn, \tau) = \frac{d\tau^2}{n_0} \wedge \left(\frac{d\tau^2}{n} + h_1^2 \right)$. 
Since $n_0 \le \frac{ n\log(1/\delta)}{16 C^2 \{d + \log(1/\delta)\}}$, then $K^2 = C\tau^2 \frac{\log(1/\delta)}{n_0} = 2C\tau^2 \frac{\log n}{n_0}$ and 
\[
\begin{aligned}
    & \max_{\nu_1 \le h_1, \nu_2 \le h_2}\bR(\widehat \theta_0, \nu_1, \nu_2)\\
    & \le   \left\{\max_{\nu_1 , \nu_2 \le  \tau\sqrt{\frac{d}{n}}}\bR(\widehat \theta_0, \nu_1, \nu_2)\right\} \vee \left\{\max_{\nu_1 \le \nu_2, \nu_1 \le h_1,   \nu_2 \le h_2, \nu_2  >  \tau\sqrt{\frac{d}{n}}}\bR(\widehat \theta_0, \nu_1, \nu_2)\right\}\\
    & \qquad \vee \left\{\max_{\nu_1 > \nu_2, \nu_1 \le h_1,   \nu_2 \le h_2, \nu_1  >  \tau\sqrt{\frac{d}{n}}}\bR(\widehat \theta_0, \nu_1, \nu_2)\right\}\\
\end{aligned}
\] 
\[
\begin{aligned}
  & \le \textstyle C_1 \cdot \left\{\frac{d\tau^2}{n}\right\} \vee \left\{\max_{\nu_1  \le h_1}\left\{ \frac{d\tau^2}{n_0} \wedge \left(\frac{d\tau^2}{n} + \nu_1^2 \right)\right\} \log n\right\}\\
    & \qquad \vee \textstyle \left\{\max_{   \nu_2 < \nu_1 \le h_1}\left\{ \frac{d\tau^2}{n_0} \wedge \left(\frac{d\tau^2}{n} + \nu_2^2 +K ^2\right)\right\} \log n \right\}\\
    & \le \textstyle C_1 \cdot  \left\{\left\{ \frac{d\tau^2}{n_0} \wedge \left(\frac{d\tau^2}{n} + h_1^2 + 2C\tau^2 \frac{\log n}{n_0} \right)\right\} \log n\right\}  \le C_2 \cdot  \left\{ \frac{d\tau^2}{n_0} \wedge \left(\frac{d\tau^2}{n} + h_1^2  \right)\right\} (\log n)^2\\  
\end{aligned}
\]
resulting 
\[
\frac{\max_{\nu_1 \le h_1, \nu_2 \le h_2}\bR(\widehat \theta_0, \nu_1, \nu_2)}{\fR ((h_1, h_2), \bn, \tau)} \le C_2 (\log n)^2\,.
\]

\paragraph{Case 3:} Let  $ \tau \sqrt{d/n} \le h_1 \le \tau / (4C\sqrt{n_0})$ and $h_2 \le h_1$. Then $\fR ((h_1, h_2), \bn, \tau) = \frac{d\tau^2}{n} + h_2^2$
and it necessarily implies 
\[
\textstyle n_0 \le \frac{n \log(1/\delta)}{16C^2 d \log(1/\delta)} \le \frac{ n\log(1/\delta)}{16 C^2 \{d + \log(1/\delta)\}}, ~~ \text{and} ~~ K^2 = C\tau^2 \frac{\log(1/\delta)}{n_0} = 2C\tau^2 \frac{\log n}{n_0}
\]
Then, it holds
\[
\begin{aligned}
     \max_{\nu_1 \le h_1, \nu_2 \le h_2}\bR(\widehat \theta_0, \nu_1, \nu_2)
    & \le   \left\{\max_{\nu_1 , \nu_2 \le  \tau\sqrt{\frac{d}{n}}}\bR(\widehat \theta_0, \nu_1, \nu_2)\right\} \vee \left\{\max_{\nu_1 \le \nu_2, \nu_1 \le h_1,   \nu_2 \le h_2, \nu_2  >  \tau\sqrt{\frac{d}{n}}}\bR(\widehat \theta_0, \nu_1, \nu_2)\right\}\\
    & \qquad \vee \left\{\max_{\nu_1 > \nu_2, \nu_1 \le h_1,   \nu_2 \le h_2, \nu_1  >  \tau\sqrt{\frac{d}{n}}}\bR(\widehat \theta_0, \nu_1, \nu_2)\right\}\\
\end{aligned}
\] 
\[
\begin{aligned}
    &\textstyle \le C_1 \cdot \left\{\frac{d\tau^2}{n}\right\} \vee \left\{\max_{\nu_1 \le \nu_2 \le h_2}\left\{ \frac{d\tau^2}{n_0} \wedge \left(\frac{d\tau^2}{n} + \nu_1^2 \right)\right\} \log n\right\}\\
    & \qquad \vee\textstyle  \left\{\max_{   \nu_2 \le h_2}\left\{ \frac{d\tau^2}{n_0} \wedge \left(\frac{d\tau^2}{n} + \nu_2^2 +K ^2\right)\right\} \log n \right\}\\
    & \le\textstyle  C_1 \cdot  \left\{\max_{\nu_1 \le \nu_2 \le h_2}\left\{ \frac{d\tau^2}{n} + \nu_1^2 \right\} \log n\right\} \vee \left\{\max_{   \nu_2 \le h_2}\left\{ \frac{2d\tau^2\log n}{n_0} \wedge \left(\frac{d\tau^2}{n} + \nu_2^2 +K ^2\right)\right\} \log n \right\}\\
    & \le\textstyle  C_1 \cdot  \left\{\max_{\nu_1 \le \nu_2 \le h_2}\left\{ \frac{d\tau^2}{n} + \nu_1^2 \right\} \log n\right\} \vee \left\{\max_{   \nu_2 \le h_2}\left\{ \frac{d\tau^2}{n} + \nu_2^2 +2C\tau^2 \frac{\log n}{n_0}\right\} \log n \right\}\\
    &  \le\textstyle  C_2 \cdot  \left\{ \frac{d\tau^2}{n} + h_2^2  + \frac{\tau^2}{n_0}\right\} (\log n)^2\\
\end{aligned}
\]
which leads to 
\[
\begin{aligned}
    & \max_{\tau \sqrt{d/n} \le h_1 \le \tau / (4C\sqrt{n_0}), h_2 \le h_1}\frac{\max_{\nu_1 \le h_1, \nu_2 \le h_2}\bR(\widehat \theta_0, \nu_1, \nu_2)}{\fR ((h_1, h_2), \bn, \tau)} \\
    & \le C_2 \cdot \max_{\tau \sqrt{d/n} \le h_1 \le \tau / (4C\sqrt{n_0}), h_2 \le h_1} \textstyle \frac{\frac{d\tau^2}{n} + h_2^2  + \frac{\tau^2}{n_0}}{\frac{d\tau^2}{n} + h_2^2} \cdot (\log n)^2\\
    & \le C_2 \textstyle \cdot \frac{n}{d n_0} \cdot (\log n)^2
\end{aligned}
\]
\paragraph{Case 4:} Let  $  h_1 > \tau / (4C\sqrt{n_0})$ and $h_2 \le h_1$. Then $\fR ((h_1, h_2), \bn, \tau) = \frac{d\tau^2}{n} + h_2^2$
and it holds
\[
\begin{aligned}
     \max_{\nu_1 \le h_1, \nu_2 \le h_2}\bR(\widehat \theta_0, \nu_1, \nu_2)
    & \le   \left\{\max_{\nu_1 , \nu_2 \le  \tau\sqrt{\frac{d}{n}}}\bR(\widehat \theta_0, \nu_1, \nu_2)\right\} \vee \left\{\max_{\nu_1 \le \nu_2, \nu_1 \le h_1,   \nu_2 \le h_2, \nu_2  >  \tau\sqrt{\frac{d}{n}}}\bR(\widehat \theta_0, \nu_1, \nu_2)\right\}\\
    & \qquad \vee \left\{\max_{\nu_1 > \nu_2, \nu_1 \le h_1,   \nu_2 \le h_2, \nu_1  >  \tau\sqrt{\frac{d}{n}}}\bR(\widehat \theta_0, \nu_1, \nu_2)\right\}\\
\end{aligned}
\]
\[
\begin{aligned}
    & \le \textstyle C_1 \cdot \left\{\frac{d\tau^2}{n}\right\} \vee \left\{\max_{\nu_1 \le \nu_2 \le h_2}\left\{ \frac{d\tau^2}{n_0} \wedge \left(\frac{d\tau^2}{n} + \nu_1^2 + K^2  \right)\right\} \log n\right\}\\
    & \qquad \vee  \textstyle  \left\{\max_{   \nu_2 \le h_2}\left\{ \frac{d\tau^2}{n_0} \wedge \left(\frac{d\tau^2}{n} + \nu_2^2 +K ^2\right)\right\} \log n \right\}\\
    & \le \textstyle  C_1 \cdot   \left\{ \frac{2d\tau^2\log n}{n_0} \wedge \left(\frac{d\tau^2}{n} + h_2^2 +K ^2\right)\right\} \log n \\
    &  \le \textstyle  C_2 \cdot  \left\{ \frac{d\tau^2}{n_0} \wedge \left( \frac{d\tau^2}{n} + h_2^2 \right)\right\} (\log n)^2\\
\end{aligned}
\] 
\[
\text{Then,} \qquad \frac{\max_{\nu_1 \le h_1, \nu_2 \le h_2}\bR(\widehat \theta_0, \nu_1, \nu_2)}{\fR ((h_1, h_2), \bn, \tau)} \le C_2 (\log n)^2\,.
\]

\subsubsection{Supporting lemmas}

\begin{lemma}[Concentration of $T$]
\label{lem:diff-sq-concentration}
Define $T := \|\widetilde\theta_1 - \widetilde\theta_0\|_2^2 - \|\widetilde\theta_2 - \widetilde\theta_0\|_2^2$ and $\mu_j := \theta_j^\star - \theta_0^\star$. There exists an absolute constant $C>0$ such that for every $\delta\in(0,1)$, with probability at least $1-\delta$,
\[
\begin{aligned}
    \bigl|T - (\|\mu_1\|_2^2 - \|\mu_2\|_2^2)\bigr|
& \;\leq\; \textstyle
C\tau\sqrt{\log\tfrac{1}{\delta}}\,(\|\mu_1\|_2+\|\mu_2\|_2)\!\left(\tfrac{1}{\sqrt{n}}+\tfrac{1}{\sqrt{n_0}}\right)\\
& \qquad \textstyle + C\tau^2\!\left(\tfrac{d+\log(1/\delta)}{n} + \sqrt{\tfrac{d+\log(1/\delta)}{n}}\sqrt{\tfrac{\log(1/\delta)}{n_0}}\right).
\end{aligned}
\]
\end{lemma}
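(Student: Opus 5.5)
We expand $T$ around its mean and bound each random piece separately. Write $\widetilde\theta_0 = \theta_0^\star + \xi_0$ and $\widetilde\theta_j = \theta_j^\star + \xi_j$ for $j=1,2$, where $\xi_0,\xi_1,\xi_2$ are independent, centered, and sub-Gaussian with variance proxies $\tau^2/n_0$, $\tau^2/n$, $\tau^2/n$ by \eqref{eq:sub-gaussian}. Then $\widetilde\theta_j-\widetilde\theta_0 = \mu_j + \xi_j - \xi_0$, and the $\|\xi_0\|_2^2$ contributions cancel:
\[
T - (\|\mu_1\|_2^2-\|\mu_2\|_2^2) = 2\langle \mu_1,\xi_1-\xi_0\rangle - 2\langle \mu_2,\xi_2-\xi_0\rangle + \bigl(\|\xi_1\|_2^2-\|\xi_2\|_2^2\bigr) - 2\langle \xi_1-\xi_2,\xi_0\rangle .
\]
We bound the three groups separately at confidence $\delta/5$ each and finish with a union bound.

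\emph{Linear terms.} Each of $\langle\mu_j,\xi_j\rangle$ and $\langle\mu_j,\xi_0\rangle$ is a one-dimensional projection. Applying \eqref{eq:sub-gaussian} with $a=\mu_j/\|\mu_j\|_2$, in both directions, gives with probability $1-\delta$
\[
|\langle\mu_j,\xi_j-\xi_0\rangle| \lesssim \tau\sqrt{\log(1/\delta)}\,\|\mu_j\|_2\bigl(n^{-1/2}+n_0^{-1/2}\bigr).
\]
This produces the first term of the bound. The constant $C_\tau$ is absorbed into $C$.

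\emph{Quadratic source term.} We need $\|\xi_j\|_2^2\lesssim \tau^2(d+\log(1/\delta))/n$ with high probability. This follows from a standard $1/2$-net argument on $\mathbb S^{d-1}$: the net has at most $5^d$ points, $\|\xi_j\|_2\le 2\max_{a\in\text{net}}a^\top\xi_j$, and a union bound with \eqref{eq:sub-gaussian} finishes it. We then bound $\|\xi_1\|_2^2-\|\xi_2\|_2^2$ crudely by $\|\xi_1\|_2^2+\|\xi_2\|_2^2$. No cancellation is needed, and this gives the $\tau^2(d+\log(1/\delta))/n$ term.

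\emph{Cross term.} This is the step that needs care. Bounding it naively by $\|\xi_1-\xi_2\|_2\|\xi_0\|_2$ would give a $\sqrt{d/n}\sqrt{d/n_0}$ term, which is too large. Instead we condition on $\xi_1-\xi_2$, which is independent of $\xi_0$. Given $\xi_1-\xi_2$, the quantity $\langle\xi_1-\xi_2,\xi_0\rangle$ is a projection of $\xi_0$ onto a fixed direction, so \eqref{eq:sub-gaussian} gives
\[
|\langle\xi_1-\xi_2,\xi_0\rangle| \lesssim \|\xi_1-\xi_2\|_2\,\tau\sqrt{\log(1/\delta)/n_0}
\]
with conditional probability $1-\delta$. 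Combining with the net bound $\|\xi_1-\xi_2\|_2\lesssim\tau\sqrt{(d+\log(1/\delta))/n}$ gives the term
\[
\tau^2\sqrt{\frac{d+\log(1/\delta)}{n}}\sqrt{\frac{\log(1/\delta)}{n_0}} .
\]

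Collecting all pieces with a union bound over the $O(1)$ events and rescaling $\delta$ yields the stated inequality with an absolute constant $C$. The rescaling changes $\log(1/\delta)$ only by constants, using $\log(c/\delta)\lesssim\log(1/\delta)$ for $\delta\le 1/2$; for $\delta$ close to $1$ we enlarge $C$.
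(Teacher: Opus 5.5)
Your proposal is correct and follows the paper's proof essentially step for step: the same expansion into the linear terms $2\langle\mu_j,\xi_j-\xi_0\rangle$, the source quadratic $\|\xi_1\|_2^2-\|\xi_2\|_2^2$ controlled by a net-based norm bound, and the cross term $\langle\xi_1-\xi_2,\xi_0\rangle$ handled by conditioning on $\xi_1-\xi_2$ and using the independence of $\xi_0$. One small caveat: enlarging $C$ does not rescue $\delta$ near $1$, because $\log(1/\delta)\to 0$ there while the linear term $2\langle\mu_1,\xi_1\rangle$ need not put any mass near zero (e.g.\ Rademacher-type noise with large $\|\mu_1\|_2$). The bound should therefore be read for $\delta\le 1/2$, or with $\log(e/\delta)$ in place of $\log(1/\delta)$; this is harmless because the lemma is only applied with $\delta=n^{-2}$, and the paper's proof glosses over the same point.
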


\begin{proof}
Write $\widetilde\theta_j = \theta_j^\star + \varepsilon_j$ so that $\widetilde\theta_j - \widetilde\theta_0 = \mu_j + Z_j$ where $Z_j := \varepsilon_j - \varepsilon_0$. Then
\[
T - (\|\mu_1\|_2^2 - \|\mu_2\|_2^2)
= 2(\langle\mu_1,Z_1\rangle - \langle\mu_2,Z_2\rangle)
+ (\|Z_1\|_2^2 - \|Z_2\|_2^2).
\]

\textit{Linear term.} Each $\langle \mu_j, \varepsilon_k\rangle$ is sub-Gaussian with proxy $\tau^2\|\mu_j\|_2^2/n_k$. A union bound gives, with probability $\geq 1-\delta/2$,
\[
|2(\langle\mu_1,Z_1\rangle - \langle\mu_2,Z_2\rangle)|
\leq C\tau\sqrt{\log\tfrac{1}{\delta}}\,(\|\mu_1\|_2+\|\mu_2\|_2)\!\left(\tfrac{1}{\sqrt{n}}+\tfrac{1}{\sqrt{n_0}}\right).
\]

\textit{Quadratic term.} Since $\|Z_j\|_2^2 - \|Z_j\|_2^2 = \|\varepsilon_1\|_2^2 - \|\varepsilon_2\|_2^2 - 2\langle\varepsilon_0, \varepsilon_1-\varepsilon_2\rangle$, we bound each part. Sub-Gaussian norm concentration gives $\|\varepsilon_j\|_2 \leq C\tau\sqrt{(d+\log(1/\delta))/n}$ with probability $\geq 1-\delta/4$, so $|\|\varepsilon_1\|_2^2 - \|\varepsilon_2\|_2^2| \leq C\tau^2(d+\log(1/\delta))/n$. Conditioning on $V:=\varepsilon_1-\varepsilon_2$ (which satisfies $\|V\|_2 \leq C\tau\sqrt{(d+\log(1/\delta))/n}$) and using the independence of $\varepsilon_0$, $\langle\varepsilon_0,V\rangle$ is sub-Gaussian with proxy $\tau^2\|V\|_2^2/n_0$, giving 
\[
|\langle\varepsilon_0,V\rangle| \leq C\tau^2\!\left(\tfrac{d+\log(1/\delta)}{n} + \sqrt{\tfrac{d+\log(1/\delta)}{n}}\sqrt{\tfrac{\log(1/\delta)}{n_0}}\right).
\]
 with probability $\geq 1-\delta/4$.

A union bound over all four events completes the proof.
\end{proof}

\begin{lemma} \label{lemma:bias-bounds}
  Define $\nu_1 = \|\theta_1 - \theta_0\|_2$,  $\nu_2 = \|\theta_2 - \theta_0\|_2$,  and $K ^2 :=  2C \tau^2 \left(\frac{d + \log(1/\delta)}{n} \vee \frac{\log(1/\delta)}{n_0}\right)$.
Under the event in Lemma \ref{lem:diff-sq-concentration},
the following holds: 
\begin{enumerate}
    \item If $\nu_2 > \nu_1$ and 
    \[
     \frac{C^2 \{d + \log(1/\delta)\}}{n} \le \frac{ \log(1/\delta)}{n_0}\,,
    \]
 then the event $\{T > (\tau^2/n_0) \log(1/\delta)\}$ implies 
 \[
 \nu_1^2 \ge \frac{\tau^2\log(1/\delta)}{2n_0}, \qquad 
\nu_2 \le \nu_1 + 2 C \tau  \sqrt{\frac{\log(1/\delta)}{n_0}}\,.
 \]
    \item  If $\nu_1 > \nu_2$, then the event $\{T \le (\tau^2/n_0) \log(1/\delta)\}$ implies $\nu_1 \le 3K + \nu_2$. 
\end{enumerate}
\end{lemma}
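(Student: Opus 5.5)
The plan is to work entirely on the event of Lemma~\ref{lem:diff-sq-concentration} and turn its two-sided deviation bound into inequalities for $\nu_1,\nu_2$. Write $L:=\log(1/\delta)$, $a:=\sqrt{(d+L)/n}$ and $b:=\sqrt{L/n_0}$. Since $\|\mu_j\|_2=\nu_j$, that lemma gives
\[
\bigl|T-(\nu_1^2-\nu_2^2)\bigr|\le E:=C\tau\sqrt{L}\,(\nu_1+\nu_2)\Bigl(\tfrac{1}{\sqrt n}+\tfrac{1}{\sqrt{n_0}}\Bigr)+C\tau^2\bigl(a^2+ab\bigr).
\]
Two elementary observations are used throughout. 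First, $\sqrt{L/n}\le a$, so the linear part of $E$ is at most $C\tau(\nu_1+\nu_2)(a+b)$. Second, by definition $K^2=2C\tau^2(a^2\vee b^2)$, so $C\tau(a+b)\lesssim K$ and $C\tau^2(a^2+ab)\le K^2$.

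\emph{Part 2} is the clean case, so I would do it first. Assume $\nu_1>\nu_2$ and $T\le\tau^2 b^2$. Then
\[
\nu_1^2-\nu_2^2\le T+E\le \tau^2b^2+E .
\]
Now use $\tau^2b^2\le K^2/(2C)$ together with the bounds above to get
\[
(\nu_1-\nu_2)(\nu_1+\nu_2)\le c_1K(\nu_1+\nu_2)+c_2K^2 .
\]
Divide by $\nu_1+\nu_2>0$ to obtain $\nu_1-\nu_2\le c_1K+c_2K^2/(\nu_1+\nu_2)$. Then split into two cases:
\begin{itemize}
\item if $\nu_1\ge K$, the last term is at most $c_2K$, which gives $\nu_1\le \nu_2+(c_1+c_2)K$;
\item if $\nu_1<K$, then $\nu_1\le 3K+\nu_2$ holds trivially.
\end{itemize}
Tracking the constants (the factor $2C$ built into $K^2$ is there to absorb them) should yield exactly $3K$.

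\emph{Part 1.} Assume $\nu_2>\nu_1$, $C^2a^2\le b^2$ (so $Ca\le b$), and $T>\tau^2b^2$. Then
\[
\tau^2b^2<\nu_1^2-\nu_2^2+E<E .
\]
Under the hypothesis, the linear part of $E$ is at most $C\tau(\nu_1+\nu_2)(b/C+b)\lesssim C\tau b(\nu_1+\nu_2)$, and the quadratic part is at most $\tau^2(b^2/C+b^2)$. Rearranging,
\[
(\nu_2-\nu_1)(\nu_2+\nu_1)\le E-\tau^2b^2 .
\]
The quadratic part of $E$ essentially cancels against $\tau^2b^2$, which leaves $(\nu_2-\nu_1)(\nu_1+\nu_2)\lesssim C\tau b(\nu_1+\nu_2)$. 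Dividing by $\nu_1+\nu_2$ gives $\nu_2\le\nu_1+2C\tau b$.

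For the lower bound on $\nu_1$, I would feed this back in: $\nu_1+\nu_2\le 2\nu_1+2C\tau b$. Combined with $\tau^2b^2<E$, this gives a quadratic inequality in $\nu_1$ whose solution forces $\nu_1\gtrsim\tau b$, i.e.\ $\nu_1^2\ge\tau^2 L/(2n_0)$.

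\textbf{Main obstacle: constants in Part 1.} The quadratic error term is only bounded by $(1+1/C)\tau^2b^2$, which does not sit strictly below the threshold $\tau^2b^2$. So "$E>\tau^2b^2$" alone does not obviously force a linear-term contribution of order $\tau^2b^2$. To handle this, I would first try to replace the separate bounds on $\|\varepsilon_1\|^2-\|\varepsilon_2\|^2$ and $\langle\varepsilon_0,V\rangle$ from the proof of Lemma~\ref{lem:diff-sq-concentration} by a slightly sharper absolute constant. If that fails, I would enlarge the constant in the hypothesis $C^2(d+L)/n\le L/n_0$ (e.g.\ to $4C^2$), which is harmless because Lemma~\ref{lemma:upper-bound-cost-m2-restricted} assumes $n\ge 8C^2n_0(\cdot)$. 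Either way, the quadratic error becomes at most $\tfrac12\tau^2b^2$, and the linear term must then carry the remaining $\tfrac12\tau^2b^2$. That gives $\nu_1^2\ge \tau^2b^2/2$ cleanly, after possibly absorbing the factor $2C$ into the stated bound.
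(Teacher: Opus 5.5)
The genuine gap is the lower bound $\nu_1^2\ge\tau^2L/(2n_0)$ in Part~1. Your handling of Part~2, and of the bound $\nu_2\le\nu_1+2C\tau b$ in Part~1, is sound up to constants. Dividing by $\nu_1+\nu_2$ is cleaner than the paper's quadratic-formula steps. Your proposed strengthening to $16C^2(d+L)/n\le L/n_0$ is exactly what the paper's proof silently uses.

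Once the quadratic error is below $\tfrac12\tau^2b^2$, your feedback step only gives
\[
\tfrac12\tau^2b^2<2C\tau b(\nu_1+\nu_2)\le 4C\tau b\,\nu_1+4C^2\tau^2b^2 .
\]
This is vacuous as soon as $C^2\ge 1/8$. The claim ``the linear term must carry the remaining $\tfrac12\tau^2b^2$'' lower-bounds $\nu_1+\nu_2$, which is essentially $\nu_2$, not $\nu_1$. The term $\nu_1^2-\nu_2^2<0$ cannot help.

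No bookkeeping repairs this, because the implication is false. Take $\nu_1=0$, $\nu_2=\tfrac C2\tau b$, and $n$ large enough for the hypothesis to hold.
\begin{itemize}
\item The event of Lemma~\ref{lem:diff-sq-concentration} only confines $T$ to an interval whose right endpoint is at least $-\tfrac{C^2}{4}\tau^2b^2+C\tau b\cdot\tfrac C2\tau b=\tfrac{C^2}{4}\tau^2b^2$. This exceeds the threshold $\tau^2b^2$ once $C>2$.
\item Any constant valid in that lemma for small $\delta$ exceeds $2$. In the Gaussian model, $2\langle\mu_2,\varepsilon_0\rangle\sim N(0,4\nu_2^2\tau^2/n_0)$ has upper $\delta$-quantile about $2\sqrt2\,\nu_2\tau b$.
\item Concretely, with $\nu_1=0$, $\nu_2=\tau b$ and $n\gg n_0$, the event $\{T>\tau^2b^2\}$ amounts to a standard normal exceeding $\sqrt{\log(1/\delta)}$. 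That has probability of order $\sqrt{\delta/\log(1/\delta)}\gg\delta$, so it meets the high-probability event while $\nu_1=0$.
\end{itemize}

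The paper's proof has the same flaw. It reaches $0<-\nu_2^2+4C\tau b\,\nu_2+(\nu_1^2-\tfrac12\tau^2b^2)$ and asserts this ``necessarily implies'' $\nu_1^2\ge\tfrac12\tau^2b^2$. But $\nu_1=0$, $\nu_2=2C\tau b$ satisfies it whenever $C^2>1/8$.

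A correct version must change the statement. For example, use the threshold $A\tau^2\log(1/\delta)/n_0$ in \eqref{eq:estimator-m2} with $A\ge(C+1)^2+1$. Then $-\nu_2^2+2C\tau b\,\nu_2\le C^2\tau^2b^2$ turns the event into $(\nu_1+C\tau b)^2>(A-\tfrac12)\tau^2b^2$, which forces $\nu_1>\tau b$. Your division argument still gives $\nu_2<\nu_1+2C\tau b$. The change must then be propagated through Lemma~\ref{lemma:expectation-ub-1}.

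A minor point on Part~2: the factor $2C$ in $K^2$ does not absorb the linear error, since $C\tau(a+b)\le\sqrt{2C}\,K$. Your argument therefore yields $\nu_1\le\nu_2+c(C)K$ with $c(C)\asymp\sqrt C$, not literally $3K$ unless $C$ is small. The paper's replacement of $4C\tau b\,\nu_1$ by $2K\nu_1$ has the same looseness.
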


\begin{proof}{\bf of Lemma \ref{lemma:bias-bounds}.} 

\paragraph{Case 1.} We establish an upper bound on $\nu_2$ within the event $\{T > (\tau^2/n_0) \log(1/\delta)\}$ and $\cE_T(\delta)$. If $\nu_2 > \nu_1$, then the events imply
\[
\begin{aligned}
\textstyle    \frac{\tau^2\log(1/\delta)}{n_0}  < T  & \textstyle  \le \left (\nu_1^2 - \nu_2^2\right) + C \tau \sqrt{\log\frac1\delta}
\big(\nu_1 + \nu_2\big)
\left(
\frac1{\sqrt n} + \frac1{\sqrt{n_0}}
\right)\\
& \textstyle  \quad 
+ C \tau^2
\left[
\frac{d + \log(1/\delta)}{n}
+
\sqrt{\frac{d + \log(1/\delta)}{n}}
\sqrt{\frac{\log(1/\delta)}{n_0}}
\right]
\end{aligned}
\]
Since  $C^2 \{d + \log(1/\delta)\}/n \le \log (1/\delta)/(16n_0)$, then 
\[
\begin{aligned}
    & \textstyle  C \tau^2
\left[
\frac{d + \log(1/\delta)}{n}
+
\sqrt{\frac{d + \log(1/\delta)}{n}}
\sqrt{\frac{\log(1/\delta)}{n_0}}
\right] \\
& \textstyle  \le C^2 \tau^2 \frac{d + \log(1/\delta)}{n} + C \tau^2 \sqrt{\frac{d + \log(1/\delta)}{n}}
\sqrt{\frac{\log(1/\delta)}{n_0}}\\
& \textstyle  \le \frac{\tau^2 \log(1/\delta)}{16n_0} + \frac{\tau^2 \log(1/\delta)}{4n_0} \le \frac{\tau^2 \log(1/\delta)}{2n_0}\,.
\end{aligned}
\]
which further implies 
\[
\begin{aligned}
    0 & < \textstyle  \left (\nu_1^2 - \nu_2^2\right) + C \tau \sqrt{\log\frac1\delta}
\big(\nu_1 + \nu_2\big)
\left(
\frac1{\sqrt n} + \frac1{\sqrt{n_0}}
\right)\\
& \textstyle  \quad 
+ C \tau^2
\left[
\frac{d + \log(1/\delta)}{n}
+
\sqrt{\frac{d + \log(1/\delta)}{n}}
\sqrt{\frac{\log(1/\delta)}{n_0}}
\right] - \frac{\tau^2\log(1/\delta)}{n_0}\\
& \le \textstyle  \left (\nu_1^2 - \nu_2^2\right) + 4 C \tau \nu_2 \sqrt{\frac{\log(1/\delta)}{n_0}} - \frac{\tau^2\log(1/\delta)}{2n_0}\\
& \le \textstyle   - \nu_2^2 +  4 C \tau \nu_2 \sqrt{\frac{\log(1/\delta)}{n_0}}  + \left(\nu_1^2 - \frac{\tau^2\log(1/\delta)}{2n_0}\right)\,.
\end{aligned}
\] This necessarily implies $\nu_1^2 \ge \frac{\tau^2\log(1/\delta)}{2n_0}$ and $\nu_2 \le \nu_1 + 2 C \tau  \sqrt{\frac{\log(1/\delta)}{n_0}}$.


\paragraph{Case 2:} The event $\{T \le  (\tau^2/n_0) \log(1/\delta)\}$ and $\nu_1 > \nu_2$ imply
\[
\begin{aligned}
\textstyle      \frac{\tau^2\log(1/\delta)}{n_0}  \ge  T  & \ge  \textstyle  \left (\nu_1^2 - \nu_2^2\right) - C \tau \sqrt{\log\frac1\delta}
\big(\nu_1 + \nu_2\big)
\left(
\frac1{\sqrt n} + \frac1{\sqrt{n_0}}
\right)\\
& \textstyle  \quad 
- C \tau^2
\left[
\frac{d + \log(1/\delta)}{n}
+
\sqrt{\frac{d + \log(1/\delta)}{n}}
\sqrt{\frac{\log(1/\delta)}{n_0}}
\right]
\end{aligned}
\]
Then, 
\[
\begin{aligned}
    0 & < \textstyle \left (\nu_2^2 - \nu_1^2\right) + C \tau \sqrt{\log\frac1\delta}
\big(\nu_1 + \nu_2\big)
\left(
\frac1{\sqrt n} + \frac1{\sqrt{n_0}}
\right)\\
&\textstyle  \quad 
+ C \tau^2
\left[
\frac{d + \log(1/\delta)}{n}
+
\sqrt{\frac{d + \log(1/\delta)}{n}}
\sqrt{\frac{\log(1/\delta)}{n_0}}
\right] + \frac{\tau^2\log(1/\delta)}{n_0}\\
&\textstyle  < \left (\nu_2^2 - \nu_1^2\right) + C \tau \sqrt{\log\frac1\delta}
\big(\nu_1 + \nu_2\big)
\left(
\frac1{\sqrt n} + \frac1{\sqrt{n_0}}
\right)\\
&\textstyle  \quad 
+ C \tau^2
\left[
\frac{d + \log(1/\delta)}{n}
+
\sqrt{\frac{d + \log(1/\delta)}{n}}
\sqrt{\frac{\log(1/\delta)}{n_0}}
 + \frac{\tau^2\log(1/\delta)}{n_0}\right]\\
& \le \left (\nu_2^2 - \nu_1^2\right) + 4 C \tau \nu_1 \sqrt{\frac{\log(1/\delta)}{n_0}} + 4C \tau^2 \left(\frac{d + \log(1/\delta)}{n} \vee \frac{\log(1/\delta)}{n_0}\right)\\
& \le \nu_2 ^2 - \nu_1^2 + 2 K\nu_1  + K^2\,,
\end{aligned}
\]  Then, we have $\nu_1^2 - 2K \nu_1 - (\nu_2^2 + K^2) < 0 $, which implies  $\nu_1 \le K + \sqrt{2K^2 + \nu_2^2}\le 3K + \nu_2$.
This completes the proof.

\end{proof}

\begin{lemma}\label{lemma:expectation-ub-1}
    Define $\nu_1 := \|\theta_1 - \theta_0\|_2 $,  $\nu_2:= \|\theta_2 - \theta_0\|_2 $, and $K^2 := 2C \tau^2 \left(\frac{d + \log(1/\delta)}{n} \vee \frac{\log(1/\delta)}{n_0}\right)$
 where $C > 0$ is the constant within $\cE_T$, as defined in Lemma \ref{lem:diff-sq-concentration}. If $\delta = n^{-2}$, then there exists a constant $C_1 > 0$ such that
 \[
 \begin{aligned}
     & \bR(\widehat \theta_0, \nu_1, \nu_2) := \Ex \left[ \|\widehat \theta_0 - \theta_0\|_2 ^2\right]\\
     & \le \begin{cases}
     C_1 \frac{d\tau^2}{n}  & \nu_1, \nu_2 \le \tau\sqrt{\frac{d}{n}}\\
     C_1 \left\{ \frac{d\tau^2}{n_0} \wedge \left(\frac{d\tau^2}{n} + \nu_2^2 +K ^2\right)\right\} \log n & \nu_1 > \tau\sqrt{\frac{d}{n}}, ~\nu_2 \le \nu_1\\
     C_1 \left\{ \frac{d\tau^2}{n_0} \wedge \left(\frac{d\tau^2}{n} + \nu_1^2 \right)\right\} \log n & \nu_2 > \tau\sqrt{\frac{d}{n}}, ~\nu_1 \le \nu_2, ~\frac{16 C^2 \{d + \log(1/\delta)\}}{n} \le \frac{\log(1/\delta)}{n_0}    \,.
 \end{cases}
 \end{aligned}
 \]

\end{lemma}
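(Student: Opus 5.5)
Fix $\delta=n^{-2}$ and work on the good event $\cE$: the event of Lemma~\ref{lem:diff-sq-concentration}, intersected with the norm-concentration events $\|\widetilde\theta_0-\theta_0\|_2\lesssim\tau(\sqrt d+\sqrt{\log(1/\delta)})/\sqrt{n_0}$ and $\|\widetilde\theta_{0,k}-\Ex\widetilde\theta_{0,k}\|_2\lesssim\tau(\sqrt d+\sqrt{\log(1/\delta)})/\sqrt{n_0+n}$ for $k=1,2$. By \eqref{eq:sub-gaussian} and a union bound, $P(\cE^c)\lesssim\delta=n^{-2}$. Off $\cE$ we bound the loss crudely. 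Every candidate is a convex combination of $\widetilde\theta_0,\widetilde\theta_1,\widetilde\theta_2$, and $\nu_k\le 1$. Hence $\|\widehat\theta_0-\theta_0\|_2^2\lesssim 1+\sum_k\|\widetilde\theta_k-\theta_k\|_2^2$, and Cauchy--Schwarz with sub-Gaussian fourth moments gives a contribution of order $(1+d\tau^2/n_0)\,n^{-1}$. This is dominated by $d\tau^2/n$ in all cases, using $d\tau^2/n_0\le 1$ and $n\gtrsim n_0$. So only the analysis on $\cE$ matters.

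Next I would prove a one-source lemma for $\check\theta_k$ on $\cE$: $\|\check\theta_k-\theta_0\|_2^2\lesssim\{d\tau^2/n_0\wedge(d\tau^2/n+\nu_k^2)\}\log n$. There are two cases.
\begin{itemize}
\item If the balls intersect, then $\|\widetilde\theta_{0,k}-\theta_0\|_2\le\|\widetilde\theta_{0,k}-\widetilde\theta_0\|_2+\|\widetilde\theta_0-\theta_0\|_2\lesssim\tau\sqrt{(d+\log n)/n_0}$. We also always have $\|\widetilde\theta_{0,k}-\theta_0\|_2\le\tfrac{n}{n+n_0}\nu_k+\tau\sqrt{(d+\log n)/(n+n_0)}$.
\item If the balls do not intersect, then on $\cE$ the triangle inequality forces $\nu_k\gtrsim\tau\sqrt{(d+\log n)/n_0}$. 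Falling back to $\widetilde\theta_0$ then costs $d\tau^2\log n/n_0\lesssim\nu_k^2\log n$.
\end{itemize}
In both cases the claim follows, with $\log n$ absorbing $\log(1/\delta)$.

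With this lemma, the three regimes of Lemma~\ref{lemma:expectation-ub-1} reduce to controlling which index the test $T$ selects, via Lemma~\ref{lemma:bias-bounds}.
\begin{itemize}
\item If $\nu_1,\nu_2\le\tau\sqrt{d/n}$, either choice gives error $\lesssim d\tau^2/n$. Here the $\log$ is absorbed because the pooled ball radius matches.
\item If $\nu_1\ge\nu_2$, selecting $\check\theta_2$ is good. Selecting $\check\theta_1$ happens only when $T\le\tau^2\log(1/\delta)/n_0$, and then Lemma~\ref{lemma:bias-bounds}(2) gives $\nu_1\le\nu_2+3K$. This yields the $\nu_2^2+K^2$ bound.
\item If $\nu_1\le\nu_2$ under the sample-size condition, selecting $\check\theta_1$ is good. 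Selecting $\check\theta_2$ forces, by Lemma~\ref{lemma:bias-bounds}(1), $\nu_1^2\gtrsim\tau^2\log n/n_0$ and $\nu_2\le\nu_1+2C\tau\sqrt{\log n/n_0}$. Then $\nu_2^2\lesssim\nu_1^2$, so the error is $\lesssim\{d\tau^2/n_0\wedge(d\tau^2/n+\nu_1^2)\}\log n$.
\end{itemize}

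Finally, I would carry out the casework over $\cH_0$ exactly as in the four cases written above. In each case we maximize the Lemma~\ref{lemma:expectation-ub-1} bound over $\nu_k\le h_k$ and divide by $\fR$, which is computed from \eqref{eq:loca-loptimal-rate} with $m=2$. The hypothesis $n\ge 8C^2n_0(d/\log n+2)$ ensures two things: the sample-size condition of Lemma~\ref{lemma:expectation-ub-1} holds, and $K^2\asymp\tau^2\log n/n_0$. In the ordered half ($h_1\le h_2$) and for reverse-ordered points with $h_1>\tau/\sqrt{n_0}$, the extra $K^2$ is at most $(\log n)\,d\tau^2/n_0$-type, hence $\lesssim\fR\log n$. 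The main obstacle is the reverse-ordered region inside the critical square, where the additive $\tau^2\log n/n_0$ from $K^2$ must be compared with $\fR=d\tau^2/n+h_2^2$. Membership in $\cG_{\max}^{\mathrm{symm}}$ is exactly what controls this ratio. On the diagonal piece, $h_2=h_1\ge\tau\sqrt{d/n}$, but I expect I will need $h_2^2\gtrsim\tau^2/n_0$ there, or else must accept the weaker $n/(dn_0)$ factor. I would first check carefully whether $\cG_{\max}$ guarantees $h_2^2\ge\tau^2/n_0$ at reverse points, and if not, refine the threshold in \eqref{eq:estimator-m2} so that the $K^2$ term only arises when it is dominated by $h_2^2$. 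This yields the overall $c_U(\log n)^2$ bound.
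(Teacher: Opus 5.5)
Your architecture is the same as the paper's: a good event at level $\delta=n^{-2}$ with Cauchy--Schwarz off it, a per-source bound for $\check\theta_k$, and then Lemma~\ref{lemma:bias-bounds} to decide which index $T$ selects. Your casework on $T$ is correct. The gap is the per-source step ``if $\cB_0\cap\cB_{0,k}=\varnothing$, then on $\cE$ the triangle inequality forces $\nu_k\gtrsim\tau\sqrt{(d+\log n)/n_0}$.'' Write $r_0,r_{0,k}$ for the radii of $\cB_0,\cB_{0,k}$. Non-intersection says $\frac{n}{n+n_0}\|\widetilde\theta_k-\widetilde\theta_0\|_2>r_0+r_{0,k}$, and the triangle inequality bounds the left side by $\frac{n}{n+n_0}(\nu_k+\|\widetilde\theta_k-\theta_k\|_2+\|\widetilde\theta_0-\theta_0\|_2)$. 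To extract $\nu_k\gtrsim r_0$ you would need $\|\widetilde\theta_0-\theta_0\|_2\le c\,r_0$ on $\cE$ with $c<1$ bounded away from $1$. In the Gaussian model, however, $\|\widetilde\theta_0-\theta_0\|_2\approx\tau\sqrt{d/n_0}=r_0(1-o(1))$ once $d\gg\log n$, so no event of probability $1-O(n^{-2})$ gives this. With $c\ge 1$ you get at most $\nu_k\gtrsim r_0n_0/n$.

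This is more than a lossy argument: for $\check\theta_k$ as literally defined, both the per-source bound and the second and third cases of the lemma fail in high dimension. In the Gaussian model, $\|\widetilde\theta_k-\widetilde\theta_0\|_2^2$ concentrates at $\nu_k^2+\tau^2d(1/n_0+1/n)$. The squared separation threshold $\bigl(\frac{n+n_0}{n}\bigr)^2(r_0+r_{0,k})^2$ exceeds $\tau^2d(1/n_0+1/n)$ only by order $\tau^2(\sqrt{d\log n}+d\sqrt{n_0/n})/n_0$. Take $\nu_1=\nu_2=\nu$ with $\nu^2$ a large multiple of this quantity. Then both pairs of balls separate with high probability, so $\widehat\theta_0=\widetilde\theta_0$ and the risk is about $d\tau^2/n_0$. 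The claimed bounds, by contrast, are of order $(\nu^2+K^2)\log n$. With $n_0\asymp d$ and $n\asymp d^2$, the risk is $\tau^2$ while the bound is $\tau^2(\log d)^{3/2}/\sqrt d$.

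The paper's proof has the same hole, hidden in its appeal to Theorem~\ref{thm:intersection-oracle-max} for $\cF_k(\delta_1)$. The balls in \eqref{eq:rad-def-section} have radius $2\rho_r$, twice the concentration radius, whereas the balls defining $\check\theta_k$ do not. The repair is to inflate the radii of $\cB_0,\cB_{0,k}$ by a constant factor exceeding the one in your concentration event (e.g., twice it), and keep $\cE$ at the uninflated radii. Then non-intersection on $\cE$ gives $\nu_k\ge r_0$, intersection gives $\|\widetilde\theta_{0,k}-\theta_0\|_2\lesssim r_0$, and the rest of your proposal goes through.

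Two smaller points.
\begin{itemize}
\item The log-free first case does not follow from working on $\cE$, which only yields $\tau^2(d+\log n)/n$. Instead, bound $\Ex\|\widetilde\theta_{0,k}-\theta_0\|_2^2\lesssim d\tau^2/n$ directly and treat the fallback event separately.
\item Absorbing the $O(n^{-1})$ off-event term uses $n\gtrsim n_0$, which is not a hypothesis in the second case. The paper glosses over this too.
\end{itemize}
Your last paragraph concerns Lemma~\ref{lemma:upper-bound-cost-m2-restricted} rather than this statement. Your doubt there is justified, since the paper's Case~3 ends with a factor $n/(dn_0)$.
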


\begin{proof}{\bf of Lemma \ref{lemma:expectation-ub-1}}
    Define  the events $\cE_1 := \{T \le (\tau^2/n_0) \log(1/\delta)\}$ and $\cE_2 := \{T > (\tau^2/n_0) \log(1/\delta)\}$.
 Applying Theorem \ref{thm:intersection-oracle-max}, for any $k \in \{1, 2\}$, with a probability of at least $1 - \delta_1$, we have 
\[
\textstyle \cF_k(\delta_1) := \left\{\left \|\check \theta_k - \theta_0\right\|_2 ^2 \le C_1\left\{ \frac{d\tau^2}{n_0} \wedge \left(\frac{d\tau^2}{n} + \nu_k^2\right)\right\} \log \left(\frac{1}{\delta_1}\right)\right\}\,,
\]
for some $C_1 > 0$.  

\paragraph{Step 1:} Assume that $\nu_1^2 , \nu_2^2 \le d\tau^2 / n$. Then, under the event $\cF_1(\delta) \cap \cF_2 (\delta)$, we have 
\[
\begin{aligned}
    \|\widehat \theta_0 - \theta_0\|_2 ^2 & \le \max_{k = 1, 2}  \|\check \theta_k - \theta_0\|_2 ^2\\
    &\textstyle  \le C_1\left\{ \frac{d\tau^2}{n_0} \wedge \left(\frac{d\tau^2}{n} + \max_{k = 1, 2}\nu_k^2\right)\right\} \log \left(\frac{1}{\delta_1}\right)  \le C_1\left\{ \frac{d\tau^2}{n} \right\} \log \left(\frac{1}{\delta_1}\right)\,.
\end{aligned}
\] Since $P\{\cF_1(\delta) \cap \cF_2 (\delta)\}\ge 1- 2\delta_1$, we have a sub-Gaussian concentration of $\|\widehat \theta_0 - \theta_0\|_2 $ at a rate $\tau\sqrt{d/n}$. This implies, $\Ex \left[ \|\widehat \theta_0 - \theta_0\|_2 ^2\right] \le C_1\left\{ \frac{d\tau^2}{n} \right\}$.
\paragraph{Step 2:} Assume that $d\tau^2 / n < \nu_1^2$ and $\nu_2^2 \le \nu_1^2$.  Then, by Lemma \ref{lemma:bias-bounds}, $\cE_1\cap \cE_T$ necessarily implies $\nu_1 \le 3K + \nu_2$. Thus,
\[
\begin{aligned}
    & \textstyle  P \left\{ \|\widehat \theta_0 - \theta_0 \|_2^2 > C_1\left\{ \frac{d\tau^2}{n_0} \wedge \left(\frac{d\tau^2}{n} + (\nu_2 + 3K) ^2\right)\right\} \log \left(\frac{1}{\delta}\right) \right\}\\
    &\textstyle  \le  P \left\{ \|\widehat \theta_0 - \theta_0 \|_2^2 > C_1\left\{ \frac{d\tau^2}{n_0} \wedge \left(\frac{d\tau^2}{n} + (\nu_2 + 3K) ^2\right)\right\} \log \left(\frac{1}{\delta}\right), \; \cE_1 \cap \cE_T \right\} + P(\cE_T^c)\\
    &\textstyle  \le  P \left\{ \|\check \theta_1 - \theta_0 \|_2^2 > C_1\left\{ \frac{d\tau^2}{n_0} \wedge \left(\frac{d\tau^2}{n} + \nu_1 ^2\right)\right\} \log \left(\frac{1}{\delta}\right)  \right\} + \delta  \le 2\delta\,.
\end{aligned}
\] Consequently, defining $\cE_3 := \left\{ \|\widehat \theta_0 - \theta_0 \|_2^2 > C_1\left\{ \frac{d\tau^2}{n_0} \wedge \left(\frac{d\tau^2}{n} + (\nu_2 + 3K) ^2\right)\right\} \log \left(\frac{1}{\delta}\right) \right\}$
\[
\begin{aligned}
     \Ex \left[ \|\widehat \theta_0 - \theta_0\|_2 ^2\right]
    & = \Ex \left[ \|\widehat \theta_0 - \theta_0\|_2 ^2 \bbI_{\cE_3^c}\right] + \Ex \left[ \|\widehat \theta_0 - \theta_0\|_2 ^2 \bbI_{\cE_3}\right]\\
    & \le \textstyle C_1\left\{ \frac{d\tau^2}{n_0} \wedge \left(\frac{d\tau^2}{n} + \nu_1 ^2\right)\right\} \log \left(\frac{1}{\delta}\right) + C_2 \sqrt{\delta}
\end{aligned}
\] 
where the first part of the inequality holds because, within the event $\cE_3 ^c$, we have 
\[
\textstyle \|\widehat \theta_0 - \theta_0 \|_2^2 \le C_1\left\{ \frac{d\tau^2}{n_0} \wedge \left(\frac{d\tau^2}{n} + (\nu_2 + 3K) ^2\right)\right\} \log \left(\frac{1}{\delta}\right)\,,
\] and the second part of the inequality holds with the following application of the Cauchy-Schwarz inequality
\[
\begin{aligned}
    & \Ex \left[ \|\widehat \theta_0 - \theta_0\|_2 ^2 \bbI_{\cE_3}\right] \le \left\{P(\cE_3) \Ex \left[ \|\widehat \theta_0 - \theta_0\|_2 ^4 \right]\right\}^{\frac12}\le \sqrt{\delta} \bigg\{ \sum_{k = 0, 1, 2}\Ex \left[ \|\widetilde \theta_k - \theta_0\|_2 ^4 \right] \bigg\}^{\frac12}\le C_2 \sqrt{\delta}\,.
\end{aligned}
\]
By letting $\delta = n^{-2}$, we obtain 
\[
\begin{aligned}
  \textstyle  \Ex \left[ \|\widehat \theta_0 - \theta_0\|_2 ^2\right] & \le \textstyle 2C_1\left\{ \frac{d\tau^2}{n_0} \wedge \left(\frac{d\tau^2}{n} + (\nu_2 + 3K) ^2\right)\right\} \log n  + \frac{C_2}{n}\\
    & \le\textstyle  C_3 \left\{ \frac{d\tau^2}{n_0} \wedge \left(\frac{d\tau^2}{n} + \nu_2^2 +K ^2\right)\right\} \log n\,.
\end{aligned}
\]


Let $16C^2 \{d + \log(1/\delta)\}/n \le \log (1/\delta)/n_0$.   For $d\tau^2 / n < \nu_2^2$ and $\nu_1^2 \le \nu_2^2$,  by Lemma \ref{lemma:bias-bounds}, the event $\cE_2 \cap \cE_T$ implies 
\[
\textstyle \nu_1^2 \ge \frac{\tau^2\log(1/\delta)}{2n_0}, \qquad 
\nu_2 \le \nu_1 + 2 C \tau  \sqrt{\frac{\log(1/\delta)}{n_0}}\,.
\]
Similarly to {\bf Step 2}, we have 
\[
\begin{aligned}
    \textstyle \Ex \left[ \|\widehat \theta_0 - \theta_0\|_2 ^2\right] 
    & \textstyle \le C_4 \left\{ \frac{d\tau^2}{n_0} \wedge \left(\frac{d\tau^2}{n} + \nu_1^2 + \frac{\tau^2\log(1/\delta)}{n_0}\right)\right\} \log n \\
    & \textstyle \le C_5 \left\{ \frac{d\tau^2}{n_0} \wedge \left(\frac{d\tau^2}{n} + \nu_1^2 \right)\right\} \log n \,.
\end{aligned}
\]

\end{proof}

\subsection{Proof of the Theorem \ref{thm:cost-lower-bound-general-m}}

\label{proof:thm:cost-lower-bound-general-m}

In this subsection we prove Theorem \ref{thm:cost-lower-bound-general-m}. Throughout, we assume
\[
n_1=\cdots=n_m=n.
\]
The proof combines two distinct lower-bound constructions, corresponding to the two terms in the
statement of the theorem. The first construction captures the genuinely growing-$m$ obstruction and
is based on a Rademacher prior on the source cluster labels together with a Fano argument over a
packing of target directions. The second construction is a two-point argument of Le Cam type and
produces the target-noise-scale term.

\paragraph{Step 1: A random-sign two-cluster construction.}
We first prove that
\begin{equation}
\label{eq:growing-m-first-term-goal}
\textstyle \fC^\star([0,1]^m)
\gtrsim
\frac{n_0+mn}{d\tau^2}
\min\!\left\{
\frac{\tau^2 d}{n_0},
\frac{\tau^2}{n}\sqrt{\frac{d}{m}}
\right\}.
\end{equation}
Let $A:=\frac{dn}{n_0}$ and $
B:=\sqrt{\frac{d}{m}}$.
Then the right-hand side of \eqref{eq:growing-m-first-term-goal} may be rewritten as
\begin{equation}
\label{eq:growing-m-first-term-rewrite}
\textstyle \min\left\{
1+\frac{mn}{n_0},
\frac{n_0}{n\sqrt{dm}}+\sqrt{\frac{m}{d}}
\right\}
=
\min\left\{
1+\frac{A}{B^2},
\frac{B}{A}+\frac{1}{B}
\right\}.
\end{equation}
We first treat the small-signal regime
\begin{equation}
\label{eq:small-signal-growing-m}
\min\{A,B\}\le c_\star,
\end{equation}
for a sufficiently small universal constant $c_\star>0$. The complementary regime is treated at
the end of this step.

Let $\{v_1,\dots,v_M\}\subset S^{d-1}$ be a $c_0$-packing of the unit sphere satisfying
\[
\|v_j-v_\ell\|_2\ge c_0,\qquad j\neq \ell,
\]
and $\log M \ge c_1 d$
for universal constants $c_0,c_1>0$. Let $\xi_1,\dots,\xi_m$ be i.i.d.\ Rademacher variables and
fix a scale $\alpha>0$, to be chosen later. For each $j\in\{1,\dots,M\}$ and each realization
$\xi=(\xi_1,\dots,\xi_m)\in\{\pm1\}^m$, let $P_{j,\xi}$ denote the product Gaussian law under
which
\[
\textstyle \widetilde \theta_0 \sim N\!\left(\alpha v_j,\frac{\tau^2}{n_0}I_d\right),
\qquad
\widetilde \theta_i \sim N\!\left(\xi_i\alpha v_j,\frac{\tau^2}{n}I_d\right),
\quad i=1,\dots,m,
\]
independently.

For every realization $\xi$, the associated bias vector has coordinates
\[
h_i(\xi)=\|\xi_i\alpha v_j-\alpha v_j\|_2
=
\begin{cases}
0, & \xi_i=1,\\
2\alpha, & \xi_i=-1.
\end{cases}
\]
Hence $P_{j,\xi}\in\cP(\bh(\xi),\bn,\tau)$ provided $2\alpha\le 1$. Let
\[
K(\xi):=\sum_{i=1}^m \bbI\{\xi_i=1\}
\sim \mathrm{Bin}(m,1/2)
\]
denote the number of sources aligned with the target. Since the oracle may pool the target with
the aligned sources and ignore the rest, we have
\[
\fR(\bh(\xi),\bn,\tau)\lesssim \frac{d\tau^2}{n_0+K(\xi)n}.
\]
Let $\cE_m:=\left\{\xi:K(\xi)\ge \frac{m}{4}\right\}$.
By Hoeffding's inequality,
\[
\mathbb P(\cE_m^c)\le e^{-c m}
\]
for a universal constant $c>0$. Consequently,
\begin{equation}
\label{eq:bayes-oracle-growing-m}
\mathbb E_\xi\big[\fR(\bh(\xi),\bn,\tau)\big]
\lesssim
\frac{d\tau^2}{n_0+mn}.
\end{equation}
Indeed, on $\cE_m$ we have $K(\xi)\ge m/4$, while on $\cE_m^c$ the crude bound
$\fR(\bh(\xi),\bn,\tau)\le d\tau^2/n_0$ is sufficient, and the latter contribution is negligible
for large $m$.

Now define the mixture law
\[
\overline P_j:=\mathbb E_\xi[P_{j,\xi}],
\qquad j=1,\dots,M,
\]
where the expectation is taken with respect to the i.i.d.\ Rademacher prior on $\xi$. Let $\pi$ be
the prior that chooses $j$ uniformly from $\{1,\dots,M\}$ and then draws $\xi$ from the i.i.d.\
Rademacher law. For any estimator $\widehat\theta_0$, the weighted-average inequality
$\sup_i a_i/b_i \ge (\sum_i a_i)/(\sum_i b_i)$ gives
\[
\sup_{\bh\in[0,1]^m}
\frac{\sup_{P\in\cP(\bh,\bn,\tau)}
\Ex_P\!\left[\|\widehat\theta_0-\theta_0^\star\|_2^2\right]}{
\fR(\bh,\bn,\tau)}
\ge
\frac{
\frac1M\sum_{j=1}^M
\mathbb E_{\overline P_j}\!\left[\|\widehat\theta_0-\alpha v_j\|_2^2\right]
}{
\frac1M\sum_{j=1}^M \mathbb E_\xi\big[\fR(\bh(\xi),\bn,\tau)\big]
}.
\]
Therefore, by \eqref{eq:bayes-oracle-growing-m},
\begin{equation}
\label{eq:ratio-reduction-growing-m}
\fC^\star([0,1]^m)
\gtrsim
\frac{n_0+mn}{d\tau^2}
\inf_{\widehat\theta_0}
\frac1M\sum_{j=1}^M
\mathbb E_{\overline P_j}\!\left[\|\widehat\theta_0-\alpha v_j\|_2^2\right].
\end{equation}

The remaining task is to lower bound the Bayes estimation risk over the finite family
$\{\overline P_j:1\le j\le M\}$. For this, we compare all mixture laws to a common reference law.
Let
\[
P_\star
:=
N\!\left(0,\frac{\tau^2}{n_0}I_d\right)\otimes Q_0^{\otimes m},
\qquad
Q_0:=N\!\left(0,\frac{\tau^2}{n}I_d\right).
\]
The key input is the following lemma.

\begin{lemma}
\label{lem:kl-growing-m-rademacher}
There exists a universal constant $C>0$ such that, for every $j$,
\[
D_{\mathrm{KL}}(\overline P_j\|P_\star)
\le
C\left(
\frac{n_0\alpha^2}{\tau^2}
\;+\;
\frac{mn^2\alpha^4}{\tau^4}
\right).
\]
\end{lemma}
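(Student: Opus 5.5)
Since $\overline P_j$ and $P_\star$ are both product measures across the $m+1$ independent coordinates, KL tensorizes. The prior on $\xi$ is i.i.d. and enters each source independently, so the mixture factorizes as
\[
\overline P_j = N\!\left(\alpha v_j,\tfrac{\tau^2}{n_0}I_d\right)\otimes \bigotimes_{i=1}^m \overline Q_j,
\qquad
\overline Q_j:=\tfrac12 N\!\left(\alpha v_j,\tfrac{\tau^2}{n}I_d\right)+\tfrac12 N\!\left(-\alpha v_j,\tfrac{\tau^2}{n}I_d\right).
\]
Hence
\[
D_{\mathrm{KL}}(\overline P_j\|P_\star)=D_{\mathrm{KL}}\big(N(\alpha v_j,\tfrac{\tau^2}{n_0}I_d)\,\|\,N(0,\tfrac{\tau^2}{n_0}I_d)\big)+m\,D_{\mathrm{KL}}(\overline Q_j\|Q_0).
\]
The target term equals $n_0\alpha^2/(2\tau^2)$ exactly, using $\|v_j\|_2=1$. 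This gives the first summand in Lemma~\ref{lem:kl-growing-m-rademacher}.

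For the source term, I bound KL by $\chi^2$: $D_{\mathrm{KL}}(\overline Q_j\|Q_0)\le \log(1+\chi^2(\overline Q_j\|Q_0))\le \chi^2(\overline Q_j\|Q_0)$. I then compute the $\chi^2$ via the Gaussian overlap identity. For $\mu,\mu'\in\reals^d$ and $\sigma^2=\tau^2/n$,
\[
\int \frac{dN(\mu,\sigma^2I)\,dN(\mu',\sigma^2I)}{dN(0,\sigma^2I)}=\exp\!\left(\frac{\langle\mu,\mu'\rangle}{\sigma^2}\right).
\]
Averaging over two independent signs $\epsilon,\epsilon'$ and taking $\mu=\epsilon\alpha v_j$, $\mu'=\epsilon'\alpha v_j$, I get
\[
1+\chi^2(\overline Q_j\|Q_0)=\mathbb E_{\epsilon,\epsilon'}\exp\!\left(\epsilon\epsilon'\tfrac{n\alpha^2}{\tau^2}\right)=\cosh\!\left(\tfrac{n\alpha^2}{\tau^2}\right).
\]
This reduction is exact because only the direction $v_j$ matters. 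It follows that $D_{\mathrm{KL}}(\overline Q_j\|Q_0)\le \log\cosh(n\alpha^2/\tau^2)\le \tfrac12 (n\alpha^2/\tau^2)^2$, where the last step uses the elementary inequality $\log\cosh x\le x^2/2$, valid for all real $x$. Multiplying by $m$ gives $m n^2\alpha^4/(2\tau^4)$, and summing with the target term yields the claim with $C=1/2$.

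The only real subtlety is that the symmetric sign mixture kills the first-order term: the mean of $\overline Q_j$ is zero, so the divergence from $Q_0$ is quadratic in $n\alpha^2/\tau^2$ rather than linear. The $\cosh$ identity makes this explicit. I also note that no smallness condition on $\alpha$ is needed for the bound itself, because $\log\cosh x\le x^2/2$ holds globally. The small-signal regime \eqref{eq:small-signal-growing-m} will only be used afterward, when the lemma is fed into Fano's inequality: there one takes $D_{\mathrm{KL}}(\overline P_j\|\overline P_\ell)$ bounded via the common reference $P_\star$ (the standard mutual-information bound $I\le \max_j D_{\mathrm{KL}}(\overline P_j\|P_\star)$) and requires it to be at most $c\log M\asymp c d$.
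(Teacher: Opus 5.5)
Your proof is correct and takes the same route as the paper. You tensorize over the target and the $m$ i.i.d.\ source mixtures, compute $1+\chi^2(\overline{Q}_j\|Q_0)=\cosh(n\alpha^2/\tau^2)$, and then pass from $\chi^2$ to KL; your Gaussian overlap identity is just another way of doing the paper's likelihood-ratio computation. Your last step, $D_{\mathrm{KL}}\le\log(1+\chi^2)$ with $\log\cosh x\le x^2/2$, is slightly sharper than the paper's $D_{\mathrm{KL}}\le\chi^2$ plus $\cosh u-1\lesssim u^2$ for bounded $u$: it drops the implicit reliance on the small-signal regime \eqref{eq:small-signal-growing-m} and gives the explicit constant $C=1/2$.
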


Choose
\begin{equation}
\label{eq:choice-alpha-growing-m}
\alpha^2
:=
c_2
\min\!\left\{
\frac{\tau^2 d}{n_0},
\frac{\tau^2}{n}\sqrt{\frac{d}{m}}
\right\}
\end{equation}
with $c_2>0$ sufficiently small. Under \eqref{eq:small-signal-growing-m}, this choice satisfies
\[
\frac{n\alpha^2}{\tau^2}
\le
c_2 \min\{A,B\}
\le
c_2 c_\star,
\]
so the lemma applies. Moreover, Lemma \ref{lem:kl-growing-m-rademacher} and $\log M\asymp d$
imply
\[
\max_{1\le j\le M} D_{\mathrm{KL}}(\overline P_j\|P_\star)
\le
\frac{1}{8}\log M,
\]
provided $c_2>0$ is chosen sufficiently small. Moreover, the packing property yields
\[
\|\alpha v_j-\alpha v_\ell\|_2^2
\ge
c_0^2\alpha^2,
\qquad j\neq \ell.
\]
Hence the standard Fano inequality with a common reference law gives
\[
\inf_{\widehat \theta_0}
\frac1M\sum_{j=1}^M
\mathbb E_{\overline P_j}\!\left[\|\widehat\theta_0-\alpha v_j\|_2^2\right]
\gtrsim
\alpha^2.
\]
Substituting this bound into \eqref{eq:ratio-reduction-growing-m} and using
\eqref{eq:choice-alpha-growing-m} proves \eqref{eq:growing-m-first-term-goal}.

It remains to consider the complementary regime $\min\{A,B\}>c_\star$. We claim that in this
regime the right-hand side of \eqref{eq:growing-m-first-term-goal} is bounded by a universal
constant. Indeed, if $A\le B$, then
\[
1+\frac{A}{B^2}\le 1+\frac{1}{B}\le 1+\frac{1}{c_\star},
\]
while if $A>B$, then
\[
\frac{B}{A}+\frac{1}{B}\le 1+\frac{1}{c_\star}.
\]
By \eqref{eq:growing-m-first-term-rewrite}, the first term in the theorem is therefore bounded by
a universal constant in this regime. On the other hand, by the definition of
$\fC^\star([0,1]^m)$ and of the local benchmark $\fR(\bh,\bn,\tau)$, we always have
\[
\fC^\star([0,1]^m)\ge 1.
\]
Indeed, for every estimator $\widehat\theta_0$ and every configuration $\bh$,
\[
\sup_{P\in \cP(\bh,\bn,\tau)}
\Ex_P\!\left[\|\widehat\theta_0-\theta_0^\star\|_2^2\right]
\ge
\fR(\bh,\bn,\tau),
\]
and taking the supremum over $\bh$ and then the infimum over $\widehat\theta_0$ preserves this
lower bound. This completes the proof of \eqref{eq:growing-m-first-term-goal}.

\paragraph{Step 2: A balanced two-cluster two-point construction.}
We now prove that
\begin{equation}
\label{eq:growing-m-second-term-goal}
\fC^\star([0,1]^m)
\gtrsim
\frac{n_0+mn}{d\,n_0}.
\end{equation}
Fix a unit vector $u\in S^{d-1}$ and let
\[
\mu_1:=0_d,
\qquad
\mu_2:=\delta u,
\]
where $\delta>0$ will be chosen below. Let $P_0$ and $P_1$ be the two product Gaussian laws under
which
\[
\widetilde \theta_i
\sim
N\!\left(\mu_1,\frac{\tau^2}{n}I_d\right),
\quad i=1,\dots,m/2,
\qquad
\widetilde \theta_i
\sim
N\!\left(\mu_2,\frac{\tau^2}{n}I_d\right),
\quad i=m/2+1,\dots,m,
\]
while
\[
\widetilde \theta_0
\sim
N\!\left(\mu_1,\frac{\tau^2}{n_0}I_d\right)
\quad\text{under }P_0,
\qquad
\widetilde \theta_0
\sim
N\!\left(\mu_2,\frac{\tau^2}{n_0}I_d\right)
\quad\text{under }P_1.
\]
Under either law, exactly $m/2$ sources are aligned with the target and the remaining $m/2$
sources are at Euclidean distance $\delta$ from the target. Hence the corresponding bias vector
belongs to $[0,1]^m$ provided $\delta\le 1$, and the oracle risk satisfies
\[
\fR(\bh,\bn,\tau)\lesssim \frac{d\tau^2}{n_0+mn}.
\]

The source observations have the same distribution under $P_0$ and $P_1$, so all information
distinguishing the two hypotheses comes from the target observation $\widetilde\theta_0$. Thus
\[
D_{\mathrm{KL}}(P_0\|P_1)
=
\frac{n_0}{2\tau^2}\|\mu_1-\mu_2\|_2^2
=
\frac{n_0\delta^2}{2\tau^2}.
\]
Choose $\delta^2=c_3\tau^2/n_0$ with $c_3>0$ sufficiently small, so that
$D_{\mathrm{KL}}(P_0\|P_1)\le c_4<1$. Then Le Cam's two-point lemma yields
\[
\inf_{\widehat \theta_0}
\max\left\{
\Ex_{P_0}\!\left[\|\widehat\theta_0-\mu_1\|_2^2\right],
\Ex_{P_1}\!\left[\|\widehat\theta_0-\mu_2\|_2^2\right]
\right\}
\gtrsim
\delta^2
\asymp
\frac{\tau^2}{n_0}.
\]
Dividing by the oracle rate above proves \eqref{eq:growing-m-second-term-goal}.

\paragraph{Conclusion.}
Combining \eqref{eq:growing-m-first-term-goal} and \eqref{eq:growing-m-second-term-goal}, we
obtain
\[
\fC^\star([0,1]^m)
\gtrsim
\max\left[
\frac{n_0+mn}{d\tau^2}
\min\!\left\{
\frac{\tau^2 d}{n_0},
\frac{\tau^2}{n}\sqrt{\frac{d}{m}}
\right\},
\;
\frac{n_0+mn}{d\,n_0}
\right],
\]
which is the claim of Theorem \ref{thm:cost-lower-bound-general-m}.

\medskip
\noindent
\emph{Proof of Lemma \ref{lem:kl-growing-m-rademacher}.}
We decompose the mixture laws into their target and source components. Write
\[
\overline P_j=\overline P_j^{(0)}\otimes \overline P_j^{(1:m)},
\qquad
\;P_\star=P_\star^{(0)}\otimes P_\star^{(1:m)},
\]
where $\overline P_j^{(0)}$ denotes the marginal law of the target estimator
$\widetilde\theta_0$, and $\overline P_j^{(1:m)}$ denotes the joint marginal law of the source
estimators $(\widetilde\theta_1,\dots,\widetilde\theta_m)$. Here
\[
\overline P_j^{(0)}=N\!\left(\alpha v_j,\frac{\tau^2}{n_0}I_d\right),
\qquad
\;P_\star^{(0)}=N\!\left(0,\frac{\tau^2}{n_0}I_d\right),
\]
and therefore
\[
D_{\mathrm{KL}}(\overline P_j^{(0)}\|P_\star^{(0)})
=
\frac{n_0}{2\tau^2}\|\alpha v_j\|_2^2
\lesssim
\frac{n_0\alpha^2}{\tau^2}.
\]

For the source part, independence of the Rademacher labels implies that
\[
\overline P_j^{(1:m)}=Q_j^{\otimes m},
\qquad
\;P_\star^{(1:m)}=Q_0^{\otimes m},
\]
where
\[
Q_j
=
\frac{1}{2}N\!\left(\alpha v_j,\frac{\tau^2}{n}I_d\right)
\;+\;
\frac{1}{2}N\!\left(-\alpha v_j,\frac{\tau^2}{n}I_d\right),
\qquad
Q_0=N\!\left(0,\frac{\tau^2}{n}I_d\right).
\]
Consequently,
\[
D_{\mathrm{KL}}(\overline P_j^{(1:m)}\|P_\star^{(1:m)})
=
m\,D_{\mathrm{KL}}(Q_j\|Q_0).
\]

It remains to control the one-source divergence. Let $\sigma^2=\tau^2/n$, and let $\phi_\sigma$
denote the density of $N(0,\sigma^2 I_d)$. A direct calculation gives
\[
\frac{dQ_j}{d\phi_\sigma}(x)
=
\exp\!\left(-\frac{\alpha^2}{2\sigma^2}\right)
\cosh\!\left(\frac{\alpha}{\sigma^2}v_j^\top x\right).
\]
Hence
\[
1+\chi^2(Q_j\|Q_0)
=
\int \left(\frac{dQ_j}{d\phi_\sigma}\right)^2 d\phi_\sigma
=
\exp\!\left(-\frac{\alpha^2}{\sigma^2}\right)
\mathbb E\left[\cosh^2\!\left(\frac{\alpha}{\sigma^2}v_j^\top X\right)\right],
\qquad X\sim N(0,\sigma^2 I_d).
\]
Since $(\alpha/\sigma^2)v_j^\top X\sim N(0,\alpha^2/\sigma^2)$, the identity
$\cosh^2(t)=\{1+\cosh(2t)\}/2$ and the Gaussian moment formula
$\mathbb E[\cosh(2Y)]=e^{2\Var(Y)}$ for centered Gaussian $Y$ imply
\[
1+\chi^2(Q_j\|Q_0)
=
\cosh\!\left(\frac{\alpha^2}{\sigma^2}\right).
\]
Using $D_{\mathrm{KL}}(Q_j\|Q_0)\le \chi^2(Q_j\|Q_0)$ and the bound
$\cosh(u)-1\lesssim u^2$ for $|u|\le c_\star c_2$, it follows that
\[
D_{\mathrm{KL}}(Q_j\|Q_0)
\lesssim
\frac{\alpha^4}{\sigma^4}
=
\frac{n^2\alpha^4}{\tau^4}.
\]
Combining the target and source bounds yields
\[
D_{\mathrm{KL}}(\overline P_j\|P_\star)
\le
D_{\mathrm{KL}}(\overline P_j^{(0)}\|P_\star^{(0)})
\;+\;
D_{\mathrm{KL}}(\overline P_j^{(1:m)}\|P_\star^{(1:m)})
\lesssim
\frac{n_0\alpha^2}{\tau^2}
\;+\;
\frac{mn^2\alpha^4}{\tau^4},
\]
which proves the lemma.

\subsection{Proof of Theorem \ref{thm:model-selection}}
\label{proof:thm:model-selection}

\begin{proof}{\bf of Theorem \ref{thm:model-selection}.}
    Applying Lemma \ref{lemma:model-selection-1}, there exists a constant $C_1 > 0$ such that 
\begin{equation}\label{eq:thm-ms-1}
    \begin{aligned}
      \Ex\left[\left\|\widehat \theta_0^{\mathrm{MS}} - \theta_0\right\|_2^2\Big\vert \xi_1, \dots, \xi_M\right] \le C_1 \cdot \left[ \left(\min_{j=1, \dots, M} \|\xi_j- \theta_0\|_2^2\right) +  \tau^2 \frac{\log M \wedge d}{n_0}\right]  
    \end{aligned}
\end{equation}
By taking the expectation on both sides, we obtain
    \[
    \begin{aligned}
        \Ex\left[\left\|\widehat \theta_0^{\mathrm{MS}} - \theta_0\right\|_2^2\right] \le C_1 \cdot \left[ \Ex\left(\min_{j=1, \dots, M} \|\xi_j- \theta_0\|_2^2\right) +  \tau^2 \frac{\log M \wedge d}{n_0}\right]\,. 
    \end{aligned}
    \]
    Since, 
    \[
    \begin{aligned}
        \Ex \left[\min_{j=1, \dots, M} \|\xi_j- \theta_0\|_2^2\right] & \le \min_{j=1, \dots, M}\Ex \left[ \|\xi_j- \theta_0\|_2^2\right]\\
        & \le \min_{j = 1, \dots, M} \left(\left \|\frac{\sum_{k \in U_j} n_k  \theta_k }{\sum_{k \in U_j} n_k + \frac{n_0}{2}} - \theta_0\right\|_2^2 + \frac{d\tau^2}{\sum_{k \in U_j} n_k + \frac{n_0}{2}} \right)\,,
    \end{aligned}
    \] this completes the proof.
\end{proof}

\begin{lemma}\label{lemma:model-selection-1}
    Let $M \ge 1$ be a positive integer and $\{\beta_j : j = 1, \dots, M\}$ be a fixed set of vectors. Let $\widetilde{\theta}_0^{(1)}$ be an estimator of $\theta_0$ satisfying, for all $t > 0$ and $a \in \mathbb{R}^d$,
    \[
        P\!\left(a^\top \bigl\{\widetilde{\theta}_0^{(1)} - \theta_0\bigr\} > t\right)
        \le C_\tau \exp\!\left(-\frac{n_0 t^2}{4\tau^2 \|a\|_2^2}\right).
    \]
    Define the model-selection estimator $\beta_{\hat{j}}$, where
    \[
        \hat{j} := \operatorname*{arg\,min}_{j=1,\dots,M} \left\|\beta_j - \widetilde{\theta}_0^{(1)}\right\|_2^2.
    \]
    Then there exists a constant $C > 0$ such that
    \[
        \mathbb{E}\!\left[\left\|\beta_{\hat{j}} - \theta_0\right\|_2^2\right]
        \le 2\!\left(\min_{j=1,\dots,M} \|\beta_j - \theta_0\|_2^2\right)
        + \frac{4C^2\tau^2\bigl((\log M \wedge d) + 1\bigr)}{n_0}.
    \]
\end{lemma}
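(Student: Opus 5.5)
Let $j^\star$ be a minimizer of $\|\beta_j-\theta_0\|_2$, and write $\varepsilon := \widetilde\theta_0^{(1)}-\theta_0$. The argument is a standard selection inequality. By the definition of $\hat j$ we have $\|\beta_{\hat j}-\widetilde\theta_0^{(1)}\|_2^2 \le \|\beta_{j^\star}-\widetilde\theta_0^{(1)}\|_2^2$. Expanding both sides around $\theta_0$ cancels the $\|\varepsilon\|_2^2$ terms and gives
\[
\|\beta_{\hat j}-\theta_0\|_2^2 \le \|\beta_{j^\star}-\theta_0\|_2^2 + 2\langle \varepsilon, \beta_{\hat j}-\beta_{j^\star}\rangle .
\]
Everything then reduces to controlling the cross term. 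We write $\beta_{\hat j}-\beta_{j^\star} = (\beta_{\hat j}-\theta_0) - (\beta_{j^\star}-\theta_0)$ and set $D_j := \|\beta_j-\beta_{j^\star}\|_2$.

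We bound the cross term in two ways and take whichever is smaller.

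\emph{The $\log M$ route.} Since the $\beta_j$ are fixed, each $\langle \varepsilon, (\beta_j-\beta_{j^\star})/D_j\rangle$ is sub-Gaussian with variance proxy of order $\tau^2/n_0$, by the stated tail bound with $a$ a unit vector. A union bound over the $M$ indices gives
\[
\max_j \frac{\langle \varepsilon,\beta_j-\beta_{j^\star}\rangle}{D_j} \le C\tau\sqrt{(\log M + \log(1/\delta))/n_0}
\]
with probability $1-\delta$. Integrating this tail shows that the square of this maximum has expectation $\lesssim \tau^2(\log M+1)/n_0$.

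\emph{The dimension route.} Simply use $\langle \varepsilon, \beta_{\hat j}-\beta_{j^\star}\rangle \le \|\varepsilon\|_2 D_{\hat j}$. For a sub-Gaussian vector, $\Ex\|\varepsilon\|_2^2 \lesssim d\tau^2/n_0$; this follows from the directional tail bound via a $1/2$-net of the sphere, which gives $\|\varepsilon\|_2 \le 2\max_{a\in\mathcal N}a^\top\varepsilon$ with $\log|\mathcal N|\lesssim d$.

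In both routes the cross term is at most $2Z D_{\hat j}$, where $\Ex Z^2 \lesssim \tau^2((\log M\wedge d)+1)/n_0$. To get this minimum inside a single random variable, take $Z$ to be the smaller of the two random bounds. Both bounds hold pointwise, so their minimum $Z$ has second moment at most the minimum of the two second moments.

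Next we absorb. Using $D_{\hat j} \le \|\beta_{\hat j}-\theta_0\|_2 + \|\beta_{j^\star}-\theta_0\|_2$ and Young's inequality $2ZD \le \tfrac12 D^2 + 2Z^2$, together with $D^2 \le 2\|\beta_{\hat j}-\theta_0\|^2 + 2\|\beta_{j^\star}-\theta_0\|^2$, we would only obtain a coefficient $1$ on $\|\beta_{\hat j}-\theta_0\|^2$, which cannot be absorbed. So instead use $4ZD \le \tfrac14 D^2 + 16Z^2$ and pick constants so that the left side retains coefficient $1/2$. This gives
\[
\|\beta_{\hat j}-\theta_0\|_2^2 \le c\,\|\beta_{j^\star}-\theta_0\|_2^2 + c' Z^2 .
\]
Getting the leading constant exactly $2$, as stated, requires the sharper split $2Z(\|\beta_{\hat j}-\theta_0\|+\|\beta_{j^\star}-\theta_0\|) \le \tfrac12\|\beta_{\hat j}-\theta_0\|^2 + \tfrac12\|\beta_{j^\star}-\theta_0\|^2 + 4Z^2$. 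Rearranging yields $\|\beta_{\hat j}-\theta_0\|^2 \le 3\|\beta_{j^\star}-\theta_0\|^2 + 8Z^2$. If the constant $2$ is needed exactly, use a weight $\epsilon$ in Young's inequality, giving $(1+\epsilon)/(1-\epsilon)$, and absorb the resulting constant into $C$. Taking expectations and inserting the bound on $\Ex Z^2$ then gives the claim.

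The main obstacle is the union-bound step: the maximum must be of normalized inner products, whose directions are deterministic only because the $\beta_j$ are fixed. Any dependence of the $\beta_j$ on $\widetilde\theta_0^{(1)}$ would break the argument, which is exactly why the paper uses sample splitting.
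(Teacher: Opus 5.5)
Your proposal is correct and follows essentially the paper's argument: the basic selection inequality, uniform control of the cross term by a union bound over the $M$ fixed directions or by a net bound on $\|\varepsilon\|_2$ (which gives the $\log M\wedge d$), and integration of the resulting tail. The differences are cosmetic: you normalize by $\beta_j-\beta_{j^\star}$ rather than $\beta_j-\theta_0$, and you bundle everything into a single random variable $Z$ instead of working on a $\delta$-dependent event and using the layer-cake formula. One correction on the leading constant: it cannot be absorbed into $C$, which multiplies only the $\tau^2$ term. Take $\epsilon=1/3$ so that $(1+\epsilon)/(1-\epsilon)=2$ exactly; only the $Z^2$ coefficient goes into $C$. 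More simply, factor $\|\beta_{\hat j}-\theta_0\|_2^2-\|\beta_{j^\star}-\theta_0\|_2^2\le 2Z(\|\beta_{\hat j}-\theta_0\|_2+\|\beta_{j^\star}-\theta_0\|_2)$ to get $\|\beta_{\hat j}-\theta_0\|_2\le\|\beta_{j^\star}-\theta_0\|_2+2Z$, which yields the constant $2$ directly, as the paper does via the quadratic formula.
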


\begin{proof}
Define $\mu_j := \beta_j - \theta_0$ and $\varepsilon_0 := \widetilde{\theta}_0^{(1)} - \theta_0$, and let
$j_0 := \operatorname*{arg\,min}_{j} \|\mu_j\|_2^2$.
Expanding the squared norm gives
\[
    \left\|\beta_j - \widetilde{\theta}_0^{(1)}\right\|_2^2
    = \|\mu_j\|_2^2 + 2\mu_j^\top \varepsilon_0 + \|\varepsilon_0\|_2^2,
\]
so controlling $|\mu_j^\top \varepsilon_0|$ uniformly over $j$ is the key step.

\paragraph{Uniform control of $|\mu_j^\top \varepsilon_0|$.}
We handle two cases based on the relative sizes of $d$ and $\log M$.

\begin{itemize}
    \item \textbf{Case 1} ($d \ge \log M$): For each $j$, the inner product
    $\mu_j^\top \varepsilon_0$ is sub-Gaussian with variance proxy
    $2\tau^2\|\mu_j\|_2^2/n_0$. Applying a union bound over $j = 1, \dots, M$
    and setting $\delta = M\eta$, there exists $C > 0$ such that with
    probability at least $1 - \delta$,
    \[
         \bigl|\mu_j^\top \varepsilon_0\bigr|
        \le C\tau\|\mu_j\|_2\sqrt{\frac{\log M + \log(1/\delta)}{n_0}}\quad \text{for all} ~ j = 1, \dots, M.
    \]

    \item \textbf{Case 2} ($d < \log M$): By Cauchy--Schwarz,
    $|\mu_j^\top \varepsilon_0| \le \|\mu_j\|_2\|\varepsilon_0\|_2$. Since
    $\varepsilon_0$ is a sub-Gaussian vector in $\mathbb{R}^d$, there exists
    $C > 0$ such that with probability at least $1 - \delta$,
    \[
        \|\varepsilon_0\|_2 \le C\tau\sqrt{\frac{d + \log(1/\delta)}{n_0}},
    \]
    which yields the same bound with $\log M$ replaced by $d$.
\end{itemize}

Combining both cases, with probability at least $1 - \delta$, the following
event $\mathcal{E}$ holds simultaneously for all $j$:
\[
    \left|\left\|\beta_j - \widetilde{\theta}_0^{(1)}\right\|_2^2
    - \|\varepsilon_0\|_2^2 - \|\mu_j\|_2^2\right|
    \le C\tau\|\mu_j\|_2
    \sqrt{\frac{(\log M \wedge d) + \log(1/\delta)}{n_0}}.
\]

\paragraph{Bounding $\|\mu_{\hat{j}}\|_2^2$ on $\mathcal{E}$.}
On the event $\mathcal{E}$, whenever $\hat{j} \ne j_0$ the definition of
$\hat{j}$ implies
$\|\beta_{\hat{j}} - \widetilde{\theta}_0^{(1)}\|_2^2
\le \|\beta_{j_0} - \widetilde{\theta}_0^{(1)}\|_2^2$,
which gives
\[
    \|\mu_{\hat{j}}\|_2^2 - \|\mu_{j_0}\|_2^2
    \le 2C\tau\bigl(\|\mu_{\hat{j}}\|_2 + \|\mu_{j_0}\|_2\bigr)
    \sqrt{\frac{(\log M \wedge d) + \log(1/\delta)}{n_0}}.
\]
Let $\kappa := C\tau\sqrt{[(\log M \wedge d) + \log(1/\delta)]/n_0}$.
Rearranging yields
\[
    \|\mu_{\hat{j}}\|_2^2 - 2\kappa\|\mu_{\hat{j}}\|_2 - \|\mu_{j_0}\|_2^2 \le 0,
\]
so by the quadratic formula
$\|\mu_{\hat{j}}\|_2 \le \kappa + \sqrt{\kappa^2 + \|\mu_{j_0}\|_2^2}$,
and hence
\[
    \|\mu_{\hat{j}}\|_2^2 \le 2\|\mu_{j_0}\|_2^2 + 4\kappa^2
    = \underbrace{2\|\mu_{j_0}\|_2^2
    + \frac{4C^2\tau^2(\log M \wedge d)}{n_0}}_{=: A}
    + \underbrace{\frac{4C^2\tau^2}{n_0}}_{=: B}\log(1/\delta).
\]
This inequality also holds trivially when $\hat{j} = j_0$. Thus,
$P\!\left(\|\mu_{\hat{j}}\|_2^2 > A + Bt\right) \le e^{-t}$ for all $t \ge 0$.

\paragraph{Computing the expectation.}
Using the layer-cake formula,
\[
    \mathbb{E}\!\left[\|\mu_{\hat{j}}\|_2^2\right]
    = \int_0^\infty P\!\left(\|\mu_{\hat{j}}\|_2^2 > u\right)du
    \le A + B\int_0^\infty e^{-t}\,dt
    = A + B,
\]
which gives
\[
\begin{aligned}
     \mathbb{E}\!\left[\|\beta_{\hat{j}} - \theta_0\|_2^2\right]
    &\le 2\|\mu_{j_0}\|_2^2 + \frac{4C^2\tau^2\bigl((\log M \wedge d) + 1\bigr)}{n_0}\\
    &
    = 2\!\left(\min_{j}\|\beta_j - \theta_0\|_2^2\right)
    + \frac{4C^2\tau^2\bigl((\log M \wedge d) + 1\bigr)}{n_0}
\end{aligned}
\]
\end{proof}

\subsection{Proof of Corollary \ref{cor:adaptation-cost-ub-general-m}}

\begin{proof}
 Letting $\cM = 2^{[m]}$, we note $\log M\asymp m$ and 
    \[
   \max_{\{\theta_k^\star\}_{k = 0}^m \in \Theta(\bh, \|\cdot\|_2) } \min_{j = 1, \dots, M} \left(\textstyle \frac{ 1}{N_j^2} \Big \|\sum_{k \in S_j} n_k  (\theta_k^\star - \theta_0^\star) \Big\|_2^2  + \frac{d\tau^2}{N_j} \right) \lesssim \fR (\bh, \bn, \tau).
    \] Thus, we obtain the upper bound
    \[
    \begin{aligned}
        \sup_{P \in \cP(\bh, \bn, \tau)} \frac{\Ex\left[\|\widehat \theta_0^{\mathrm{MS}} - \theta_0^\star\|_2^2\right]}{\fR (\bh, \bn, \tau)} \lesssim 1 + \frac{\tau^2 \frac{m \wedge d}{n_0}}{\fR (\bh, \bn, \tau)}
    \end{aligned}
    \] Taking supremum over $\bh \in [0, 1]^m$ on both sides,
    \[
    \begin{aligned}
        \fC(\widehat \theta_0^{\mathrm{MS}}, [0, 1]^m) & \lesssim 1+ \frac{{(m \wedge d)}/{n_0}}{{ d}/{(mn)}} = 1 + \left(1 \wedge \frac{m}{d}\right)\frac{mn}{n_0},
    \end{aligned}
    \] which leads to the desired upper bound.
\end{proof}
\section{Supplementary material for Section \ref{sec:adaptation-general}}

\subsection{A proxy for the sub-Gaussian scale}
\label{supp:tau-proxy}

In practice, the sub-Gaussian parameter $\tau$ in equation \eqref{eq:sub-gaussian} 
can be estimated from data. We describe natural data-driven proxies $\widehat{\tau}$ 
for the three examples in Section \ref{subsec:domain-specific}.

\paragraph{Normal mean estimation:}
In Example~\ref{example:normal-mean}, a natural data-driven proxy for $\tau^2$ is
\[
\widehat{\tau}^2 := \max_{k = 0, \dots, m}\lambda_{\max}(\widehat{\Sigma}_k),
\]
where $\widehat{\Sigma}_k := n_k^{-1} \sum_{i=1}^{n_k} (Z_{k,i} - \widetilde{\theta}_k)
(Z_{k,i} - \widetilde{\theta}_k)^\top$ is the sample covariance matrix.

\paragraph{Linear regression:}  In Example~\ref{example:linear-regression}, the sub-Gaussian parameter satisfies
\[
\tau^2 = \max_{k = 0, \dots, m} \frac{\sigma_k^2}{\lambda_{\min}(\widehat{\Sigma}_k)},
\]
where $\widehat{\Sigma}_k := n_k^{-1} \sum_{i=1}^{n_k} X_{k,i} X_{k,i}^\top$ 
is the scaled Gram matrix. A natural data-driven proxy is
\[
\widehat{\tau}^2 := \max_{k = 0, \dots, m}\frac{\widehat{\sigma}_k^2}{\lambda_{\min}(\widehat{\Sigma}_k)},
\]
where $\widehat{\sigma}_k^2 := (n_k - d)^{-1} \sum_{i=1}^{n_k} 
(Y_{k,i} - X_{k,i}^\top \widetilde{\theta}_k)^2$ is the residual variance estimate.

\paragraph{M-estimation:} In Example~\ref{example:m-estimation}, a natural data-driven proxy is
\[
\widehat{\tau}^2 := \max_{k =0, \dots, m}\lambda_{\max}\!\left(\widehat{H}_k^{-1} \widehat{V}_k 
\widehat{H}_k^{-1}\right),
\]
where the empirical Hessian and score covariance are given by
\[
\widehat{H}_k := \frac{1}{n_k} \sum_{i=1}^{n_k} 
\nabla_\theta^2 \ell(Z_{k,i}, \widetilde{\theta}_k), \qquad
\widehat{V}_k := \frac{1}{n_k} \sum_{i=1}^{n_k} 
\nabla_\theta \ell(Z_{k,i}, \widetilde{\theta}_k)\, 
\nabla_\theta \ell(Z_{k,i}, \widetilde{\theta}_k)^\top.
\]

\subsection{Proof of the Theorem \ref{thm:intersection-oracle-max}}
\label{proof:thm:intersection-oracle-max}

\begin{proof}
\textbf{Step 0: Oracle property of $r^\star$.}
By Lemma \ref{lem:oracle-index-max} we have
$R_{r^\star}=\min_{0\le r\le m} R_r$.

\textbf{Step 1: Non-emptiness of $\mathcal C_{r^\star}$.}
Let $\theta^\sharp := \Ex[\widehat\theta^{(r^\star)}]$.
We show that on $\Omega_0$, $\theta^\sharp \in \bigcap_{r=0}^{r^\star}\mathcal B_r$.

Fix any $r<r^\star$. Using triangle inequality and the definition of $\Omega_0$,
\[
\|\theta^\sharp-\widehat\theta^{(r)}\|_2
\le
\|\theta^\sharp-\theta_0^\star\|_2
+
\|\widehat\theta^{(r)}-\Ex[\widehat\theta^{(r)}]\|_2
\le
h_{r^\star} + \rho_r.
\]
By the definition of $r^\star$, we have $h_{r^\star}\le \sigma_{r^\star-1}$.
Since $r\le r^\star-1$ and $(\sigma_r)$ is nonincreasing, $\sigma_{r^\star-1}\le \sigma_r$,
hence $h_{r^\star}\le \sigma_r$. Therefore, for $C_0$ large enough in~\eqref{eq:rad-def-section},
\[
\|\theta^\sharp-\widehat\theta^{(r)}\|_2 \le \sigma_r+\rho_r \le 2\rho_r,
\]
so $\theta^\sharp\in\mathcal B_r$.

For $r=r^\star$, on $\Omega_0$ we directly have
\[
\|\theta^\sharp-\widehat\theta^{(r^\star)}\|_2
=
\|\Ex[\widehat\theta^{(r^\star)}]-\widehat\theta^{(r^\star)}\|_2
\le \rho_{r^\star},
\]
so $\theta^\sharp\in\mathcal B_{r^\star}$ as well.

Thus on $\Omega_0$,
\[
\theta^\sharp \in \bigcap_{r=0}^{r^\star}\mathcal B_r = \mathcal C_{r^\star},
\qquad\text{so}\qquad
\mathcal C_{r^\star}\neq\varnothing.
\]
By definition of $\hat t$, it follows that $\hat t\ge r^\star$ on $\Omega_0$.

\textbf{Step 2: Risk bound on $\Omega_0$.}
Since $\hat t\ge r^\star$ and $\hat\theta\in\mathcal C_{\hat t}$, we have $\hat\theta\in\mathcal B_{r^\star}$,
hence
\[
\|\hat\theta-\widehat\theta^{(r^\star)}\|_2 \le 2\rho_{r^\star}.
\]
Also, by triangle inequality and the definition of $\Omega_0$,
\[
\|\widehat\theta^{(r^\star)}-\theta_0^\star\|_2
\le
\|\widehat\theta^{(r^\star)}-\Ex[\widehat\theta^{(r^\star)}]\|_2
+
\|\Ex[\widehat\theta^{(r^\star)}]-\theta_0^\star\|_2
\le
\rho_{r^\star}+h_{r^\star}.
\]
Therefore,
\[
\|\hat\theta-\theta_0^\star\|_2
\le
\|\hat\theta-\widehat\theta^{(r^\star)}\|_2
+
\|\widehat\theta^{(r^\star)}-\theta_0^\star\|_2
\le
h_{r^\star}+3\rho_{r^\star}.
\]
Squaring and absorbing constants gives
\[
\|\hat\theta-\theta_0^\star\|_2^2
\;\lesssim\;
h_{r^\star}^2 + \rho_{r^\star}^2
\;\lesssim\;
h_{r^\star}^2 + \sigma_{r^\star}^2\log\frac{m+1}{\delta}.
\]
Finally, since $h_{r^\star}^2 \vee \sigma^2_{r^*} \le R_{r^\star}$, we obtain on $\Omega_0$,
\[
\|\hat\theta-\theta_0^\star\|_2^2
\;\lesssim\;
R_{r^\star} \log\frac{m+1}{\delta}.
\]

This proves~\eqref{eq:intersection-oracle-final-max}.
\end{proof}

\begin{lemma}[Oracle index for the max-risk criterion]
\label{lem:oracle-index-max}
Assume $0=h_0\le h_1\le\cdots\le h_m$ and $\sigma_0\ge \sigma_1\ge\cdots\ge \sigma_m$.
Adopt the convention $\sigma_{-1}:=+\infty$.
Define
\[
r^\star \in \arg\max\Bigl\{0\le r\le m:\ \sigma_{r-1}\ge h_r\Bigr\},
\qquad
R_r := h_r^2 \vee \sigma_r^2,\ \ r=0,\dots,m.
\]
Then
\[
R_{r^\star} \;=\; \min_{0\le r\le m} R_r.
\]
\end{lemma}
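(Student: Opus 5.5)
The approach is a direct comparison argument exploiting the two opposing monotonicities: $h_r$ is nondecreasing in $r$, while $\sigma_r$ is nonincreasing. First I will check that $r^\star$ is well defined. The set $\{0\le r\le m:\ \sigma_{r-1}\ge h_r\}$ contains $r=0$ because of the convention $\sigma_{-1}=+\infty$, so it is nonempty.

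Next I will note that this set is a prefix $\{0,1,\dots,r^\star\}$. The map $r\mapsto \sigma_{r-1}$ is nonincreasing and $r\mapsto h_r$ is nondecreasing. Hence if $\sigma_{r-1}\ge h_r$ holds at some $r$, then $\sigma_{r'-1}\ge \sigma_{r-1}\ge h_r\ge h_{r'}$ for every $r'\le r$. Only two consequences will actually be used: the condition holds at $r^\star$, so $h_{r^\star}\le \sigma_{r^\star-1}$; and, when $r^\star<m$, it fails at $r^\star+1$, so $h_{r^\star+1}>\sigma_{r^\star}$.

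The main step is to show $R_{r^\star}\le R_r$ for every $r$, split into two cases.

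\emph{Case $r<r^\star$.} Monotonicity of $\sigma$ gives $\sigma_r\ge \sigma_{r^\star-1}\ge h_{r^\star}$ and $\sigma_r\ge \sigma_{r^\star}$. Therefore
\[
R_r\ge \sigma_r^2\ge h_{r^\star}^2\vee\sigma_{r^\star}^2=R_{r^\star}.
\]

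\emph{Case $r>r^\star$.} Here $r\ge r^\star+1$, so monotonicity of $h$ gives $h_r\ge h_{r^\star+1}>\sigma_{r^\star}$ and $h_r\ge h_{r^\star}$. Therefore
\[
R_r\ge h_r^2\ge h_{r^\star}^2\vee\sigma_{r^\star}^2=R_{r^\star}.
\]
The case $r=r^\star$ is trivial. Combining the cases yields $R_{r^\star}=\min_{0\le r\le m}R_r$.

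No step is genuinely hard. The only places needing care are the boundary cases. When $r^\star=0$, the case $r<r^\star$ is vacuous, so the convention $\sigma_{-1}=+\infty$ is never used inside an inequality chain. When $r^\star=m$, the case $r>r^\star$ is vacuous, so we never appeal to the failure of the condition at the nonexistent index $m+1$.
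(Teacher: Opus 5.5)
Your proof is correct and follows essentially the same route as the paper: both split into $r<r^\star$ and $r>r^\star$ and use only the opposing monotonicities of $(h_r)$ and $(\sigma_r)$, together with feasibility at $r^\star$ and infeasibility beyond it. The only difference is cosmetic. The paper shows $R_r$ is unimodal through consecutive comparisons of $R_{r-1}$ with $R_r$, whereas you compare each $R_r$ directly with $R_{r^\star}$, so you need infeasibility only at the single index $r^\star+1$.
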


\begin{proof}
First note that the feasibility set $\{r:\sigma_{r-1}\ge h_r\}$ is nonempty since
$\sigma_{-1}=+\infty\ge h_0=0$, so $r^\star$ is well-defined.

\emph{Case 1: $r>r^\star$.}
By maximality of $r^\star$, $r$ is infeasible, i.e. $\sigma_{r-1}<h_r$.
Using monotonicity $h_{r-1}\le h_r$ and $\sigma_r\le \sigma_{r-1}$, we get
\[
R_{r-1} = h_{r-1}^2\vee \sigma_{r-1}^2 \;\le\; h_r^2
\;\le\; h_r^2\vee \sigma_r^2 = R_r.
\]
Thus $R_{r-1}\le R_r$ for all $r>r^\star$, so $(R_r)_{r\ge r^\star}$ is nondecreasing.

\emph{Case 2: $1\le r\le r^\star$.}
Since $r\le r^\star$ and $\sigma_{r*-1}\ge h_{r*}$, so $\sigma_{r-1}\ge \sigma_{r*-1}\ge h_{r*} \ge h_r$ i.e $\sigma_{r-1}\ge h_r$.
Therefore
\[
R_{r-1} = h_{r-1}^2\vee \sigma_{r-1}^2 = \sigma_{r-1}^2
\;\ge\; h_r^2\vee \sigma_r^2 = R_r,
\]
where the last inequality uses $\sigma_{r-1} \ge h_r$ and $\sigma_{r-1}\ge \sigma_r$ .
Hence $R_{r-1}\ge R_r$ for all $1\le r\le r^\star$, so $(R_r)_{r\le r^\star}$ is nonincreasing.

Combining the two cases, $R_r$ decreases up to $r^\star$ and increases thereafter, so it
achieves its minimum at $r^\star$, i.e. $R_{r^\star}=\min_{0\le r\le m}R_r$.
\end{proof}

\subsection{Proof of the Theorem \ref{thm:two-cluster-mse}}
\label{proof:thm:two-cluster-mse}

\begin{proof}{\bf of Theorem~\ref{thm:two-cluster-mse}:}
Write $\sigma^2:=\frac{\tau^2}{n}$.
As throughout this section, all losses and risks are understood up to universal
constant factors, the constant loss of effective sample size caused by sample
splitting is absorbed into the universal constants.

For any subset \(S\subset[m]\), define the
pooled estimator
\[
\hat\theta_S
:=
\frac{1}{1+|S|}
\left(
\widetilde\theta_0^{(1)}+\sum_{i\in S}\widetilde\theta_i^{(3)}
\right),
\qquad
\hat\theta_\varnothing:=\widetilde\theta_0^{(1)}.
\]
If \(\hat G_1,\hat G_2\) are the two clusters returned by
Algorithm~\ref{alg:sample-split-clustering}, then the three candidates used by
Algorithm~\ref{alg:two-cluster-adaptive-short} are
\[
\hat\theta^{(0)}:=\hat\theta_\varnothing,
\qquad
\hat\theta^{(1)}:=\hat\theta_{\hat G_1},
\qquad
\hat\theta^{(2)}:=\hat\theta_{\hat G_2}.
\]
The final estimator is selected by validation:
\[
\hat a
\in
\arg\min_{a\in\{0,1,2\}}
\bigl\|\hat\theta^{(a)}-\widetilde\theta_0^{(2)}\bigr\|_2^2,
\qquad
\hat\theta_{\mathrm{ad}}:=\hat\theta^{(\hat a)}.
\]

Let $R_a:=\|\hat\theta^{(a)}-\theta_0^\star\|_2^2$ for $a\in\{0,1,2\}$.
By Lemma~\ref{lem:three-way-validation-selector},
\[
\mathbb E\bigl[\|\hat\theta_{\mathrm{ad}}-\theta_0^\star\|_2^2\bigr]
\le
C\left(
\mathbb E\Bigl[\min_{a\in\{0,1,2\}}R_a\Bigr]
+\sigma^2
\right).
\]
It therefore suffices to show that
\[
\mathbb E\Bigl[\min_{a\in\{0,1,2\}}R_a\Bigr]
\le
C\left(M_{\mathrm{cl}}+\sigma^2\widetilde\Delta_{m,d}\right).
\]

We split into two regimes.

\medskip
\noindent
\textbf{Case 1: large separation.}
Assume first that
\[
\Delta^2 \ge A\sigma^2\widetilde\Delta_{m,d}
\]
for a sufficiently large universal constant \(A\). Set $\delta:=(md)^{-10}$.
Since \(m_{\min}\ge c m\), we have
\[
\Gamma_{m,m_{\min},d}(\delta)
:=
\frac{d+\log(1/\delta)}{m_{\min}}
+
\frac{\sqrt{m\bigl(d+\log(1/\delta)\bigr)}}{m_{\min}}
+
\log\frac{m}{\delta}
\;\le\;
C\,\widetilde\Delta_{m,d}.
\]
Indeed,
\[
\log(1/\delta)=10\log(md),
\qquad
\log\frac{m}{\delta}=\log m+10\log(md)\le 11\log(md),
\]
and \(m_{\min}\ge c m\). Therefore, after enlarging \(A\) if necessary,
Proposition~\ref{prop:sample-split-recovery} applies and yields
\[
\mathbb P(\mathcal E_{\mathrm{rec}}^c)\le (md)^{-10},
\qquad
\mathcal E_{\mathrm{rec}}
:=
\bigl\{\{\hat G_1,\hat G_2\}=\{G_1,G_2\}\bigr\}.
\]

On \(\mathcal E_{\mathrm{rec}}\), the three candidates
\(\hat\theta^{(0)},\hat\theta^{(1)},\hat\theta^{(2)}\) coincide, up to relabeling,
with the three oracle actions
\[
\hat\theta_\varnothing,
\qquad
\hat\theta_{G_1},
\qquad
\hat\theta_{G_2}.
\]
Therefore,
\[
\min_{a\in\{0,1,2\}}R_a
\le
\mathbf 1_{\mathcal E_{\mathrm{rec}}}
\min\!\left\{
\|\hat\theta_\varnothing-\theta_0^\star\|_2^2,\,
\|\hat\theta_{G_1}-\theta_0^\star\|_2^2,\,
\|\hat\theta_{G_2}-\theta_0^\star\|_2^2
\right\}
+
\mathbf 1_{\mathcal E_{\mathrm{rec}}^c}
\|\hat\theta_\varnothing-\theta_0^\star\|_2^2.
\]
Hence
\[
\mathbb E\Bigl[\min_{a\in\{0,1,2\}}R_a\Bigr]
\le
\mathbb E\!\left[
\min\!\left\{
\|\hat\theta_\varnothing-\theta_0^\star\|_2^2,\,
\|\hat\theta_{G_1}-\theta_0^\star\|_2^2,\,
\|\hat\theta_{G_2}-\theta_0^\star\|_2^2
\right\}
\right]
+
\mathbb E\!\left[
\mathbf 1_{\mathcal E_{\mathrm{rec}}^c}
\|\hat\theta_\varnothing-\theta_0^\star\|_2^2
\right].
\]

For the first term, using
\(\mathbb E[\min_j X_j]\le \min_j \mathbb E[X_j]\),
\[
\mathbb E\!\left[
\min\!\left\{
\|\hat\theta_\varnothing-\theta_0^\star\|_2^2,\,
\|\hat\theta_{G_1}-\theta_0^\star\|_2^2,\,
\|\hat\theta_{G_2}-\theta_0^\star\|_2^2
\right\}
\right]
\le
C\,M_{\mathrm{cl}}.
\]

For the second term, since
\(\hat\theta_\varnothing-\theta_0=\widetilde\theta_0^{(1)}-\theta_0\) is
sub-Gaussian with scale \(\sigma\), its squared norm has second moment bounded
by \(C\sigma^4 d^2\). Thus by Cauchy--Schwarz,
\[
\mathbb E\!\left[
\mathbf 1_{\mathcal E_{\mathrm{rec}}^c}
\|\hat\theta_\varnothing-\theta_0^\star\|_2^2
\right]
\le
\bigl(\mathbb E\|\hat\theta_\varnothing-\theta_0^\star\|_2^4\bigr)^{1/2}
\mathbb P(\mathcal E_{\mathrm{rec}}^c)^{1/2}
\le
C\sigma^2 d\,(md)^{-5}
\le
C\sigma^2.
\]
Therefore,
\[
\mathbb E\Bigl[\min_{a\in\{0,1,2\}}R_a\Bigr]
\le
C\left(M_{\mathrm{cl}}+\sigma^2\right)
\le
C\left(M_{\mathrm{cl}}+\sigma^2\widetilde\Delta_{m,d}\right),
\]
since \(\widetilde\Delta_{m,d}\ge 1\) for \(m,d\ge 1\).

\medskip
\noindent
\textbf{Case 2: small separation.}
Assume now that $\Delta^2 < A\sigma^2\widetilde\Delta_{m,d}$.
Let $\hat r\in\arg\max_{r\in\{1,2\}} |\hat G_r|$ and $\hat G_{\max}:=\hat G_{\hat r}$.
Then \(|\hat G_{\max}|\ge m/2\). Since
\(\hat\theta^{(\hat r)}=\hat\theta_{\hat G_{\max}}\), Lemma~\ref{lem:arbitrary-subset-risk}
gives
\[
\mathbb E[R_{\hat r}]
=
\mathbb E\bigl[\|\hat\theta_{\hat G_{\max}}-\theta_0^\star\|_2^2\bigr]
\le
C\left(M_{\mathrm{cl}}+\Delta^2\right)
\le
C\left(M_{\mathrm{cl}}+\sigma^2\widetilde\Delta_{m,d}\right).
\]
Since $\min_{a\in\{0,1,2\}}R_a \le R_{\hat r}$,
we conclude that
\[
\mathbb E\Bigl[\min_{a\in\{0,1,2\}}R_a\Bigr]
\le
C\left(M_{\mathrm{cl}}+\sigma^2\widetilde\Delta_{m,d}\right).
\]

Combining the two cases yields
\[
\mathbb E\Bigl[\min_{a\in\{0,1,2\}}R_a\Bigr]
\le
C\left(M_{\mathrm{cl}}+\sigma^2\widetilde\Delta_{m,d}\right).
\]
Substituting this into the selector inequality from
Lemma~\ref{lem:three-way-validation-selector} gives
\[
\mathbb E\bigl[\|\hat\theta_{\mathrm{ad}}-\theta_0^\star\|_2^2\bigr]
\le
C\left(
M_{\mathrm{cl}}+\sigma^2\widetilde\Delta_{m,d}+\sigma^2
\right).
\]
Recalling that \(\sigma^2=\tau^2/n\) proves the theorem.
\end{proof}

\begin{lemma}[Three-way validation selector]
\label{lem:three-way-validation-selector}
There exists a universal constant \(C>0\) such that
\[
\mathbb E\bigl[\|\hat\theta_{\mathrm{ad}}-\theta_0^\star\|_2^2\bigr]
\;\le\;
C\left(
\mathbb E\Bigl[\min_{a\in\{0,1,2\}}
\|\hat\theta^{(a)}-\theta_0^\star\|_2^2\Bigr]
+\sigma^2
\right).
\]
\end{lemma}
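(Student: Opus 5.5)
The plan is to condition on everything except the validation half $\widetilde\theta_0^{(2)}$ and then apply Lemma~\ref{lemma:model-selection-1} with $M=3$. The three candidates $\hat\theta^{(0)},\hat\theta^{(1)},\hat\theta^{(2)}$ are built from $\widetilde\theta_0^{(1)}$, $\{\widetilde\theta_k^{(1)},\widetilde\theta_k^{(2)},\widetilde\theta_k^{(3)}\}_{k=1}^m$ and the clustering output. By the sample-splitting construction these are all independent of $\widetilde\theta_0^{(2)}$. So, conditionally on $\mathcal F:=\sigma(\hat\theta^{(0)},\hat\theta^{(1)},\hat\theta^{(2)})$, the candidates act as fixed vectors $\beta_a:=\hat\theta^{(a)}$. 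Moreover, $\widetilde\theta_0^{(2)}$ still satisfies the sub-Gaussian bound \eqref{eq:sub-gaussian}, with effective sample size a constant fraction of $n_0=n$. This is exactly the hypothesis of Lemma~\ref{lemma:model-selection-1}; the constant in the exponent is absorbed into $C$.

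Applying Lemma~\ref{lemma:model-selection-1} conditionally gives
\[
\mathbb E\bigl[\|\hat\theta_{\mathrm{ad}}-\theta_0^\star\|_2^2\,\big|\,\mathcal F\bigr]\le 2\min_{a\in\{0,1,2\}}\|\hat\theta^{(a)}-\theta_0^\star\|_2^2+\frac{4C^2\tau^2\{(\log 3\wedge d)+1\}}{n}.
\]
Since $\log 3\wedge d\le \log 3$, the additive term is at most a universal constant times $\tau^2/n=\sigma^2$. Taking expectations over $\mathcal F$ via the tower property then yields the claimed bound, with $C$ universal.

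The one step needing care is confirming that the proof of Lemma~\ref{lemma:model-selection-1} uses only the randomness of the validation estimator, with the $\beta_j$ held fixed. Its union bound over $j$ and its norm bound on $\varepsilon_0$ indeed involve only $\varepsilon_0=\widetilde\theta_0^{(2)}-\theta_0^\star$, so the conditional application is legitimate. One must also check that $\widetilde\theta_0^{(2)}$ is genuinely independent of the clustering step. This holds because the clustering uses only source halves, and the candidates use $\widetilde\theta_0^{(1)}$ together with the third source splits. The split sizes change $n$ only by a constant factor, which is absorbed into $C$. I expect no real obstacle beyond this independence bookkeeping.
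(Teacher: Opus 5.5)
Your proof is correct, but it takes a different route from the paper's. You reduce to Lemma~\ref{lemma:model-selection-1} with $M=3$, applied conditionally on the candidates. The paper instead proves the selector inequality directly.

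Conditioning on the candidates, the paper takes the best index $a_\star$ and the differences $v_a:=\hat\theta^{(a)}-\hat\theta^{(a_\star)}$. It writes the basic inequality $R_{\hat a}-R_{a_\star}\le 2\langle W,v_{\hat a}\rangle$ and combines it with $\|v_{\hat a}\|_2^2\le 2R_{\hat a}+2R_{a_\star}$. This gives the deterministic bound $R_{\hat a}\le 3R_{a_\star}+2Z$, with $Z:=\max_a\{2\langle W,v_a\rangle-\tfrac14\|v_a\|_2^2\}$. A self-normalized exponential-moment inequality plus a union over the three candidates then gives $\mathbb E[Z\mid\hat\theta^{(0)},\hat\theta^{(1)},\hat\theta^{(2)}]\lesssim\sigma^2$.

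What each route buys:
\begin{itemize}
\item The paper's argument is self-contained. It avoids the high-probability event, the quadratic-formula step, the layer-cake integration, and the $d$ versus $\log M$ case split used in Lemma~\ref{lemma:model-selection-1}.
\item Your argument is shorter and reuses a lemma the paper already needs for Theorem~\ref{thm:model-selection}.
\item You make explicit that $\widetilde\theta_0^{(2)}$ is independent of the clustering and pooling steps; the paper uses this only implicitly.
\item You use only the tail form of \eqref{eq:sub-gaussian}, whereas the paper's exponential-moment step is stated for a mean-zero $W$.
\end{itemize}

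One caution in leaning on Lemma~\ref{lemma:model-selection-1}: its quadratic step drops the cross term $2\kappa\|\mu_{j_0}\|_2$. Done correctly, $(\|\mu_{\hat j}\|_2-\kappa)^2\le(\|\mu_{j_0}\|_2+\kappa)^2$ yields $\|\mu_{\hat j}\|_2^2\le 2\|\mu_{j_0}\|_2^2+8\kappa^2$. This changes only constants and leaves your conclusion intact.
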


\begin{proof}
Condition on the candidate estimators
\(\hat\theta^{(0)},\hat\theta^{(1)},\hat\theta^{(2)}\), and write
\[
R_a:=\|\hat\theta^{(a)}-\theta_0^\star\|_2^2,
\qquad
W:=\widetilde\theta_0^{(2)}-\theta_0^\star.
\]
Let
\[
a_\star\in\arg\min_{a\in\{0,1,2\}} R_a,
\qquad
v_a:=\hat\theta^{(a)}-\hat\theta^{(a_\star)}.
\]
Since \(\hat a\) minimizes
\(\|\hat\theta^{(a)}-(\theta_0^\star+W)\|_2^2\), we have
\[
\|\hat\theta^{(\hat a)}-(\theta_0^\star+W)\|_2^2
\le
\|\hat\theta^{(a_\star)}-(\theta_0^\star+W)\|_2^2.
\]
Expanding both sides gives
\[
R_{\hat a}-R_{a_\star}
\le
2\langle W,v_{\hat a}\rangle.
\]
Define
\[
Z
:=
\max_{a\in\{0,1,2\}}
\left\{
2\langle W,v_a\rangle-\frac14\|v_a\|_2^2
\right\}.
\]
Then
\[
2\langle W,v_{\hat a}\rangle
\le
Z+\frac14\|v_{\hat a}\|_2^2.
\]
Also,
\[
\|v_{\hat a}\|_2^2
=
\|\hat\theta^{(\hat a)}-\hat\theta^{(a_\star)}\|_2^2
\le
2R_{\hat a}+2R_{a_\star}.
\]
Therefore
\[
R_{\hat a}-R_{a_\star}
\le
Z+\frac12(R_{\hat a}+R_{a_\star}),
\]
and hence
\[
R_{\hat a}\le 3R_{a_\star}+2Z.
\]

It remains to bound \(Z\). Since \(W\) is mean-zero sub-Gaussian with scale
\(\sigma\), there exists a universal constant \(c>0\) such that for every
deterministic \(v\in\mathbb R^d\),
\[
\mathbb E\!\left[
\exp\!\left(
\frac{c}{\sigma^2}
\left(
2\langle W,v\rangle-\frac14\|v\|_2^2
\right)
\right)
\right]
\le 1.
\]
Conditioning on the candidates and applying this bound with \(v=v_a\), followed
by the union bound over the three values of \(a\), yields
\[
\mathbb E[Z \mid \hat\theta^{(0)},\hat\theta^{(1)},\hat\theta^{(2)}]
\le C\sigma^2.
\]
Thus
\[
\mathbb E[R_{\hat a}\mid \hat\theta^{(0)},\hat\theta^{(1)},\hat\theta^{(2)}]
\le
3\min_{a\in\{0,1,2\}}R_a + C\sigma^2.
\]
Taking expectations proves the claim.
\end{proof}

\begin{lemma}[Risk of pooling an arbitrary subset]
\label{lem:arbitrary-subset-risk}
Let \(S\subset[m]\), and write
\[
a:=|S\cap G_1|,
\qquad
b:=|S\cap G_2|,
\qquad
s:=|S|=a+b.
\]
Then
\[
\mathbb E\bigl[\|\hat\theta_S-\theta_0^\star\|_2^2\bigr]
\;\le\;
C\left(
\frac{d\sigma^2}{1+s}
+
\min\{b_1^2,b_2^2\}
+
\Delta^2
\right).
\]
In particular, if \(s\ge m/2\) and \(m_{\min}\ge c m\), then
\[
\mathbb E\bigl[\|\hat\theta_S-\theta_0^\star\|_2^2\bigr]
\;\le\;
C\left(M_{\mathrm{cl}}+\Delta^2\right).
\]
\end{lemma}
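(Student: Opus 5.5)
The plan is a bias--variance decomposition of $\hat\theta_S$ for a \emph{fixed} $S$, and then a comparison of the resulting bound with $M_{\mathrm{cl}}$. Because the lemma is later applied with the data-dependent set $S=\hat G_{\max}$, I will first note the independence that makes the fixed-$S$ bound transfer. $\hat G_1,\hat G_2$ are built only from $\{\widetilde\theta_k^{(1)},\widetilde\theta_k^{(2)}\}_{k\ge1}$, whereas $\hat\theta_S$ uses only $\widetilde\theta_0^{(1)}$ and $\{\widetilde\theta_i^{(3)}\}$. Conditionally on $S$, the bound below therefore holds verbatim, and taking expectations finishes the job.

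\emph{Step 1 (variance).} Write $\widetilde\theta_0^{(1)}=\theta_0^\star+\varepsilon_0$ and $\widetilde\theta_i^{(3)}=\theta_i^\star+\varepsilon_i$. These errors are independent, centered and sub-Gaussian at scale $\sigma$, up to the constant lost to sample splitting. Hence
\[
\mathbb E\Bigl\|\tfrac{1}{1+s}\bigl(\varepsilon_0+\textstyle\sum_{i\in S}\varepsilon_i\bigr)\Bigr\|_2^2\le \tfrac{Cd\sigma^2}{1+s}.
\]
Because the noise is centered, the mean-squared error splits exactly as this variance term plus the squared bias.

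\emph{Step 2 (bias).} The bias equals
\[
\frac{a(\mu_1-\theta_0^\star)+b(\mu_2-\theta_0^\star)}{1+s}.
\]
Let $r_\star\in\arg\min_r b_r$ and let $r'$ be the other label. For indices in the cluster $G_{r'}$, write $\mu_{r'}-\theta_0^\star=(\mu_{r_\star}-\theta_0^\star)+(\mu_{r'}-\mu_{r_\star})$. The triangle inequality then bounds the bias norm by $\tfrac{s}{1+s}\min\{b_1,b_2\}+\Delta$. Squaring and using $(x+y)^2\le 2x^2+2y^2$, then adding Step 1, gives the first display.

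\emph{Step 3 (the ``in particular'').} With $s\ge m/2$ the variance term is at most $2d\sigma^2/m$. The triangle inequality gives $\max\{b_1,b_2\}\le\min\{b_1,b_2\}+\Delta$, so $\min\{b_1^2,b_2^2\}\le\max\{b_1^2,b_2^2\}$. The bound is therefore at most
\[
C\bigl(\tfrac{d\sigma^2}{m}+b_1^2\vee b_2^2+\Delta^2\bigr),
\]
which is $\lesssim M_{\mathrm{cl}}+\Delta^2$ whenever the minimum in \eqref{eq:oracle-rate-cluster} is attained by its second branch. The balance condition $m_{\min}\ge cm$ is what makes \eqref{eq:oracle-rate-cluster} the right benchmark, since it lets $d\sigma^2/m_r$ be replaced by $d\sigma^2/m$.

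This comparison is where I expect the main obstacle. When $M_{\mathrm{cl}}=d\sigma^2$ (target-only is optimal), we need $\min\{b_1^2,b_2^2\}\lesssim d\sigma^2+\Delta^2$. This does not follow from the first display, because the pooled estimator genuinely carries bias of order $b_{\min}$ when the target is far from both clusters. I would try to close the gap by reading $M_{\mathrm{cl}}$ in this regime through the clustered oracle. Using $b_{\max}\le b_{\min}+\Delta$, the set $[m]$ gives $b_{\max}^2+d\sigma^2/m\lesssim b_{\min}^2+\Delta^2+d\sigma^2/m$, so the bound equals the second branch of \eqref{eq:oracle-rate-cluster} up to $\Delta^2$. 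If the first branch is strictly smaller, the argument does not cover it. For the downstream use in Theorem~\ref{thm:two-cluster-mse} I would therefore check whether this case is harmless, because the validation selector can fall back on $\hat\theta^{(0)}$. I would flag explicitly that the stated form needs either this restriction to the pooling branch or an additional hypothesis such as $b_{\min}^2\lesssim d\sigma^2+\Delta^2$.
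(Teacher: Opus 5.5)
Your Steps 1--2 are the paper's proof of the first display essentially verbatim. You use the same variance bound $Cd\sigma^2/(1+s)$. You also use the same regrouping of the bias around the closer center, namely $a(\mu_1-\theta_0^\star)+b(\mu_2-\theta_0^\star)=(a+b)(\mu_1-\theta_0^\star)+b(\mu_2-\mu_1)$ when $b_1\le b_2$ (here $a,b$ are the counts), which gives bias norm at most $b_{\min}+\Delta$. Your remark about independence is a worthwhile addition. $\hat G_{\max}$ depends only on $\{\widetilde\theta_k^{(1)},\widetilde\theta_k^{(2)}\}_{k\ge1}$, so it is independent of $(\widetilde\theta_0^{(1)},\{\widetilde\theta_i^{(3)}\})$. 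The paper applies the fixed-$S$ bound to $\hat G_{\max}$ without stating this.

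On the ``in particular'' you are right, and the problem lies in the statement, not in your argument. The paper's proof ends with $C(\min_r\{d\sigma^2/(1+m_r)+b_r^2\}+\Delta^2)\le C(M_{\mathrm{cl}}+\Delta^2)$. That last inequality fails precisely in the target-only branch you isolated.

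Here is a concrete counterexample. Take $\theta_0^\star=0$, $\mu_1=Bu$, $\mu_2=Bu+\Delta v$ with $u\perp v$ unit vectors and $B^2\ge d\sigma^2$. For any $S$ with $s\ge1$, the bias of $\hat\theta_S$ is $(sBu+|S\cap G_2|\,\Delta v)/(1+s)$. Its $u$-component is at least $B/2$, so the risk is at least $B^2/4$. Yet $M_{\mathrm{cl}}=d\sigma^2$. The true two-cluster oracle rate is also $\asymp d\sigma^2$, so the failure is not an artifact of the $b_1^2\vee b_2^2$ in \eqref{eq:oracle-rate-cluster}. With $B=1/2$, $\Delta=\sigma$ and $n\to\infty$, the ratio $B^2/\{4C(d\sigma^2+\Delta^2)\}$ is unbounded.

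So the second display is false as stated. What is true is the bound $C(d\sigma^2/m+b_{\min}^2+\Delta^2)$. This is $\lesssim M_{\mathrm{cl}}+\Delta^2$ only under a condition such as your $b_{\min}^2\lesssim d\sigma^2+\Delta^2$.

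Your proposed downstream repair is the correct one. In Case 2 of the proof of Theorem~\ref{thm:two-cluster-mse}, replace $\min_aR_a\le R_{\hat r}$ by $\min_aR_a\le\min\{R_0,R_{\hat r}\}$. Apply the first display conditionally on $\hat G_{\max}$, which your independence remark justifies. Then $\mathbb E\min_aR_a\le\min\{\mathbb ER_0,\mathbb ER_{\hat r}\}\le C\min\{d\sigma^2,\,d\sigma^2/m+b_{\min}^2+\Delta^2\}\le C(M_{\mathrm{cl}}+\Delta^2)$. With this change the theorem survives as stated.
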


\begin{proof}
Write
\[
\textstyle \hat\theta_S-\theta_0
=
\frac{1}{1+s}
\left(
(\widetilde\theta_0^{(1)}-\theta_0^\star)
+
\sum_{i\in S}(\widetilde\theta_i^{(3)}-\theta_i^\star)
\right)
+
\frac{1}{1+s}
\sum_{i\in S}(\theta_i-\theta_0^\star).
\]
Since the estimation noises are independent, mean-zero, and sub-Gaussian with
covariance bounded by \(C\sigma^2 I_d\), the variance term is bounded by
\[
\textstyle \mathbb E\left\|
\frac{1}{1+s}
\left(
(\widetilde\theta_0^{(1)}-\theta_0^\star)
+
\sum_{i\in S}(\widetilde\theta_i^{(3)}-\theta_i^\star)
\right)
\right\|_2^2
\le
C\,\frac{d\sigma^2}{1+s}.
\]
The bias term is
\[
\textstyle \frac{1}{1+s}\sum_{i\in S}(\theta_i-\theta_0^\star)
=
\frac{a(\mu_1-\theta_0^\star)+b(\mu_2-\theta_0^\star)}{1+s}.
\]

Assume without loss of generality that \(b_1\le b_2\). Then
\[
a(\mu_1-\theta_0^\star)+b(\mu_2-\theta_0^\star)
=
(a+b)(\mu_1-\theta_0^\star)+b(\mu_2-\mu_1),
\]
so
\[
\textstyle \left\|
\frac{a(\mu_1-\theta_0^\star)+b(\mu_2-\theta_0^\star)}{1+s}
\right\|_2
\le
\frac{s}{1+s}b_1+\frac{b}{1+s}\Delta
\le b_1+\Delta.
\]
Hence
\[
\textstyle \left\|
\frac{a(\mu_1-\theta_0^\star)+b(\mu_2-\theta_0^\star)}{1+s}
\right\|_2^2
\le
2b_1^2+2\Delta^2
=
2\min\{b_1^2,b_2^2\}+2\Delta^2.
\]
Combining the variance and bias bounds proves the first display.

For the second display, if \(s\ge m/2\) and \(m_{\min}\ge c m\), then
\[
\textstyle \frac{d\sigma^2}{1+s}\le C\frac{d\sigma^2}{m}
\le
C\min_{r\in\{1,2\}}
\frac{d\sigma^2}{1+m_r},
\]
since \(m_r\ge m_{\min}\ge c m\). Therefore
\[
\textstyle \mathbb E\bigl[\|\hat\theta_S-\theta_0^\star\|_2^2\bigr]
\le
C\left(
\min_{r\in\{1,2\}}
\left\{
\frac{d\sigma^2}{1+m_r}+b_r^2
\right\}
+\Delta^2
\right)
\le
C\left(M_{\mathrm{cl}}+\Delta^2\right),
\]
as claimed.
\end{proof}

\begin{proposition}[Sufficient separation for consistent clustering]
\label{prop:sample-split-recovery}
There exists a universal constant \(C>0\) such that the following holds for
every \(\delta\in(0,1/2)\). If
\[
\textstyle \|\mu_1-\mu_2\|_2^2
\;\ge\;
C\,\frac{\tau^2}{n}\,\Gamma_{m,m_{\min},d}(\delta),
\]
where
\[
\textstyle \Gamma_{m,m_{\min},d}(\delta)
:=
\frac{d+\log(1/\delta)}{m_{\min}}
+
\frac{\sqrt{m\bigl(d+\log(1/\delta)\bigr)}}{m_{\min}}
+
\log\frac{m}{\delta},
\]
then Algorithm~\ref{alg:sample-split-clustering} recovers the latent partition
exactly with probability at least \(1-\delta\), that is,
\[
\{\hat G_1,\hat G_2\}=\{G_1,G_2\}
\qquad\text{with probability at least }1-\delta.
\]
\end{proposition}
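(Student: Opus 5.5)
Write $\sigma^2:=\tau^2/n$, $\widetilde\theta_k^{(\ell)}=\mu_{z_k}+\varepsilon_k^{(\ell)}$ with $z_k\in\{1,2\}$ the true label, $u:=(\mu_1-\mu_2)/\Delta$, and $p_r:=m_r/m$. The plan is the usual spectral-then-one-dimensional argument, in three steps. First, $\hat u$ retains a constant fraction of the separation, $|\hat u^\top(\mu_1-\mu_2)|\ge \Delta/2$. Second, the one-dimensional $2$-means centers $\hat c_1,\hat c_2$ land near the projected true centers. Third, the independent second-half projections are all classified correctly. Each step will use a high-probability event of probability at least $1-\delta/3$.

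\emph{Step 1 (direction).} Expand $\hat\Sigma^{(1)}=p_1p_2\Delta^2uu^\top+E$, where $E=E_{\mathrm{cross}}+E_{\mathrm{noise}}$. The cross term is $E_{\mathrm{cross}}=\frac1m\sum_k\{a_k(\varepsilon_k^{(1)}-\bar\varepsilon)^\top+\text{transpose}\}$, with $a_k$ the centered mean of source $k$, so $\|a_k\|_2\le\Delta$. The noise term is $E_{\mathrm{noise}}=\frac1m\sum_k(\varepsilon_k^{(1)}-\bar\varepsilon)(\varepsilon_k^{(1)}-\bar\varepsilon)^\top$. I will use standard sub-Gaussian matrix deviation bounds via an $\epsilon$-net in $d$ dimensions. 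They give $\|E_{\mathrm{cross}}\|_{\op}\lesssim\Delta\sigma\sqrt{(d+\log(1/\delta))/m}$. They also give $E_{\mathrm{noise}}=\Sigma_\varepsilon+F$, where $\Sigma_\varepsilon\preceq C\sigma^2 I$ and $\|F\|_{\op}\lesssim\sigma^2\{\sqrt{(d+\log(1/\delta))/m}+(d+\log(1/\delta))/m\}$. The population part $\Sigma_\varepsilon$ need not be isotropic, so I will not try to cancel it and will treat it as part of the perturbation.

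Davis--Kahan (the $\sin\Theta$ form) then gives $\sin\angle(\hat u,u)\le 2\|E\|_{\op}/(p_1p_2\Delta^2)$, and I need this to be at most $1/2$. Using $p_1p_2\asymp m_{\min}/m$, this reduces to three separate requirements. The cross term forces $\Delta^2\gtrsim\sigma^2 m(d+\log(1/\delta))/m_{\min}^2$, which is the square of the $\sqrt{m(d+\log)}/m_{\min}$ term after taking square roots appropriately. The fluctuation $F$ gives the $(d+\log)/m_{\min}$ and $\sqrt{m(d+\log)}/m_{\min}$ terms of $\Gamma$. The bias $\Sigma_\varepsilon$ requires $\Delta^2\gtrsim\sigma^2 m/m_{\min}$. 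I will check these against $\Gamma_{m,m_{\min},d}(\delta)$ and enlarge $C$ accordingly. The last one is borderline when $m_{\min}\ll m$, because only the $\log(m/\delta)$ term of $\Gamma$ covers it. If the check fails there, I will note that in the paper's application $m_{\min}\ge cm$ anyway.

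\emph{Step 2 (centers; main obstacle).} The first-half projections are $t_k^{(1)}=\hat u^\top(\mu_{z_k}-\bar\mu)+\eta_k$ with $\eta_k=\hat u^\top(\varepsilon_k^{(1)}-\bar\varepsilon)$. Since $\hat u$ depends on $\varepsilon^{(1)}$, the $\eta_k$ are not independent sub-Gaussians, so I will control them only on average: $\frac1m\sum_k\eta_k^2=\hat u^\top E_{\mathrm{noise}}\hat u\le\|E_{\mathrm{noise}}\|_{\op}$, which is bounded in Step 1. I will then invoke a deterministic one-dimensional $k$-means stability lemma. When the true projected centers are separated by $s\ge\Delta/2$ and the mean squared perturbation is $\bar\eta^2$ with $\bar\eta^2\ll p_{\min}s^2$, the optimal $2$-means centers are within $O(\bar\eta/\sqrt{p_{\min}})$ of the true projected centers. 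This follows by comparing the objective at the true partition with any partition that misassigns a mass fraction of points. This comparison is where the $m/m_{\min}$ factors enter, and I expect it to be the delicate part. The outcome I will aim for is $|\hat c_r-\hat u^\top(\mu_r-\bar\mu)|\le s/8$ after relabeling.

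\emph{Step 3 (classification).} The estimator $\hat u$ and the centers are functions of the first split only. Hence, conditionally, the quantities $\hat u^\top\varepsilon_k^{(2)}$ are independent of them and sub-Gaussian with scale $\sigma$, using \eqref{eq:sub-gaussian}. A union bound over $k\in[m]$ gives $\max_k|\hat u^\top\varepsilon_k^{(2)}|\le C\sigma\sqrt{\log(m/\delta)}$. The shift $\hat u^\top\bar\varepsilon^{(1)}$ is smaller, of order $\sigma\sqrt{\log(1/\delta)/m}$. When $\Delta^2\ge C\sigma^2\log(m/\delta)$, the total error is below $s/8$. Each $\hat u^\top(\widetilde\theta_k^{(2)}-\bar\theta^{(1)})$ is then within $s/4$ of its correct center, and so lies strictly closer to $\hat c_{z_k}$ than to the other center. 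Intersecting the three events yields exact recovery $\{\hat G_1,\hat G_2\}=\{G_1,G_2\}$ with probability at least $1-\delta$.
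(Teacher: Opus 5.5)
Your proposal follows the same route as the paper's proof: Davis--Kahan on the first-split covariance, one-dimensional $2$-means on the projections, and a conditional union bound on the independent second split. It is correct under the section's standing assumption $m_{\min}\ge cm$, with $C$ allowed to depend on $c$. Your handling of a non-isotropic noise covariance and of the dependence of $\hat u$ on the first split fills in points the paper's sketch takes for granted: it simply declares the noise covariance isotropic and asserts the one-dimensional center-recovery step.

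Falling back on balance for the requirement $\Delta^2\gtrsim\sigma^2 m/m_{\min}$ is unavoidable, not a weakness of your route. Take $d=1$, Gaussian noise, $\delta$ fixed and $m_{\min}=m^{0.6}$. Then $\Gamma\asymp\log m$, yet for any fixed $C$ and large $m$ the partition that splits the large cluster in half beats the true partition in the $2$-means objective: its cost is about $0.36\,m\sigma^2$ versus about $m\sigma^2$. So the statement itself needs balance.

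Two small repairs are needed.

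\begin{itemize}
\item \emph{Cross term.} Since $\sum_k a_k=0$ and $a_k\in\{p_2\Delta u,-p_1\Delta u\}$, you can write $E_{\mathrm{cross}}=\Delta(uw^\top+wu^\top)$ with $w$ of sub-Gaussian scale $\sigma\sqrt{p_1p_2/m}$. This gives the paper's bound $\|E_{\mathrm{cross}}\|_{\op}\lesssim\Delta\sigma\sqrt{m_{\min}(d+\log(1/\delta))}/m$, which produces exactly the first term of $\Gamma$. Your cruder requirement $\sigma^2m(d+\log(1/\delta))/m_{\min}^2$ is $\sigma^2$ times the \emph{square} of the second term of $\Gamma$, not that term, and it is dominated by $\Gamma$ only under balance.
\item \emph{Common shift.} Because $\hat u$ depends on the first split, $\hat u^\top\bar\varepsilon^{(1)}$ is not of order $\sigma\sqrt{\log(1/\delta)/m}$. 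Bound it instead by $\|\bar\varepsilon^{(1)}\|_2\lesssim\sigma\sqrt{(d+\log(1/\delta))/m}$, which is still absorbed by the first term of $\Gamma$.
\end{itemize}
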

\begin{proof}
This is a standard exact-recovery argument for a two-component sub-Gaussian
mixture; compare, for example, \citet{fei2018hidden} and
\citet{jiang2020recovery}. We sketch the short adaptation to the present
sample-split clustering rule.

Let
\[
\sigma^2:=\frac{\tau^2}{n},
\qquad
u_\star:=\frac{\mu_1-\mu_2}{\|\mu_1-\mu_2\|_2}.
\]

\paragraph{Step 1: recover the cluster direction from the first split.}
Let
\[
\bar \theta^{(1)}:=\frac1m\sum_{i=1}^m \widetilde\theta_i^{(1)},
\qquad
\hat\Sigma^{(1)}
:=
\frac1m\sum_{i=1}^m
\bigl(\widetilde\theta_i^{(1)}-\bar \theta^{(1)}\bigr)\bigl(\widetilde\theta_i^{(1)}-\bar \theta^{(1)}\bigr)^\top.
\]
Under the two-cluster model, the centered source means lie on the one-dimensional
signal direction \(u_\star\), and the corresponding population covariance has the
form
\[
\Sigma_\star
=
\rho \Delta^2 u_\star u_\star^\top + \Sigma_\eta,
\]
where \(\Sigma_\eta\) is isotropic of order \(\sigma^2 I_d\), and
\[
\rho=\frac{m_1m_2}{m^2}\asymp \frac{m_{\min}}{m}
\]
under the balanced-cluster assumption \(m_{\min}\ge c m\). Therefore, the
population eigengap is of order
\[
\lambda_1(\Sigma_\star)-\lambda_2(\Sigma_\star)
\asymp
\frac{m_{\min}}{m}\Delta^2.
\]

Standard operator-norm concentration for sub-Gaussian covariance matrices shows
that, with probability at least \(1-\delta/3\),
\[
\|\hat\Sigma^{(1)}-\Sigma_\star\|_{\op}
\le
C\left[
\frac{\sigma\Delta\sqrt{m_{\min}}}{m}\sqrt{d+\log(1/\delta)}
+
\sigma^2
\left(
\sqrt{\frac{d+\log(1/\delta)}{m}}
+
\frac{d+\log(1/\delta)}{m}
\right)
\right].
\]
The assumed lower bound on \(\Delta^2\) implies that the right-hand side is at
most a sufficiently small constant multiple of the eigengap. Hence
Davis--Kahan yields
\[
\sin\angle(\hat u,u_\star)\le \frac18
\]
with probability at least \(1-\delta/3\), where \(\hat u\) is a leading
eigenvector of \(\hat\Sigma^{(1)}\).

\paragraph{Step 2: classify the second split on the recovered axis.}
Conditional on the first split, define
\[
\nu_r:=\hat u^\top(\mu_r-\bar \theta^{(1)}),
\qquad r=1,2.
\]
On the event above,
\[
|\nu_1-\nu_2|
=
|\hat u^\top(\mu_1-\mu_2)|
\ge c_0\Delta
\]
for a universal constant \(c_0>0\).

Now set
\[
t:=A\sigma\sqrt{\log\frac{6m}{\delta}},
\]
where \(A>0\) is a sufficiently large universal constant. Because the second
split is independent of the first split, a union bound for sub-Gaussian tails
gives
\[
\max_{1\le i\le m}|\hat u^\top\eta_i^{(2)}|\le t
\]
with probability at least \(1-\delta/6\). Moreover, the one-dimensional
\(2\)-means centers computed from the first split lie within distance \(t\) of
the projected population centers with probability at least \(1-\delta/6\), by
the same one-dimensional separation argument used in standard two-cluster
mixture recovery.

Hence, on an event of probability at least \(1-\delta\), every projected
second-split point lies within distance \(2t\) of its correct center, whereas
its distance to the incorrect center is at least
\[
|\nu_1-\nu_2|-2t\ge c_0\Delta-2t.
\]
If
\[
\Delta^2\ge C\sigma^2\log\frac{m}{\delta},
\]
then \(c_0\Delta\ge 6t\), so each second-split point is strictly closer to the
correct center than to the incorrect one. Therefore the classification step is
exact, and
\[
\{\hat G_1,\hat G_2\}=\{G_1,G_2\}.
\]

\paragraph{Step 3: combine the two requirements.}
The spectral step requires a lower bound of order
\[
\sigma^2
\left(
\frac{d+\log(1/\delta)}{m_{\min}}
+
\frac{\sqrt{m\bigl(d+\log(1/\delta)\bigr)}}{m_{\min}}
\right),
\]
while the one-dimensional classification step requires
\[
\sigma^2\log\frac{m}{\delta}.
\]
Both are implied by the assumed bound
\[
\Delta^2
\ge
C\,\sigma^2\,\Gamma_{m,m_{\min},d}(\delta).
\]
This proves the proposition.
\end{proof}

\subsection{Proof of the Lemma \ref{lemma:minimizer-iff-bias-1}}
\label{proof:lemma:minimizer-iff-bias}

\begin{proof}
Let $h^\star = h_{S^\star}$ and $f^\star = f(S^\star) = (h^\star)^2 \vee \tfrac{d\tau^2}{N_{S^\star}}$.

\textbf{($\Rightarrow$) Necessity.} If $h_k \le h^\star$ but $k \notin S^\star$, then
$S' = S^\star \cup \{k\}$ satisfies $h_{S'} = h^\star$ and $N_{S'} > N_{S^\star}$,
so $f(S') < f^\star$, contradicting minimality. Hence $S^\star = \{k : h_k \le h^\star\}$.
Optimality over sets with $h_S < h^\star$ requires $f(\{k: h_k < h^\star\}) \ge f^\star$,
yielding $(h^\star)^2 \le d\tau^2/{\sum_{k: h_k < h^\star} n_k}$.
For any $k \notin S^\star$, minimality against $S^\star \cup \{k\}$ requires
$h_k^2 \vee \tfrac{d\tau^2}{N_{S^\star}+n_k} \ge f^\star$, and since the variance
term decreases, we get $h_k > \max\{h^\star, \tau\sqrt{d/N_{S^\star}}\}$.

\textbf{($\Leftarrow$) Sufficiency.} For any competing $S$: if $h_S \le h^\star$ then
$S \subseteq S^\star$, so $N_S \le N_{S^\star}$ and $f(S) \ge \tfrac{d\tau^2}{N_S} \ge
\tfrac{d\tau^2}{N_{S^\star}}$; the condition on $h^\star$ ensures
$\tfrac{d\tau^2}{N_S} \ge (h^\star)^2$, so $f(S) \ge f^\star$.
If $h_S > h^\star$, some $k \notin S^\star$ lies in $S$, giving
$f(S) \ge h_k^2 > (h^\star)^2 \vee \tfrac{d\tau^2}{N_{S^\star}} = f^\star$.
Hence $S^\star$ is a minimizer.
\end{proof}

\subsection{Proof of the Theorem \ref{thm:adaptation-test-then-combine}}

\begin{proof} 
The proof proceeds in two steps. We first show that $\widehat{S} = S$ with high probability and then use this to bound the mean squared error of $\widehat{\theta}_0$.

\medskip
\noindent\textbf{Step 1: Correct identification of $S$.}

\medskip
\noindent\textit{Concentration of $\widetilde{\theta}_k - \widetilde{\theta}_0$.}
Define $\delta_k := \theta_k^\star - \theta_0^\star$. By combining the sub-Gaussian concentration
of $\widetilde{\theta}_k - \theta_k^\star$ and $\widetilde{\theta}_0 - \theta_0^\star$, we obtain
\[
P\!\left(a^\top\bigl\{\widetilde{\theta}_k - \widetilde{\theta}_0 - \delta_k\bigr\} > t\right)
\;\le\; \exp\!\left(-\frac{(n_0 \wedge n_k)\,t^2}{4\tau^2}\right).
\]
Applying Lemma~\ref{lemma:tech-1-t2c}, this one-dimensional bound lifts to a norm bound:
for some universal constant $c > 0$ and all $t > c\tau\sqrt{d/(n_0 \wedge n_k)}$,
\[
P\!\left(\bigl\|\widetilde{\theta}_k - \widetilde{\theta}_0 - \delta_k\bigr\|_2 > t\right)
\;\le\; \exp\!\left(-\frac{(n_0 \wedge n_k)\,t^2}{32\tau^2}\right).
\]
Setting $t_k := \tau\sqrt{32\alpha d\log(n_0 \vee n_k)/(n_0 \wedge n_k)}$ for $\alpha \ge 1$,
the exponent equals $-\alpha d \log(n_0 \vee n_k)$, giving
\begin{equation}\label{eq:concentration-diff}
P\!\left(\bigl\|\widetilde{\theta}_k - \widetilde{\theta}_0 - \delta_k\bigr\|_2 > t_k\right)
\;\le\; \frac{1}{(n_0 \vee n_k)^{\alpha d}}.
\end{equation}

\medskip
\noindent\textit{Correct inclusion ($k \in S$).}
For $k \in S$, the bias satisfies $\|\delta_k\|_2 \le h_k \le \tau\sqrt{d/n_0}$.
By the triangle inequality,
\[
\bigl\|\widetilde{\theta}_k - \widetilde{\theta}_0 - \delta_k\bigr\|_2
\;\ge\; \bigl\|\widetilde{\theta}_k - \widetilde{\theta}_0\bigr\|_2 - \|\delta_k\|_2,
\]
so if $\|\widetilde{\theta}_k - \widetilde{\theta}_0\|_2 > 9\tau\sqrt{d\alpha\log(n_0 \vee n_k)/(n_0 \wedge n_k)}$, then
\[
\textstyle \bigl\|\widetilde{\theta}_k - \widetilde{\theta}_0 - \delta_k\bigr\|_2
\;>\; 9\tau\sqrt{\frac{d\alpha\log(n_0 \vee n_k)}{n_0 \wedge n_k}}
     - \tau\sqrt{\frac{d}{n_0}}
\;>\; t_k,
\]
where the last inequality holds for $\alpha \ge 1$. Hence, by~\eqref{eq:concentration-diff},
\begin{equation}\label{eq:false-exclusion}
\textstyle P\!\left(\|\widetilde{\theta}_k - \widetilde{\theta}_0\|_2 > 9\tau\sqrt{\frac{d\alpha\log(n_0 \vee n_k)}{n_0 \wedge n_k}}\right)
\;\le\; \frac{1}{(n_0 \vee n_k)^{\alpha d}}.
\end{equation}

\medskip
\noindent\textit{Correct exclusion ($k \notin S$).}
For $k \notin S$, the separation condition gives
$\|\delta_k\|_2 > 17\tau\sqrt{\frac{d\alpha\log(n_0 \vee n_k)}{(n_0 \wedge n_k)}}$.
Analogously, if $\|\widetilde{\theta}_k - \widetilde{\theta}_0\|_2 \le 9\tau\sqrt{d\alpha\log(n_0 \vee n_k)/(n_0 \wedge n_k)}$,
then by the triangle inequality,
\[
\begin{aligned}
    \bigl\|\widetilde{\theta}_k - \widetilde{\theta}_0 - \delta_k\bigr\|_2
\;\ge\; \|\delta_k\|_2 - \|\widetilde{\theta}_k - \widetilde{\theta}_0\|_2
&\;>\; \textstyle 17\tau\sqrt{\frac{d\alpha\log(n_0\vee n_k)}{n_0\wedge n_k}}
     - 9\tau\sqrt{\frac{d\alpha\log(n_0\vee n_k)}{n_0\wedge n_k}}\\
     &
\;=\; \textstyle 8\tau\sqrt{\frac{d\alpha\log(n_0\vee n_k)}{n_0\wedge n_k}}
\;>\; t_k.
\end{aligned}
\]
Therefore, by~\eqref{eq:concentration-diff},
\begin{equation}\label{eq:false-inclusion}
\textstyle P\!\left(\|\widetilde{\theta}_k - \widetilde{\theta}_0\|_2 \le 9\tau\sqrt{\frac{d\alpha\log(n_0 \vee n_k)}{n_0 \wedge n_k}}\right)
\;\le\; \frac{1}{(n_0 \vee n_k)^{\alpha d}}.
\end{equation}

\medskip
\noindent\textit{Union bound.}
The event $\{\widehat{S} \neq S\}$ implies that at least one of the following occurs:
some $k \in S$ is falsely excluded, or some $k \notin S$ is falsely included.
Applying a union bound over all $k \in \{1, \dots, m\}$ and using
\eqref{eq:false-exclusion}--\eqref{eq:false-inclusion}, we obtain
\[
\textstyle P\!\left(\widehat{S} \neq S\right)
\;\le\; \sum_{k=1}^{m} \frac{1}{(n_0 \vee n_k)^{\alpha d}}
\;\le\; \frac{m}{n_0^{\alpha d}},
\]
which proves the first part of the theorem.

\medskip
\noindent\textbf{Step 2: Mean squared error bound.}
For any index set $\{0\} \subset J \subset \{0, \dots, m\}$, define the weighted average
$\widehat{\theta}_J := \sum_{k \in J} n_k \widetilde{\theta}_k \,/\, \sum_{k \in J} n_k$.
We decompose the expected squared error of $\widehat{\theta}_{\widehat{S}}$ by conditioning on
whether $\widehat{S} = S$:
\[
\mathbb{E}\!\left[\|\widehat{\theta}_{\widehat{S}} - \theta_0^\star\|_2^2\right]
\;\le\;
\underbrace{\mathbb{E}\!\left[\|\widehat{\theta}_S - \theta_0^\star\|_2^2\right]}_{\text{(I): oracle error}}
\;+\;
\underbrace{P\!\left(\widehat{S} \neq S\right)
\max_{\{0\} \subset J \subset \{0,\dots,m\}}
\mathbb{E}\!\left[\|\widehat{\theta}_J - \theta_0^\star\|_2^2\right]}_{\text{(II): misidentification penalty}}.
\]

\noindent\textit{Bounding (I).}
Since $S$ is the true set of integrable domains, the upper bound established in
Theorem~\ref{thm:oracle-minimax-rate} gives
\[
\mathbb{E}\!\left[\|\widehat{\theta}_S - \theta_0^\star\|_2^2\right]
\;\le\; \frac{d\tau^2}{N_S} + \max_{k \in S} h_k^2
\;\asymp\; \fR(\bh, \bn,\tau).
\]

\noindent\textit{Bounding (II).}
For any fixed $J$, the same oracle bound yields
\[
\mathbb{E}\!\left[\|\widehat{\theta}_J - \theta_0^\star\|_2^2\right]
\;\lesssim\; \frac{d\tau^2}{N_J} + \max_{k \in J} h_k^2
\;\lesssim\; 1,
\]
since $\max_{k \in [m]} h_k^2 \le 1$. Combining this with Step 1: $\text{(II)} \;\lesssim\; \frac{m}{n_0^{\alpha d}}$.

\noindent\textit{Conclusion.}
Combining the bounds on (I) and (II),
\[
\mathbb{E}\!\left[\|\widehat{\theta}_{\widehat{S}} - \theta_0^\star\|_2^2\right]
\;\lesssim\; \fR(\bh, \bn,\tau) + \frac{m}{n_0^{\alpha d}}.
\]
The second term is dominated by the first whenever
$m/n_0^{\alpha d} \lesssim d\tau^2/N_{\mathrm{total}}$,
which holds provided $\alpha \;\ge\; \frac{\log\!\left(m N_{\mathrm{total}}/(d\tau^2)\right)}{d\log(n_0)}$.
Together with the requirement $\alpha \ge 1$, this gives the stated condition on $\alpha$, and therefore
\[
\mathbb{E}\!\left[\|\widehat{\theta}_{\widehat{S}} - \theta_0^\star\|_2^2\right]
\;\lesssim\; \fR(\bh, \bn,\tau),
\]
uniformly over all $S$, $\mathbf{h} \in \mathcal{H}(S)$, and $P \in \mathcal{P}'(\mathbf{h},\mathbf{n},\tau,\kappa,S)$.
This completes the proof.
\end{proof}

\begin{lemma}\label{lemma:tech-1-t2c}
    Let $X \in \mathbb{R}^d$ satisfy the following for some $\beta > 0$: $\max_{\|a\|_2 = 1} P\!\left(a^\top X > t\right) \le \exp\!\left(-\frac{t^2}{\beta^2}\right)$ for all $t > 0$.
     Then, for $c = 2\sqrt{2\log 5}$ and all $t > c\beta\sqrt{d}$,
    \[
   \textstyle  P\!\left(\|X\|_2 > t\right) \le \exp\!\left(-\frac{t^2}{2\beta^2}\right).
    \]
\end{lemma}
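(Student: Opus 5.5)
The plan is the standard $\varepsilon$-net argument. The one point that needs care is the choice of net resolution. With the usual $\tfrac12$-net, the union bound loses a factor of $4$ in the exponent, and the resulting tail $5^d\exp(-t^2/(4\beta^2))$ can never be below $\exp(-t^2/(2\beta^2))$. So the net must be fine enough that the discretization loss in the exponent is strictly less than a factor of $2$. The net's cardinality must then be absorbed by the assumption $t > c\beta\sqrt d$ with $c^2 = 8\log 5$.

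\textbf{Step 1 (net reduction).} Let $\mathcal N_\varepsilon$ be an $\varepsilon$-net of the unit sphere $\mathbb S^{d-1}$ with $|\mathcal N_\varepsilon| \le (1+2/\varepsilon)^d$. Since $\|X\|_2 = \sup_{\|a\|_2=1} a^\top X$, the usual argument applies. Write the maximizing direction $a_\star = X/\|X\|_2$ as $a_\star = a + (a_\star - a)$ with $a\in\mathcal N_\varepsilon$ and $\|a_\star-a\|_2\le\varepsilon$. This gives
\[
\|X\|_2 \le \max_{a\in\mathcal N_\varepsilon} a^\top X + \varepsilon\|X\|_2,
\qquad\text{hence}\qquad
\|X\|_2 \le (1-\varepsilon)^{-1}\max_{a\in\mathcal N_\varepsilon} a^\top X .
\]
Only the one-sided events $\{a^\top X > s\}$ are used, and these are exactly what the hypothesis controls.

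\textbf{Step 2 (union bound).} By the hypothesis applied to each $a\in\mathcal N_\varepsilon$ with $s=(1-\varepsilon)t$,
\[
P(\|X\|_2>t) \le (1+2/\varepsilon)^d \exp\!\Big(-\frac{(1-\varepsilon)^2 t^2}{\beta^2}\Big).
\]
It therefore suffices to have
\[
d\log(1+2/\varepsilon) \le \Big((1-\varepsilon)^2-\tfrac12\Big)\frac{t^2}{\beta^2}.
\]

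\textbf{Step 3 (choosing $\varepsilon$ and using $t>c\beta\sqrt d$).} Take $\varepsilon = 1/10$. Then $(1-\varepsilon)^2-\tfrac12 = 0.31$ and $\log(1+2/\varepsilon)=\log 21 \approx 3.04$. The assumption gives $t^2/\beta^2 > 8d\log 5 \approx 12.88\,d$, so $0.31\,t^2/\beta^2 > 3.99\,d > d\log 21$. The sufficient condition in Step 2 holds, and the claim $P(\|X\|_2>t)\le\exp(-t^2/(2\beta^2))$ follows.

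The main obstacle is exactly this balance in Step 3. The net must be fine enough that $(1-\varepsilon)^2$ exceeds $1/2$ by a margin, yet coarse enough that $\log(1+2/\varepsilon)$ is covered by that margin times $8\log 5$. Any $\varepsilon$ in a moderate range around $1/10$ works; $\varepsilon=1/2$ does not. If the constants felt too tight, I would fall back on checking the inequality numerically for a couple of candidate values of $\varepsilon$.
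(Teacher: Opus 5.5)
Your proof is correct and proves the lemma exactly as stated. It follows the same net-plus-union-bound template as the paper, but the choice of net resolution is the substantive difference, and yours is the one that works. The paper takes a $\tfrac12$-net with $5^d$ points and obtains $P(\|X\|_2>t)\le \exp\bigl(d\log 5 - t^2/(4\beta^2)\bigr)$. It then asserts this is at most $\exp(-t^2/(2\beta^2))$ once $t>c\beta\sqrt d$. As you observed, that would require $d\log 5 + t^2/(4\beta^2)\le 0$, so it never holds.

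Under $t^2>8\beta^2 d\log 5$, what the paper's argument actually gives is the weaker tail $\exp(-t^2/(8\beta^2))$, because then $d\log 5 < t^2/(8\beta^2)$. Your choice $\varepsilon=1/10$ costs only a factor $0.81$ in the exponent. The leftover margin $0.31\,t^2/\beta^2 > 3.99\,d$ then absorbs the larger net cardinality $21^d$ under the same threshold $c^2=8\log 5$. So your argument delivers the sharper exponent $t^2/(2\beta^2)$ that the statement claims.

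The trade-off is only bookkeeping. The coarse net is simpler but yields exponent $t^2/(8\beta^2)$. That weaker version is in fact exactly what is used downstream in the proof of Theorem~\ref{thm:adaptation-test-then-combine}: there $\beta^2=4\tau^2/(n_0\wedge n_k)$ and the lifted tail is $\exp(-(n_0\wedge n_k)t^2/(32\tau^2))$. So the paper's later results are unaffected, and your finer net is what is needed if one wants the lemma with the constant as written.

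One small omission: handle $X=0$ separately before writing $a_\star=X/\|X\|_2$ (the bound is trivial there).
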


\begin{proof}
    Let $\mathcal{N}$ be a $\tfrac{1}{2}$-net of $\mathcal{S}^{d-1}$ with $|\mathcal{N}| \le 5^d$.
    For any $a \in \mathcal{S}^{d-1}$, pick $a_0 \in \mathcal{N}$ with $\|a - a_0\|_2 \le \tfrac{1}{2}$; then
    \[
    a^\top X = a_0^\top X + (a-a_0)^\top X \le a_0^\top X + \tfrac{1}{2}\|X\|_2.
    \]
    Taking the maximum over $a \in \mathcal{S}^{d-1}$ and using $\|X\|_2 = \max_{a \in \mathcal{S}^{d-1}} a^\top X$ gives
    $\tfrac{1}{2}\|X\|_2 \le \max_{a_0 \in \mathcal{N}}\, a_0^\top X$.
    Hence, by a union bound and the assumed tail condition,
    \[
  \textstyle   P\!\left(\|X\|_2 > t\right)
    \le P\!\left(\max_{a_0 \in \mathcal{N}} a_0^\top X > \tfrac{t}{2}\right)
    \le 5^d \exp\!\left(-\frac{t^2}{4\beta^2}\right)
    = \exp\!\left(d\log 5 - \frac{t^2}{4\beta^2}\right).
    \]
    This is at most $\exp(-t^2/(2\beta^2))$ whenever $t > 2\beta\sqrt{2d\log 5} = c\beta\sqrt{d}$.
\end{proof}

\section{Supplementary material for Section \ref{sec:simuations}}

\subsection{A modification on the clustering based estimator}

\begin{algorithm}[!ht]
  \caption{Clustering based estimator}
  \label{alg:clustering-estimator}
  \begin{algorithmic}[1]
    \Require Local estimators $\{\widetilde \theta_k : k = 0, \dots, m\}$, number of clusters $K \ge 1$, and $C > 0$

\medskip 
    
    \State \textcolor{green!40!black}{//Low-dimensional projection}
    \State Let $a = d \wedge \{(K-1) \vee 1\}$ and define 
    \[
   \bar \theta:= \frac{\sum_{k = 1}^m n_k \widetilde \theta_k  }{\sum_{k = 1}^m n_k}, \qquad  \hat \Sigma := \frac{\sum_{k = 1}^m n_k (\widetilde \theta_k - \bar \theta) (\widetilde \theta_k - \bar \theta)^\top }{\sum_{k = 1}^m n_k}
    \]
    \State For $a = d \wedge \{(K-1) \vee 1\}$ let $A \in \reals^{d \times a}$ be the matrix whose column vectors are the $a$ highest eigen-vectors of $\hat \Sigma$. 

\medskip 
    
\State \textcolor{green!40!black}{//Clustering}
\State Let $\{S_1, \dots, S_K\}$ be partition of $\{1, \dots, m\}$ which are cluster assignments obtained using K-means clustering on $\left\{A^\top \widetilde \theta_k: k = 1, \dots, m\right\}$
\For{$j = 1,\dots, K $}
\State Define $N_j := \sum_{k \in S_j} n_j$, $\mu_j := \frac{1}{N_j}\sum_{k \in S_j} n_k \widetilde \theta_k$ and $\Delta_j = \left \| \mu_j - \widetilde \theta_0 \right\|_2$.
\EndFor

\medskip 

\State \textcolor{green!40!black}{//Integrable source domains}

\State Define $j_0 := \arg\min_j \Delta_j$ and $I = \{0\} \cup S_j$. 
\For{$j = 1, \dots, K$ and $j \neq j_0$}
\State Define the test statistic a $T_j = \Delta_j^2 - \Delta_{j_0}^2$  and $\lambda_j = C \max \left\{ \frac{d}{N_j \wedge N_{j_0}}, \frac{1}{n_0} \right\}$.
\If{$T_j \le \lambda_j$}
\State $I \gets  I \cup S_j$ \qquad // expand the index set of integrable sources
\EndIf
\EndFor

\medskip 

\Return  the final estimator 
\[
\widehat \theta_0 := \frac{\sum_{k \in I} n_k \widetilde \theta_k}{\sum_{k \in I} n_k}\,.
\]
 
  \end{algorithmic}
\end{algorithm}

\vskip 0.2in
\bibliography{sample,seamus,references-mendeley}

\end{document}